\def\mm{\mathfrak{m}}
\def\bC{\mathbf{C}}
\def\oo{\mathfrak{o}}
\def\bQ{\mathbf{Q}}
\def\Mod{\mathbf{Mod}}
\def\Rep{\mathbf{Rep}}
\def\GRep{\mathbf{GRep}}
\def\Ob{\mathbf{Ob}}
\def\FEnd{\mathscr{E}nd}
\def\Ainf{\textnormal{A}_{\textnormal{inf}}}
\def\Ainft{\textnormal{A}_{\textnormal{inf},2}}
\def\bB{\overline{\mathscr{B}}}
\def\bbB{\breve{\overline{\mathscr{B}}}}
\def\Nc{\mathbb{N}^{\circ}}
\def\Gm{\mathbb{G}_m}
\def\Ga{\mathbb{G}_a}
\def\LS{\mathbf{LocSys}}
\def\bO{\overline{\mathscr{O}}}
\def\bH{\mathbb{H}}
\def\bfH{\mathbf{H}}
\def\bfT{\mathbf{T}}
\def\bV{\mathbb{V}}
\def\cC{\mathscr{C}}
\def\bcC{\breve{\mathscr{C}}}
\def\hcC{\widehat{\mathscr{C}}}
\def\CC{\mathfrak{C}}
\def\Zp{\mathbb{Z}_p}
\def\Qp{\mathbb{Q}_p}
\def\BQ{\mathbb{Q}}
\def\alm{\alpha\textnormal{-}}
\def\FPic{\mathbf{Pic}}
\def\FHB{\mathbf{HB}}
\DeclareMathOperator{\ur}{ur}
\DeclareMathOperator{\LP}{LP}
\DeclareMathOperator{\LPM}{LPM}
\DeclareMathOperator{\PM}{PM}
\DeclareMathOperator{\rig}{rig}
\DeclareMathOperator{\ad}{ad}
\DeclareMathOperator{\an}{an}
\DeclareMathOperator{\Coh}{Coh}
\DeclareMathOperator{\Lie}{Lie}
\DeclareMathOperator{\Pic}{Pic}
\DeclareMathOperator{\alg}{alg}
\DeclareMathOperator{\DW}{DW}
\DeclareMathOperator{\pDW}{pDW}
\DeclareMathOperator{\HB}{HB}
\DeclareMathOperator{\VB}{VB}
\DeclareMathOperator{\HM}{HM}
\DeclareMathOperator{\HC}{HC}
\DeclareMathOperator{\HS}{HS}
\DeclareMathOperator{\HIG}{HIG}
\DeclareMathOperator{\IM}{Im}
\DeclareMathOperator{\conv}{conv}
\DeclareMathOperator{\Cov}{Cov}
\DeclareMathOperator{\Et}{\textnormal{\textbf{\'{E}t}}}
\DeclareMathOperator{\Fet}{\textnormal{\textbf{F\'{e}t}}}
\DeclareMathOperator{\et}{\textnormal{\'{e}t}}
\DeclareMathOperator{\fet}{\textnormal{f\'{e}t}}
\DeclareMathOperator{\zar}{zar}
\DeclareMathOperator{\Gal}{Gal}
\DeclareMathOperator{\Ker}{Ker}
\DeclareMathOperator{\Hom}{Hom}
\DeclareMathOperator{\Aut}{Aut}
\DeclareMathOperator{\Sym}{Sym}
\DeclareMathOperator{\End}{End}
\DeclareMathOperator{\Ext}{Ext}
\DeclareMathOperator{\Spec}{Spec}
\DeclareMathOperator{\Spa}{Spa}
\DeclareMathOperator{\Spf}{Spf}
\DeclareMathOperator{\rH}{H}
\DeclareMathOperator{\rB}{B}
\DeclareMathOperator{\rR}{R}
\DeclareMathOperator{\id}{id}
\DeclareMathOperator{\pltf}{pltf}
\DeclareMathOperator{\cohd}{cohd}
\DeclareMathOperator{\Mor}{Mor}
\DeclareMathOperator{\Con}{Con}
\DeclareMathOperator{\Dolb}{Dolb}
\DeclareMathOperator{\sma}{small}
\DeclareMathOperator{\free}{free}
\DeclareMathOperator{\Hig}{Hig}
\DeclareMathOperator{\dR}{dR}
\DeclareMathOperator{\fin}{fin}
\DeclareMathOperator{\Exp}{Exp}
\DeclareMathOperator{\Sp}{sp}
\DeclareMathOperator{\Vect}{Vec}
\DeclareMathOperator{\Ber}{Ber}
\DeclareMathOperator{\Map}{Map}
\DeclareMathOperator{\Adic}{Adic}
\DeclareMathOperator{\LRS}{LRS}
\DeclareMathOperator{\ev}{ev}
\newtheorem{theorem}{Theorem}[subsection]
\newtheorem{prop}[theorem]{Proposition}
\newtheorem{lemma}[theorem]{Lemma}
\newtheorem{coro}[theorem]{Corollary}
\theoremstyle{definition}
\newtheorem{rem}[theorem]{Remark}
\newtheorem{question}[theorem]{Question}
\newtheorem{definition}[theorem]{Definition}
\newtheorem{secnumber}[theorem]{}
\numberwithin{equation}{section}
\numberwithin{equation}{theorem}
\newsavebox{\@brx}
\newcommand{\llangle}[1][]{\savebox{\@brx}{\(\m@th{#1\langle}\)}%
  \mathopen{\copy\@brx\kern-0.5\wd\@brx\usebox{\@brx}}}
\newcommand{\rrangle}[1][]{\savebox{\@brx}{\(\m@th{#1\rangle}\)}%
  \mathclose{\copy\@brx\kern-0.5\wd\@brx\usebox{\@brx}}}
\newcommand{\quash}[1]{}
\let\@wraptoccontribs\wraptoccontribs
\title{Parallel transport for Higgs bundles over $p$-adic curves}
\author{Daxin Xu}
\date{\today}
\begin{document}
\selectlanguage{english}
\maketitle

\begin{abstract}
	Faltings conjectured that under the $p$-adic Simpson correspondence, finite dimensional $p$-adic representations of the geometric \'etale fundamental group of a smooth proper $p$-adic curve $X$ are equivalent to semi-stable Higgs bundles of degree zero over $X$. 
	In this article, we establish, over a $p$-adic curve of genus $g\ge 2$, an equivalence between these representations and Higgs bundles, whose underlying bundles potentially admit a strongly semi-stable reduction of degree zero. 
	We show that these Higgs bundles are semi-stable of degree zero and investigate some evidence for the aforementioned conjecture. 
\end{abstract}

\tableofcontents
\addtocontents{toc}{\protect\setcounter{tocdepth}{1}}

\section{Introduction}
\setcounter{subsection}{1}

\begin{secnumber} \label{sss:intro}
	In his seminal work \cite{Sim92}, Simpson established a correspondence between finite dimensional complex linear representations of the topological fundamental group of a connected projective complex manifold and semi-stable Higgs bundles with vanishing Chern classes. 
This result generalizes the Narasimhan--Seshadri correspondence \cite{NS65} between unitary irreducible representations of the fundamental group of a compact Riemann surface of genus $\ge 2$ and stable vector bundles of degree zero. 

	Inspired by these works, Faltings developed an analogue theory between (generalized) $p$-adic local systems and Higgs bundles over $p$-adic smooth proper varieties, called \textit{the $p$-adic Simpson correspondence} \cite{Fal05}. 
	Faltings' short paper has been systematically studied by Abbes--Gros and Tsuji \cite{AGT}, each introducing their own new approach and laying the foundations of this theory. 
	
	Simultaneously and independently, Deninger and Werner developed a $p$-adic analogue of the Narasimhan-Seshadri correspondence: they associated functorially to every vector bundle over a $p$-adic smooth projective curve whose reduction is strongly semi-stable of degree zero a $p$-adic representation of the geometric fundamental group of the curve \cite{DW05}. 
	The author showed that the construction of Deninger--Werner is compatible with Faltings' theory in the case where the Higgs field is trivial \cite{Xu17}. 
	These results have been generalized to the case of higher dimensional $p$-adic varieties by Deninger--Werner \cite{DW20} and Würthen \cite{Wur19}. 

	Based on Scholze's work \cite{Sch13}, Liu and Zhu \cite{LZ16} developed a $p$-adic Riemann-Hilbert correspondence for $\Qp$-local systems on a smooth rigid analytic variety over a finite extension of $\Qp$, which allows them to associate nilpotent Higgs bundles to these arithmetic local systems. 
	This work is recently extended to the logarithmic case by Diao--Lan--Liu--Zhu \cite{DLLZ}. 
	Y. Wang compared Faltings' approach in the good reduction case with the work of Liu--Zhu in his thesis \cite{Wang}. 

	Faltings imbedded the category of $p$-adic representations of the geometric fundamental group into the much larger category of so called ``generalized representations'' that he expected to be equivalent to the category of all Higgs bundles. 
	The fundamental problem is then to understand which Higgs bundles arise from true $p$-adic representations. 
	He formulated, for curves, the hope that true $p$-adic representations correspond to semi-stable Higgs bundles of slope zero. 
	Faltings sketched a strategy of two steps to establish the equivalence between generalized representations and Higgs bundles. 
	First, one establishes an equivalence between the subcategories of objects that are $p$-adically close from the trivial objects, that he qualifies as being ``small''. 
	Second, one extends the equivalence to all objects by descent, as any generalized representation \'etale locally becomes small. 
	The first step has been developed by Abbes--Gros and Tsuji \cite{AGT} in any dimension. The second step is only sketched for smooth proper curves by Faltings. 

	Recently, Heuer \cite{Heu23} established a new approach to the $p$-adic Simpson correspondence for smooth proper varieties. 
	In this approach, generalized representations are considered as bundles for v-topology. 
	Heuer and the author \cite{HX24} geometrized the $p$-adic Simpson correspondence as a (twisted) isomorphism between moduli stacks of Higgs $G$-bundles and of v-$G$-bundles for a reductive group $G$ in the curve case. 
	These works are based on the \textit{Hitchin fibration}, the \textit{geometric Sen operator} and are different to Faltings' original approach. 
\end{secnumber}
\begin{secnumber}
	In this article, we provide for curves a characterization of the Higgs bundles associated to true $p$-adic representations and we compare it with Faltings' expected description. 
	Our approach is inspired by Tsuji's approach to the $p$-adic Simpson correspondence and Deninger--Werner's theory. 



	Let us first describle these Higgs bundles.  
	Let $K$ be a finite extension of $\Qp$, $\overline{K}$ an algebraic closure of $K$, $\bC$ (resp. $\oo$) the $p$-adic completion of $\overline{K}$ (resp. $\mathscr{O}_{\overline{K}}$), $X$ a geometrically connected, smooth proper curve over $K$ and $\overline{x}$ a geometric generic point of $X$. 
	Roughly speaking, we establish an equivalence between the category $\Rep(\pi_1(X_{\overline{K}},\overline{x}),\bC)$ of continuous finite dimensional $\bC$-representations of $\pi_1(X_{\overline{K}},\overline{x})$ and a category of Higgs bundles over $X_{\bC}$, whose underlying bundles admit a strongly semi-stable reduction of degree zero as in Deninger--Werner's theory, after taking a twisted inverse image over a finite \'etale cover of $X_{\overline{K}}$. 
	This equivalence is compatible with the $p$-adic Simpson correspondence when restricted to small objects. 
	We prove that these Higgs bundles are semi-stable of degree zero. 
	It is conjectured that the converse holds. 
	We discuss these results in more details in the following. 
\end{secnumber}

\begin{secnumber}
	We first briefly explain the $p$-adic Simpson correspondence following Faltings' original approach. 
	
	We set $S=\Spec(\mathscr{O}_K)$. Let $X$ be a projective semi-stable $S$-scheme\footnote{We restrict ourself to this case for the simplicity of presentation.} with geometrically connected generic fiber $X_{\overline{K}}$ and $\overline{x}$ a geometric generic point of $X_{\overline{K}}$. 
	The $p$-adic Simpson correspondence applies to certain objects more general than those of $\Rep(\pi_1(X_{\overline{K}},\overline{x}),\bC)$, called \textit{generalized representations}. 
	They are certain modules of the Faltings ringed topos and locally can be expressed as semi-linear representations of $\pi_1(X_{\overline{K}},\overline{x})$ over finite projective modules over a certain $p$-adic ring equipped with a continuous action of $\pi_1(X_{\overline{K}},\overline{x})$. 
	A generalized representation is called \textit{small}, if locally, the associated representation admits a basis, whose $\pi_1(X_{\overline{K}},\overline{x})$-action is trivial modulo $p^{\beta}$ for some $\beta>\frac{2}{p-1}$ (definition \ref{d:rep small}). 
	The category $\GRep(X)$ of \textit{generalized representations} over $X$ contains $\Rep(\pi_1(X_{\overline{K}},\overline{x}),\bC)$ as a full subcategory and we denote by $\GRep_{\sma}(X)$ its full subcategory of small objects. 


On the other hand, a Higgs bundle over $X_{\bC}$ is a pair $(M,\theta)$ consisting of a vector bundle $M$ over $X_{\bC}$ together with a Higgs field $\theta:M\to M\otimes_{\mathscr{O}_{X_{\bC}}}\Omega_{X_{\bC}}^1(-1)$ such that $\theta\wedge \theta=0$, where $(-1)$ denotes the inverse of the Tate twist. 
We say a Higgs bundle $(M,\theta)$ over $X_{\bC}$ is \textit{small} if it admits an integral model $(M^{\circ},\theta)$ over $X_{\oo}=X\otimes_{\mathscr{O}_K}\oo$ such that $\theta(M^{\circ})\subset p^{\alpha} M^{\circ}\otimes_{\mathscr{O}_X}\widetilde{\Omega}_{X/S}^1$ for some $\alpha>\frac{1}{p-1}$. Here, $\widetilde{\Omega}_{X/S}^{1}$ denotes  the $\mathscr{O}_X$-module of logarithmic differentials for the log structures $\mathscr{M}_X$ and $\mathscr{M}_S$. 
	We denote by $\HB(X_{\bC})$ the category of Higgs bundles over $X_{\bC}$ and by $\HB_{\sma}(X_{\bC})$ its full subcategory of small Higgs bundles. 

	Set $\mathscr{O}_{\overline{K}}^{\flat}=\varprojlim_{x\mapsto x^p} \mathscr{O}_{\overline{K}}/p \mathscr{O}_{\overline{K}}$ and $\Ainf=W(\mathscr{O}_{\overline{K}}^{\flat})$ and let $\theta:\Ainf \to \oo$ be Fontaine's theta map. 
	Let $\check{X}$ be the $p$-adic completion of $X_{\oo}$. 
	We take a family of log-smooth liftings $\mathcal{X}=(\mathcal{X}_n)_{\ge 1}$ of $\check{X}$ over $(\Ainf/\Ker(\theta)^n)_{n\ge 1}$ with respect to log structures extending by $\mathscr{M}_X$ and $\mathscr{M}_S$ (see \S~\ref{sss:Higgs envelope} for more detalis). 
	Based on the works of Faltings \cite{Fal05}, Abbes--Gros \cite{AGT}, Tsuji \cite{AGT,Tsu18}, the lifting $\mathcal{X}$ induces an equivalence of categories: 
\begin{equation}
	\bfH_{\mathcal{X}}: \GRep_{\sma}(X)\simeq \HB_{\sma}(X_{\bC}),
	\label{eq:p-adic Simpson}
\end{equation}
called \textit{the $p$-adic Simpson correspondence}. 
\end{secnumber}

\begin{secnumber} \label{sss:intro-twisted}
	In the following, we assume moreover that $X$ is a \textit{stable curve} over $S$ and that \textit{the genus of $X_{\overline{K}}$ is $\ge 2$}. 
	In this article, we construct a functor \eqref{eq:HXExp} via $\bfH_{\mathcal{X}}$:
	\begin{equation} \label{eq:intro HX}
		\bH_{\mathcal{X},\Exp}: \Rep(\pi_1(X_{\overline{K}},\overline{x}),\bC) \to \HB(X_{\bC}).  
	\end{equation}
	The construction relies on a section $\Exp:\bC\to 1+\mm$ \eqref{eq:Exp} of the $p$-adic logarithmic homomorphism $\log:1+\mm\to \bC$, $x\mapsto \sum_{n= 1}^{\infty} (-1)^{n+1} \frac{(x-1)^n}{n}$. 
	For small $\bC$-representations, the above functor is explicitly defined by gluing a local version of the $p$-adic Simpson correspondence (\cite{AGT} \S~II.13, II.14). 

	To extend the construction for all $\bC$-representations, 
	an important tool is the \textit{twisted inverse image functor} for Higgs bundles, sketched in Faltings' paper \cite{Fal05}. We present this functor in a more general setting in the appendix, joint with T. He. 
	Let $Y$ be a projective stable $S$-curve and $f:Y\to X$ a proper $S$-morphism, which is finite over $X_K$, and $\mathcal{Y}$ a family of log-smooth liftings of $\check{Y}$ over $(\Ainf/\Ker(\theta)^n)_{n\ge 1}$. 
	There exists a \textit{twisted inverse image functor} associated to these data \eqref{eq:twisted fcirc}:
	\begin{equation} \label{eq:twisted intro}
		f^{\circ}_{\mathcal{Y},\mathcal{X},\Exp}:\HB(X_{\bC}) \to \HB(Y_{\bC}),
	\end{equation} 
	which is different from the usual inverse image functor by twisting a line bundle on the spectral curve. 
	Its restriction to small Higgs bundles is compatible with the natural inverse image functor of generalized representations via $p$-adic Simpson correspondences $\bfH_{\mathcal{X}}$ and $\bfH_{\mathcal{Y}}$. 
	Its restriction to vector bundles (Higgs bundles with zero Higgs field) is compatible with the usual inverse image functor for vector bundles. 

	The twisted inverse image functor and the \'etale descent for Higgs bundles allow us to define the functor $\bH_{\mathcal{X},\Exp}$. 
	The same argument also extends $\bfH_{\mathcal{X}}$ to a functor $\bfH_{\mathcal{X},\Exp}$ from generalized representations over $X$ to Higgs bundles over $X_{\bC}$. 

	In this article, we describe the essential image of $\bH_{\mathcal{X},\Exp}$, in terms of Deninger--Werner's theory \cite{DW05}, and then construct a quasi-inverse of $\bH_{\mathcal{X},\Exp}$ (\ref{sss:construction V}).
\end{secnumber}

\begin{secnumber} \label{sss:DW intro}
	In summary, we have the following commutative diagram
	\begin{equation} \label{eq:diag intro}
	\xymatrix{
	\GRep_{\sma}(X) \ar[d] \ar[rr]^{\bfH_{\mathcal{X}}}_{\sim} && \HB_{\sma}(X_{\bC}) \ar[d]\\
	\GRep(X) \ar@{-->}[rr]^{\bfH_{\mathcal{X},\Exp}} && \HB(X_{\bC}) \\
	\Rep(\pi_1(X_{\overline{K}},\overline{x}),\bC)  \ar[u]  \ar[rru]^{\bH_{\mathcal{X},\Exp}} &&  \VB^{\DW}(X_{\bC}) \ar[ll]_{\bV^{\DW}} \ar[u]  \ar@/_2pc/[uu]
	}
	\end{equation}

	Let us briefly explain how Deninger--Werner's functor $\bV^{\DW}$ \cite{DW05} fits into this picture. 
	Recall that a vector bundle over a smooth proper curve $C$ of characteristic $p>0$ is \textit{strongly semi-stable} if its inverse images by all the power of the absolute Frobenius $F_C$ of $C$ are semi-stable. 
	A vector bundle $M$ over $X_{\oo}$ is called \textit{Deninger--Werner} if its restriction to the normalisation of each irreducible component of $X_{\overline{k}}$ is strongly semi-stable of degree zero. 
	A vector bundle over $X_{\bC}$ is \textit{Deninger--Werner}, if it admits a Deninger--Werner vector bundle over $X_{\oo}$ as an integral model. 
For every $n\in \mathbb{N}$, the reduction modulo $p^n$ of a Deninger--Werner vector bundle over $X_{\oo}$ can be trivialized by a proper morphism of $S$-curves $Y\to X$, which is finite \'etale over $X_K$ (after taking a finite extension of $K$). 
	By parallel transport, Deninger and Werner constructed the functor $\bV^{\DW}$ in the above diagram. 
	The vertical functor $\VB^{\DW}(X_{\bC})\to \HB(X_{\bC})$ is given by equipping each vector bundle with the trivial Higgs field, and factors through $\HB_{\sma}(X_{\bC})$. 	
	The compatibility between $\bV^{\DW}$ and $\bfH_{\mathcal{X}}$ is proved in the author's thesis \cite{Xu17}. 
\end{secnumber}
	
\begin{secnumber}
	In the following, we characterize the essential image of $\bH_{\mathcal{X},\Exp}$ \eqref{eq:intro HX}
	in terms of certain Higgs bundles related to Deninger--Werner vector bundles. 

	We denote by $\HB^{\DW}(X_{\oo})$ the category of pairs $(M,\theta)$ consisting of a Deninger--Werner vector bundle $M$ on $X_{\oo}$ together with a small Higgs field $\theta$ on $M$, and by $\HB^{\DW}(X_{\bC})$ the full subcategory of $\HB(X_{\bC})$ consisting of Higgs bundles which admits an integral model in $\HB^{\DW}(X_{\oo})$. 
	A Higgs bundle over $X_{\bC}$ is called \textit{potentially Deninger--Werner}, if it belongs to $\HB^{\DW}(Y_{\bC})$ after taking a twisted inverse image functor \eqref{eq:twisted intro}. 
	We denote by $\HB_{\mathcal{X},\Exp}^{\pDW}(X_{\bC})$ the category these Higgs bundles over $X_{\bC}$. 
	Here is our main result. 
\end{secnumber}

\begin{theorem}[\ref{c:C-rep to HBpDW}, \ref{t:HBpDW to C-rep}]
	\label{t:equi intro}
	\textnormal{(i)} The functor $\mathbb{H}_{\mathcal{X},\Exp}$ factors through the category $\HB_{\mathcal{X},\Exp}^{\pDW}(X_{\bC})$:
	\begin{equation}
		\mathbb{H}_{\mathcal{X},\Exp}: \Rep(\pi_1(X_{\overline{K}},\overline{x}),\bC) \to \HB_{\mathcal{X},\Exp}^{\pDW}(X_{\bC}).
		\label{eq:bH intro}
	\end{equation}	

	\textnormal{(ii)} There exists a quasi-inverse $\bV_{\mathcal{X},\Exp}$ of $\bH_{\mathcal{X},\Exp}$:
	\begin{equation}
		\bV_{\mathcal{X},\Exp}: \HB_{\mathcal{X},\Exp}^{\pDW}(X_{\bC})\xrightarrow{\sim} \Rep(\pi_1(X_{\overline{K}},\overline{x}),\bC).
	\label{eq:bV intro}
	\end{equation}
\end{theorem}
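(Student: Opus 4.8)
The plan is to realize both functors by descent from a finite \'etale cover on which the $p$-adic Simpson correspondence $\bfH_{\mathcal{X}}$ and Deninger--Werner's functor $\bV^{\DW}$ are available.

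For \textnormal{(i)}: let $V\in\Rep(\pi_1(X_{\overline{K}},\overline{x}),\bC)$. Its image is a compact, hence bounded, subgroup of $\mathrm{GL}_n(\bC)$, so after conjugation $V$ takes values in $\mathrm{GL}_n(\oo)$. Fixing a large $\beta$, the set of $g$ with $V(g)\equiv\mathrm{id}\bmod p^{\beta}$ is an open subgroup of $\pi_1(X_{\overline{K}},\overline{x})$ of finite index, hence cuts out a connected finite \'etale cover of $X_{\overline{K}}$; such a cover is defined over a finite extension of $K$, and after a further extension the normalization of the base change of $X$ in it admits a semi-stable model $Y$ with a proper morphism $f\colon Y\to X$ that is finite \'etale on generic fibers. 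Choosing liftings $\mathcal{Y}$ of $\check{Y}$, the representation $V|_{\pi_1(Y_{\overline{K}})}$ is small (being trivial modulo $p^{\beta}$), so $\bfH_{\mathcal{Y}}(V|_{Y})=(N,\eta)$ is a small Higgs bundle; from the explicit local formulas for $\bfH_{\mathcal{Y}}$ on objects close to the trivial one, for $\beta$ large enough (which we may assume, by enlarging the cover) the integral model $N^{\circ}$ is trivial modulo $p$, hence its pullback to the normalization of each component of the special fiber is trivial, so strongly semi-stable of degree zero; thus $(N,\eta)\in\HB^{\DW}(Y_{\bC})$. Finally, by the construction of $\bH_{\mathcal{X},\Exp}$ as a twisted \'etale descent, $f^{\circ}_{\mathcal{Y},\mathcal{X},\Exp}(\bH_{\mathcal{X},\Exp}(V))\cong\bfH_{\mathcal{Y}}(V|_{Y})$, which lies in $\HB^{\DW}(Y_{\bC})$; so $\bH_{\mathcal{X},\Exp}(V)$ is potentially Deninger--Werner.

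For \textnormal{(ii)} I would prove that $\bH_{\mathcal{X},\Exp}\colon\Rep(\pi_1(X_{\overline{K}},\overline{x}),\bC)\to\HB^{\pDW}_{\mathcal{X},\Exp}(X_{\bC})$ is an equivalence. Full faithfulness: given two representations, a single cover $f\colon Y\to X$ as above makes both small; there $\bH_{\mathcal{X},\Exp}$ coincides with the equivalence $\bfH_{\mathcal{Y}}$ and $f^{\circ}_{\mathcal{Y},\mathcal{X},\Exp}$ is compatible with the (faithful, exact) inverse image of representations, so one concludes by \'etale descent of Higgs bundles along $Y\to X$ and descent of representations along $\pi_1(Y_{\overline{K}})\subset\pi_1(X_{\overline{K}})$. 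Essential surjectivity: given $(M,\theta)\in\HB^{\pDW}_{\mathcal{X},\Exp}(X_{\bC})$, pick $f\colon Y\to X$ with $f^{\circ}_{\mathcal{Y},\mathcal{X},\Exp}(M,\theta)=(N,\eta)\in\HB^{\DW}(Y_{\bC})$, integral model $(N^{\circ},\eta)$ with $N^{\circ}$ of Deninger--Werner; since $N^{\circ}$ is trivialized modulo $p^{\beta}$ by a further cover $g\colon Z\to Y$ (semi-stable, finite \'etale on generic fibers), $g^{\circ}_{\mathcal{Z},\mathcal{Y},\Exp}(N,\eta)=(P,\zeta)\in\HB^{\DW}(Z_{\bC})$ has integral model trivial modulo $p^{\beta}$. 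One then shows $\bfH_{\mathcal{Z}}^{-1}(P,\zeta)$ is a genuine $\bC$-representation $W$ of $\pi_1(Z_{\overline{K}})$ --- not merely a generalized one --- using Deninger--Werner's parallel transport for $N^{\circ}$ together with the description of the small $p$-adic Simpson correspondence near the trivial object. The \'etale descent datum carried by $f^{\circ}$ and $g^{\circ}$ of $(M,\theta)$, transported through the inverses of the $\bfH$'s, equips $W$ with a descent datum relative to $Z\to X$ and yields $V\in\Rep(\pi_1(X_{\overline{K}},\overline{x}),\bC)$ with $\bH_{\mathcal{X},\Exp}(V)\cong(M,\theta)$; setting $\bV_{\mathcal{X},\Exp}(M,\theta):=V$ (independent of choices) gives the quasi-inverse.

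The main obstacle is the step in \textnormal{(ii)} asserting that the inverse of the small $p$-adic Simpson correspondence, applied to a small Higgs bundle that is trivial modulo $p^{\beta}$ on a suitable cover, produces a genuine $\bC$-representation rather than a genuine generalized representation --- i.e.\ that inverting $\bH_{\mathcal{X},\Exp}$ never leaves $\Rep(\pi_1,\bC)$ --- together with the compatible bookkeeping of the descent data so that this representation actually descends along $Z\to X$. This is precisely the point where Faltings' strategy of descending generalized representations is to be replaced by Deninger--Werner's parallel transport.
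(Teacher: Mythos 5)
Part (i) of your proposal is essentially the paper's own argument (smallness of the restricted representation forces the integral model of $\bfH_{\mathcal{Y}}(V|_Y)$ to be trivial modulo a positive power of $p$, hence strongly semi-stable of degree zero on the special fiber; combine with the compatibility of $\bH_{\mathcal{X},\Exp}$ with twisted inverse images), and it is correct; this is proposition \ref{p:small o-rep to HB}(ii) plus corollary \ref{c:C-rep to HBpDW}.

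Part (ii) has a genuine gap, and it is exactly the step you yourself flag as ``the main obstacle'': you assert that $\bfH_{\mathcal{Z}}^{-1}(P,\zeta)$ is a genuine $\bC$-representation ``using Deninger--Werner's parallel transport together with the description of the small correspondence near the trivial object,'' but you give no argument, and the sketch as written would not go through. First, a Deninger--Werner Higgs bundle is not trivialized by a single further cover: the paper's theorem \ref{t:pullback Higgs trivial} produces, for \emph{each} $n$, a separate proper morphism $Z_n\to Y$ of models which is finite \'etale only on generic fibers and which trivializes the associated Higgs \emph{crystal} modulo $p^n$; even this requires first making the Higgs field divisible by a high power of $p$ via Faltings' multiplication-by-$p^n$ trick on the Jacobian (proposition \ref{p:trivialize differentials}). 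The genuine representation is then manufactured, for each $n$, by a parallel-transport functor $\mathbb{L}_n$: take global sections of the trivialized $\bB_n$-module over the simplicial Faltings topos of the hypercovering $Z_{n,\bullet}\to Y$ and glue the resulting constant sheaves by h-descent of \'etale sheaves, then pass to the limit over $n$. Second --- and this is the deepest missing input --- to know that the representation so obtained is carried back to $(M,\theta)$ by $\bH$, one must prove that $\breve{\beta}^*(\bV_{\mathcal{X}}(M,\theta))$ is almost isomorphic to the Faltings-topos module $T(\mathcal{M})$ attached to the Higgs crystal (theorem \ref{t:thm comparison}(iii)); this rests on the cohomological descent in the Faltings topos for locally free $\bB_n$-modules along proper hypercoverings (T.~He's theorem, which in turn uses Bhatt--Scholze's arc-descent for perfectoid algebras). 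Your proposal invokes nothing of this strength, and without it the ``compatible bookkeeping of the descent data'' cannot be completed: one cannot identify the parallel-transport representation with a $\bfH$-preimage of the Higgs bundle, which is precisely what the quasi-inverse property requires.
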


\begin{rem} \label{r:intro1}
	(i) A Higgs bundle $(M,0)$ with zero Higgs field belongs to $\HB^{\pDW}_{\mathcal{X},\Exp}(X_{\bC})$ if and only if $M$ potentially belongs to $\VB^{\DW}(X_{\bC})$ (\ref{d:DW}, theorem \ref{t:DW Higgs}). 
	The functor $\bV_{\mathcal{X},\Exp}$ is compatible with $\bV^{\DW}$. 
	We abusively call that these vector bundles are potentially of Deninger--Werner. 

	(ii) For an object $(E,\theta)$ of $\HB^{\pDW}_{\mathcal{X},\Exp}(X_\bC)$, there exists an $S$-morphism $f:Y\to X$ and a lifting $\mathcal{Y}$ as in \eqref{sss:intro-twisted} such that $f_K$ is \textit{finite \'etale} and that $f_{\mathcal{Y},\mathcal{X},\Exp}^{\circ}(E,\theta)$ belongs to $\HB^{\DW}(Y_\bC)$ (corollary \ref{c:eta-cover-pDW}). 
	\end{rem}


\begin{theorem}[\S~\ref{ss:pDW}, \ref{c:abelian}] \label{t:pDW property}
	\textnormal{(i)~[Faltings]} Every object of $\HB_{\mathcal{X},\Exp}^{\pDW}(X_{\bC})$ is semi-stable of degree zero. 

	\textnormal{(ii)} Every Higgs line bundle of degree zero over $X_{\bC}$ belongs to $\HB_{\mathcal{X},\Exp}^{\pDW}(X_{\bC})$. 

	\textnormal{(iii)} The category $\HB_{\mathcal{X},\Exp}^{\pDW}(X_{\bC})$ is abelian and is closed under extension.
\end{theorem}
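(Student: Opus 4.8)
The plan is to reduce everything to the \emph{ordinary} inverse image. By the description of the $\Exp$-twisted inverse image in the appendix, $f^{\circ}_{\mathcal Y,\mathcal X,\Exp}$ agrees with $f^{*}_{\bC}$ up to tensoring with a Higgs line bundle which is itself of Deninger--Werner, so it preserves rank, degree, (strong) semi-stability and the property of being a Deninger--Werner Higgs bundle; hence $\HB^{\pDW}_{\mathcal X,\Exp}(X_{\bC})$ is also the category of Higgs bundles over $X_{\bC}$ that become of Deninger--Werner after some ordinary inverse image along a proper $S$-curve finite étale over $X_{K}$. I will also use the $[p^{m}]$-device: for a smooth proper curve $Y$ with Jacobian $J_{Y}$ and an Abel--Jacobi map $a_{Y}\colon Y\to J_{Y}$, the fibre product $Y_{m}:=Y\times_{J_{Y},a_{Y},[p^{m}]}J_{Y}$ is a finite étale cover of $Y$ over which the pullback of any global $1$-form on $Y$ becomes divisible by $p^{m}$, since $[p^{m}]^{*}$ acts by $p^{m}$ on invariant differentials and $a_{Y}^{*}\colon H^{0}(J_{Y},\Omega^{1})\xrightarrow{\sim}H^{0}(Y,\Omega^{1})$.

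For (i) I argue by descent. If $(M,\theta)\in\HB^{\pDW}_{\mathcal X,\Exp}(X_{\bC})$ and $f\colon Y\to X$ is such that $f^{*}_{\bC}(M,\theta)\in\HB^{\DW}(Y_{\bC})$, then its underlying bundle is of Deninger--Werner, hence semi-stable of degree zero as a vector bundle (strongly semi-stable reduction of degree zero on the components forces the generic fibre to be semi-stable of degree zero); a fortiori $f^{*}_{\bC}(M,\theta)$ is semi-stable of degree zero as a Higgs bundle, Higgs subsheaves being in particular subsheaves. Since $f_{\bC}\colon Y_{\bC}\to X_{\bC}$ is finite étale, pulling back a would-be destabilising Higgs subsheaf of $(M,\theta)$ and using $\deg f_{\bC}^{*}=(\deg f_{\bC})\cdot\deg$ shows that $(M,\theta)$ is itself semi-stable of degree zero.

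For (ii), given a Higgs line bundle $(L,\theta)$ of degree zero I first invoke Deninger--Werner's treatment of line bundles \cite{DW05} to find (after enlarging $K$) a finite étale cover $f_{1}\colon Y_{1}\to X$ with $f_{1}^{*}L$ of Deninger--Werner over $Y_{1,\oo}$; then I pull back along $(Y_{1})_{m}\to Y_{1}$ for $m$ large, which makes the $1$-form $f_{1}^{*}\theta$ divisible by $p^{m}$, hence small, while $f_{1}^{*}L$ stays of Deninger--Werner. For (iv) I reduce to the underlying bundle the same way: if $M$ becomes of Deninger--Werner over a cover $Z\to X$, then after a further cover trivialising $M$ modulo $p^{m}$ and then the $[p^{m}]$-pullback, the Higgs field becomes congruent to $0$ modulo $p^{m}$, hence small; so it suffices to show that a semi-stable vector bundle of degree zero on $X_{\bC}$ is potentially of Deninger--Werner in genus $0$ and $1$. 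In genus $0$ it is trivial, hence already of Deninger--Werner; in genus $1$, $X_{\bC}$ is an elliptic curve, and by the classification of semi-stable bundles on elliptic curves $M$ is a direct sum of bundles $L\otimes F_{r}$ with $L\in\Pic^{0}$ and $F_{r}$ an Atiyah bundle --- an iterated self-extension of $\mathscr O$, hence of Deninger--Werner --- so (ii) gives the claim over a common cover.

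Finally, for (iii), the equivalence of Theorem~\ref{t:equi intro} identifies $\HB^{\pDW}_{\mathcal X,\Exp}(X_{\bC})$ with $\Rep(\pi_{1}(X_{\overline K},\overline x),\bC)$ via $\bH_{\mathcal X,\Exp}$, which is fully faithful and exact (being glued from the exact $p$-adic Simpson equivalences along the exact twisted inverse image --- exact since $f_{\bC}$ is finite étale and the twist is tensoring with a line bundle --- and étale descent); since $\Rep(\pi_{1}(X_{\overline K},\overline x),\bC)$ is abelian, this makes $\HB^{\pDW}_{\mathcal X,\Exp}(X_{\bC})$ abelian, with kernels and cokernels of its morphisms computed in $\HB(X_{\bC})$. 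For stability under extensions, given $0\to A\to E\to B\to 0$ in $\HB(X_{\bC})$ with $A,B$ potentially of Deninger--Werner, I pass to a common cover making $f^{*}_{\bC}A$ and $f^{*}_{\bC}B$ of Deninger--Werner and trivialising them modulo $p^{m}$, so that $f^{*}_{\bC}E$ is an extension of trivial-modulo-$p^{m}$ bundles --- hence strongly semi-stable of degree zero on the reduction --- and one more $[p^{m}]$-pullback makes the connecting homomorphism, and therefore the whole Higgs field, congruent to $0$ modulo $p^{m}$, i.e. small; thus $E$ becomes of Deninger--Werner over a cover, so $E\in\HB^{\pDW}_{\mathcal X,\Exp}(X_{\bC})$. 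The main obstacle I anticipate is exactly this last point: an extension of small Higgs bundles is not small for the naive integral model, and making the connecting term small requires combining the $p^{m}$-divisibility trick with the (available) fact that Deninger--Werner bundles trivialise modulo $p^{m}$ over a finite étale cover; the other recurring technical point is controlling the $\Exp$-twist so that it preserves smallness and Deninger--Wernerness throughout.
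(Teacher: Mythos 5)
Your overall strategy (reduce to Deninger--Werner results for vector bundles, use the Jacobian $[p^m]$-cover to make Higgs fields small, get abelian-ness from the equivalence with $\Rep(\pi_1(X_{\overline{K}},\overline{x}),\bC)$, and use the elliptic-curve classification in genus one) is the same as the paper's, but the foundation of your reduction contains a genuine error. You assert that $f^{\circ}_{\mathcal{Y},\mathcal{X},\Exp}$ agrees with $f^{*}_{\bC}$ ``up to tensoring with a Higgs line bundle which is itself of Deninger--Werner.'' This is only true in rank one. For a Higgs bundle $(M,\theta)$ of rank $r>1$, the twist $\mathcal{L}^{\Exp}_{f,\theta}$ is an invertible module over the spectral cover algebra $\mathscr{A}_{f^{*}(\theta)}$, and the tensor product $f^{*}(M,\theta)\otimes_{\mathscr{A}_{f^{*}(\theta)}}\mathcal{L}^{\Exp}_{f,\theta}$ is taken over $\mathscr{A}_{f^{*}(\theta)}$, not over $\mathscr{O}_{Y_{\bC}}$; as an $\mathscr{O}_{Y_{\bC}}$-module the twist has rank $r$ and the underlying bundle of $f^{\circ}(M,\theta)$ is genuinely different from $f^{*}(M)$ twisted by a line bundle. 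Consequently your blanket claims that the twist preserves (strong) semi-stability of the reduction and the Deninger--Werner property, and your identification of $\HB^{\pDW}_{\mathcal{X},\Exp}(X_{\bC})$ with ``Higgs bundles that become Deninger--Werner after an \emph{ordinary} inverse image,'' are unjustified. This also breaks your proof of (i): what is known is that $f^{\circ}(M,\theta)$ has Deninger--Werner underlying bundle, not $f^{*}_{\bC}(M,\theta)$, so you cannot simply descend semi-stability along the finite \'etale map $f_{\bC}$. The paper bridges this gap by proving that $f^{\circ}$ is exact and preserves degrees (proposition \ref{p:twisted pullback}(ii),(iii), the latter via the connectedness argument on the Picard stack of the spectral cover), so that a destabilizing Higgs subsheaf of $(M,\theta)$ produces one of $f^{\circ}(M,\theta)$; some such input about $f^{\circ}$ is indispensable and is missing from your argument.

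Two secondary points. For closure under extensions in (iii), your worry about extensions of small Higgs fields is resolved in the paper not by another $[p^m]$-pullback but by the Hitchin-base criterion for smallness (proposition \ref{p:small spectral cover}): the characteristic polynomial of an extension is the product of those of the sub and quotient, so the coefficients $c_i$ inherit the required $p$-divisibility; combined with closure of Deninger--Werner bundles under extension (\cite{DW05}) and exactness of $f^{\circ}$ this gives the result without revisiting integral models. For (iv), your reduction to the underlying vector bundle presupposes that a semi-stable Higgs bundle of degree zero on an elliptic curve has semi-stable underlying bundle, and it separates the cover trivializing the bundle from the one shrinking the Higgs field, whereas the definition of $\HB^{\pDW}_{\mathcal{X},\Exp}$ requires a single twisted pullback landing in $\HB^{\DW}$; the paper instead writes such a Higgs bundle as a successive extension of degree-zero Higgs line bundles (using that stable degree-zero Higgs bundles on elliptic curves have rank one) and concludes from (ii) and closure under extension, which avoids both issues.
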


In (\cite{Fal05} \S~5), Faltings claimed that Higgs bundles over $X_{\bC}$, associated to $\Rep(\pi_1(X_{\overline{K}},\overline{x}),\bC)$, are semi-stable of degree zero and gave a brief sketch of the proof. 
	Our proofs of theorems \ref{t:equi intro}(i), \ref{t:pDW property}(i) are inspired by Faltings' sketched argument.  
	Moreover, he expressed the hope that all semi-stable Higgs bundles of degree zero can be obtained in this way: 

\begin{question}[Faltings] \label{c:semistability}
	Does every semi-stable Higgs bundle of degree zero over $X_{\bC}$ belong to $\HB_{\mathcal{X},\Exp}^{\pDW}(X_{\bC})$? 
\end{question}

\begin{rem}
	(i)
	For vector bundles (as Higgs bundles with zero Higgs field), the above question askes whether every semi-stable vector bundle of degree zero is potentially of Deninger--Werner (remark \ref{r:intro1}(i)). 
	Andreatta \cite{And24} recently gave a negative answer to the above question for vector bundles over a smooth projective curve with good reduction. 
	The above question is still widely open for Higgs bundles with non-zero Higgs fields. 

	(ii) Our article restricts to curves with genus $\ge 2$. 
	Heuer--Mann--Werner \cite{HMW21} studied the $p$-adic Simpson correspondence and describled the essential image of $\bC$-representations of the fundamental group over elliptic curves (and more generally over abeloids). 

	(iii) Heuer \cite{Heu20, Heu22b} studied the $p$-adic Simpson correspondence for line bundles over smooth proper varieties and a geometrisation of this correspondence. 
	A similar statement of theorem \ref{t:pDW property}(ii) was proved in \cite{Heu22b}. 	

\end{rem}

Moreover, we can compare the cohomologies of two sides in this correspondence by considering extensions: 

\begin{prop} \label{p:comp coh}
	Let $(M,\theta)$ be a Higgs bundle of $\HB_{\mathcal{X},\Exp}^{\pDW}(X_{\bC})$ and $V=\bV_{\mathcal{X},\Exp}(M,\theta)$ the associated $\bC$-representation. 
	There exists a canonical $\bC$-linear isomorphism: 
	\begin{equation}
		\rH^1(X_{\overline{K},\et},V)\simeq \mathbb{H}^1(X_{\bC},M\xrightarrow{\theta} M\otimes_{\mathscr{O}_{X_{\bC}}}\Omega_{X_{\bC}}^1(-1)). 
		\label{eq:compare coh}
	\end{equation}
\end{prop}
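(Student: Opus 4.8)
The plan is to realise both sides of \eqref{eq:compare coh} as groups of Yoneda $\Ext^1$ and to transport one onto the other through the equivalence of Theorem~\ref{t:equi intro}(ii). On the left-hand side, since $X_{\overline{K}}$ is a curve it is a $K(\pi,1)$, so the comparison between étale cohomology of the $\bC$-local system $V$ and continuous $\pi_1(X_{\overline{K}},\overline{x})$-cohomology, together with the usual interpretation of $\rH^1$ as classifying extensions of the trivial representation, gives a canonical $\bC$-linear identification
\[
\rH^1(X_{\overline{K},\et},V)\;\simeq\;\Ext^1_{\Rep(\pi_1(X_{\overline{K}},\overline{x}),\bC)}(\bC,V).
\]
On the right-hand side, the hypercohomology of the two-term Dolbeault complex computes $\Ext$ in the category of (Tate-twisted) Higgs bundles, so — using that $X_{\bC}$ is a curve, whence this complex has amplitude $[0,1]$ and $\mathbb{H}^1$ sits in the ``middle'' degree where the extension interpretation is clean — one has a canonical $\bC$-linear identification
\[
\mathbb{H}^1\!\big(X_{\bC},\,M\xrightarrow{\theta}M\otimes_{\mathscr{O}_{X_{\bC}}}\Omega^1_{X_{\bC}}(-1)\big)\;\simeq\;\Ext^1_{\HB(X_{\bC})}\big((\mathscr{O}_{X_{\bC}},0),(M,\theta)\big),
\]
the class of an extension $0\to(M,\theta)\to(N,\eta)\to(\mathscr{O}_{X_{\bC}},0)\to 0$ being the image of $1$ under the boundary map $\mathbb{H}^0(\mathscr{O}_{X_{\bC}},0)\to\mathbb{H}^1(M,\theta)$.

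Next I would pass from $\HB(X_{\bC})$ to the subcategory $\HB_{\mathcal{X},\Exp}^{\pDW}(X_{\bC})$. By Theorem~\ref{t:pDW property}(ii) the trivial Higgs bundle $(\mathscr{O}_{X_{\bC}},0)$ lies in $\HB_{\mathcal{X},\Exp}^{\pDW}(X_{\bC})$, and by Theorem~\ref{t:pDW property}(iii) this subcategory is abelian and closed under extension inside $\HB(X_{\bC})$; hence every extension of $(\mathscr{O}_{X_{\bC}},0)$ by $(M,\theta)$ in $\HB(X_{\bC})$ already lies in $\HB_{\mathcal{X},\Exp}^{\pDW}(X_{\bC})$, and the inclusion induces an isomorphism $\Ext^1_{\HB_{\mathcal{X},\Exp}^{\pDW}(X_{\bC})}((\mathscr{O}_{X_{\bC}},0),(M,\theta))\xrightarrow{\sim}\Ext^1_{\HB(X_{\bC})}((\mathscr{O}_{X_{\bC}},0),(M,\theta))$. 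Now invoke the equivalence $\bV_{\mathcal{X},\Exp}$ of Theorem~\ref{t:equi intro}(ii) (equivalently, its quasi-inverse $\bH_{\mathcal{X},\Exp}$): it is a $\bC$-linear equivalence between two abelian categories, hence exact, and it sends $(\mathscr{O}_{X_{\bC}},0)$ to the trivial representation $\bC$ — this last point is built into the construction of $\bH_{\mathcal{X},\Exp}$, which on small objects restricts to the $p$-adic Simpson correspondence $\bfH_{\mathcal{X}}$, carrying the trivial (small) Higgs bundle to the trivial generalized representation. An exact $\bC$-linear equivalence induces $\bC$-linear isomorphisms on Yoneda $\Ext^1$, so
\[
\Ext^1_{\HB_{\mathcal{X},\Exp}^{\pDW}(X_{\bC})}\big((\mathscr{O}_{X_{\bC}},0),(M,\theta)\big)\;\xrightarrow{\sim}\;\Ext^1_{\Rep(\pi_1(X_{\overline{K}},\overline{x}),\bC)}(\bC,V),
\]
using $\bV_{\mathcal{X},\Exp}(M,\theta)=V$ and $\bV_{\mathcal{X},\Exp}(\mathscr{O}_{X_{\bC}},0)=\bC$. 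Composing the displayed isomorphisms yields the canonical $\bC$-linear isomorphism \eqref{eq:compare coh}.

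I expect the main difficulty to be \emph{canonicity and compatibility} of the two ``dictionary'' isomorphisms, rather than their mere existence: one must check that the identification of $\mathbb{H}^1$ of the Dolbeault complex with $\Ext^1$ in $\HB(X_{\bC})$, and of $\rH^1(X_{\overline{K},\et},V)$ with $\Ext^1$ in $\Rep(\pi_1(X_{\overline{K}},\overline{x}),\bC)$, are natural in the relevant variable and are genuinely intertwined by $\bV_{\mathcal{X},\Exp}$, so that \eqref{eq:compare coh} is independent of auxiliary choices. In particular one should verify that $\bH_{\mathcal{X},\Exp}$ fixes the trivial object up to a \emph{canonical} isomorphism, that the Tate twist in $\Omega^1_{X_{\bC}}(-1)$ is precisely the one making the two sides match (it is this twist that produces, on trivial objects, the Hodge--Tate-type splitting of $\rH^1$), and — a more routine point — that $\rH^1(X_{\overline{K},\et},V)$ is computed by continuous $\pi_1(X_{\overline{K}},\overline{x})$-cohomology for a $\bC$-local system $V$ on a curve. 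A possible alternative for small objects is to quote the cohomological compatibility of the $p$-adic Simpson correspondence established by Faltings, Abbes--Gros and Tsuji and then propagate it to all objects by the same extension argument together with étale descent; but the $\Ext^1$-formulation above treats all objects at once and seems the most economical route.
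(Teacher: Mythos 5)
Your proposal is correct and follows essentially the same route as the paper's proof: both sides are identified with Yoneda $\Ext^1$ groups (against the trivial object) and matched through the exact equivalence $\bH_{\mathcal{X},\Exp}$/$\bV_{\mathcal{X},\Exp}$. Your write-up is in fact slightly more careful than the paper's — it gets the variance of the $\Ext^1$ right (the paper writes $\Ext^1(V,\bC)$ where $\Ext^1(\bC,V)$ is what computes $\rH^1(X_{\overline{K},\et},V)$) and makes explicit the closure-under-extension step needed to pass between $\HB^{\pDW}_{\mathcal{X},\Exp}(X_{\bC})$ and $\HB(X_{\bC})$.
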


\begin{secnumber} \label{sss:construction V}
	We outline the proof of above theorems. 

	Assertion (i) of theorem \ref{t:equi intro} follows from the explicit construction of $\mathbb{H}_{\mathcal{X},\Exp}$ (proposition \ref{sss:small o-rep to HB}). 

The construction of the functor $\bV_{\mathcal{X},\Exp}$ is inspired by $\bV^{\DW}$. 
	Recall \eqref{sss:DW intro} that, for every $n\ge 1$, the reduction modulo $p^n$ of a Deninger--Werner vector bundle over $X_{\oo}$ can be trivialized by a proper morphism of $S$-curves $f:Y\to X$ such that $f_K$ is finite \'etale. 
	We extend this trivializing property to $\HB^{\DW}(X_{\oo})$ (see theorem \ref{t:pullback Higgs trivial} for a precise statement in terms of Higgs crystals). 
	Via Tsuji's approach to the $p$-adic Simpson correspondence (\cite{AGT} Chapter IV), we obtain a similar trivializing property for generalized representations over $X$ associated to $\HB^{\DW}(X_{\bC})$. 
	This allows us to apply a parallel transport functor (\cite{Xu17} \S~8) from these generalized representations over $X$ to $\Rep(\pi_1(X_{\overline{K}},\overline{x}),\bC)$ (theorem \ref{t:thm comparison}(i)). 
	Combined with these constructions, we obtain a functor 
	\[\bV_{\mathcal{X}}:\HB^{\DW}(X_{\bC}) \to \Rep(\pi_1(X_{\overline{K}},\overline{x}),\bC). \]
	We extend $\bV_{\mathcal{X}}$ to the functor $\bV_{\mathcal{X},\Exp}$ by descent. 

	When $\bV_{\mathcal{X},\Exp}$ (resp. $\bH_{\mathcal{X},\Exp}$) restricts to small Higgs bundles (resp. small $\bC$-representations), we show that it is compatible with the $p$-adic Simpson correspondence $\bfH_{\mathcal{X}}$ (and is independent of the choice of $\Exp$). 
	This assertion for $\bV_{\mathcal{X},\Exp}$ (theorem \ref{t:thm comparison}(iii)) relies on a recent work of T. He on the cohomological descent in Faltings topos \cite{He21} (another approach to this result is also obtained by Abbes--Gros \cite{AG20} proposition 4.6.30). 
	Then, we deduce that $\bV_{\mathcal{X},\Exp}$ is a quasi-inverse of $\bH_{\mathcal{X},\Exp}$ via the $p$-adic Simpson correspondence. 
	Finally, we prove theorem \ref{t:pDW property} using similar properties for Deninger--Werner vector bundles \cite{DW05}. 
\end{secnumber}

\begin{secnumber}
	We briefly go over the structure of this article. 
	Section \ref{s:Higgs cry} contains a brief review of Tsuji's interpretation of small Higgs bundles as Higgs crystals over a site and some complements. 
	Section \ref{s:trvializing covers} is devoted to the trivializing property of Deninger--Werner Higgs bundles \eqref{sss:construction V}. 
	In section \ref{s:p-adic Simpson}, we briefly reivew Tsuji's approach to $p$-adic Simpson correspondence and a local description as well. 
	In section \ref{s:C-rep to HB}, we present the construction of $\bH_{\mathcal{X},\Exp}$ and prove theorem \ref{t:equi intro}(i) and theorem \ref{t:pDW property}. 
	Section \ref{s:parallel transport} is devoted to the construction of $\bV_{\mathcal{X},\Exp}$ and the proof of theorem \ref{t:equi intro}(ii). 
	In the appendix, jointly with T. He, we define the twisted inverse image functor \eqref{eq:twisted intro} in a more general setting. 
\end{secnumber}

\subsection{Notations and conventions}
In this article, $\mathbb{N}$ denotes the set of positive integers. 

\begin{secnumber} \label{sss:notations}
	Let $K$ be a complete discrete valuation field of characteristic zero, with an algebraically closed residue field $k$ of characteristic $p>0$, $\mathscr{O}_K$ the valuation ring of $K$, $\overline{K}$ an algebraic closure of $K$, $\mathscr{O}_{\overline{K}}$ the integral closure of $\mathscr{O}_K$ in $\overline{K}$, $\oo$ the $p$-adic completion of $\mathscr{O}_{\overline{K}}$ and $\bC$ the fraction field of $\oo$. 
	In sections \ref{s:trvializing covers}, \ref{s:C-rep to HB} and \ref{s:parallel transport}, we assume moreover that $k$ is an \textit{algebraic closure of $\mathbb{F}_p$}. 
	
	We choose  a compatible system $(p^{\frac{1}{n}})_{n\ge 1}$ of $n$-th roots of $p$ in $\mathscr{O}_{\overline{K}}$. For any rational number $\varepsilon>0$, we set $p^{\varepsilon}=(p^{\frac{1}{n}})^{n\varepsilon}$, where $n$ is a positive integer such that $n \varepsilon$ is an integer, and $\oo_{\varepsilon}=\oo/p^{\varepsilon}\oo$.  

	We set $S=\Spec(\mathscr{O}_K)$, $\overline{S}=\Spec(\mathscr{O}_{\overline{K}})$.  
	We denote by $s$, $\eta$ and $\overline{\eta}$ the special point, generic point and geoemetrically generic point of $S$ respectively. 
	We equipe $S$ with the log structure $\mathscr{M}_S$ defined by the closed point (i.e. $\mathscr{M}_S=j_*(\mathscr{O}_{\eta}^{\times})\cap \mathscr{O}_S$). 
	Let $\Ainf$ be $W(\mathscr{O}_{\overline{K}}^{\flat})$, $\underline{p}=(p,p^{\frac{1}{p}},\cdots) \in (\mathscr{O}_{\overline{K}})^{\flat}$, $\xi=p-[\underline{p}]$ a generator of $\Ker(\theta:\Ainf\to \oo)$ and $\rB_{\dR}^+$ the completion of $\Ainf[\frac{1}{p}]$ with respect to $\Ker(\theta[\frac{1}{p}])$. 
	We set $\rB_{\dR,2}^+=\rB_{\dR}^+/\xi^2$. 

	Let $X$ be a scheme (resp. formal scheme, resp. a rigid analytic space over $\bC$) such that $\mathscr{O}_X$ is coherent. 
	Then, we say a coherent $\mathscr{O}_X$-module is a \textit{vector bundle} if it is locally free of finite type. 
	We denote by $\Coh(X)$ (resp. $\VB(X)$) the category of coherent modules (resp. vector bundles) over $X$. 
\end{secnumber}

\begin{secnumber} \label{sss:semi-stable}
	Given an $S$-scheme $X$, we set $\underline{X}=X\times_S \overline{S}$. 
	We say an $S$-scheme of finite type $X$ is \textit{semi-stable} (or $X$ is a semi-stable $S$-scheme) if, \'etale locally on $X$, $X$ is isomorphic to 
	\begin{equation} \label{eq:ss reduction}
		\Spec(\mathscr{O}_K[t_0,\cdots,t_b,t_{b+1}^{\pm 1},\cdots,t_c^{\pm 1}]/(t_1\cdots t_b-\pi^e)),
\end{equation}
for some integers $0\le b\le c$, $e\ge 1$ and an uniformizer $\pi$ of $\mathscr{O}_K$. 

	We denote by $\mathscr{M}_X$ the log structure on $X$ associated with the open immersion $\eta\to S$ and $X_{\eta}\to X$ (\cite{Ogus}  III.1.6.1). 
	The structure morphism $f:(X,\mathscr{M}_X)\to (S,\mathscr{M}_S)$ admits a semi-stable chart as follow (\cite{He24} example 5.6), is a smooth, saturated morphism between fine, saturated log schemes and moreover adequate in the sense of (\cite{AGT} III.4.7) (c.f. \cite{He24} lemma 12.3). 

	Let $P$ be the submonoid of $\mathbb{Z}_{\ge 0}^{1+b}\oplus \mathbb{Z}^{c-b}$ generated by $(e\mathbb{Z}_{\ge 0})^{1+b}\oplus \mathbb{Z}^{c-b}$ and $(1,1,\cdots,1,0,\cdots,0)\in \{1\}^{1+b}\times \{0\}^{c-b}$. 
	In other words,
	\[
	P= \{ (a_0,\cdots,a_c)\in \mathbb{Z}_{\ge 0}^{1+b}\oplus \mathbb{Z}^{c-b}| a_0\equiv a_1\equiv \cdots \equiv a_b \mod e\}.
	\]
	Then, $P$ is a fine, saturated monoid. 
	Let $\alpha:\mathbb{Z}_{\ge 0}\to \mathscr{O}_K$ be the homomorphism of monoids sending $1$ to $\pi$, and $\gamma:\mathbb{Z}_{\ge 0}\to P$ the homomorphism of monoids sending $1$ to $(1,1,\cdots,1,0,\cdots,0)\in \{1\}^{1+b}\times \{0\}^{c-b}$. 
	Then, there exists an isomorphism of $\mathscr{O}_K$-algebra:
	\[
	\mathscr{O}_K[t_0,\cdots,t_b,t_{b+1}^{\pm 1},\cdots,t_c^{\pm 1}]/(t_1\cdots t_b-\pi^e)\xrightarrow{\sim} \mathscr{O}_K\otimes_{\mathbb{Z}[\mathbb{Z}_{\ge 0}]}\mathbb{Z}[P],
	\]
	sending $t_i$ to $1\otimes (0,\cdots,e,\cdots,0)$ where $e$ appears on the $i$-th position for $0\le i\le c$. 
\end{secnumber}

\begin{secnumber}\label{sss:models}
	\textit{A variety over a field $F$} means a geometrically connected, separated scheme of finite type over $F$. 
	\textit{A curve over $F$} is a variety over $F$ of dimension one. 
	
	Let $T=\Spec(R)$ be the spectrum of a valuation ring $R$ of height one with fraction field $F$, and $\overline{F}$ an algebraic closure of $F$. 
	We denote by $\tau$ (resp. $t$) the generic point (resp. special) point of $T$. 
	Given a proper $F$-scheme $X$, an \textit{$T$-model of $X$} is a flat, proper $T$-scheme of finite presentation $Y$ with generic fiber $Y_{\tau}\simeq X$.  

	\textit{A $T$-curve} is a flat, proper $T$-scheme of finite presentation $X$ of relative dimension one whose generic fiber is a smooth curve over $F$. 

	We say an $S$-curve $X$ is \textit{semi-stable} (or $X$ is a semi-stable $S$-curve) if it is semi-stable in the sense of \ref{sss:semi-stable}. 
	In particular, its special fiber $X_s$ is reduced, and its singular points are ordinary double points. 
	
	We say a semi-stable $S$-curve $X$ is \textit{stable} if moreover, the following conditions are verified: (i) $X_{\eta}$ has genus $g\ge 2$;
	(ii) Any irreducible component of $X_s$, which is isomorphic to $\mathbb{P}^1$, intersects the other components at at least three points.

	By an abuse of language, we say an $\overline{S}$-curve $X$ is semi-stable (resp. stable) if there exists a finite extension $K'$ of $K$, a semi-stable (resp. stable) $S'=\Spec(\mathscr{O}_{K'})$-curve $Y$ such that $X\simeq Y\times_{S'}\overline{S}$. 
	Given an smooth proper $\overline{\eta}$-curve $C$ of genus $\ge 2$, the stable $\overline{S}$-model of $C$ exists and is unique (up to isomorphisms). 
	Given a finite morphism $f:C'\to C$ of smooth proper $\overline{\eta}$-curves of genus $\ge 2$ and $Y,X$ the stable $\overline{S}$-models of $C',C$ respectively, 
	the morphism $f$ extends uniquely to an $\overline{S}$-morphism $g:Y\to X$ (\cite{LL99} proposition 4.4). 

	A morphism $\varphi:Y\to X$ of $T$-curves is called \textit{$\tau$-cover} (resp. \textit{generic $\tau$-cover}), if $\varphi$ is proper of finite presentation and $\varphi_{\tau}: Y_{\tau}\to X_{\tau}$ is finite \'etale (resp. finite). 

	Let $\varphi:Y\to X$ be a generic $\tau$-cover. We say $\varphi$ is a \textit{Galois generic $\tau$-cover}, if there exists a finite group $G$ and an action $\mu$ of $G$ on $Y$ over $X$ such that over the unramified locus of $\varphi_{\tau}$, $\varphi_{\tau}^{-1}(U)\to U$ is a $G$-torsor with action $\mu$. When $\varphi$ is a $\tau$-cover (i.e. $U=X_{\tau}$), we simply say $\varphi$ is a \textit{Galois $\tau$-cover}.

\end{secnumber}

\begin{secnumber} \label{sss:almost}
	\textbf{Almost mathematics}. In this article, we consider the category of almost $\oo$-modules with respect to the maximal ideal $\mm$ of $\oo$. 
	We denote by $(-)_{\sharp}$ the functor 
	\begin{equation} \label{eq:sharp}
	\Mod(\oo) \to \Mod(\oo),\quad M \mapsto M_{\sharp}=\Hom_{\oo}(\mm,M).
\end{equation}
	Recall that it sends almost isomorphisms to isomorphisms. 

	Let $\mathscr{A}$ be an abelian category, $\End(\id_{\mathscr{A}})$ the ring of endomorphisms of the identity functor and $\varphi:\oo\to \End(\id_{\varphi})$ a homomorphism. 
We say an object $M$ of $\mathscr{A}$ is almost-zero if it is annihilated by every element of $\mm$. 
	We denote by $\alm\mathscr{A}$ the quotient of $\mathscr{A}$ by the thick full subcategory of almost zero objects. 
	We denote by $\alpha$ the canonical functor
	\[
	\alpha:\mathscr{A}\to \alm\mathscr{A},\quad M\mapsto M^{\alpha}.
	\]
	We say a morphism $f$ of $\mathscr{A}$ is an almost isomorphism if $\alpha(f)$ is an isomorphism and we use $\xrightarrow{\approx}$ to denote an almost isomorphism. 

	Given an additive category $\mathscr{C}$, we denote by $\mathscr{C}_{\mathbb{Q}}$ the category of objects of $\mathscr{C}$ up to isogeny, i.e. the localized category of $\mathscr{C}$ with respect to isogenies.
\end{secnumber}

\begin{secnumber} \label{sss:toposN}
	Let $T$ be a topos. The projective systems of objects of $T$ indexed by the ordered set of postive integers $\mathbb{N}$ forms a topos, that we denote by $T^{\Nc}$. 

	Given a ring $\breve{A}=(A_n)_{n\ge 1}$ of $T^{\Nc}$, we say an $\breve{A}$-module $M=(M_n)_{n\ge 1}$ is \textit{adic} if for every integers $1\le i< j$, the morphism $M_j\otimes_{A_j}A_i\to M_i$, induced by $M_j\to M_i$, is an isomorphism. 

	For any scheme $X$, we denote by $\Et_X$ (resp. $X_{\et}$) the \'etale site (resp. topos) of $X$, and by $\Fet_X$ (resp. $X_{\fet}$) the finite \'etale site (resp. topos) of $X$. 
\end{secnumber}

\textbf{Acknowledgement.} 
The author is grateful to Ahmed Abbes for valuable discussions and his comments on this paper. 
The author is grateful to an anonymous referee for his/her careful reading and helpful comments.
He would like to thank Takeshi Tsuji for sharing his note on the $p$-adic Simpson correspondence. 
He would like to thank 
Fabrizio Andreatta, Tongmu He, Ben Heuer, Annette Werner and Xinwen Zhu for useful discussions. 
The author is supported by  National Natural Science Foundation of China Grant (nos. 12222118, 12288201) and CAS Project for Young Scientists in Basic Research, Grant No. YSBR-033.

\section{Higgs crystals and Higgs bundles} \label{s:Higgs cry}
	In this section, we briefly review the notion of \textit{Higgs crystal} introduced by Tsuji (\cite{AGT} IV.2), which provides a site-theoretic description of small Higgs bundles. 
	We work with the base $(\Ainf,(\xi))$ (c.f. \cite{AGT} IV.1 Notation).

\subsection{Higgs envelope}
We first review the notion of Higgs envelope of algebras. 

\begin{definition}[\cite{AGT} IV.2.1.1] \label{sss:Hig envelope} 
(1)	We denote by $\mathscr{A}$ the category of $\Ainf$-algebras $A$ with a decreasing filtration $\{F^nA\}_{n\ge 0}$ of ideals such that $F^0A=A$ and $\xi^n A\subset F^n A$ for $n\ge 0$. 
A morphism in $\mathscr{A}$ is a homomorphism of $\Ainf$-algebras compatible with the filtrations.

(2)	For $r\in \mathbb{N}$ (resp. $r=\infty$), we denote by $\mathscr{A}^r_{\alg}$ the full subcategory of $\mathscr{A}$ consisting of $(A,F^nA)$ satisfying the following conditions: 

(i) The rings $A$ and $A / F^{n} A\left(n \in \mathbb{N}\right)$ are $p$-torsion free.

(ii) The ring $A$ is $\xi$-torsion free.

(iii) For every $s \in \mathbb{Z} \cap[0, r]$ (resp. $s \in \mathbb{Z}_{\ge 0}$ ), we have $p F^{s} A \subset \xi^{s} A$ (resp. $\left.F^{s} A=\xi^{s} A\right)$.

(iv) The inverse image of $F^{n} A$ under $\xi: A \rightarrow A$ is $F^{n-1} A$ for every $n \in \mathbb{N} .$

(3) We denote by $\mathscr{A}_{p}^{r}$ the full subcategory of $\mathscr{A}_{\text {alg }}^{r}$ consisting of $\left(A, F^{n} A\right) \in \mathrm{Ob} \mathscr{A}_{\text {alg }}^{r}$ satisfying the following condition:
The rings $A$ and $A / F^{n} A\left(n \in \mathbb{N}\right)$ are $p$-adically complete and separated.

(4) For $r \in \mathbb{N} \cup\{\infty\}$, we define $\mathscr{A}^{r}$ to be the full subcategory of $\mathscr{A}_{p}^{r}$ consisting of
$\left(A, F^{n} A\right) \in \mathrm{Ob} \mathscr{A}_{p}^{r}$ satisfying the following condition:
The natural homomorphism $A \rightarrow \varprojlim_{n} A / F^{n} A$ is an isomorphism.
\end{definition}

\begin{definition}[\cite{AGT} IV.2.1.5]
	(1) We denote by $\mathscr{A}_{p,\bullet}$ the category of inverse systemes of $\Ainf$-algebras $(A_N)_{N\ge 1}$ such that $A_N$ are $p$-adically complete and separated, $\xi^N A_N=0$ and the transition maps $\pi: A_{N+1}\to A_N$ are surjective. 
	
	(2) For $r\in \mathbb{N}$ (resp. $r=\infty$), we denote by $\mathscr{A}_{\bullet}^r$ the full subcategory of $\mathscr{A}_{p,\bullet}$ consisting of $(A_N)$ satisfying the following conditions (\cite{AGT} IV.2.1.5): 
	\begin{itemize}
		\item[(i)] The rings $A_{N}\left(N \in \mathbb{N}\right)$ are $p$-torsion free. 
			
	\item[(ii)] For every $s \in \mathbb{Z} \cap[0, r]$ (resp. $s \in \mathbb{N}$ ) and $N \in \mathbb{N}$, we have $p F^{s} A_{N} \subset \xi^{s} A_{N}$ (resp. $\left.F^{s} A_{N}=\xi^{s} A_{N}\right)$, where $F^{n} A_{N}(n \in \mathbb{N})$ is the decreasing filtration of $A_{N}$ by ideals defined by $F^{0} A_{N}=A_{N}, F^{n} A_{N}=\operatorname{Ker}\left(\pi: A_{N} \rightarrow A_{n}\right)(1 \leq n \leq N)$ and $F^{n} A_{N}=0$ $(n>N)$.

	\item[(iii)] The kernel of the homomorphism $A_{N} \rightarrow A_{N} ; x \mapsto \xi x$ is $F^{N-1} A_{N}$ for every $N \in \mathbb{N} .$
	\end{itemize}
\end{definition}

	For $N\ge n$, we set $F^n A_N=\Ker (A_N\to A_n)$. 
	Then, the functor $\mathscr{A}_{p,\bullet} \to \mathscr{A}$, defined by $(A_N)\mapsto (\varprojlim_{N} A_N, \varprojlim_{N} F^n A_N)$ is fully faithful (\cite{AGT} IV.2.1.7). 	
	The projective limit functor induces an equivalence of categories $\mathscr{A}_{\bullet}^r\xrightarrow{\sim} \mathscr{A}^r$ (\cite{AGT} IV.2.1.7). 	
	In summary, we have the following diagram of fully faithful functors:
	\[
	\xymatrix{
	\mathscr{A}_{\bullet}^r \ar[rrr] \ar[d]_{\sim} &&& \mathscr{A}_{p,\bullet} \ar[d] \\
	\mathscr{A}^r \ar[r] & \mathscr{A}^r_{p} \ar[r] & \mathscr{A}^r_{\alg} \ar[r] & \mathscr{A}
	}
	\]
	The functors in the second row have left adjoint functors (\cite{AGT} IV.2.1.2, IV.2.1.8). 
	Then, so is the inclusion functor in the first row. 
	We denote the left adjoint functor by $D_{\Hig}^r:\mathscr{A}\to \mathscr{A}^r$ (resp. $D_{\Hig}^r:\mathscr{A}_{p,\bullet}\to \mathscr{A}_{\bullet}^r$) and call it \textit{Higgs envelope (of level $r$)}.

\begin{secnumber}
	We present an example of Higgs envelope following (\cite{AGT} IV.2.3). 
	Let $A=(A_N)$ be an object of $\mathscr{A}^r_{\bullet}$. If $r\in \mathbb{N}$, for integers $N\ge 1,m,n\ge 0$, we set 
	\[
	A_N^{(m)_r,n}=p^{-n_1}\xi^{n-n_2}F^{n_2} A_N \subset A_N,\quad \textnormal{ and } A_N^{(m)_r}=\sum_{n\ge 0} A_N^{(m)_r,n},
	\]
	where $n_1=\min\{[\frac{n-m}{r}],0\}$ and $n_2=n-m-n_1r$. 
	If $r=\infty$, we set $A_N^{(m)_r}=A_N$. 

	Let $d$ be an integer $\ge 1$, and define an object $B=(B_{N})$ of $\mathscr{A}_{p,\bullet}$ over $A$ by
	\begin{eqnarray*}
	B_1=A_1,&& \quad B_N=A_N\{T_1,\cdots,T_d\}=\varprojlim_m A_N/p^m A_N[T_1,\cdots,T_d], ~(N\ge 2),\\
	B_2\to B_1, T_i\mapsto 0, && \quad B_{N+1}\to B_N, T_i\mapsto T_i. 
\end{eqnarray*}
	We define the $(\Ainf/\xi^N)$-submodule $A_N[W_1,\cdots,W_d]_r$ of $A_N[W_1,\cdots,W_d]$ to be 
	\[
	\bigoplus_{I\in \mathbb{Z}_{\ge 0}^d} A_N^{(|I|)_r} \underline{W}^I,
	\]
	which is an $A_N$-subalgebra (\cite{AGT} IV.2.3.13(1)). 
	We define $A_N\{W_1,\cdots,W_d\}_r$ to be the $p$-adic completion of $A_N[W_1,\cdots,W_d]_r$ and we obtain an object $(A_N\{W_1,\cdots,W_d\}_r)$ of $\mathscr{A}_{p,\bullet}$, denoted by $A\{W_1,\cdots,W_d\}_r$. 
	
	By (\cite{AGT} IV.2.3.15), $A\{W_1,\cdots,W_d\}_r$ is actually an object of $\mathscr{A}^r_{\bullet}$ and $D^r_{\Hig}(B)$ with the adjunction morphism $B\to D^r_{\Hig}(B)$ is isomorphic to the following homomorphism in $\mathscr{A}_{p,\bullet}$ over $A$:
\begin{equation}
	B_N=A_N\{T_1,\cdots,T_d\} \to A_N\{W_1,\cdots,W_d\}_r,\quad T_i\mapsto \xi W_i,\quad N\ge 2.
	\label{eq:DHIGr}
\end{equation}
In the following, we mainly use the description \eqref{eq:DHIGr} in the case $r=\infty$, where $A_N\{W_1,\cdots,W_d\}_{\infty}$ is the $p$-adic completion of the polynomial algebra $A_N[W_1,\cdots,W_d]$. 

For the convenience of readers, we present a simple proof of \eqref{eq:DHIGr} for $r=\infty$ in lemma \ref{l:Higgs envelope}. 
\end{secnumber}

\begin{secnumber}
	Let $(C,F^nC)$ be an object of $\mathscr{A}$ such that $F^1C$ is finitely generated by $(\xi,a_i)_{i=1}^m$, 
	$F^n C=\xi^n C$ for $n\ge 2$ and that $(\xi,a_i,p)_{i=1}^m$ is a regular sequence. 
	We denote by $C_0=C[\frac{F^1 C}{\xi}]$ the dilatation of the ideal $F^1C$ with respect to $\xi$ (\cite{Xu19} 3.3). 
	We set $\widehat{C_0}=\varprojlim_{n\ge 1} C_0/p^n C_0$ and 
	$D=\varprojlim_{n\ge 1} \widehat{C_0}/\xi^n \widehat{C_0}$.

	Then, the description \eqref{eq:DHIGr} in the case $r=\infty$ follows from the following lemma. 
\end{secnumber}

\begin{lemma}
	With the above assumption, $(D,\xi^n D)$ belongs to $\mathscr{A}^{\infty}$ and is isomorphic to Higgs envelope $D^{\infty}_{\Hig}(C,F^nC)$. 
	\label{l:Higgs envelope}
\end{lemma}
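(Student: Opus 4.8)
The plan is to verify the defining axioms of $\mathscr{A}^{\infty}$ for $(D,\xi^n D)$ one by one, and then exhibit the universal property of the Higgs envelope. The key observation driving everything is that the dilatation $C_0 = C[\frac{F^1C}{\xi}]$ has an explicit presentation: since $F^1C$ is generated by $(\xi, a_1,\dots,a_m)$, the dilatation is $C_0 = C[W_1,\dots,W_m]/(\xi W_i - a_i)_{i=1}^m$, and the hypothesis that $(\xi, a_1,\dots,a_m, p)$ is a regular sequence is exactly what guarantees good behaviour after inverting $\xi$ is \emph{not} done — rather, it ensures that $\xi$ remains a nonzerodivisor on $C_0$ and that $C_0/\xi C_0$, $C_0/pC_0$ are well-controlled. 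First I would record this presentation and check that $C_0$ is $\xi$-torsion free and $p$-torsion free; the regularity of the sequence gives that multiplication by $\xi$ on $C_0$ is injective, and an analogous computation modulo $p$ (using that $(\xi,a_i,p)$ being regular implies $(\xi, a_i)$ is regular on $C/pC$) gives $p$-torsion freeness. Taking $p$-adic completion $\widehat{C_0}$ preserves $p$-torsion freeness (it becomes $p$-adically complete and separated by construction) and, because $\xi$ is a nonzerodivisor with $C_0/\xi C_0$ already $p$-adically separated in the relevant sense, also preserves $\xi$-torsion freeness; then taking the $\xi$-adic completion $D = \varprojlim_n \widehat{C_0}/\xi^n\widehat{C_0}$ preserves both. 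This handles axioms (i) and (ii) of Definition~\ref{sss:Hig envelope}(2), and axiom in Definition~\ref{sss:Hig envelope}(4) (that $D \to \varprojlim_n D/\xi^n D$ is an isomorphism) holds by construction.

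Next I would verify axiom (iii) for $r=\infty$, namely $F^s D = \xi^s D$ for all $s$, where $F^n D = \Ker(D \to D/\xi^n D \to \dots)$ — more precisely $F^n D$ is defined as the $n$-th step of the canonical filtration on an object of $\mathscr{A}$, which here should just be $\xi^n D$. The point is that in $C_0$ we have $a_i = \xi W_i$, so the extended ideal $F^1 C \cdot C_0$ becomes $\xi C_0$; passing to completions this says the filtration generated by $F^\bullet C$ collapses to the $\xi$-adic filtration, which is precisely the "$r=\infty$" condition. I would make this precise by showing $F^1 C_0 = \xi C_0$ directly from the presentation and then noting completion commutes with this. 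Axiom (iv) — that the preimage of $\xi^n D$ under multiplication by $\xi$ equals $\xi^{n-1} D$ — follows from $\xi$ being a nonzerodivisor on $D$ together with $D$ being $\xi$-adically separated.

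Having shown $(D, \xi^n D) \in \mathscr{A}^{\infty}$, it remains to identify it with $D^{\infty}_{\Hig}(C, F^n C)$, i.e.\ to check the universal property: any morphism in $\mathscr{A}$ from $(C, F^\bullet C)$ to an object $(E, F^\bullet E)$ of $\mathscr{A}^{\infty}$ factors uniquely through $D$. This is where the dilatation's universal property does the work: a map $C \to E$ in $\mathscr{A}$ sends $F^1 C$ into $F^1 E = \xi E$, so each generator $a_i$ maps into $\xi E$, i.e.\ $a_i \mapsto \xi e_i$ for a \emph{unique} $e_i$ (uniqueness because $E$ is $\xi$-torsion free). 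That is exactly the data needed to extend $C \to E$ to $C_0 = C[W_i]/(\xi W_i - a_i) \to E$, $W_i \mapsto e_i$, uniquely. Since $E$ is $p$-adically complete and separated this extends uniquely to $\widehat{C_0} \to E$, and since $E$ is $\xi$-adically complete and separated (the isomorphism $E \xrightarrow{\sim} \varprojlim E/\xi^n E$, which holds because $F^s E = \xi^s E$ and $E \cong \varprojlim E/F^n E$) it extends uniquely to $D \to E$. Checking filtration-compatibility is automatic since both filtrations are the $\xi$-adic ones. I expect the main obstacle to be the bookkeeping around torsion-freeness under the two successive completions — one must make sure that $p$-adic completion does not introduce $\xi$-torsion and $\xi$-adic completion does not introduce $p$-torsion, which is where the regularity hypothesis on $(\xi, a_i, p)$ is used most delicately (e.g.\ via a Tor-vanishing or a Koszul-complex argument, or by citing the relevant flatness statement for dilatations along regular sequences from \cite{Xu19}); everything else is a direct unwinding of definitions and the universal property of dilatations.
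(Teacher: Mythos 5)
Your core argument — the explicit presentation $C_0\simeq C[x_1,\dots,x_m]/(a_i-\xi x_i)$, the use of the regular sequence $(\xi,a_i,p)$ to get $\xi$- and $p$-torsion freeness, and the universal property of the dilatation to factor any filtered map $C\to E$ with $E$ being $\xi$-torsion free — is exactly the paper's. The organizational difference is where the completions are handled. The paper does \emph{not} verify the axioms of $\mathscr{A}^{\infty}$ for $D$ directly: it invokes the proof of (\cite{AGT} IV.2.1.2), which constructs the left adjoint $D^{\infty}_{\Hig}:\mathscr{A}\to\mathscr{A}^{\infty}$ as the composition of the algebraic envelope $\mathscr{A}\to\mathscr{A}^{\infty}_{\alg}$ with the ($p$-adic, then $F$-adic) completion functors, so that the entire lemma reduces to checking that $(C_0,\xi^nC_0)$ lies in $\mathscr{A}^{\infty}_{\alg}$ and has the universal property there — a purely algebraic statement settled in a few lines. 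You instead work at the level of $D$ itself, and this forces you to prove that $p$-adic completion does not create $\xi$-torsion and that $\xi$-adic completion does not create $p$-torsion; you correctly identify this as the main obstacle but then only gesture at a Tor/Koszul argument rather than carrying it out, so as written your proof has a genuine (if fillable) gap precisely at the step the paper's reduction is designed to avoid. To close your argument you would need to either prove those completion statements (e.g.\ that $(\xi,p)$ remains a regular sequence after each completion, which does follow from $C_0/\xi^N C_0$ being $p$-torsion free for all $N$, as the paper notes), or simply adopt the paper's reduction to $\mathscr{A}^{\infty}_{\alg}$ and let the cited adjunction formalism do that work. One further small point common to both write-ups: identifying the dilatation (a subring of $C[\frac{1}{\xi}]$ by definition) with the presentation $C[x_i]/(a_i-\xi x_i)$ itself uses the regularity hypothesis, since in general that quotient could have extra $\xi$-torsion; it is worth flagging that this is where the hypothesis first enters.
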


\begin{proof}
	In view of the proof of (\cite{AGT} IV.2.1.2), it suffices to show that $(C_0,\xi^n C_0)$ belongs to $\mathscr{A}^{\infty}_{\alg}$ and is isomorphic to the image of $(C,F^nC)$ via the left adjoint of $\mathscr{A}^{\infty}_{\alg}\to \mathscr{A}$. 
	We have an isomorphism of $C$-algebras:
	\[
	C_0\simeq C[x_1,\cdots,x_m]/(a_i-\xi x_i)_{i=1}^m.
	\]
	As $C$ is $\xi$-torsion free, then $C_0$ is a sub-ring of $C[\frac{1}{\xi}]$ and is also $\xi$-torsion free. 
	Since $(\xi,a_i,p)_{i=1}^m$ is a regular sequence of $C$, then the quotient $C_0/\xi C_0 \simeq (C/F^1C)[x_1,\cdots,x_n]$ is $p$-torsion free, i.e. $(\xi,p)$ is a regular sequence of $C_0$. 
	Then, we deduce that for every $N\ge 1$, $C_0/\xi^N C_0$ is $p$-torsion free. 
	Therefore, $(C_0,\xi^n C_0)$ is an object of $\mathscr{A}^{\infty}_{\alg}$. 

	Let $(E,F^nE)$ be an object of $\mathscr{A}_{\alg}^{\infty}$ and $f:(C,F^nC)\to (E,F^nE)$ a homomorphism compatible with filtrations. 
	Since $E$ is $\xi$-torsion free and $f(F^1C)\subset \xi E$, $f$ induces a homomorphism of $C$-algebras $g:C_0\to E$ compatible with filtrations. This finishes the proof. 
\end{proof}

\begin{secnumber}
	Let $X\to \Spf(\Zp)$ be a $p$-adic formal scheme (i.e. $p\mathscr{O}_X$ is an ideal of definition) over $\Zp$. 
	For $m\ge 1$, we denote by $X_m$ the reduction modulo $p^m$ of $X$. 
	\textit{A fine log structure $\mathscr{M}$ on $X$} is a family of fine log structures $\mathscr{M}_m$ on $X_m$ and exact closed immersions $(X_m,\mathscr{M}_m)\to (X_{m+1},\mathscr{M}_{m+1})~(m\ge 1)$ extending the closed immersion $X_m\to X_{m+1}$ (\cite{AGT} IV.2.2). 
	We define a \textit{$p$-adic fine log formal scheme} to be a $p$-adic formal scheme endowed with a fine log structure. 
	A morphism of $p$-adic fine log formal schemes $f:(X,\mathscr{M})\to (Y,\mathscr{N})$ is a family of morphisms of fine log schemes $f_m:(X_m,\mathscr{M}_m)\to (Y_m,\mathscr{N}_m)$ $(m\in \mathbb{N})$ compatible with the exact closed immersions $(X_m,\mathscr{M}_{m})\to (X_{m+1},\mathscr{M}_{m+1})$ and $(Y_m,\mathscr{N}_m)\to (Y_{m+1},\mathscr{N}_{m+1})$. 
	We say $f$ is \textit{smooth} (resp. \textit{\'etale}, resp. an \textit{exact closed immersion}, resp. a \textit{closed immersion}, resp. \textit{strict}, resp. \textit{integral} (\cite{AGT} II.5.18)) if for every $m\ge 1$, $f_m$ is smooth (resp. \'etale,\ldots). 

	Given a $p$-adic fine log formal scheme $X$, the \'etale site $\Et_{X}$ is defined by the category of $p$-adic fine formal schemes strict \'etale over $X$, equipped with the \'etale topology. 
	We define the sheaf of rings $\mathscr{O}_X$ (resp. sheaf of monoids $\mathscr{M}_X$) on $X_{\et}$ by $\Gamma(U,\mathscr{O}_X)=\varprojlim_m \Gamma(U_m,\mathscr{O}_{U_m})$ (resp. $\Gamma(U,\mathscr{M}_X)=\varprojlim_m \Gamma(U_m,\mathscr{M}_{U_m})$). 
	We refer to \textit{loc.cit} for more details on $p$-adic fine log formal schemes. 
\end{secnumber}
\begin{secnumber} \label{sss:notations p-adic log} 
	Next, we review the notion of \textit{Higgs envelope} of $p$-adic fine log formal schemes following \textit{loc.cit}. 
	
	We set $\Sigma_N=\Spf(\Ainf/\xi^N)$, equipped with the trivial log structure. 
We refer to (\cite{AGT} IV.2.2.1) for the definition of category $\mathscr{C}$ (over $(\Sigma_N)_{N\ge 1}$) and its full subcategory $\mathscr{C}^r$ for $r\in \mathbb{N}\cup \{\infty\}$. 
	Recall that an object of $\mathscr{C}$ is a sequence of morphisms $\mathcal{Y}=(\mathcal{Y}_1\to \mathcal{Y}_2\to\cdots \to \mathcal{Y}_N \to\cdots )_{N\ge 1}$ of $p$-adic fine log formal schemes $\mathcal{Y}_N$ over $\Sigma_N$, compatible with $\Sigma_N\to \Sigma_{N+1}$ such that:
\begin{itemize}
	\item the morphism $\mathcal{Y}_1\to \mathcal{Y}_2$ is an immersion, and for $N\ge 2$, the morphism $\mathcal{Y}_N\to \mathcal{Y}_{N+1}$ is an exact closed immersion;

	\item for $N\ge 2$, the morphism of the modulo $p$ reduction of $\mathcal{Y}_N\to \mathcal{Y}_{N+1}$ is a nilpotent immersion.
\end{itemize}

An object $\mathcal{Y}=(\mathcal{Y}_N)$ of $\mathscr{C}$ belongs to $\mathscr{C}^{\infty}$, if it satisfies moreover the following conditions:  
\begin{itemize}
	\item $\mathcal{Y}_1\to \mathcal{Y}_2$ is an exact closed immersion. 

	\item The morphism $p:\mathscr{O}_{\mathcal{Y}_N}\to \mathscr{O}_{\mathcal{Y}_N}$ is injective for all $N\ge 1$. 

	\item For every $N\ge 1$, $\mathscr{O}_{\mathcal{Y}_N}$ is killed by $\xi^N$ and $\Ker(\mathscr{O}_{\mathcal{Y}_N}\to \mathscr{O}_{\mathcal{Y}_n})=\xi^{n} \mathscr{O}_{\mathcal{Y}_N}$ for $1\le n\le N$. 

	\item $\Ker(\xi:\mathscr{O}_{\mathcal{Y}_N}\to \mathscr{O}_{\mathcal{Y}_N})$ is $\xi^{N-1}\mathscr{O}_{\mathcal{Y}_N}$. 
\end{itemize}

	Given an object $\mathcal{Y}=(\mathcal{Y}_N)_{N\ge 1}$ of $\mathscr{C}$ and an immersion $X\to \mathcal{Y}_1$ over $\Sigma_1$, we also denote by $(X\to \mathcal{Y})$ the object $(X\to \mathcal{Y}_2 \to \mathcal{Y}_3 \to \cdots)$ of $\mathscr{C}$. 
	If a caligraphic letter $\mathcal{Y}=(\mathcal{Y}_N)_{N\ge 1}$ denotes an object of $\mathscr{C}$, we use the correspondence capital letter $Y$ to denote $\mathcal{Y}_1$. 

	For $N\ge 1$, we denote by $\Sigma_{N,S}$ the $p$-adic formal scheme $\Sigma_N$, equipped with the fine log structure extended by $\mathscr{M}_S$ on $S$ (c.f. \cite{AGT} II.9.6), and by $\Sigma_S$ the object $(\Sigma_{N,S})_{N\ge 1}$ of $\mathscr{C}$. 
	Note that, $\Sigma_S$ belongs to $\mathscr{C}^r$, for every $r\in \mathbb{N}\cup \{\infty\}$. 

	Let $f=(f_N)_{N\ge 1}:\mathcal{Y}=(\mathcal{Y}_N)\to \mathcal{Y}'=(\mathcal{Y}'_N)$ be a morphism of $\mathscr{C}$.
	We say $f$ is is smooth (resp. \'etale, resp. integral) if $f_N$ is smooth (resp. \'etale, resp. integral) for every $N\ge 1$. 
	We say $f$ is \textit{Cartesian} if the morphism $\mathcal{Y}'_N\to \mathcal{Y}'_{N+1}\times_{\mathcal{Y}_{N+1}}\mathcal{Y}_N$ induced by $f$ is an isomorphism for every $N\ge 1$. 
\end{secnumber}

\begin{secnumber}\label{sss:Hig envelope C}
	For $r\in \mathbb{N}\cup \{\infty\}$, the natural inclusion $\mathscr{C}^r \to \mathscr{C}$ admits a right adjoint functor $D^r_{\HIG}:\mathscr{C}\to \mathscr{C}^r$, called \textit{Higgs envelope of level $r$} (\cite{AGT} IV.2.2.2, IV.2.2.9). 
	When $r=\infty$, we simply call it Higgs envelope. 

	If $\mathcal{Y}$ is an object of $\mathscr{C}$ such that $\mathcal{Y}_N$ are affine and $\mathcal{Y}_1\to \mathcal{Y}_2$ is an exact closed immersion, the Higgs envelope of level $r$ of $\mathcal{Y}$ is constructed by the functor $D_{\Hig}^r:\mathscr{A}_{p,\bullet}\to \mathscr{A}_{\bullet}^r$ applying to $(\Gamma(\mathcal{Y}_N,\mathscr{O}_{\mathcal{Y}_N}))_{N\ge 1}$, and the log structure induced by $(\mathscr{M}_{Y_N})_{N\ge 1}$ (\cite{AGT} IV.2.2.14).
	We refer to (\cite{AGT} IV.2.2) for more details. 
\end{secnumber}

\begin{secnumber} \label{sss:Higgs envelope} 
	Let $X\to \Sigma_{1,S}$ be a smooth integral morphism of $p$-adic fine log formal schemes and $\mathcal{X}=(\mathcal{X}_N)_{N\ge 1}\to \Sigma_S$ a smooth Cartesian morphism lifting of $X$ (i.e. $\mathcal{X}_1=X$). 
	Then, $\mathcal{X}$ belongs to $\mathscr{C}^{r}$ for every $r\in \mathbb{N}\cup\{\infty\}$. 

	Let $s$ be an integer $\ge 1$. We denote by $\mathcal{X}^{s+1}$ the product of $(s+1)$-copies of $\mathcal{X}$ over $\Sigma_S$ in $\mathscr{C}$ and $p(s):\mathcal{D}(s)=(\mathcal{D}(s)_N)_{N\ge 1}\to (X\to \mathcal{X}^{s+1})$ the Higgs envelope of level $r$ of the diagonal embedding $(X\to \mathcal{X}^{s+1})$. 
	We set $D(s)=\mathcal{D}(s)_1$. 
We denote by $q_i:\mathcal{D}(1)\to \mathcal{X}$ (resp. $p_{ij}:\mathcal{D}(2)\to \mathcal{D}(1)$) be the projection in the $i$-th component for $i=1,2$ (resp. $i,j$-th component for $1\le i<j\le 3$) and by $\Delta_s:\mathcal{X}\to \mathcal{D}(s)$ the morphism induced by the diagonal immersion $\mathcal{X}\to \mathcal{X}^{s+1}$.

Suppose that $\mathcal{X}$ is affine and that there exist $x_i=(x_{i,N})\in \varprojlim_{N}\Gamma(\mathcal{X}_N,\mathscr{M}_{\mathcal{X}_N})$ ($1\le i\le d$) (\cite{AGT} IV.2.2) such that $\{d\log(x_{i,N})\}_{1\le i\le d}$ is a basis of $\Omega_{\mathcal{X}_N/\Sigma_{N,S}}^1$ for $N\ge 1$. 
	By (\cite{AGT} IV.2.3.17), for $i\in \{1,\cdots,d\}$ and $N\ge 2$, there exists a unique element $u_{i,N}\in 1+F^1\mathscr{O}_{\mathcal{D}(1)_N}$, such that 
	$$p(1)_{N}^*(1\otimes x_{i,N})=p(1)_{N}^*(x_{i,N} \otimes 1)u_{i,N} ~\in ~ \mathscr{O}_{\mathcal{D}(1)_N},$$ 
	and that there exists canonical isomorphisms of $(\mathscr{O}_{\mathcal{X}_N})_{N\ge 1}$-algebras in $\mathscr{A}_{\bullet}^\infty$ 
	\begin{equation} \label{eq:local description envelop}
			\mathscr{O}_{\mathcal{X}_N}\{W_{1},\cdots,W_{d}\}_r\xrightarrow{\sim} q_{j*}(\mathscr{O}_{\mathcal{D}(1)_{N}}), \quad
	W_i \mapsto \frac{u_{i,N+1}-1}{\xi},\quad 1\le i\le d,~ j=1,2.
\end{equation}
\end{secnumber}

\subsection{Higgs crystals} \label{ss:Higgscrystals}
\begin{secnumber}
	Let $Y$ be a $p$-adic fine log formal scheme over $\Sigma_{1,S}$. 
	For $r\in \mathbb{N}\cup \{\infty\}$, we denote by $(Y/\Sigma_S)_{\HIG}^{r}$ (resp. $(Y/\Sigma_S)_{\HIG}^{r,\sim}$) the Higgs site (resp. Higgs topos) defined in (\cite{AGT} IV.3.1.4). 
	Recall (\cite{AGT} IV.3.1.1) that an object of $(Y/\Sigma_S)_{\HIG}^r$ is a pair $(\mathcal{T},z)$ consisting of an object $\mathcal{T}=(\mathcal{T}_N)_{N\ge 1}$ of $\mathscr{C}^r_{/\Sigma_S}$ and a $\Sigma_{1,S}$-morphism $z:T(=\mathcal{T}_1)\to Y$. 
	A morphism $(\mathcal{T}',z') \to (\mathcal{T},z)$ is a morphism $u:\mathcal{T}'\to \mathcal{T}$ of $\mathscr{C}_{/\Sigma_S}^r$ such that $z\circ u_1= z'$. 

	For an object $(\mathcal{T},z)$ of $(Y/\Sigma_S)_{\HIG}^r$, the set of coverings $\Cov( (\mathcal{T},z) )$ is defined to be
\[
\left\{\left(u_{\alpha}:\left(\mathcal{T}_{\alpha}, z_{\alpha}\right) \rightarrow(\mathcal{T}, z)\right)_{\alpha \in A} \mid \begin{array}{l}
\text { (i) } u_{\alpha} \text { is strict étale and Cartesian for all } \alpha \in A \\
\text { (ii) } \bigcup_{\alpha \in A} u_{\alpha, 1}\left(T_{\alpha}\right)=T
\end{array}\right\}
\]

The functor $(\mathcal{T},z)\mapsto \Gamma(T,\mathscr{O}_T)$ defines a sheaf of rings, denoted by $\bO_{Y/\Sigma_S}$\footnote{This sheaf of rings is denoted by $\mathscr{O}_{Y/\Sigma_S,1}$ in (\cite{AGT} IV.3.1.4).} on $(Y/\Sigma_S)_{\HIG}^r$.
	For each rational number $\alpha\in \BQ_{>0}$, we set $\bO_{Y/\Sigma_S,\alpha}=\bO_{Y/\Sigma_S} /p^\alpha \bO_{Y/\Sigma_S}$ and $\bO_{Y/\Sigma_S,\BQ}=\bO_{Y/\Sigma_S}[\frac{1}{p}]$. 

	An $\bO_{Y/\Sigma_S}$-module $\mathcal{M}$ is equivalent to a collection of data $\{\mathcal{M}_{(\mathcal{T},z)},\gamma_u\}$, where for each object $(\mathcal{T},z)$ of $(Y/\Sigma_S)_{\HIG}^r$, $\mathcal{M}_{(\mathcal{T},z)}$ is an $\mathscr{O}_{T}$-module of $T_{\et}$ and for each morphism $u:(\mathcal{T}',z')\to (\mathcal{T},z)$ of $(Y/\Sigma_S)_{\HIG}^r$, $\gamma_u$ is an $\mathscr{O}_{T'}$-linear morphism
	\[
	\gamma_u: u_1^*(\mathcal{M}_{(\mathcal{T},z)})\to \mathcal{M}_{(\mathcal{T}',z')},
	\]
	satisfying following conditions:
	\begin{itemize}
		\item[(i)] for any morphism $u:\left(\mathcal{T}^{\prime}, z^{\prime}\right) \rightarrow (\mathcal{T}, z)$ such that the underlying morphism $u: \mathcal{T}^{\prime} \rightarrow \mathcal{T} $ in $\mathscr{C}$ is strict étale and Cartesian, the morphism $\gamma_{u}$ is an isomorphism;
	
		\item[(ii)] for any morphisms $\left(\mathcal{T}^{\prime \prime}, z^{\prime \prime}\right) \stackrel{v}{\rightarrow}\left(\mathcal{T}^{\prime}, z^{\prime}\right) \stackrel{u}{\rightarrow}(\mathcal{T}, z)$, we have $\gamma_{v} \circ v_{1}^{*}\left(\gamma_{u}\right)=\gamma_{u v}$. 
	\end{itemize}

	A \textit{Higgs isocrystal of level $r$} is an $\bO_{Y/\Sigma_S,\BQ}$-module $\mathcal{M}$ on $(Y/\Sigma_S)_{\HIG}^r$ such that for every morphism $u$ of $(Y/\Sigma_S)_{\HIG}^r$, $\gamma_u$ is an isomorphism. 
	A \textit{Higgs crystal} is an $\bO_{Y/\Sigma_S}$-module $\mathcal{M}$ on $(Y/\Sigma_S)_{\HIG}^{\infty}$ such that for every morphism $u$ of $(Y/\Sigma_S)_{\HIG}^{\infty}$, $\gamma_u$ is an isomorphism\footnote{Our definition of (iso-)crystals is slightly different to that of (\cite{AGT} IV.3.3), where certain finiteness condition is imposed.}.
\end{secnumber}

\begin{secnumber} \label{sss:catQ}
	To introduce a finiteness condition on Higgs (iso-)crystals in a simple way, we assume $X$ is a semi-stable $S$-scheme (\S~\ref{sss:semi-stable}) in the following of this section. 
	Let $\check{X}$ be the $p$-adic completion of $X\otimes_{\mathscr{O}_K}\oo$, equipped with the fine log structure induced by $\mathscr{M}_X$. 
	Then, $\check{X}$ is smooth over $\Sigma_{1,S}$.  

	We denote by $\bQ_{/X}$ (or simply $\bQ$ if there is no confusion) (\cite{AGT} III.10.5) the full subcategory of $\Et_{/X}$ \'etale schemes over $X$ consisting of affine schemes $U$ such that there exists a fine and saturated chart $M\to \Gamma(U,\mathscr{M}_X)$ that induces an isomorphism (\cite{AGT} II.5.17) 
	\[
	M\xrightarrow{\sim} \Gamma(U,\mathscr{M}_X)/\Gamma(U,\mathscr{O}_U^{\times}).
	\]

	A Higgs isocrystal of level $r$ (resp. a Higgs crystal) $\mathcal{M}$ is called \textit{finite}, if for any object $U$ of $\bQ$ and any smooth Cartesian lifting $\mathcal{U}$ of $\check{U}$ over $\Sigma_S$ (regarded as an object of $(\check{X}/\Sigma_S)_{\HIG}^r$), the $\mathscr{O}_{\check{U}}[\frac{1}{p}]$-module (resp. $\mathscr{O}_{\check{U}}$-module) $\mathcal{M}_{\mathcal{U}}$ is finitely generated and projective (resp. coherent and $\mathcal{M}_{\mathcal{U}}[\frac{1}{p}]$ is projective). 
	In particular, when $\check{X}$ admits a smooth Cartesian lifting $\mathcal{X}$ over $\Sigma_S$, this condition is equivalent to the fact that $\mathcal{M}_{\mathcal{X}}$ is finitely generated and locally projective (resp. coherent and $\mathcal{M}_{\mathcal{X}}[\frac{1}{p}]$ is locally projective). 

	We denote by $\HC^r_{\Qp}(\check{X}/\Sigma_S)$ (resp. $\HC_{\Zp}(\check{X}/\Sigma_S)$) the category of Higgs isocrystals on $(\check{X}/\Sigma_S)_{\HIG}^{r}$ (resp. Higgs crystals on $(\check{X}/\Sigma_S)_{\HIG}^{\infty}$)
	and by $\HC^r_{\Qp,\fin}(\check{X}/\Sigma_S)$ (resp. $\HC_{\Zp,\fin}(\check{X}/\Sigma_S)$) the full subcategory consisting of objects which are finite. 

The canonical functor $\HC_{\Zp,\fin}(\check{X}/\Sigma_S)_{\BQ}\to \HC_{\BQ_p,\fin}^{\infty}(\check{X}/\Sigma_S)$ is fully faithful (\cite{AGT} IV.3.5.1). 
\end{secnumber}

\begin{rem}
	Under the assumption of \S~\ref{sss:catQ}, the above finiteness condition is stronger than the condition in (\cite{AGT} IV.3.3.1(ii) and IV.3.3.2(i,iii)).
	Hence we can apply results of (\cite{AGT} IV) to our setting. 
\end{rem}


\begin{secnumber}\label{sss:HS}
	Let $\mathcal{X}\to \Sigma_S$ be a smooth Cartesian lifting of $\check{X}$ in $\mathscr{C}$.
	We keep the notation in \S~\ref{sss:Higgs envelope}.

	We denote by $\HS_{\BQ_p}^r(\check{X},\mathcal{X}/\Sigma_S)$ (resp. $\HS_{\Zp}^r(\check{X},\mathcal{X}/\Sigma_S)$) 
	the category of pairs $(M,\varepsilon)$ consisting of an $\mathscr{O}_{\check{X}}[\frac{1}{p}]$-module (resp. $\mathscr{O}_{\check{X}}$-module) $M$ together with an $\mathscr{O}_{D(1)}$-linear isomorphism $\varepsilon:q_2^*(M)\xrightarrow{\sim} q_1^*(M)$, called \textit{Higgs stratification}, such that:
(a) $\Delta_{1}^{*}(\varepsilon)=\mathrm{id}_{M}$; 
(b) $p_{12}^{*}(\varepsilon) \circ p_{23}^{*}(\varepsilon)=p_{13}^{*}(\varepsilon)$.

By a standard argument (\cite{AGT} IV.3.4.4), the lifting $\mathcal{X}$ induces equivalences of categories between 
	\begin{equation}
		\HC^r_{\BQ_p}(\check{X}/\Sigma_S)\xrightarrow{\sim} \HS^r_{\BQ_p}(\check{X},\mathcal{X}/\Sigma_S) \quad 
		\textnormal{(resp. } \HC_{\Zp}(\check{X}/\Sigma_S) \xrightarrow{\sim} \HS_{\Zp}(\check{X},\mathcal{X}/\Sigma_S) ),\quad 
		\mathcal{M} \mapsto (\mathcal{M}_{\mathcal{X}},\varepsilon),
		\label{eq:equivalence HC HS}
	\end{equation}
	where $\varepsilon$ is constructed by the composition $q_2^*(\mathcal{M}_{\mathcal{X}})\simeq \mathcal{M}_{\mathcal{D}(1)}\simeq q_1^*(\mathcal{M}_{\mathcal{X}})$.  

	We denote by $\HS^r_{\BQ_p,\fin}(\check{X},\mathcal{X}/\Sigma_S)$ (resp. $\HS_{\Zp,\fin}(\check{X},\mathcal{X}/\Sigma_S)$) the full subcategory of $\HS^r_{\BQ_p}(\check{X},\mathcal{X}/\Sigma_S)$ (resp. $\HS_{\Zp}(\check{X},\mathcal{X}/\Sigma_S)$) corresponding to finite Higgs (iso-)crystals (of level $r$).  
\end{secnumber}

\begin{secnumber} \label{HS HB}
	We denote by $\widetilde{\Omega}_{X/S}^1$ the module of log differentials $\Omega_{(X,\mathscr{M}_X)/(S,\mathscr{M}_S)}^1$ and by $\Omega_{\check{X}/\Sigma_{1,S}}^1$ the $p$-adic completion of $\widetilde{\Omega}_{X/S}^1\otimes_{\mathscr{O}_K}\mathscr{O}_{\overline{K}}$ over $\check{X}$. 
	Given an $\mathscr{O}_{\check{X}}$-module $M$ and an $\mathscr{O}_{\check{X}}$-linear morphism $\theta:M\to M\otimes_{\mathscr{O}_{\check{X}}}\xi^{-1}\Omega_{\check{X}/\Sigma_{1,S}}^1$, we can define an $\mathscr{O}_{\check{X}}$-linear morphism, for $q\ge 1$:
	\[
	\theta^q:M\otimes_{\mathscr{O}_{\check{X}}} \xi^{-q}\Omega_{\check{X}/\Sigma_{1,S}}^{q}\to M\otimes_{\mathscr{O}_{\check{X}}} \xi^{-q-1}\Omega_{\check{X}/\Sigma_{1,S}}^{q+1},\quad x\otimes\xi^{-q}\omega\mapsto \theta(x)\wedge \xi^{-q} \omega. 
	\]
	If $\theta^1\circ \theta=0$, we say $\theta$ is a \textit{Higgs field} and $(M,\theta)$ is a \textit{Higgs module}.

	Since $X$ is semi-stable over $S$, there exists a strict \'etale morphism $U\to \check{X}$ and $(t_i)_{i=1}^d\in \Gamma(U,\mathscr{M}_U)$ such that $\{d\log(t_i)\}_{i=1}^d$ is a basis of $\Omega_{U/\Sigma_{1,S}}^1$. 
	Let $\theta_i$ $(1\le i\le d)$ be the endomorphism of $M$ defined by $\theta(x)=\sum_{1\le i\le d} \xi^{-1}\theta_i(x)\otimes d\log(t_i)$. 
	The condition $\theta^1\circ \theta=0$ is equivalent to the fact that $\theta_i$ and $\theta_j$ commutes to each other. 

	Suppose $M$ is a coherent $\mathscr{O}_U[\frac{1}{p}]$-module (resp. a $p$-torsion free coherent $\mathscr{O}_U$-module).
	We endow $M$ with the $p$-adic topology and for $x\in \Gamma(U,M)$ and $\underline{m}=(m_i)\in \mathbb{N}^d$, we set $\theta_{\underline{m}}(x)=\prod_{1 \leq i \leq d} \theta_{i}^{m_{i}}(x)$.
	Consider the following convergent condition (Conv$)_r$ (resp. integral condition (Int)) on $M$ (\cite{AGT} IV.3.4.12): 

	(Conv$)_r$: $p^{-\left[|\underline{m}| r^{-1}\right]} \frac{1}{\underline{m}!} \theta_{\underline{m}}(x) $ converges to  $0$ as $|\underline{m}| \rightarrow \infty$, where $|\underline{m}|r^{-1}=0$ if $r=\infty$. 

	(Int): we have $\frac{1}{\underline{m} !} \theta_{\underline{m}}(x) \in M$. 
\end{secnumber}

\begin{definition} \label{d:Conv Int}
	(i) We denote by $\HM_{\BQ_p}(\check{X}/\Sigma_{1,S})$ (resp. $\HM_{\Zp}(\check{X}/\Sigma_{1,S})$) the category of pairs $(M,\theta)$ consisting of an $\mathscr{O}_{\check{X}}[\frac{1}{p}]$-module (resp. $\mathscr{O}_{\check{X}}$-module) together with a Higgs field $\theta$, and by $\HB_{\Qp}(\check{X}/\Sigma_{1,S})$ the full subcategory of $\HM_{\BQ_p}(\check{X}/\Sigma_{1,S})$ whose underlying $\mathscr{O}_{\check{X}}[\frac{1}{p}]$-module is locally projective of finite type.  
	
(ii) [\cite{AGT} IV.3.4.14] We denote by $\HB_{\BQ_p,\conv}^r(\check{X}/\Sigma_{1,S})$ (resp. $\HB_{\Zp,\conv}(\check{X}/\Sigma_{1,S})$) the full subcategory of $\HM_{\BQ_p}(\check{X}/\Sigma_{1,S})$ (resp. $\HM_{\Zp}(\check{X}/\Sigma_{1,S})$) consisting of $(M,\theta)$, where $M$ is a finitely generated locally projective $\mathscr{O}_{\check{X}}[\frac{1}{p}]$-module (resp. a $p$-torsion free coherent $\mathscr{O}_{\check{X}}$-module such that $M[\frac{1}{p}]$ is locally projective) and $\theta$ satisfies:
	There exist a strict \'etale covering $(U_\alpha\to \check{X})_{\alpha\in A}$ and $(t_{i,\alpha})_{i=1}^d\in \Gamma(U_\alpha,\mathscr{M}_{U_{\alpha}})$ such that
\begin{itemize}
	\item $d\log t_{i,\alpha}$ is a basis of $\Omega^1_{U_{\alpha}/\Sigma_{1,S}}$ for every $N\ge 1$,

	\item $U_{\alpha}$ is affine

	\item the pair $(\Gamma(U_{\alpha},M),\Gamma(U_{\alpha},\theta))$ satisfies (Conv$)_r$ (resp. (Int)) with respect to $t_{i,\alpha}$. 
\end{itemize}

(iii) We denote by $\HB_{\BQ_p,\sma}(\check{X}/\Sigma_{1,S})$ the full subcategory of $\HM_{\BQ_p}(\check{X}/\Sigma_{1,S})$ consisting of objects belonging to $\HB_{\BQ_p,\conv}^r(\check{X}/\Sigma_{1,S})$ for some integer $r\in \mathbb{N}$. 
We set $\HB_{\BQ_p,\conv}(\check{X}/\Sigma_{1,S})=\HB_{\BQ_p,\conv}^{\infty}(\check{X}/\Sigma_{1,S})$. 
\end{definition}

\begin{prop}[\cite{AGT} IV.3.4.16] \label{p:HS HM}
	Let $\mathcal{X}$ be a smooth Cartesian lifting of $\check{X}$ over $\Sigma_S$, and $r\in \mathbb{N}\cup\{\infty\}$. 

	\textnormal{(i)} The lifting $\mathcal{X}$ induces a canonical functor:  
	\begin{equation} \label{eq:HS HM}
		\HS^r_{\BQ_p}(\check{X},\mathcal{X}/\Sigma_S) \to \HM_{\BQ_p}^r(\check{X}/\Sigma_{1,S}) ~ 
		\textnormal{(resp. } \HS_{\Zp}(\check{X},\mathcal{X}/\Sigma_S) \to \HM_{\Zp}(\check{X}/\Sigma_{1,S}) ),\quad (M,\varepsilon)\mapsto (M,\theta).
	\end{equation}

	\textnormal{(ii)} The functor \eqref{eq:HS HM} induces an equivalence of categories $\iota_{\mathcal{X}}$ between
$\HB^r_{\BQ_p,\conv}(\check{X}/\Sigma_S)$ (resp. $\HB_{\Zp,\conv}(\check{X}/\Sigma_S)$) and
	$\HC^r_{\BQ_p,\fin}(\check{X}/\Sigma_S)$ (resp. full subcategory of $\HC_{\Zp,\fin}(\check{X}/\Sigma_S)$ such that $\mathcal{M}_{\mathcal{X}}$ is $p$-torsion free). 
\end{prop}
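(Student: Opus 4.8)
The plan is to follow the standard strategy relating stratifications to (Higgs) fields, familiar from the analogue for connections and crystalline cohomology, adapted to the $\xi$-twisted formalism of \cite{AGT} IV.2--3. Everything being local on $\check X$, I would fix a strict \'etale $U\to\check X$ together with $t_1,\dots,t_d\in\Gamma(U,\mathscr{M}_U)$ whose $d\log t_i$ trivialize $\Omega^1_{\check X/\Sigma_{1,S}}$, and use the explicit description \eqref{eq:local description envelop} of the first (and, for the cocycle condition, the second) Higgs envelope: $q_{j*}\mathscr{O}_{\mathcal{D}(1)}\cong\mathscr{O}_{\check X}\{W_1,\dots,W_d\}_r$ with $W_i=\frac{u_i-1}{\xi}$, together with the analogous description of $q_{i*}\mathscr{O}_{\mathcal{D}(2)}$ and the ``addition law'' $u_i^{(13)}=u_i^{(12)}u_i^{(23)}$ relating the three projections (equivalently, after passing to $F^1/F^2$, additivity of the $W_i$).

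For (i), given a Higgs stratification $\varepsilon\colon q_2^*M\xrightarrow{\sim}q_1^*M$ I would read off its linear term modulo $F^2\mathscr{O}_{\mathcal{D}(1)}$. Writing $\varepsilon(m\otimes 1)\equiv m\otimes 1+\sum_i\theta_i(m)\otimes W_i\pmod{F^2}$ defines endomorphisms $\theta_i$ of $M$, hence $\theta(m)=\sum_i\xi^{-1}\theta_i(m)\otimes d\log t_i$; the normalization $\Delta_1^*(\varepsilon)=\mathrm{id}_M$ forces the absence of a degree-$0$ correction, so $\theta$ is well defined, and pulling the cocycle condition $p_{12}^*(\varepsilon)\circ p_{23}^*(\varepsilon)=p_{13}^*(\varepsilon)$ back to $\mathcal{D}(2)$ and extracting the bidegree-$(1,1)$ part gives $\theta_i\theta_j=\theta_j\theta_i$, i.e.\ $\theta^1\circ\theta=0$. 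For canonicity and to see that the locally defined $\theta$ glue, I would give the intrinsic description: $\theta$ is the composite of the $\varepsilon$-twisted universal Taylor map $M\to q_{1*}q_2^*M$ with the projection onto $q_{1*}(F^1\mathscr{O}_{\mathcal{D}(1)}/F^2)\otimes M$ and the canonical identification $F^1\mathscr{O}_{\mathcal{D}(1)}/F^2\cong\xi^{-1}\Omega^1_{\check X/\Sigma_{1,S}}$ recorded by the normalization $W_i=\frac{u_i-1}{\xi}$; this is manifestly independent of coordinates.

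For (ii) I would construct a quasi-inverse on the small/convergent subcategories by the exponential formula. Given $(M,\theta)$ with commuting $\theta_i$ satisfying (Conv$)_r$ (resp.\ (Int)), set $\varepsilon(m\otimes 1)=\sum_{\underline n\in\mathbb{N}^d}\frac{1}{\underline n!}\theta_{\underline n}(m)\otimes\underline W^{\underline n}$. The key point is the convergence: the coefficient ring of the monomials of total degree $m$ in $\mathscr{O}_{\check X}\{W\}_r=\bigoplus_I\mathscr{O}_{\check X}^{(|I|)_r}\underline W^I$ is built (through the recipe $n_1=\min\{[\frac{n-m}{r}],0\}$, $n_2=n-m-n_1r$, $\mathscr{O}_{\check X}^{(m)_r,n}=p^{-n_1}\xi^{n-n_2}F^{n_2}\mathscr{O}_{\check X}$) exactly so that $\sum_{\underline n}\frac{1}{\underline n!}\theta_{\underline n}(m)\underline W^{\underline n}$ lies in it precisely when $p^{-[|\underline n|r^{-1}]}\frac{1}{\underline n!}\theta_{\underline n}(m)\to 0$; for $r=\infty$ this is literally the statement that $\frac{1}{\underline n!}\theta_{\underline n}(m)\to 0$ $p$-adically, so the series converges in the $p$-adic completion of $\mathscr{O}_{\check X}[W]$, and in the integral case one uses in addition that $M$ is coherent and $p$-torsion free. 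Condition (a) is immediate since $\Delta_1^*W_i=0$ annihilates all terms with $\underline n\neq 0$; condition (b) is the multiplicativity of the exponential and reduces, via the addition law $u_i^{(13)}=u_i^{(12)}u_i^{(23)}$ on $\mathcal{D}(2)$, to the commutativity of the $\theta_i$. Comparing the two constructions degree by degree shows they are mutually inverse; full faithfulness follows from the same bookkeeping, since an $\mathscr{O}_{\check X}$-linear $f$ is horizontal for the stratifications iff it commutes with every $\theta_i$ (degree $1$ gives the implication, the higher degrees are then automatic from the Taylor formula). Finally the two locally built $\varepsilon$'s agree on overlaps because each is the unique stratification with prescribed linear term $\theta$, so the construction globalizes, and the matching of the underlying-module conditions (finite locally projective $\mathscr{O}_{\check X}[\frac1p]$-module $\leftrightarrow$ finite Higgs isocrystal of level $r$; $p$-torsion-free coherent $M$ with $M[\frac1p]$ locally projective $\leftrightarrow$ finite Higgs crystal with $\mathcal{M}_{\mathcal{X}}$ $p$-torsion free) is precisely the definition of \emph{finite} in \S\ref{sss:catQ} transported through \eqref{eq:equivalence HC HS}.

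The main obstacle is the convergence comparison in the last paragraph: one must control the mixed $p$-adic/$\xi$-adic filtration on the twisted rings $A_N\{W\}_r=\bigoplus_I A_N^{(|I|)_r}\underline W^I$ (the groups $A_N^{(m)_r,n}$ above) and its interplay with the substitution $W_i=\frac{u_i-1}{\xi}$ closely enough to identify membership of the Taylor series with the analytic conditions (Conv$)_r$/(Int), all the while keeping the Tate twist consistent so that $\theta$ genuinely takes values in $M\otimes_{\mathscr{O}_{\check X}}\xi^{-1}\Omega^1_{\check X/\Sigma_{1,S}}$; this is where I would follow the computations of \cite{AGT} IV.2.3 and IV.3.4 in detail. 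The case $r=\infty$ is considerably more transparent, and is the one that suffices for the applications in this paper.
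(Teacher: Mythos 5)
Your proposal is correct and follows essentially the same route as the paper and the cited reference (\cite{AGT} IV.3.4.16): the Higgs field is extracted as the linear term of $\varepsilon$ on the first infinitesimal neighborhood $\mathcal{D}(1)^1$ via the identification $\Ker(\mathscr{O}_{\mathcal{D}(1)^1}\to\mathscr{O}_{\check{X}})\cong\xi^{-1}\Omega^1_{\check{X}/\Sigma_{1,S}}$, and the quasi-inverse on the small/convergent subcategories is the Taylor series $\varepsilon(1\otimes x)=\sum_{\underline{m}}\frac{\theta_{\underline{m}}(x)}{\underline{m}!}\otimes\prod_i\bigl(\frac{u_{i,2}-1}{\xi}\bigr)^{m_i}$, which is exactly the formula the paper records in remark \ref{r:formula twisted pullback} and corollary \ref{c:HS HM}, with convergence governed by (Conv$)_r$/(Int) against the coefficient groups $A_N^{(m)_r}$. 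The convergence bookkeeping you flag as the main obstacle is indeed where the cited computations of \cite{AGT} IV.2.3 and IV.3.4 are needed, but your reduction is the right one.
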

	We briefly review the construction of functor \eqref{eq:HS HM}. 
	Let $\mathcal{D}(1)_{N}^1$ be the first infinitesimal neighborhood of $\Delta_{1,N}:\mathcal{X}_N\to \mathcal{D}(1)_N$. Then, there exists a canonical isomorphism (\cite{AGT} (IV.2.4.7)) 
	\[
	\Ker(\mathcal{O}_{\mathcal{D}(1)_{N}^{1}} \rightarrow \mathcal{O}_{\mathcal{X}_{N}}) \xrightarrow{\sim} \xi^{-1}\Omega_{\mathcal{X}_N/\Sigma_{N,S}}^1.
	\]
	Given $(M,\varepsilon)$ as above, $\varepsilon$ induces an isomorphism $\varepsilon^1:\mathscr{O}_{D(1)^1}\otimes M\to M\otimes \mathscr{O}_{D(1)^1}$ and $\theta$ is defined by
	$$\theta: M\to \xi^{-1}M\otimes_{\mathscr{O}_{\check{X}}}\Omega_{\check{X}/\Sigma_{1,S}}^1, \quad x\mapsto \varepsilon^1(1\otimes x)-x\otimes 1.$$ 
	
\begin{secnumber} \label{sss:Higgs small}
	Assume moreover that $X$ is proper $\mathscr{O}_K$. 
	We have a canonical isomorphism of rigid spaces $X_\bC^{\an}\simeq \check{X}^{\rig}$.
	Let $\check{X}^{\rig}_{\ad}$ be the rigid space $\check{X}^{\rig}$, equipped with the admissible topology and 
	$$\rho_{\check{X}}:(\check{X}^{\rig}_{\ad},\mathscr{O}_{\check{X}^{\rig}})\to (\check{X},\mathscr{O}_{\check{X}})$$
	the morphism of ringed topoi (\cite{Ab10} 4.7.5.2). 
	We have the following commutative diagram: 
	\begin{equation} \label{eq:coh scheme formal scheme}
	\xymatrix{
	\Coh(\mathscr{O}_{X_{\oo}}) \ar[r]^{\widehat{-}}_{\sim} \ar[d] & \Coh(\mathscr{O}_{\check{X}}) \ar[d]^{\rho_{\check{X}}^*} \\
	\Coh(\mathscr{O}_{X_\bC}) \ar[r]^{\an}_{\sim} & \Coh(\mathscr{O}_{X_\bC^{\an}})
	}
\end{equation}
	where horizontal functors are the $p$-adic completion functor and analytification functor and induce equivalences of categories, the vertical functor is essentially surjective and is fully faithful up to isogeny.
%
\end{secnumber}

\begin{prop} \label{p:bundles models}
	Assume moreover that $X$ is proper over $\mathscr{O}_K$.

	\textnormal{(i)} The analytification functor induces an equivalence of categories $\VB(X_{\bC}) \xrightarrow{\sim} \VB(X_{\bC}^{\an})$. 

	\textnormal{(ii)} The functor $\rho_{\check{X}}^*$ induces an equivalence between the category $\LP(\mathscr{O}_{\check{X}}[\frac{1}{p}])$ of locally projective $\mathscr{O}_{\check{X}}[\frac{1}{p}]$-modules of finite type and $\VB(X_{\bC}^{\an})$. 
\end{prop}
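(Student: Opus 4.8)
The plan is to deduce both assertions from the commutative square \eqref{eq:coh scheme formal scheme}, whose horizontal arrows --- $p$-adic completion and analytification --- are the GAGA-type equivalences recorded there; the work is to keep track of the ``locally free of finite type'' condition along the functors in that square, the only ingredient beyond these equivalences and a categorical localization at $p$ being some flatness of the comparison morphism $\rho_{\check{X}}$.

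For (i): the bottom arrow of \eqref{eq:coh scheme formal scheme} is an equivalence $\Coh(\mathscr{O}_{X_{\bC}})\xrightarrow{\ \sim\ }\Coh(\mathscr{O}_{X_{\bC}^{\an}})$, so it suffices to check that a coherent $\mathscr{O}_{X_{\bC}}$-module $\mathscr{F}$ is locally free if and only if $\mathscr{F}^{\an}$ is. This is local on $X_{\bC}$: at a closed point $x$, the Noetherian local rings $\mathscr{O}_{X_{\bC},x}$ and $\mathscr{O}_{X_{\bC}^{\an},x}$ share a common completion $\widehat{\mathscr{O}}$, and $(\mathscr{F}^{\an})_{x}$ has the same $\widehat{\mathscr{O}}$-base change as $\mathscr{F}_{x}$. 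Since both $\mathscr{F}_{x}$ and $(\mathscr{F}^{\an})_{x}$ are finitely presented, each is free if and only if it is flat, if and only if its base change to $\widehat{\mathscr{O}}$ is $\widehat{\mathscr{O}}$-flat, by faithfully flat descent of flatness (resp.\ freeness) along completion. Hence $(-)^{\an}$ restricts to the asserted equivalence $\VB(X_{\bC})\xrightarrow{\ \sim\ }\VB(X_{\bC}^{\an})$.

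For (ii): first localize \eqref{eq:coh scheme formal scheme} at $p$. For a ringed space $(Y,\mathscr{O})$ with $\mathscr{O}$ coherent, $\mathscr{F}\mapsto\mathscr{F}[\tfrac{1}{p}]$ identifies $\Coh(\mathscr{O}[\tfrac{1}{p}])$ with the localization of $\Coh(\mathscr{O})$ at $\{p^{n}\}$: essentially surjective by clearing denominators, fully faithful modulo $p$-power torsion since coherent modules are finitely presented. Applying this to $\check{X}$, and using that the vertical functor of \eqref{eq:coh scheme formal scheme} is essentially surjective and fully faithful up to isogeny (with $X_{\bC}=X_{\oo}[\tfrac{1}{p}]$), the commutativity of \eqref{eq:coh scheme formal scheme} and the invertibility of $p$ on $\mathscr{O}_{X_{\bC}^{\an}}$ yield that $\rho_{\check{X}}^{*}$ descends to an equivalence $\Coh(\mathscr{O}_{\check{X}}[\tfrac{1}{p}])\xrightarrow{\ \sim\ }\Coh(\mathscr{O}_{X_{\bC}^{\an}})$. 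It remains to match $\LP(\mathscr{O}_{\check{X}}[\tfrac{1}{p}])$ with $\VB(X_{\bC}^{\an})$. If $\mathscr{M}$ is, locally over a formal affine $\Spf(A)\subset\check{X}$, a finite projective $A[\tfrac{1}{p}]$-module, then $\rho_{\check{X}}^{*}\mathscr{M}$ is, on the admissible open $\Spf(A)^{\rig}$ of $X_{\bC}^{\an}$, a direct summand of a finite free $\mathscr{O}$-module, hence a vector bundle; so $\rho_{\check{X}}^{*}$ carries $\LP$ into $\VB$. Conversely, for $\mathscr{N}\in\VB(X_{\bC}^{\an})$ with preimage $\mathscr{M}\in\Coh(\mathscr{O}_{\check{X}}[\tfrac{1}{p}])$, I would use the flatness of $\rho_{\check{X}}$ from \cite{Ab10}, so that for a rigid point $y$ over $x\in\check{X}$ the local homomorphism $(\mathscr{O}_{\check{X}}[\tfrac{1}{p}])_{x}\to\mathscr{O}_{X_{\bC}^{\an},y}$ is faithfully flat; then the finite presentation of $\mathscr{M}_{x}$ and freeness of $\mathscr{N}_{y}$ force $\mathscr{M}_{x}$ to be free, by the standard descent of freeness for finitely presented modules along faithfully flat local maps. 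Hence $\mathscr{M}\in\LP(\mathscr{O}_{\check{X}}[\tfrac{1}{p}])$, and $\rho_{\check{X}}^{*}$ restricts to the equivalence $\LP(\mathscr{O}_{\check{X}}[\tfrac{1}{p}])\xrightarrow{\ \sim\ }\VB(X_{\bC}^{\an})$.

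The point requiring care is the converse direction of (ii): one needs enough control on $\rho_{\check{X}}$ --- via its description in \cite{Ab10} --- to transport local freeness of a coherent module back from the rigid side to the formal side, i.e.\ faithful flatness of $\rho_{\check{X}}$ on local rings. The GAGA equivalences and the localization at $p$ are formal once available, and everything else is routine.
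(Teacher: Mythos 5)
Your treatment of (i) and of the ``forward'' half of (ii) is fine (the paper simply quotes \cite{Xu17} for both), and obtaining full faithfulness by exhibiting $\Coh(\mathscr{O}_{X_{\bC}^{\an}})$ as the localization of $\Coh(\mathscr{O}_{\check{X}})$ at $p$ is a legitimate alternative to the paper's bare appeal to the diagram \eqref{eq:coh scheme formal scheme} (though the assertion that every coherent $\mathscr{O}_{\check{X}}[\frac{1}{p}]$-module admits a coherent $\mathscr{O}_{\check{X}}$-lattice globally, not just locally, deserves a word). The real problem is exactly at the step you yourself flag as the one requiring care.

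The claim that for a rigid point $y$ lying over $x\in\check{X}$ the map $(\mathscr{O}_{\check{X}}[\frac{1}{p}])_{x}\to\mathscr{O}_{X_{\bC}^{\an},y}$ is a faithfully flat \emph{local} homomorphism is false. The point $x$ lives on the special fibre, so $p\in\mathfrak{m}_{x}\subset\mathscr{O}_{\check{X},x}$, and after inverting $p$ the stalk $(\mathscr{O}_{\check{X}}[\frac{1}{p}])_{x}=\mathscr{O}_{\check{X},x}[\frac{1}{p}]$ is no longer local: its maximal ideals are indexed by the classical points of the whole tube of $x$. For instance, for $\check{X}=\Spf(\oo\langle T\rangle)$ and $x$ the origin of the special fibre, every ideal $(T-a)$ with $a\in\mm$ survives the localization and gives a distinct maximal ideal. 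A single local ring $\mathscr{O}_{X_{\bC}^{\an},y}$ therefore cannot be faithfully flat over it --- the image of its spectrum only meets the primes below the one maximal ideal attached to $y$ --- so you cannot descend projectivity of $\mathscr{M}_{x}$ from a single point $y$, and ``free over $(\mathscr{O}_{\check{X}}[\frac{1}{p}])_{x}$'' is in any case not the conclusion you want over a non-local ring. The honest replacement for this stalkwise descent is Kiehl's theorem applied affinoid-locally: on a formal affine $\Spf(A)\subset\check{X}$ your $\mathscr{M}$ is a finite module over the affinoid algebra $A[\frac{1}{p}]$, and a finite module over an affinoid algebra whose associated coherent sheaf on $\Spf(A)^{\rig}$ is locally free is finite projective.

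Note that the paper avoids this converse altogether: it proves essential surjectivity onto $\VB(X_{\bC}^{\an})$ by producing the preimage explicitly. Starting from $E\in\VB(X_{\bC})$ (via (i)), one chooses a torsion-free coherent model $\mathscr{E}$ on $X_{\oo}$; on an affine $U=\Spec(R)$ meeting the special fibre, $\mathscr{E}(U)[\frac{1}{p}]$ is finite projective over $R[\frac{1}{p}]$, hence $\widehat{\mathscr{E}(U)}[\frac{1}{p}]\simeq\mathscr{E}(U)[\frac{1}{p}]\otimes_{R}\widehat{R}$ is finite projective over $\widehat{R}[\frac{1}{p}]$, so $\widehat{\mathscr{E}}[\frac{1}{p}]$ lies in $\LP(\mathscr{O}_{\check{X}}[\frac{1}{p}])$ and maps to $E$. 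You should replace the faithful-flatness step either by this construction or by the affinoid-local Kiehl argument.
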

\begin{proof} 
	Assertion (i) is due to Köpf \cite{Kopf} (see \cite{Xu17} proposition 4.7).  

	(ii) By (\cite{Xu17} 5.15), $\rho_{\check{X}}^*$ sends $\LP(\mathscr{O}_{\check{X}}[\frac{1}{p}])$ to $\VB(X_{\bC}^{\an})$. 
	It remains to show it is essentially surjective. 

	Let $E$ be a vector bundle over $X_{\bC}$ and $\mathscr{E}$ a torsion-free coherent $\mathscr{O}_{X_{\oo}}$-module with generic fiber $E$. 
	For any affine open subscheme $U=\Spec(R)$ of $X_{\oo}$ such that $U\cap X_s$ is non-empty, the $R[\frac{1}{p}]$-module $\mathscr{E}(U)[\frac{1}{p}]$ is finite projective. 
	Then, the $\widehat{R}[\frac{1}{p}]$-module $\widehat{\mathscr{E}(U)}[\frac{1}{p}]\simeq \mathscr{E}(U)[\frac{1}{p}]\otimes_R \widehat{R}$ is also finite projective and $\widehat{\mathscr{E}}[\frac{1}{p}]$ is therefore locally projective of finite type. 
	Then, the essential surjectivity follows. 
\end{proof}

\begin{definition}[\cite{Fal05}] \label{d:small HB}
	(i) Let $M$ be a coherent $\mathscr{O}_{\check{X}}$-module and $\theta$ a Higgs field on $M$. 

	\begin{itemize}
		\item For $\alpha\in \BQ_{>0}$, we say $\theta$ is \textit{$\alpha$-small} if $\theta(M)\subset p^{\alpha} M\otimes_{\mathscr{O}_{\check{X}}}\xi^{-1}\Omega_{\check{X}/\Sigma_{1,S}}^1$. 

		\item We say $\theta$ is \textit{small} if it is $\alpha$-small for some $\alpha>\frac{1}{p-1}$. 
		\end{itemize}

	(ii) Let $\mathscr{N}$ be a locally projective $\mathscr{O}_{\check{X}}[\frac{1}{p}]$-module and $\theta$ a Higgs field on $\mathscr{N}$. 
	We say $\theta$ is \textit{small} if there exists a coherent sub-$\mathscr{O}_{\check{X}}$-module $\mathscr{N}^{\circ}$ of $\mathscr{N}$ such that it generates $\mathscr{N}$ over $\mathscr{O}_{\check{X}}[\frac{1}{p}]$ and 
	that $\theta$ preserves $\mathscr{N}^{\circ}$ and the restriction of $\theta$ on $\mathscr{N}^{\circ}$ is a small in the sense of (i). 
\end{definition}

	Given a $p$-torsion free coherent $\mathscr{O}_{\check{X}}$-module $M$, a small Higgs fields on $M$ satisfies condition (Int).

Given a locally projective $\mathscr{O}_{\check{X}}[\frac{1}{p}]$-module $\mathscr{N}$ of finite type, $\theta$ is small if and only if $(\mathscr{N},\theta)$ belongs to $\HB_{\BQ_p,\conv}^r(\check{X}/\Sigma_{1,S})$ for some $r\in \mathbb{N}$, i.e. $(\mathscr{N},\theta)$ belongs to $\HB_{\BQ_p,\sma}(\check{X}/\Sigma_{1,S})$ (\cite{AGT} IV.3.6.6). 

	We denote by $\HB(X_{\bC})$ (resp. $\HB(X_{\bC}^{\an})$) the category of pairs $(M,\theta)$ of a vector bundle $M$ and a Higgs field $\theta:M\to M\otimes_{\mathscr{O}_{X_{\bC}}}\xi^{-1}\Omega_{X_{\bC}/\bC}^1$ (resp. $\theta:M\to M\otimes_{\mathscr{O}_{X_{\bC}^{\an}}}\xi^{-1}\Omega_{X_{\bC}^{\an}/\bC}^1$). 
	By proposition \ref{p:bundles models}, we have equivalences of categories 
	\begin{equation} \label{eq:equivHB}
		\HB_{\Qp}(\check{X}/\Sigma_{1,S})\xrightarrow{\sim} \HB(X_{\bC}^{\an}),\quad \HB(X_{\bC}) \xrightarrow{\sim} \HB(X_{\bC}^{\an}).
	\end{equation}

	We say a Higgs bundle $(M,\theta)$ of $\HB(X_{\bC}^{\an})$ (resp. $\HB(X_{\bC})$) is \textit{small (with respect to $X$)} if there exists a small Higgs bundle of $\HB_{\BQ_p}(\check{X}/\Sigma_{1,S})$ associated to $(M,\theta)$ via the above equivalences.
	We denote by $\HB_{\sma}(X_{\bC})$ the full subcategory of small Higgs bundles. 

\begin{coro} \label{c:HS HM}
	 Let $\mathcal{M}$ be an object of $\HC_{\Zp,\fin}(\check{X}/\Sigma_S)$ such that $M=\mathcal{M}_{\mathcal{X}}$ is $p$-torsion free and $\varepsilon$ (resp. $\theta$) the assicated Higgs stratification (resp. field) on $M$. 
	If $\theta$ is $\alpha$-small and set $\beta=\alpha-\frac{1}{p-1}$, then for any local sections $a\in \mathscr{O}_{D(1)}, x\in M$, we have
	\begin{equation} \label{eq:cong varepsilon}
\varepsilon(a\otimes x)\equiv x\otimes a \mod p^{\beta}.
	\end{equation}
	In particular, if $M/p^{\beta}M$ is moreover free, then there exists an isomorphism of Higgs crystals modulo $p^\beta$:
	\[
	\mathcal{M}/p^{\beta}\mathcal{M}\simeq (\bO_{\check{X}/\Sigma_S,\beta})^{\oplus r}.
	\]
\end{coro}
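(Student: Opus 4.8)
The plan is to reduce the congruence \eqref{eq:cong varepsilon} to an explicit local form of the Higgs stratification attached to $\theta$, and then to run a one-line $p$-adic estimate; the trivialization statement will then follow formally from the equivalence \eqref{eq:equivalence HC HS}. First I would use that \eqref{eq:cong varepsilon} is local on $\check{X}$ and pass to a strict étale neighborhood carrying log coordinates $t_1,\dots,t_d\in\Gamma(\check{X},\mathscr{M}_{\check{X}})$ whose $d\log t_i$ form a basis of $\Omega^1_{\check{X}/\Sigma_{1,S}}$. By \eqref{eq:local description envelop} (with $N=1$, $r=\infty$) this identifies $\mathscr{O}_{D(1)}$, as an $\mathscr{O}_{\check{X}}$-algebra via $q_1$ (equivalently $q_2$), with the $p$-adic completion $\mathscr{O}_{\check{X}}\{W_1,\dots,W_d\}$ of a polynomial algebra. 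A small $\theta$ is integral (as noted after Definition \ref{d:small HB}), and $\alpha$-smallness with $\alpha>\tfrac{1}{p-1}$ moreover gives the $p$-adic convergence needed below; so, by the construction of the functor \eqref{eq:HS HM} underlying the equivalence $\iota_{\mathcal{X}}$ of Proposition \ref{p:HS HM}, the stratification is recovered from $\theta$ through the Taylor formula
\begin{equation*}
\varepsilon(1\otimes x)=\sum_{\underline{m}\in\mathbb{N}^d}\frac{1}{\underline{m}!}\,\theta_{\underline{m}}(x)\otimes\underline{W}^{\underline{m}},
\end{equation*}
where $\theta_i$ are the components of $\theta$, $\theta_{\underline{m}}=\prod_i\theta_i^{m_i}$, $\underline{W}^{\underline{m}}=\prod_i W_i^{m_i}$, and the $\xi$-powers cancel because $W_i=(u_i-1)/\xi$ while each $\theta_i$ carries a factor $\xi^{-1}$.

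Next comes the estimate. By Definition \ref{d:small HB}(i), $\alpha$-smallness means $\theta_i(M)\subset p^{\alpha}M$ for every $i$, hence $\theta_{\underline{m}}(M)\subset p^{|\underline{m}|\alpha}M$. Writing $\underline{m}!=p^{c}u$ with $u$ prime to $p$, the integer $c=\sum_i v_p(m_i!)$ satisfies $c\le|\underline{m}|/(p-1)$, so $\tfrac{1}{\underline{m}!}\theta_{\underline{m}}(x)\in p^{|\underline{m}|\alpha-c}M\subset p^{|\underline{m}|\beta}M$ with $\beta=\alpha-\tfrac{1}{p-1}>0$. Therefore each term with $|\underline{m}|\ge 1$ lies in $p^{\beta}\bigl(M\otimes_{\mathscr{O}_{\check{X}}}\mathscr{O}_{D(1)}\bigr)$ and tends to $0$, whence $\varepsilon(1\otimes x)\equiv x\otimes 1\pmod{p^{\beta}}$; by $\mathscr{O}_{D(1)}$-linearity, $\varepsilon(a\otimes x)=a\,\varepsilon(1\otimes x)\equiv x\otimes a\pmod{p^{\beta}}$ for all local sections $a\in\mathscr{O}_{D(1)}$, $x\in M$, which is \eqref{eq:cong varepsilon} (the assertion being local, the local verification suffices).

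Finally, for the trivialization: the crystal $\mathcal{M}/p^{\beta}\mathcal{M}$ is an $\bO_{\check{X}/\Sigma_S,\beta}$-module crystal, and under \eqref{eq:equivalence HC HS} it corresponds to the pair $(M/p^{\beta}M,\bar\varepsilon)$ with $\bar\varepsilon$ the reduction of $\varepsilon$. Assuming $M/p^{\beta}M$ is free of rank $r$, I would choose a basis $e_1,\dots,e_r$, lift each $e_k$ locally to $M$, and apply \eqref{eq:cong varepsilon} to get $\bar\varepsilon(a\otimes e_k)=e_k\otimes a$; that is, $\bar\varepsilon$ is the trivial Higgs stratification on the free $\mathscr{O}_{\check{X}}/p^{\beta}$-module of rank $r$, which is precisely the one attached to $(\bO_{\check{X}/\Sigma_S,\beta})^{\oplus r}$ under \eqref{eq:equivalence HC HS}. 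Full faithfulness of that equivalence then yields $\mathcal{M}/p^{\beta}\mathcal{M}\simeq(\bO_{\check{X}/\Sigma_S,\beta})^{\oplus r}$.

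I expect the only delicate point to be the first step, namely the assertion that the stratification of a small Higgs field is the displayed Taylor series with integral coefficients $\tfrac{1}{\underline{m}!}\theta_{\underline{m}}(x)$ — in particular the $\xi$-bookkeeping forced by the $\xi^{-1}$-twist in the definition of $\theta$, the normalization $W_i=(u_i-1)/\xi$, and condition (Int). This, however, is built into the construction behind Proposition \ref{p:HS HM}, so once it is granted the corollary reduces to the elementary inequality $v_p(\underline{m}!)\le|\underline{m}|/(p-1)$.
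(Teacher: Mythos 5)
Your proposal is correct and follows essentially the same route as the paper: the paper's proof likewise writes the stratification locally as the Taylor series $\varepsilon(1\otimes x)=\sum_{\underline{m}}\frac{\theta_{\underline{m}}(x)}{\underline{m}!}\otimes\prod_i\bigl(\frac{u_{i,2}-1}{\xi}\bigr)^{m_i}$, deduces $\frac{\theta_{\underline{m}}(x)}{\underline{m}!}\equiv 0 \bmod p^{\beta}$ from $\alpha$-smallness, and obtains the trivialization from the equivalence \eqref{eq:equivalence HC HS}. Your explicit verification of the valuation bound $v_p(\underline{m}!)\le|\underline{m}|/(p-1)$ just spells out a step the paper leaves implicit.
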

\begin{proof}
	Let $U\to \check{X}$ be a strict \'etale morphism as in \ref{HS HB}. 
	With the notations of \ref{sss:Higgs envelope} and \ref{HS HB}, the Higgs stratification $\varepsilon$ can be written, for any local section $m\in M$, as:
	\[
	\varepsilon(1\otimes x)=\sum_{\underline{m}\in \mathbb{Z}_{\ge 0}^d} \frac{\theta_{\underline{m}}(x)}{\underline{m}!} \otimes \prod_{1\le i\le d} \biggl(\frac{u_{i,2}-1}{\xi}\biggr)^{m_i}.
	\]
	Since $\theta$ is $\alpha$-small, we deduce that $\frac{\theta_{\underline{m}}(x)}{\underline{m}!}\equiv 0 \mod p^{\beta}$ except $\underline{m}=(0,\cdots,0)$. Then, the congruence \eqref{eq:cong varepsilon} follows. 
	The second assertion follows from the equivalence \eqref{eq:equivalence HC HS}. 
\end{proof}

\subsection{Inverse image of Higgs crystals}

In this subsection, we discuss the inverse image functor for Higgs crystals and its realization in terms of modules with Higss stratification \eqref{sss:HS} and of Higgs modules. 
\begin{secnumber} \label{sss:pullback-HiggsC} 
	Let $K'$ be a finite extension of $K$ in $\overline{K}$ and $S'=\Spec(\mathscr{O}_{K'})$. 
	With the notations of \S~\ref{sss:notations p-adic log}, there exists a canonical morphism $g:\Sigma_{S'}\to \Sigma_S$ in $\mathscr{C}$. 
	
	Let $X'$ be a semi-stable $S'$-scheme, $\check{X}'$ the $p$-adic completion of $X'\otimes_{\mathscr{O}_{K'}}\oo$, equipped with the fine log structure induced $\mathscr{M}_{X'}$. 
	Let $f:X'\to X_{S'}=X\times_S S'$ be an $S'$-morphism. 
	We denote by $\check{f}:\check{X}'\to \check{X}$ the morphism of $p$-adic fine log formal schemes over $\Sigma_{1,S}$ induced by $f$. 
	For $r',r\in \mathbb{N} \cup \{\infty\}$ such that $r'\ge r$, the morphisms $\check{f}$ and $g$ induces a cocontinuous functor: 
	\begin{equation}
		(\check{X}'/\Sigma_{S'})^{r'}_{\HIG}\to (\check{X}/\Sigma_S)^{r}_{\HIG},\quad (\mathcal{T},z)\mapsto (\mathcal{T}\to \Sigma_{S'}\to \Sigma_S,g\circ z),
		\label{eq:functor-f-g-HIG}
	\end{equation}
	and then a morphism of topoi (\cite{AGT} IV.3.1.5)
	\[
	\check{f}_{\HIG}: (\check{X}'/\Sigma_{S'})^{r',\sim}_{\HIG}\to (\check{X}/\Sigma_S)^{r,\sim}_{\HIG},
	\]
	such that $\check{f}_{\HIG}^{-1}(\overline{\mathscr{O}}_{\check{X}/\Sigma_S})=\overline{\mathscr{O}}_{\check{X}'/\Sigma_{S'}}$. 
	Its inverse image functors preserve Higgs (iso-)crystals:
	\[
	\check{f}_{\HIG}^*: \HC^{r}_{\BQ_p}(\check{X}/\Sigma_S) \to \HC^{r'}_{\BQ_p}(\check{X}'/\Sigma_{S'}), \quad 
	\HC_{\Zp}(\check{X}/\Sigma_S)\to \HC_{\Zp}(\check{X}'/\Sigma_{S'}).
	\]
	If a Higgs crystal $\mathcal{M}$ is finite, then so is its inverse image $\check{f}_{\HIG}^*(\mathcal{M})$. 
	When $S'=S$ and $X'=X$, the above functor, defined for Higgs isocrystals of different levels, is fully faithful (\cite{AGT} IV.3.5.3). 

	In the following, we focus on the inverse image for Higgs (iso-)crystals (i.e. $r'=r=\infty$).
	We ignore the superscript $\infty$ from the notations $\HS_{\Qp}^{\infty}$. 

	Let $\mathcal{X}$ (resp. $\mathcal{X}'$) be a smooth Cartesian lifting of $\check{X}$ over $\Sigma_S$ (resp. $\check{X}'$ over $\Sigma_{S'}$) and $\mathcal{D}(s)$ (resp. $\mathcal{D}'(s)$) the Higgs envelope of $X\to \mathcal{X}^{s+1}$ (resp. $X'\to \mathcal{X}'^{s+1}$) for $s\ge 1$. 
	By \eqref{eq:equivalence HC HS} and \ref{p:HS HM}(ii), for $\bullet\in \{\Zp,\BQ_p\}$, we obtain twisted inverse image functors for modules with stratifications and Higgs bundles (definition \ref{d:Conv Int})
	\begin{equation} \label{eq:twisted pullback conv}
	\check{f}_{\HIG,\mathcal{X}',\mathcal{X}}^*: 
	\HS_{\bullet,\fin}(\check{X},\mathcal{X}/\Sigma_S) \to \HS_{\bullet,\fin}(\check{X}',\mathcal{X}'/\Sigma_{S'}), \quad 
	\HB_{\bullet,\conv}(\check{X}/\Sigma_{1,S})\to \HB_{\bullet,\conv}(\check{X}'/\Sigma_{1,S'}).
	\end{equation}
	The above functors for Higgs bundles are different to the usual inverse image functors for Higgs bundles unless the Higgs field is zero (see remark \ref{r:formula twisted pullback}). 
\end{secnumber}

\begin{lemma}\label{p:lifting homomorphism of Hopf algs}
	Suppose $\widetilde{f}:\mathcal{X}'\to \mathcal{X}$ is a lifting of $f:\check{X}'\to \check{X}$ over $\Sigma_S$ (which locally exists). 
	Then, for $s\ge 1$, it induces canonical morphisms $\widetilde{f}(s):\mathcal{D}'(s)\to\mathcal{D}(s)$ compatible with natural projections and the diagonal embedding $\mathcal{X}\to \mathcal{D}(s)$ (c.f. \S~\ref{sss:Higgs envelope}). 
\end{lemma}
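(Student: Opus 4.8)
The plan is to realise the morphism $\mathcal{D}'(s)\to\mathcal{D}(s)$ as the image, under the Higgs envelope functor $D^{\infty}_{\HIG}\colon\mathscr{C}\to\mathscr{C}^{\infty}$ (the right adjoint of the full inclusion $\mathscr{C}^{\infty}\hookrightarrow\mathscr{C}$, see \S\ref{sss:Hig envelope C}), of a suitable morphism of objects of $\mathscr{C}$ built out of $\widetilde{f}$, and then to deduce all the stated compatibilities from the functoriality of $D^{\infty}_{\HIG}$ and the naturality of its adjunction (co)unit. Recall from \S\ref{sss:Higgs envelope} that, writing $X=\mathcal{X}_1=\check{X}$ and $X'=\mathcal{X}'_1=\check{X}'$, one has $\mathcal{D}(s)=D^{\infty}_{\HIG}(X\to\mathcal{X}^{s+1})$ and $\mathcal{D}'(s)=D^{\infty}_{\HIG}(X'\to\mathcal{X}'^{s+1})$, and that $q_i$, $p_{ij}$ and $\Delta_s$ (and their primed versions) are obtained by applying $D^{\infty}_{\HIG}$ — and using that, since $\mathscr{C}^{\infty}$ is full, $D^{\infty}_{\HIG}$ is canonically the identity on objects of $\mathscr{C}^{\infty}$ — to the projections $\mathcal{X}^{s+1}\to\mathcal{X}$, the projections $\mathcal{X}^{3}\to\mathcal{X}^{2}$, and the diagonal $\mathcal{X}\to\mathcal{X}^{s+1}$, all viewed as morphisms of $\mathscr{C}$ that carry the diagonal immersion to the diagonal immersion.

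First I would form the $(s+1)$-fold fibre product $\widetilde{f}^{s+1}\colon\mathcal{X}'^{s+1}\to\mathcal{X}^{s+1}$ of $\widetilde{f}$: composing the $s+1$ canonical projections $\mathcal{X}'^{s+1}\to\mathcal{X}'$ with $\widetilde{f}\colon\mathcal{X}'\to\mathcal{X}$ gives $s+1$ morphisms $\mathcal{X}'^{s+1}\to\mathcal{X}$ over $\Sigma_S$ (via the canonical $\Sigma_{S'}\to\Sigma_S$), whence, by the universal property of the product $\mathcal{X}^{s+1}$ over $\Sigma_S$ in $\mathscr{C}$, a morphism $\widetilde{f}^{s+1}$ of $\mathscr{C}$ commuting with all projections and with the two diagonals $\mathcal{X}'\to\mathcal{X}'^{s+1}$ and $\mathcal{X}\to\mathcal{X}^{s+1}$. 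Since $\widetilde{f}$ lifts $f$ we have $\widetilde{f}_1=f$, so $(\widetilde{f}^{s+1})_1$ carries the diagonal immersion $X'\hookrightarrow(\mathcal{X}'^{s+1})_1$ into $X\hookrightarrow(\mathcal{X}^{s+1})_1$ via $f$; hence $\widetilde{f}^{s+1}$ induces a morphism $g\colon(X'\to\mathcal{X}'^{s+1})\to(X\to\mathcal{X}^{s+1})$ of objects of $\mathscr{C}$, and I would set the desired morphism to be $D^{\infty}_{\HIG}(g)\colon\mathcal{D}'(s)\to\mathcal{D}(s)$. The projections $\mathcal{X}^{s+1}\to\mathcal{X}$, $\mathcal{X}^{3}\to\mathcal{X}^{2}$ and the diagonal $\mathcal{X}\to\mathcal{X}^{s+1}$ commute, by the very construction of $\widetilde{f}^{s+1}$, with the relevant maps $g$ and $\widetilde{f}$; applying $D^{\infty}_{\HIG}$ to these commutative squares of $\mathscr{C}$, and using its functoriality together with the identification of objects of $\mathscr{C}^{\infty}$ with their own Higgs envelopes, yields exactly the compatibility of $D^{\infty}_{\HIG}(g)$ with $q_i$ and $\widetilde{f}$, with $p_{ij}$, and with $\Delta_s$ and the diagonal lifting.

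The argument is essentially formal, so I do not expect a serious obstacle; the two points that need a little care are verifying that $\widetilde{f}^{s+1}$ is a morphism of $\mathscr{C}$ respecting the immersion structure built into the objects $(X\to\mathcal{X}^{s+1})$ — which is precisely the compatibility of $\widetilde{f}_1=f$ with the two diagonal immersions used above — and keeping track of the base change $\Sigma_{S'}\to\Sigma_S$, over which $\widetilde{f}^{s+1}$ is taken, since $\mathcal{X}'^{s+1}$ is a product over $\Sigma_{S'}$ whereas $\mathcal{X}^{s+1}$ is a product over $\Sigma_S$. Everything else reduces to functoriality of the right adjoint $D^{\infty}_{\HIG}$ and naturality of its (co)unit.
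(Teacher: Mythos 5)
Your proposal is correct and matches the paper's argument: the paper likewise forms $\widetilde{f}^{s+1}:\mathcal{X}'^{s+1}\to\mathcal{X}^{s+1}$, observes that it carries the diagonal immersion of $X'$ to that of $X$ via $f$, and obtains $\mathcal{D}'(s)\to\mathcal{D}(s)$ from the universal property of the Higgs envelope (i.e.\ the adjunction defining $D^{\infty}_{\HIG}$), with the compatibilities following formally. Your phrasing via functoriality of the right adjoint $D^{\infty}_{\HIG}$ applied to the morphism $g$ is the same canonical construction, and your remarks on the base change $\Sigma_{S'}\to\Sigma_S$ are a reasonable point of care.
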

\begin{proof}
	We have a commutative diagram
	\[
	\xymatrix{
	D'(s) \ar[r] \ar[d] & \check{X}' \ar[r]^{\check{f}} \ar[d]_{\Delta} & \check{X} \ar[d]_{\Delta} \\
	\mathcal{D}'(s) \ar[r] & \mathcal{X}'^{s+1} \ar[r]^{\widetilde{f}^{s+1}} & \mathcal{X}^{s+1}
	}
	\]
	Then, by the universal property of Higgs envelope, we deduces a canonical morphism $\mathcal{D}'(s)\to \mathcal{D}(s)$ above $\widetilde{f}^{s+1}$. 
	The compatibility can be verified in a similar way. 
\end{proof}

\begin{secnumber} \label{lifting construction}
	Keep the assumption of lemma \ref{p:lifting homomorphism of Hopf algs}. 
	Then, for $\bullet\in \{\Zp,\BQ_p\}$, the morphism $\widetilde{f}$ induces functors:
	\begin{eqnarray} \label{eq:pullback tildef conv}
		&\widetilde{f}^*:& \HS_{\bullet,\fin}(\check{X},\mathcal{X}/\Sigma_S) \to \HS_{\bullet,\fin}(\check{X}',\mathcal{X}'/\Sigma_{S'}),\quad
		(M,\varepsilon) \mapsto (\check{f}^*M, \varepsilon\otimes_{\mathscr{O}_{D(1)},\widetilde{f}(1)}\mathscr{O}_{D'(1)}), \\ 
		&\widetilde{f}^*:&	 \HB_{\bullet,\conv}(\check{X}/\Sigma_{1,S}) \to \HB_{\bullet,\conv}(\check{X}'/\Sigma_{1,S'}),\quad
		(M,\theta)\mapsto \check{f}^*(M,\theta),  \label{eq:pullback tildef conv HB}
	\end{eqnarray}
	which are compatible with the twisted inverse image functor \eqref{eq:twisted pullback conv} via functors \eqref{eq:equivalence HC HS} and \eqref{eq:HS HM}.
	The restriction of $\widetilde{f}^*$ to underlying $\mathscr{O}_{\check{X}}$-modules coincides with the usual inverse image functor $\check{f}^*$. 
\end{secnumber}

\begin{prop} \label{p:twisted-pullback}
	\textnormal{(i)} Suppose $\widetilde{f}'$ is another lifting of $\check{f}$. There exists a natural isomorphism of functors $\alpha_{\widetilde{f}',\widetilde{f}}:\widetilde{f}'^* \xrightarrow{\sim} \widetilde{f}^*$ \eqref{eq:pullback tildef conv}, \eqref{eq:pullback tildef conv HB}.

	\textnormal{(ii)} The isomorphisms $\alpha_{\widetilde{f}',\widetilde{f}}$ satisfy a cocycle condition for three liftings of $\check{f}$. 
	
	\textnormal{(iii)} Let $\{\mathcal{U}_i\}_{i\in I}$ be an open covering of $\mathcal{X}'$ and $\{\widetilde{f}_i:\mathcal{U}_i\to \mathcal{X}\}_{i\in I}$ a collection of liftings of $\check{f}|_{\mathcal{U}_i}$. 
	By gluing local constructions, we obtain inverse image functors:
	\begin{eqnarray}
		\check{f}_{\mathcal{X}',\mathcal{X},(\tilde{f}_i)_{i\in I}}^*: \HS_{\bullet,\fin}(\check{X},\mathcal{X}/\Sigma_S)\to \HS_{\bullet,\fin}(\check{X}',\mathcal{X}'/\Sigma_{S'}),
		\quad (M,\varepsilon)\mapsto (\widetilde{f}_i^*(M,\varepsilon),\alpha_{\widetilde{f}_{i'},\widetilde{f}_i}), \\ 
		\check{f}_{\mathcal{X}',\mathcal{X},(\tilde{f}_i)_{i\in I}}^*: \HB_{\bullet,\conv}(\check{X}/\Sigma_{1,S}) \to \HB_{\bullet,\conv}(\check{X}'/\Sigma_{1,S'}),
		\quad (M,\theta)\mapsto (\widetilde{f}_i^*(M,\theta),\alpha_{\widetilde{f}_{i'},\widetilde{f}_i}),
		\label{eq:twisted pullback}
	\end{eqnarray}
	which are independent of the choice of $(\widetilde{f}_i)_{i\in I}$ up to canonical isomorphisms. 
	We denote the resulting functors by $\check{f}_{\mathcal{X}',\mathcal{X}}^*$. 
	
	\textnormal{(iv)} The functors $\check{f}_{\mathcal{X}',\mathcal{X}}^*$ are canonically isomorphic to functors \eqref{eq:twisted pullback conv}. 
\end{prop}
\begin{proof}
	By proposition \ref{p:HS HM}, it suffices to prove the proposition for the categories $\HS$. 

(i) By the universal property of Higgs envelope, the canonical morphism 
	$(\widetilde{f}',\widetilde{f}):\mathcal{X}'\to \mathcal{X}^2$ induces a canonical morphism over $\Sigma_S$:
	\begin{equation} \label{eq:alpha}
		\alpha: \mathcal{X}' \to \mathcal{D}(1).
\end{equation}
	Given an object $(M,\varepsilon)$ of $\HS_{\bullet,\fin}(\check{X},\mathcal{X}/\Sigma_{S})$, then we obtain an isomorphism 
	\[
	\alpha^*(\varepsilon):\widetilde{f}'^*(M)\xrightarrow{\sim} \widetilde{f}^*(M). 
	\]
	It remains to show that this isomorphism is compatible with $\mathscr{O}_{D'(1)}$-stratifications defined in \eqref{eq:pullback tildef conv}. 
	Indeed, we need to show the following diagram commutes:
	\[
	\xymatrix{
	\mathscr{O}_{D'(1)}\otimes_{\mathscr{O}_{\check{X}'}} \widetilde{f}'^*(M) \ar[r]^{\widetilde{f}'^*(\varepsilon)} \ar[d]_{\mathscr{O}_{D'(1)}\otimes \alpha} & 
	\widetilde{f}'^*(M) \otimes_{\mathscr{O}_{\check{X}'}} \mathscr{O}_{D'(1)} \ar[d]^{\alpha\otimes \mathscr{O}_{D'(1)}} \\
	\mathscr{O}_{D'(1)}\otimes_{\mathscr{O}_{\check{X}'}} \widetilde{f}^*(M) \ar[r]^{\widetilde{f}^*(\varepsilon)} & 
	\widetilde{f}^*(M) \otimes_{\mathscr{O}_{\check{X}'}} \mathscr{O}_{D'(1)}
	}
	\]
	The morphism $(\widetilde{f}',\widetilde{f}):\mathcal{X}'^2\to \mathcal{X}^2$ induces a canonical morphism $(\widetilde{f}',\widetilde{f}):\mathcal{D}'(1)\to \mathcal{D}(1)$, which coincides following two compositions:
	\[
	\mathcal{D}'(1)\xrightarrow{(\widetilde{f}',\widetilde{f}'), \alpha\circ q_2} \mathcal{D}(1)\times_\mathcal{D} \mathcal{D}(1)\simeq \mathcal{D}(2) \xrightarrow{q_{13}} \mathcal{D}(1),\quad
	\mathcal{D}'(1)\xrightarrow{\alpha\circ q_1, (\widetilde{f},\widetilde{f})} \mathcal{D}(1)\times_\mathcal{D} \mathcal{D}(1) \simeq \mathcal{D}(2) \xrightarrow{q_{13}} \mathcal{D}(1).
	\]
	By the cocycle condition of $\varepsilon$, we deduce that
	\[
	(\alpha\otimes \mathscr{O}_{D'(1)})\circ \widetilde{f}'^*(\varepsilon)\simeq 
	(\widetilde{f}',\widetilde{f})^*(\varepsilon) \simeq
	\widetilde{f}^*(\varepsilon)\circ (\mathscr{O}_{D'(1)}\otimes \alpha).
	\]
	Hence the above diagram is commutative. 
	In this way, we construct a natural isomorphism $\alpha_{\widetilde{f}',\widetilde{f}}:\widetilde{f}'^* \xrightarrow{\sim} \widetilde{f}^*$ of functors \eqref{eq:pullback tildef conv} and finish the proof of assertion (i). 

	Assertion (ii) follows from the cocycle condition for stratifications. 

	(iii) By assertion (ii) and gluing local constructions defined by $\widetilde{f}_i$, we obtain functors \eqref{eq:twisted pullback}. 
	 
	If we have another collection of lifting $\{g_j:\mathcal{V}_j\to \mathcal{X}\}_{j\in J}$, we may assume that $\{\mathcal{V}_j\}_{j\in J}$ refines $\{\mathcal{U}_{i}\}_{i\in I}$. 
	By assertions (i-ii), we deduce a canonical isomorphism $\Phi_{(\tilde{f}_i)_{i\in I}, (\tilde{g}_{j})_{j\in J}} : \check{f}_{\mathcal{X}',\mathcal{X},(\tilde{f}_i)_{i\in I}}^* \xrightarrow{\sim} \check{f}_{\mathcal{X}',\mathcal{X},(\tilde{g}_j)_{j\in J}}^*$, satisfying a cocycle condition. 
	Then, assertion (iii) follows. 
	
	(iv) Via the equivalence between Higgs crystals and modules with Higgs stratification, we obtain an isomorphism between $\check{f}_{\mathcal{X}',\mathcal{X},(f_i)_{i\in I}}^*$ and $\check{f}_{\HIG,\mathcal{X}',\mathcal{X}}^*$, which is compatible with $\Phi_{(\tilde{f}_i)_{i\in I}, (\tilde{g}_{j})_{j\in J}}$ in (iii). 
	Then, assertion (iv) follows. 
\end{proof}

\begin{rem} \label{r:formula twisted pullback}
	We present a local description of the isomorphism $\alpha^*(\varepsilon)$. 
	Suppose $\mathcal{X}$ is affine and there exist $t_i=(t_{i,N})\in \varprojlim_{N}\Gamma(\mathcal{X}_N,\mathscr{M}_{\mathcal{X}_N})$ ($1\le i\le d$) such that $\{d\log(t_{i,N})\}_{1\le i\le d}$ is a basis of $\Omega_{\mathcal{X}_N/\Sigma_{N,S}}^1$ for $N\ge 1$.
	We keep the notation of \ref{sss:Higgs envelope}. 
	Then, the homomorphism $\mathscr{O}_{D(1)}\to \mathscr{O}_{\check{X}'}$ \eqref{eq:alpha} sends 
	\[
	\frac{u_{i,2}-1}{\xi} \mapsto \frac{\widetilde{f}'^*(t_{i,2})\widetilde{f}^*(t_{i,2})^{-1}-1}{\xi},
	\]
	where $\widetilde{f}'^*(t_{i,2})\widetilde{f}^*(t_{i,2})^{-1}$ denotes an element $v_{i,2}\in 1+\xi \mathscr{O}_{\mathcal{X}'_2}$ such that $\widetilde{f}'^*(t_{i,2})=v_{i,2}\widetilde{f}^*(t_{i,2})$.
	Let $(M,\varepsilon)$ be an object of $\HS_{\bullet,\fin}(X,\mathcal{X}/\Sigma_{S})$ and $\theta$ the associated Higgs field. 
	The $\mathscr{O}_{D(1)}$-stratification $\varepsilon$ on $M$ is locally defined, for $x\in M$, by  
	\[
	\varepsilon(1\otimes x)= \sum_{\underline{m}\in \mathbb{Z}_{\ge 0}^d} \frac{\theta_{\underline{m}}(x)}{\underline{m}!} \otimes 
	\prod_{i=1}^d \biggl(\frac{u_{i,2}-1}{\xi}\biggr)^{m_i}. 
	\]
	Note that this formula $p$-adically converges as $\theta$ satisfies (Conv$)_r$ (resp. (Int)) \eqref{HS HB}. 

	Then, the isomorphism $\alpha^*(\varepsilon):\widetilde{f}'^*(M)\xrightarrow{\sim} \widetilde{f}^*(M)$ is locally given by: 
	\begin{equation} \label{eq:transition twisted pullback}
		\alpha^*(\varepsilon)(\check{f}^*(x))=\sum_{\underline{m}\in \mathbb{Z}_{\ge 0}^d} \check{f}^*\biggl(\frac{\theta_{\underline{m}}(x)}{\underline{m}!}\biggr) \otimes 
		\prod_{i=1}^d \biggl(\frac{\widetilde{f}'^*(t_{i,2})\widetilde{f}^*(t_{i,2})^{-1}-1}{\xi}\biggr)^{m_i}. 
\end{equation}
\end{rem}

\begin{coro} \label{c:divisible pn}
	Let $(M,\theta)$ be a Higgs bundle of $\HB_{\Zp,\conv}(\check{X}/\Sigma_{1,S})$ and $\alpha\in \mathbb{Q}_{>0}$.

	\textnormal{(i)} If the image of the morphism $df:f^*\widetilde{\Omega}_{X/S}^1 \to \widetilde{\Omega}_{X'/S'}^1$ of log differentials \eqref{HS HB} is contained in $p^\alpha \widetilde{\Omega}_{X'/S'}^1$, then the Higgs field of $\check{f}^*_{\mathcal{X}',\mathcal{X}}(M,\theta)$ is trivial modulo $p^\alpha$. 

	\textnormal{(ii)} If the Higgs field $\theta$ is $\alpha$-small for $\alpha>\frac{1}{p-1}$, for $\beta=\alpha-\frac{1}{p-1}$,
	then the reduction modulo $p^{\beta}$ of the underlying bundle of $\check{f}^*_{\mathcal{X}',\mathcal{X}}(M,\theta)$ is canonically isomorphic to that of $\check{f}^*(M)$.

	\textnormal{(iii)} If $\theta=0$, then $\check{f}^*_{\mathcal{X}',\mathcal{X}}(M,0)\simeq (\check{f}^*(M),0)$. 
\end{coro}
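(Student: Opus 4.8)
Both assertions are \'etale-local on $\check{X}$, and \'etale-locally on $\check{X}$ a lifting $\widetilde{f}\colon\mathcal{X}'\to\mathcal{X}$ of $\check{f}$ over $\Sigma_S$ exists (Lemma~\ref{p:lifting homomorphism of Hopf algs}), so the plan is to argue in such a chart. There $\check{f}^{*}_{\mathcal{X}',\mathcal{X}}(M,\theta)$ is represented by $\widetilde{f}^{*}(M,\theta)$, which by \eqref{eq:pullback tildef conv HB} (see \S\ref{lifting construction}) is the naive inverse image $\check{f}^{*}(M,\theta)=(\check{f}^{*}M,\check{f}^{*}\theta)$; the global object is then recovered by gluing these local copies along the transition isomorphisms $\alpha_{\widetilde{f},\widetilde{f}'}$, whose explicit description is \eqref{eq:transition twisted pullback}. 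The whole argument reduces to reading off how these transition maps behave modulo a power of $p$.

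For assertion (i), note that in such a chart the Higgs field of $\check{f}^{*}_{\mathcal{X}',\mathcal{X}}(M,\theta)$ is identified with the pullback Higgs field $\check{f}^{*}\theta$, obtained from $\theta$ by composing the naive pullback with the base change along $\check{f}$ of the map $df$ of log differentials; hence it is divisible by $p^{n}$ as soon as $df$ is. Since vanishing modulo $p^{n}$ of a morphism of $\mathscr{O}_{\check{X}'}$-modules is an \'etale-local condition and the $\alpha_{\widetilde{f},\widetilde{f}'}$ are isomorphisms of Higgs modules, it follows that the Higgs field of $\check{f}^{*}_{\mathcal{X}',\mathcal{X}}(M,\theta)$ is trivial modulo $p^{n}$.

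For assertion (ii), I would look at the underlying-module part of the gluing data: by \eqref{eq:transition twisted pullback} the transition $\alpha_{\widetilde{f},\widetilde{f}'}$ carries a local section $\check{f}^{*}x$ of $\check{f}^{*}M$ to $\sum_{\underline{m}\in\mathbb{N}^{d}}\bigl(\prod_{i}w_{i}^{m_{i}}\bigr)\,\check{f}^{*}\!\bigl(\tfrac{\theta_{\underline{m}}(x)}{\underline{m}!}\bigr)$ for suitable elements $w_{i}\in\mathscr{O}_{\check{X}'}$, the series converging $p$-adically because $\theta$ satisfies condition (Int). As $\theta$ is $\alpha$-small, one has $\theta_{i}(M)\subset p^{\alpha}M$, hence $\theta_{\underline{m}}(x)\in p^{\alpha|\underline{m}|}M$, and combined with $v_{p}(\underline{m}!)\le\tfrac{|\underline{m}|}{p-1}$ this yields $\tfrac{\theta_{\underline{m}}(x)}{\underline{m}!}\in p^{\beta|\underline{m}|}M$ with $\beta=\alpha-\tfrac{1}{p-1}$ — the estimate behind Corollary~\ref{c:HS HM}. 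Consequently, modulo $p^{\beta}$ only the term $\underline{m}=0$ survives, so each $\alpha_{\widetilde{f},\widetilde{f}'}$ reduces to the identity of $\check{f}^{*}M/p^{\beta}\check{f}^{*}M$; the local identity maps then glue to the desired canonical isomorphism between the reduction modulo $p^{\beta}$ of the underlying bundle of $\check{f}^{*}_{\mathcal{X}',\mathcal{X}}(M,\theta)$ and that of $\check{f}^{*}M$, and its independence of the auxiliary lifts follows from the cocycle identity for the $\alpha_{\widetilde{f},\widetilde{f}'}$.

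The $p$-adic estimates above are routine; the substantive ingredient is the explicit formula \eqref{eq:transition twisted pullback}, which rests on the computation of the Higgs envelope in \S\ref{sss:Higgs envelope}, with its $\xi^{-1}$-normalisation and the index shift between $u_{i,N}$ and $u_{i,N+1}$, so that getting those normalisations right is the main thing to watch. A secondary point is to check that the ratio $v_{i,2}\in 1+\xi\mathscr{O}_{\mathcal{X}'_{2}}$ entering \eqref{eq:transition twisted pullback} reduces, after division by $\xi$, to an element $w_{i}$ of $\mathscr{O}_{\check{X}'}$, so that the displayed series is a genuine endomorphism of $\check{f}^{*}M$ and the reduction-modulo-$p^{\beta}$ argument makes sense.
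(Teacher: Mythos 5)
Your proposal is correct and follows essentially the same route as the paper: reduce to a chart where a lifting $\widetilde{f}$ exists, use \eqref{eq:pullback tildef conv HB} to identify the twisted pullback with the naive one for (i), and for (ii) observe that the gluing isomorphisms \eqref{eq:transition twisted pullback} are congruent to the identity modulo $p^{\beta}$ because $\alpha$-smallness together with $v_p(\underline{m}!)\le |\underline{m}|/(p-1)$ forces $\theta_{\underline{m}}(x)/\underline{m}!\in p^{\beta|\underline{m}|}M$. The paper's proof is a terse version of exactly this; your spelled-out $p$-adic estimate is the same one underlying Corollary~\ref{c:HS HM}.
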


\begin{proof}
	(i) 
	The assertion being local property, we may assume that there exists a lifting $\widetilde{f}:\mathcal{X}'\to \mathcal{X}$ of $\check{f}$. 
	Then, the assertion follows from the formula \eqref{eq:pullback tildef conv HB}. 

	(ii) If a local lifting $\widetilde{f}$ of $\check{f}$ exists, the underlying bundle of $\check{f}^*_{\mathcal{X}',\mathcal{X}}(M,\theta)$ is isomorphic to $\check{f}^*(M)$. 
	These local constructions are glued by the formula \eqref{eq:transition twisted pullback}, which is congruent to the identity modulo $p^{\beta}$. 
	Then, the assertion follows. 

	(iii) If $\theta=0$, then the associated stratification $\varepsilon$ sends $1\otimes m$ to $m\otimes 1$, and so is $\alpha^*(\varepsilon)$ \eqref{eq:transition twisted pullback}. 
	Then, the assertion follows. 
\end{proof}

\begin{rem}
	The above construction also applies to the inverse image functoriality of Cartier transform of Ogus--Vologodsky in charateristic $p>0$ and its lifting modulo $p^n$ via Oyama topos \cite{OV07,Oy}.
\end{rem}

\section{Small Higgs bundles with strongly semi-stable reduction of degree zero} \label{s:trvializing covers}

In this section, we assume moreover that $k$ is an algebraic closure of $\mathbb{F}_p$. 
We will generalize some results in \cite{DW05} to certain Higgs bundles.

\subsection{Review on Deninger--Werner's construction \cite{DW05}} \label{ss:DW}

\begin{definition}
	(i) Let $C$ be a smooth proper curve over $k$ and $F_C:C\to C$ the absolute Frobenius of $C$. 
	We say a vector bundle $E$ over $C$ is \textit{strongly semi-stable} if $(F_C^n)^*(E)$ is semi-stable for all $n\ge 0$. 

	(ii) Let $C$ be an irreducible proper curve over $k$ and $\pi:\widetilde{C}\to C$ the normalization of the reduced subscheme of $C$. 
	We say a vector bundle $E$ over $C$ is \textit{strongly semi-stable of degree zero} if $\pi^*(E)$ is strongly semi-stable of degree zero on each irreducible connected component of $\widetilde{C}$. 
\end{definition}

\begin{definition} \label{d:DW}
	(i) Let $X$ be an $\overline{S}$-model of a smooth proper curve $X_{\overline{\eta}}$ over $\overline{K}$ \eqref{sss:models} and $\check{X}$ the $p$-adic completion of $X$.  
	We denote by $\VB^{\DW}(\check{X})$ the full subcategory of vector bundles $E$ over $\check{X}$ whose special fiber $E_s$ is a strongly semi-stable bundle of degree zero on each irreducible component of $X_s$. 

(ii) Let $Y$ be a smooth proper $\overline{K}$-curve. We denote by $\VB^{\DW}(Y_{\bC}^{\an})$ the full subcategory of vector bundles $E$ over $Y_{\bC}^{\an}$ such that there exists an $\overline{S}$-model $X$ of $Y$ and an object $\mathscr{E}$ of $\VB^{\DW}(\check{X})$ with rigid generic fiber $E$. 
We also denote by $\VB^{\DW}(Y_{\bC})$ the essential image of $\VB^{\DW}(Y_{\bC}^{\an})$ via equivalence $\VB(Y_{\bC})\simeq \VB(Y_{\bC}^{\an})$. 

We denote by $\VB^{\pDW}(Y_{\bC})$ the full subcategory of $\VB(Y_{\bC})$ consisting of vector bundles $E$, which admits a finite map $g:Z\to Y$ of smooth proper $\overline{K}$-curves such that $g^*_{\bC}(E)$ belongs to $\VB^{\DW}(Z_{\bC})$. 
\end{definition}

\begin{theorem}[\cite{DW05} theorems 16, 17, \cite{To07} th\'eor\'em 2.4] \label{th:DW}
	Let $X$ be an $\overline{S}$-model of a smooth proper $\overline{K}$-curve. 
	A vector bundle $E$ over $\check{X}$ belongs to $\VB^{\DW}(\check{X})$ if and only if, for each $n\ge 1$, there exists an $\overline{\eta}$-cover \eqref{sss:models} $\varphi:X'\to X$ such that $\varphi_n^*(E_n)$ is free of finite type, where $(-)_n$ denotes the reduction modulo $p^n$. 
\end{theorem}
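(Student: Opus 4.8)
The plan is to deduce the statement by combining the cited results: the ``only if'' direction is \cite{DW05} Theorems~16 and 17 as upgraded by \cite{To07} th\'eor\`eme~2.4, and the ``if'' direction is an elementary descent argument. I sketch both and indicate the delicate point.

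For the ``if'' direction I would argue as follows. It suffices to treat $n=1$: a free module over $X_1$ restricts to a free module over the special fiber $X_s$, so $\varphi_s^*(E_s)$ is free, where $\varphi_s:X'_s\to X_s$ is the reduction of $\varphi$. Fix an irreducible component $C$ of $X_s$; the closure in $X'$ of a point of $X'_{\overline\eta}$ lying over the generic point of $C$ gives a component $C'$ of $X'_s$ mapping finitely and surjectively onto $C$, and after normalising we get a finite surjective morphism $g:\widetilde{C'}\to\widetilde C$ of smooth projective curves over $k$ along which $E_s|_{\widetilde C}$ pulls back to a free bundle. Now $g$ factors as a power of the absolute Frobenius followed by a finite separable morphism; pullback along the Frobenius multiplies degrees by $p$ and both preserves and reflects semi-stability, while pullback along a separable morphism multiplies slopes by the degree and reflects semi-stability through descent of the Harder--Narasimhan filtration. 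Hence being strongly semi-stable of degree zero descends along $g$, so $E_s|_{\widetilde C}$ is strongly semi-stable of degree zero; as $C$ was arbitrary, $E\in\VB^{\DW}(\check X)$.

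For the ``only if'' direction I would invoke the cited construction of Deninger--Werner, in the general-model form due to \cite{To07} (using also that $X$ is dominated by a semi-stable $\overline S$-model, cf.\ \S\ref{sss:models}). Its backbone is the structure of strongly semi-stable degree-zero bundles on a smooth projective curve over $k=\overline{\mathbf F}_p$: a degree-zero line bundle is torsion in $\Pic^0$, its prime-to-$p$ part is killed by a Kummer cover and its $p$-part by a power of the Frobenius, and a general strongly semi-stable degree-zero bundle becomes a successive extension of such line bundles after a finite \'etale cover; so it is trivialised by a finite morphism of smooth projective curves, a composite of an \'etale cover and a power of the Frobenius. One then realises this modulo $p^n$ by an $\overline S$-model $X'$ finite \'etale over $X_{\overline\eta}$ on the generic fiber: the \'etale and Kummer parts lift (no obstruction, since it would lie in $\rH^2$ of a curve), and one proceeds by induction on $n$, at each step enlarging the generic-\'etale cover so as to kill the class controlling the failure of $\varphi_n^*(E_n)$ to be free.

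The hard part is entirely in the ``only if'' direction, in the interaction between the inseparable (Frobenius-type) part of the special-fiber cover and the constraint that one must use genuine finite covers of the generic fiber: the absolute Frobenius of $X_s$ does not lift to characteristic zero, so the special-fiber cover cannot be lifted directly, and its effect must be reproduced modulo $p^n$ by enlarging the generic-\'etale cover as $n$ grows. Carrying this out, and for an arbitrary $\overline S$-model, is exactly what \cite{DW05} Theorems~16,~17 and \cite{To07} th\'eor\`eme~2.4 do, so I would simply cite them.
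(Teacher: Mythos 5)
The paper offers no proof of this statement beyond the citations to \cite{DW05} and \cite{To07}, and your proposal is consistent with that: the substantive ``only if'' direction is correctly deferred to those references (your outline of the induction on $n$ killing obstruction classes by enlarging the generic-\'etale cover matches what they do), while your descent argument for the ``if'' direction is sound and is essentially \cite{DW05} Theorem~18, which the paper invokes elsewhere for exactly this purpose. One minor inaccuracy: Frobenius pullback does \emph{not} in general preserve semi-stability (it only reflects it), but this claim is not load-bearing --- reflection of strong semi-stability and of degree zero along an arbitrary finite surjective morphism of smooth projective curves over $k$ suffices, with no need for the separable/inseparable factorization.
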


Let $\overline{x}$ be a geometric generic point of $X_{\overline{\eta}}$. 
Deninger and Werner \cite{DW05, DWII} constructed functors: 
\begin{equation} \label{eq:DW functor}
	\bV^{\DW}_X:\VB^{\DW}(\check{X}) \to \Rep(\pi_1(X_{\overline{\eta}},\overline{x}),\oo), \quad 
	\bV^{\DW}_{X_{\bC}}: \VB^{\pDW}(X_{\bC}) \to \Rep(\pi_1(X_{\overline{\eta}},\overline{x}),\bC).
\end{equation}

%

\subsection{Covers trivializing small Higgs bundles with strongly semi-stable degree zero reduction} \label{ss:HBDW}

\begin{secnumber} \label{sss:HB DW}
	Let $X$ be a semi-stable $S$-curve whose generic fiber $X_K$ has genus $g\ge 1$. 
	We denote by $\check{X}$ the $p$-adic completion of $X\times_S \overline{S}$, equipped with the fine log structure induced by $\mathscr{M}_X$. 
	We fix a smooth Cartesian lifting $\mathcal{X}$ of $\check{X}$ over $\Sigma_{S}$, which always exists (\cite{Xu17} \S~11.2). 

	We denote by $\HB_{\Zp,\sma}^{\DW}(\check{X}/\Sigma_{1,S})$ the full subcategory of $\HB_{\Zp,\conv}(\check{X}/\Sigma_{1,S})$ \eqref{d:Conv Int} consisting of Higgs modules $(M,\theta)$ such that $M$ belongs to $\VB^{\DW}(\check{X})$ and that $\theta$ is small \eqref{d:small HB}.

	Our main result in this section is the following, generalizing the description of $\VB^{\DW}(\check{X})$ in theorem \ref{th:DW}: 
\end{secnumber}

\begin{theorem} \label{t:pullback Higgs trivial}
	Let $M$ be a vector bundle of rank $r$ over $\check{X}$, $\theta$ a small Higgs field on $M$, and $\mathcal{M}$ the associated Higgs crystal on $(\check{X}/\Sigma_S)_{\HIG}^{\infty}$.  
	The following conditions are equivalence:

	\textnormal{(i)} $(M,\theta)$ belongs to $\HB^{\DW}_{\Zp,\sma}(\check{X}/\Sigma_{1,S})$. 

	\textnormal{(ii)} For every $n\in \mathbb{N}$, there exists a finite extension $K'$ of $K$, a semi-stable $S'(=\Spec(\mathscr{O}_{K'}))$-curve $Y$ and an $\eta'$-cover $f:Y\to X_{S'}$ satisfying the following property:
	if $\check{f}:\check{Y}\to \check{X}$ denotes the $p$-adic completion of $f\times_{S'}\overline{S}$ and the associated morphism of $p$-adic fine log formal schemes, then there exists an isomorphism of reduction modulo $p^n$ of Higgs crystals
	\begin{equation}
		\check{f}_{\HIG}^*(\mathcal{M})_n \simeq 
		(\overline{\mathscr{O}}_{\check{Y}/\Sigma_{S'},n})^{\oplus r}.
		\label{eq:trivilization Higgs crystals}
	\end{equation}
\end{theorem}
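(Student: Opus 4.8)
The plan is to prove the equivalence (i)$\Leftrightarrow$(ii) by reducing, in both directions, to the vector‑bundle criterion of Theorem~\ref{th:DW}, the congruences of Corollary~\ref{c:divisible pn}, and the triviality criterion of Corollary~\ref{c:HS HM}. Fix once and for all $\alpha_0>\tfrac{1}{p-1}$ with $\theta$ being $\alpha_0$‑small, and set $\beta_0=\alpha_0-\tfrac{1}{p-1}>0$; recall $\tfrac{1}{p-1}\le 1$. Since $(M,\theta)\in\HB_{\Zp,\conv}(\check{X}/\Sigma_{1,S})$ and $\mathcal{M}=\iota_{\mathcal{X}}(M,\theta)$ is a finite Higgs crystal (Proposition~\ref{p:HS HM}(ii)), and twisted inverse images of finite Higgs crystals are finite, all Higgs crystals occurring below are finite.

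\textbf{Proof of (i)$\Rightarrow$(ii).} Fix $n\ge 1$. \emph{Step 1 (killing the Higgs field).} Using that $X$ has semi‑stable reduction, I would take the Kummer log‑\'etale cover of $(X,\mathscr{M}_X)/(S,\mathscr{M}_S)$ raising the log‑coordinates to the $p^{n+1}$‑st power: after enlarging $K$ to a finite extension $K_1$ this underlies an $\eta_1$‑cover $f_1\colon Y_1\to X_{S_1}$ with $Y_1$ semi‑stable and with $df_1\colon f_1^*\widetilde{\Omega}_{X/S}^1\to\widetilde{\Omega}_{Y_1/S_1}^1$ divisible by $p^{n+1}$ (\'etale‑locally it sends the boundary coordinates $t_i$ of a semi‑stable chart of $X$ to $s_i^{p^{n+1}}$, whence $d\log t_i\mapsto p^{n+1}d\log s_i$; it is finite \'etale over $X_{\eta_1}$ because the divisors $\{t_i=0\}$ do not meet the generic fibre of a semi‑stable model, and these local covers glue). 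Choose a smooth Cartesian lifting $\mathcal{X}_1$ of $\check{Y}_1$ over $\Sigma_{S_1}$ and set $(M_1,\theta_1)=(\check{f}_1)^{*}_{\mathcal{X}_1,\mathcal{X}}(M,\theta)$. Then Corollary~\ref{c:divisible pn}(i) gives $\theta_1\equiv 0 \bmod p^{n+1}$, and Corollary~\ref{c:divisible pn}(ii) identifies $M_1$ with $\check{f}_1^*M$ modulo $p^{\beta_0}$; restricting this identification to the reduced special fibre $(Y_1)_s$ — on which $p^{\beta_0}$ vanishes, as it lies in the maximal ideal of $\oo$ — shows $(M_1)_s\cong (f_{1,s})^*(M_s)$, which is strongly semi‑stable of degree zero, so $M_1\in\VB^{\DW}(\check{Y}_1)$. \emph{Step 2 (trivialising the bundle).} Applying Theorem~\ref{th:DW} to $M_1$ with the integer $n+1$, together with the semi‑stable reduction theorem and descent of the trivialising cover to a finite extension $K'/K_1$, yields an $\eta'$‑cover $f_2\colon Y_2\to (Y_1)_{S'}$ with $Y_2$ semi‑stable and $(\check{f}_2^*M_1)_{n+1}$ free of rank $r$. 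Put $f=f_1\circ f_2$ and $\beta'=(n+1)-\tfrac{1}{p-1}$ (so $n\le\beta'\le n+1$), choose a lifting $\mathcal{X}_2$ of $\check{Y}_2$ over $\Sigma_{S'}$, and set $(M_2,\theta_2)=(\check{f}_2)^{*}_{\mathcal{X}_2,\mathcal{X}_1}(M_1,\theta_1)$. Since $\theta_1$ is $(n+1)$‑small, Corollary~\ref{c:divisible pn}(ii) identifies $M_2$ with $\check{f}_2^*M_1$ modulo $p^{\beta'}$, so $M_2/p^{\beta'}M_2$ is free; and since locally $(M_2,\theta_2)=\check{f}_2^*(M_1,\theta_1)$ (cf.\ \ref{lifting construction}), its Higgs field satisfies $\theta_2\equiv 0\bmod p^{n+1}$. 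As the twisted inverse image functors realise the Higgs‑crystal pullback (cf.\ \eqref{eq:twisted pullback conv}) and compose, $\check{f}_{\HIG}^*(\mathcal{M})$ corresponds via $\mathcal{X}_2$ to $(M_2,\theta_2)$; Corollary~\ref{c:HS HM} (with $\alpha=n+1$, $\beta=\beta'$) then gives $\check{f}_{\HIG}^*(\mathcal{M})/p^{\beta'}\cong (\overline{\mathscr{O}}_{\check{Y}_2/\Sigma_{S'},\beta'})^{\oplus r}$, and reducing modulo $p^n$ (possible since $\beta'\ge n$) gives \eqref{eq:trivilization Higgs crystals}.

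\textbf{Proof of (ii)$\Rightarrow$(i).} Since $\theta$ is small by hypothesis, it remains only to show $M_s$ is strongly semi‑stable of degree zero, i.e.\ $M\in\VB^{\DW}(\check{X})$. Apply (ii) with $n_0=\max(1,\lceil\beta_0\rceil)$: there is an $\eta'$‑cover $f\colon Y\to X_{S'}$ with $Y$ semi‑stable and $\check{f}_{\HIG}^*(\mathcal{M})_{n_0}\cong (\overline{\mathscr{O}}_{\check{Y}/\Sigma_{S'},n_0})^{\oplus r}$. Fixing a lifting $\mathcal{Y}$ of $\check{Y}$ over $\Sigma_{S'}$, the Higgs module $\check{f}^{*}_{\mathcal{Y},\mathcal{X}}(M,\theta)$ has underlying bundle free modulo $p^{n_0}$, hence modulo $p^{\beta_0}$; by Corollary~\ref{c:divisible pn}(ii) this underlying bundle agrees with $\check{f}^*M$ modulo $p^{\beta_0}$, so $\check{f}^*M$ is free modulo $p^{\beta_0}$, and therefore $(f_s)^*(M_s)$ is free on $Y_s$. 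Passing to normalisations, $M_s$ becomes trivial after pullback along a finite surjective morphism of smooth proper $k$‑curves; since degree zero and strong semi‑stability descend along such morphisms (using $F_X\circ g=g\circ F_Y$ for absolute Frobenii), $M_s$ is strongly semi‑stable of degree zero. Hence $M\in\VB^{\DW}(\check{X})$ and $(M,\theta)\in\HB_{\Zp,\sma}^{\DW}(\check{X}/\Sigma_{1,S})$.

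\textbf{Main obstacle.} The crux is Step~1 of the first implication: constructing, after a finite extension of $K$, a cover that is finite \'etale on the generic fibre while $df$ is divisible by a prescribed power of $p$. This is exactly where semi‑stable reduction is essential — the log‑Kummer covers along the special fibre do the job — and one must verify that the local covers glue to a global one. Everything else is the numerical bookkeeping forced by two features: the twisted inverse image agrees with the ordinary one only modulo $p^{\beta_0}$, and Corollary~\ref{c:HS HM} loses $\tfrac{1}{p-1}$. This is precisely why the Higgs field must be killed \emph{first} (raising the relevant smallness parameter from the fixed $\alpha_0$ to the controllable $n+1$) and the underlying bundle trivialised afterwards.
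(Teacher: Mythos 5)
Your implication (ii)$\Rightarrow$(i) and the second half of (i)$\Rightarrow$(ii) (trivialising the underlying bundle via Theorem \ref{th:DW}, then invoking Corollary \ref{c:divisible pn}(ii) and Corollary \ref{c:HS HM}) follow the paper's argument and are fine. The gap is in Step 1 of (i)$\Rightarrow$(ii), which you correctly identify as the crux but resolve incorrectly. A Kummer log-\'etale cover obtained by extracting $p^{n+1}$-st roots of the boundary coordinates of a semi-stable chart only modifies the logarithmic part of $\widetilde{\Omega}^1_{X/S}$, i.e.\ the contribution of the nodes of the special fibre. Over the smooth locus of $X_s$ the relative log structure is trivial (pulled back from $S$), so $\widetilde{\Omega}^1_{X/S}=\Omega^1_{X/S}$ there and your proposed cover is \'etale (essentially the identity) on that locus; hence $df_1$ is an isomorphism over a dense open subset of the special fibre and its image cannot be divisible by $p^{n+1}$, or even by $p$. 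No cover ramified only along the boundary divisor can achieve the required divisibility of $df$ for a curve.

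The correct construction is Proposition \ref{p:trivialize differentials} (due to Faltings): embed $X_K$ into its Jacobian $J_K$, pull back the multiplication-by-$p^n$ isogeny to obtain an \'etale cover $X_{n,K}\to X_K$, take a semi-stable model after a finite extension of $K$, and use Albanese and N\'eron-model functoriality to see that the induced map on global sections of differentials is divisible by $p^n$; since global sections generate $p^{n_0}\widetilde{\Omega}^1_{X_n/S}$ for some fixed $n_0$, the image of $df$ is divisible by $p^{n-n_0}$. The divisibility is thus a global arithmetic phenomenon coming from the Jacobian, not a local one coming from the log structure along the special fibre. With Step 1 replaced by this proposition, the rest of your bookkeeping (killing the Higgs field first so the smallness parameter becomes $n+1$, then trivialising the bundle, then applying Corollary \ref{c:HS HM}) goes through exactly as in the paper.
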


	We will use the following argument of Faltings on covers of $X$ whose associated inverse image of log differentials is divisible by a power of $p$. 

\begin{prop}[Faltings, \cite{Fal05} Proof of theorem 6] \label{p:trivialize differentials} 
	For every $n\in \mathbb{N}$,  there exists a finite extension $K'$ of $K$, a semi-stable $S'(=\Spec(\mathscr{O}_{K'}))$-curve $Y$ and an $\eta'$-cover $f:Y\to X_{S'}$ such that the image of the morphism $df:f^{*}(\widetilde{\Omega}^1_{X/S})\to \widetilde{\Omega}^1_{Y/S'}$ of log differentials is contained in $p^n \widetilde{\Omega}^1_{Y/S'}$. 
\end{prop}

\begin{proof} 
	Let $J_K$ be the Jacobian of $X_K$ and $J$ its N\'eron model over $S$, which is isomorphic to the neutral component of the relative Picard scheme $\Pic_{X/S}^0$ (\cite{BLR} 9.5 theorem 1). 
	After taking an extension of $K$ and choosing an $S$-point of $X$, 
	we obtain a closed immersion $X_K\to J_K$ defined by this point. 
	The multiplication by $p^n$ on $J_K$ induces an \'etale cover $f_K:X_{n,K}\to X_K$ by base change. After replacing $K$ by a finite extension, there exist a semi-stable $S$-model $X_n$ of $X_{n,K}$ and a model $f:X_n\to X$ of $f_K$. 

	If $J_{n,K}$ denotes the Jacobian of $X_{n,K}$ and $J_n$ its N\'eron model over $S$, $f_K$ induces a morphism $g_K:J_{n,K}\to J_K$ by the Albanese property and $g_K$ factors through $p^n:J_K\to J_K$. 
	Moreover, by the universal property of N\'eron models, $g_K$ induces an $S$-morphism $g:J_n\to J$, which factors through $p^n$. 
	Now we consider the following diagram of global sections of differentials:
	\[
	\xymatrixcolsep{5pc}\xymatrix{
	\Gamma(X,\widetilde{\Omega}_{X/S}^1) \ar[r]^{f^*} \ar@{^{(}->}[d]
	\ar@/_5pc/[ddd]_{\wr}
	& \Gamma(X_n,\widetilde{\Omega}_{X_n/S}^1) \ar@{^{(}->}[d] 
	\ar@/^5pc/[ddd]^{\wr}\\
	\Gamma(X_K,\Omega_{X_K/K}^1) \ar[r]^{f_K^*} \ar[d]^{\wr} 
	& \Gamma(X_{n,K},\Omega_{X_{n,K}/K}^1) \ar[d]^{\wr} \\
	\Gamma(J_K,\Omega_{J_K/K}^1) \ar[r]^{g_K^*} 
	& \Gamma(J_{n,K},\Omega_{J_{n,K}/K}^1) \\
	\Gamma(J,\Omega_{J/S}^1) \ar[r]^{g^*} \ar@{^{(}->}[u] 
	& \Gamma(J_n,\Omega_{J_n/S}^1) \ar@{^{(}->}[u] }
	\]
	Note that $\widetilde{\Omega}_{X/S}^1$ is isomorphic to the dualizing sheaf $\omega_{X/S}$. 
	The outer isomorphisms in the above diagram are induced by $\rH^1(X,\mathscr{O}_X)\simeq \Lie J$ and the coherent duality. 

	Since $g$ factors through $p^n$, the image of $g^*$ is divisible by $p^n$ and so is $f^*$. 
	Since $\Omega_{X_K/K}^1$ is generated by global sections, the sub-$\mathscr{O}_{X}$-module of $\widetilde{\Omega}_{X/S}^1$, generated by global sections $\Gamma(X,\widetilde{\Omega}_{X/S}^1)$, contains $p^{n_0}\widetilde{\Omega}_{X/S}^1$ for some integer $n_0$, which is independent of $n$. 
	Hence the image of $df: f^*(\widetilde{\Omega}_{X/S}^1)\to \widetilde{\Omega}_{X_n/S}^1$ is divisible by $p^{n-n_0}$. This finishes the proof.
\end{proof}

\textit{Proof of theorem \ref{t:pullback Higgs trivial}.} 
	(ii) $\Rightarrow$ (i):
	Take $n=1$ and let $\mathcal{Y}$ be a smooth Cartesian lifting of $\check{Y}$ over $\Sigma_{S'}$. 
	By corollary \ref{c:divisible pn}(ii), $\check{f}^*(M)_s$ is isomorphic to the special fiber of the underlying bundle of $\check{f}^*_{\mathcal{Y},\mathcal{X}}(M,\theta)$, which is trivial by \eqref{eq:trivilization Higgs crystals}. 
	Then, we conclude that $M_s$ is strongly semi-stable of degree zero by (\cite{DW05} theorem 18). 

	(i) $\Rightarrow$ (ii): 
	We first take an $\eta'$-cover $f:Y\to X_{S'}$ as in proposition \ref{p:trivialize differentials} such that the image of $df$ is divisible by $p^{n+1}$ and a smooth Cartesian lifting $\mathcal{Y}$ of $\check{Y}$ over $\Sigma_{S'}$. 
	By corollary \ref{c:divisible pn}(i), the Higgs field of $\check{f}^*_{\mathcal{Y},\mathcal{X}}(M,\theta)$ is $(n+1)$-small. 
	By corollary \ref{c:divisible pn}(ii), the special fiber $(\mathcal{M}_{\mathcal{Y}})_s$ of $\mathcal{M}_{\mathcal{Y}}$ is isomorphic to $\check{f}^*_s(M_s)$. 
	Since $M_s$ is strongly semi-stable of degree zero, then so is $\check{f}^*_s(M_s)$ by (\cite{DW05} theorem 18). 

	By theorem \ref{th:DW} and the semi-stable reduction theorem, there exists a finite extension $K''$ of $K'$ a projective $S''(=\Spec(\mathscr{O}_{K''}))$-curve $Z$ with semi-stable reduction and an $\eta''$-cover $g:Z\to Y_{S''}$ satisfying following property: 
	If $\check{g}$ denotes the $p$-adic completion of $g\times_{S''} \overline{S}$, then $\check{g}^*((\mathcal{M}_{\mathcal{Y}})_{n+1})$ is trivial. 

	In the following, we show that the functor $(\check{f}\circ\check{g})^*_{\HIG}$ trivializes the Higgs crystal $\mathcal{M}$ modulo $p^n$. 

	We fix a smooth Cartesian lifting $\mathcal{Z}$ of $\check{Z}$ over $\Sigma_{S''}$. 
	The Higgs fields of $\check{f}^*_{\mathcal{Y},\mathcal{X}}(M,\theta)$ and of $(\check{f}\circ \check{g})^*_{\mathcal{Z},\mathcal{X}}(M,\theta)$ are $(n+1)$-small. 
	By applying corollary \ref{c:divisible pn}(ii) to $\check{g}$, we obtain a canonical isomorphism of Higgs modules modulo $p^n$ in
	$\HM(\check{Z},\mathcal{Z}/\Sigma_{S''})$ \eqref{eq:twisted pullback}:
	\[
	(\mathscr{O}_{\check{Z}_{n}}^{\oplus r},0)\xrightarrow{\sim}
	\bigl( (\check{f}\circ \check{g})^*_{\mathcal{Z},\mathcal{X}}(M,\theta)\bigr)_n.
	\]
	Then, we conclude the assertion from corollary \ref{c:HS HM}. \hfill\qed

\section{Review on the $p$-adic Simpson correspondence} \label{s:p-adic Simpson}

\subsection{Faltings topos}
\begin{secnumber} 
	Let $Y\to X$ be a morphism of coherent schemes (``coherent'' means ``quasi-compact and quasi-separated''). 
	We denote by $E_{Y\to X}$ the category of morphisms of schemes $(V\to U)$ above $(Y\to X)$, i.e. commutative diagrams 
	\[
	\xymatrix{
		V\ar[r] \ar[d] & U \ar[d] \\
		Y\ar[r] & X 
	}
	\]
	such that $U\to X$ is \'etale and $V\to Y\times_X U$ is finite \'etale. 
	We endow $E_{Y\to X}$ with the topology generated by the following types of families of morphisms
	\begin{itemize}
		\item[(v)] 
			 $\left\{\left(V_{m} \rightarrow U\right) \rightarrow(V \rightarrow U)\right\}_{m \in M}$, where $\left\{V_{m} \rightarrow V\right\}_{m \in M}$ is a finite \'etale covering,

	\item[(c)] $\left\{\left(V \times_{U} U_{n} \rightarrow U_{n}\right) \rightarrow(V \rightarrow U)\right\}_{n \in N}$, where $\left\{U_{n} \rightarrow U\right\}_{n \in N}$ is an \'etale covering.
	\end{itemize}
	We denote by $\widetilde{E}_{Y\to X}$ the topos of sheaves of sets on $E_{Y\to X}$ and call it \textit{Faltings topos} associated to $Y$. 
	
	The presheaf $\bB_{Y\to X}$ on $E_{Y\to X}$ defined by 
	\[
	\bB_{Y\to X}(V\to U)=\Gamma(U^V,\mathscr{O}_{U^V}),
	\]
	where $U^V$ is the integral closure of $U$ in $V$ (\cite{Stack} 0BAK), is a sheaf of rings (\cite{He21} proposition 7.6). 
	For $n\ge 1$, we set $\bB_{Y\to X,n}=\bB_{Y\to X}/p^n\bB_{Y\to X}$. 

We denote by $\bbB_{Y\to X}$ the sheaf of rings of $\widetilde{E}_{Y\to X}^{\Nc}$ (\S~\ref{sss:toposN}), defined by the projective system $(\bB_{Y\to X,n})_{n\ge 1}$. 
\end{secnumber}

\begin{prop}[\cite{AGT} VI.5.10] \label{p:sheaf E}
	A sheaf $F$ on $E_{Y\to X}$ is equivalent to the datum of a sheaf $F_U$ of $U_{Y,\fet}$, with $U_Y=U\times_X Y$ for every object $U$ of $\Et_{X}$ and a morphism $u_f:F_U\to (f_Y)_{\fet,*}(F_{U'})$ for every morphism $f:U'\to U$ of $\Et_{X}$ satisfying the following condition:

	\textnormal{(i)} a cocycle condition for morphisms $u_f$; 
	
	\textnormal{(ii)} for every covering $(f_i:U_i\to U)_{i\in I}$ of $\Et_{X}$, if we set $U_{ij}=U_i\times_U U_j$ and $f_{ij}:U_{ij}\to U$ the canonical morphism, then the following sequence of morphisms of sheaves of $U_{Y,\fet}$ is exact:
	\[
		F_{U} \to \prod_{i \in I}(f_i)_{\fet, *}\left(F_{U_{i}}\right) \rightrightarrows \prod_{i, j \in I} (f_{ij, \bar{\eta}})_{\fet, *}\left(F_{U_{ij}}\right).
	\]
\end{prop}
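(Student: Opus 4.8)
The plan is to realize $E_{Y\to X}$ as a fibered site over $\Et_X$ and to deduce the statement from the general description of sheaves for a total topology, following Giraud. First I would observe that for a fixed object $U$ of $\Et_X$, the full subcategory of $E_{Y\to X}$ consisting of objects $(V\to U)$ lying over $U$ is canonically equivalent to $\Fet_{U_Y}$, the category of finite \'etale $U_Y$-schemes, where $U_Y=U\times_X Y$; and that a morphism $f:U'\to U$ of $\Et_X$ induces the base-change functor $(V\to U)\mapsto (V\times_U U'\to U')$, which under these equivalences is pullback of finite \'etale covers along $f_Y:U'_Y\to U_Y$. Thus the projection $E_{Y\to X}\to \Et_X$ is a fibered category, with cleavage furnished by the Cartesian morphisms $(V\times_U U'\to U')\to (V\to U)$, and restriction of a sheaf $F$ on $E_{Y\to X}$ to such a fiber produces a sheaf $F_U$ on $\Fet_{U_Y}$, i.e. an object of the topos $U_{Y,\fet}$.

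Second, I would check that the topology on $E_{Y\to X}$ generated by the families (v) and (c) is precisely the total topology associated with this fibration, in which each fiber $\Fet_{U_Y}$ carries the finite \'etale topology and the base $\Et_X$ carries the \'etale topology. Indeed, a (v)-family over $U$ is exactly a finite \'etale covering of $U_Y$, and a (c)-family is exactly the Cartesian pullback of an \'etale covering $\{U_n\to U\}$ of the base. The only point requiring care is the usual normal form for coverings in a total topology: pulling back a (v)-covering along an \'etale $U'\to U$ is again a (v)-covering, and pulling back a (c)-covering along a finite \'etale $V'\to V$ is again a (c)-covering (both because \'etale and finite \'etale morphisms are stable under base change, and these base changes are computed in $E_{Y\to X}$ as the evident fiber products); hence every covering sieve of the generated topology is refined by the composite of a (c)-covering followed by (v)-coverings.

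Third, I would invoke the general theorem: for a fibered site equipped with the total topology, the topos of sheaves is equivalent to the category of data consisting of a sheaf $F_U$ on each fiber $U_{Y,\fet}$, transition morphisms $F_U\to (f_Y)_{\fet,*}(F_{U'})$ for every $f:U'\to U$ of $\Et_X$ (equivalently $f_Y^*F_U\to F_{U'}$, by adjunction), compatible with composition, and satisfying the sheaf condition relative to the base: for every \'etale covering $(f_i:U_i\to U)_{i\in I}$, with $U_{ij}=U_i\times_U U_j$, the sequence
\[
F_{U}\to \prod_{i\in I}(f_i)_{\fet,*}(F_{U_{i}})\rightrightarrows \prod_{i,j\in I}(f_{ij})_{\fet,*}(F_{U_{ij}})
\]
is exact. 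This is exactly the datum in the statement. One direction of the equivalence is $F\mapsto (F_U)_U$ as above; the converse glues such a family into a presheaf on $E_{Y\to X}$ and verifies, using that it is a sheaf on every fiber and satisfies base descent, that it is a sheaf for the total topology.

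I expect the main obstacle to be the topology identification in the second step: one must verify carefully that the generated topology coincides with the total topology, which reduces to the interchange/normal-form lemma for (v)- and (c)-coverings together with a precise bookkeeping of fiber products inside $E_{Y\to X}$ (in particular that the relevant fiber products of objects $(V\to U)$ exist and are computed componentwise). Once this is in place, the remainder is a formal application of the total-topology formalism, the exactness of the displayed descent sequence being part of that formalism.
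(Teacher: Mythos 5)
Your overall strategy---viewing $E_{Y\to X}$ as a fibered site over $\Et_{X}$ with fibers $\Fet_{U_Y}$, identifying the topology in terms of the vertical families (v) and the Cartesian families (c), and reducing to a general description of sheaves on such a fibered site---is exactly the route by which this statement is proved in the source; the paper itself gives no argument and simply cites (\cite{AGT} VI.5.10), which is the general fibered-site (``covanishing topos'') version of the claim. So in outline your proposal matches the intended proof.

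There is, however, one substantive slip in step 2 that propagates into step 3. The topology generated by (v) and (c) is \emph{not} the total topology of SGA~4~VI~7.4: the total topology is generated by the vertical coverings alone, and sheaves for it are precisely the families $(F_U)$ with compatible transition morphisms $F_U\to (f_Y)_{\fet,*}(F_{U'})$, with \emph{no} exactness condition along coverings of the base (SGA~4~VI~7.4.7). The topology here (the covanishing topology of \cite{AGT}~VI.5) is strictly finer, and the displayed exact sequence is exactly the extra sheaf condition imposed by the Cartesian coverings (c). Consequently the ``general theorem'' you invoke in step 3---total-topology sheaves equal families plus transitions plus base descent---is not a citable standard result; in its fibered-site form it is the statement to be proved. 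To close the gap you should (a) apply SGA~4~VI~7.4.7 to identify sheaves for the total topology with families plus transition morphisms; (b) use your normal-form/refinement lemma to show that such a family is a sheaf for the finer generated topology if and only if it satisfies the sheaf condition for every (c)-covering; and (c) check that the sheaf conditions for the coverings $\{(V\times_U U_i\to U_i)\to (V\to U)\}$, as $V$ ranges over $\Fet_{U_Y}$, assemble precisely into the displayed exact sequence of sheaves on $U_{Y,\fet}$. Step (c) is the actual content of the proposition and is what your write-up currently defers to a theorem that, under the name you give it, does not say this.
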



\begin{secnumber}\label{sss:coh descent}
	We briefly review the cohomological descent in Faltings topos. 
	We assume that $X$ is a coherent $\overline{S}$-scheme, $Y$ is a coherent $\overline{\eta}$-scheme and moreover that in the following diagram:
	\[
	\xymatrix{
		Y\ar[r] \ar[d] \ar@{}[rd]|{\Box}  & X^Y \ar[r] \ar[d]  & X \\
		\overline{\eta} \ar[r] & \overline{S} &
	}
	\]
	where $X^Y$ denotes the integral closure of $X$ in $Y$ (\cite{Stack} 0BAK), the square is Cartesian. 

	Let $X_{\bullet}\to X$ be a proper hypercovering of $\overline{S}$-schemes, and $\varepsilon:\widetilde{E}_{Y_{\bullet}\to X_{\bullet}}\to \widetilde{E}_{Y\to X}$ the augmentation of simplicial topoi with $Y_{\bullet}=X_{\bullet}\times_X Y$. 
	For any $\bB_{Y\to X}$-module $\mathscr{N}$, we have a canonical morphism:
	\begin{equation} \label{eq:descent}
	\mathscr{N}\to \rR\varepsilon_*(\varepsilon^*\mathscr{N}).
	\end{equation}
	
	We say $\mathscr{N}$ satisfies the \textit{cohomological descent (in Faltings topos)}, if for any proper hypercovering $X_{\bullet}\to X$ of $\overline{S}$-schemes, the above morphism is an almost isomorphism. 
	
	Recently, T. He \cite{He21} showed that the cohomological descent holds for $\mathscr{N}=\mathbb{L}\otimes_{\mathbb{Z}}\bB_{Y\to X}$, with a finite locally constant abelian sheaf $\mathbb{L}$ of $\widetilde{E}_{Y\to X}$. 
	A key ingredient of his proof is the arc-descent for perfectoid algebras due to Bhatt--Scholze \cite{BS19}. 
	In particular, we have the following corollary:
\end{secnumber}

\begin{coro} \label{c:coh desent}
	Assume $Y= X_{\overline{\eta}}$. 
	For every $n\in \mathbb{N}$, a locally free $\bB_{Y\to X,n}$-module $M$ of finite type satisfies the cohomological descent.  
\end{coro}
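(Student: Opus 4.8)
The plan is to deduce this from He's theorem \cite{He21}, which already settles the case $M=\bB_{Y\to X,n}$: indeed $\bB_{Y\to X,n}=\mathbb{L}\otimes_{\mathbb{Z}}\bB_{Y\to X}$ for $\mathbb{L}=\mathbb{Z}/p^n\mathbb{Z}$, the constant (in particular finite locally constant) abelian sheaf of $\widetilde E_{Y\to X}$. The passage from the free case to the locally free case should be a straightforward dévissage exploiting that cohomological descent is a local condition. Concretely, fix a proper hypercovering $X_\bullet\to X$ of $\overline S$-schemes, with $Y_\bullet=X_\bullet\times_X Y$ and augmentation $\varepsilon:\widetilde E_{Y_\bullet\to X_\bullet}\to\widetilde E_{Y\to X}$; one must show that \eqref{eq:descent}, i.e. $M\to\rR\varepsilon_*(\varepsilon^*M)$, is an almost isomorphism, equivalently that its cone has almost-zero cohomology sheaves.

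First I would reduce to the free case. Being almost zero is local for the topology of $E_{Y\to X}$, and both $\varepsilon^*$ and restriction to an object of $E_{Y\to X}$ are exact, so it is enough to show that the cone of \eqref{eq:descent} becomes almost acyclic after restriction to each member of some covering of the final object $(Y\to X)$; I would take a covering $\{(V_i\to U_i)\to(Y\to X)\}_{i}$ trivialising $M$, say $M|_{(V_i\to U_i)}\simeq\bB_{Y\to X,n}^{\oplus r_i}$, refined so that each $U_i$ is affine, hence coherent. By definition of $E_{Y\to X}$, $U_i\to X$ is étale and $V_i$ is finite étale over $(U_i)_{\overline\eta}=U_i\times_X Y$; I would then check that $(V_i,U_i)$ still satisfies the standing hypotheses of \S\ref{sss:coh descent}, the only substantive point being that the integral closure $U_i^{V_i}$ of $U_i$ in $V_i$ has generic fibre $(U_i^{V_i})_{\overline\eta}\simeq V_i$ — which holds because $V_i$ is finite over $(U_i)_{\overline\eta}$ and $\overline K=\mathscr O_{\overline K}[1/p]$. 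Under the canonical equivalence $\widetilde E_{Y\to X}/(V_i\to U_i)\simeq\widetilde E_{V_i\to U_i}$ of \cite{AGT} (Ch.~VI), $\bB_{Y\to X}$ restricts to $\bB_{V_i\to U_i}$, the base change $X_\bullet\times_X U_i\to U_i$ is a proper hypercovering of $\overline S$-schemes, and $\varepsilon$ restricts to the associated augmentation $\varepsilon_i:\widetilde E_{X_\bullet\times_X V_i\to X_\bullet\times_X U_i}\to\widetilde E_{V_i\to U_i}$; since localisation at an object is an ``open'' morphism of (simplicial) topoi, forming $\rR\varepsilon_*$ commutes with this restriction. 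Hence the restriction of \eqref{eq:descent} to $(V_i\to U_i)$ identifies with the direct sum of $r_i$ copies of $\bB_{V_i\to U_i,n}\to\rR\varepsilon_{i,*}(\varepsilon_i^*\bB_{V_i\to U_i,n})$, which is an almost isomorphism by He's theorem applied on $\widetilde E_{V_i\to U_i}$ with $\mathbb L=\mathbb Z/p^n\mathbb Z$; a finite direct sum of almost isomorphisms being an almost isomorphism, this finishes the argument.

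The main obstacle I anticipate is precisely the compatibility invoked at the end of the previous paragraph: that the hypotheses of \S\ref{sss:coh descent}, and more importantly the formation of $\rR\varepsilon_*$, are stable under localisation of the Faltings topos at an object of the base — a base-change statement for the augmentation $\varepsilon$ of simplicial topoi. If one wishes to avoid passing to a trivialising covering, an alternative route is via the projection formula $\rR\varepsilon_*(\varepsilon^*M)\simeq M\otimes^{\mathbf L}_{\bB_{Y\to X,n}}\rR\varepsilon_*(\varepsilon^*\bB_{Y\to X,n})$, valid because $M$ is a perfect (locally free of finite type) $\bB_{Y\to X,n}$-module: as $M$ is flat, tensoring the almost isomorphism $\bB_{Y\to X,n}\to\rR\varepsilon_*(\varepsilon^*\bB_{Y\to X,n})$ of \cite{He21} with $M$ preserves almost isomorphy and concludes directly. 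In that variant the difficulty migrates to setting up the projection formula in the simplicial, almost-module framework.
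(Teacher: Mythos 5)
Your proposal is correct and follows essentially the same route as the paper's proof: localize at a covering $\{(V_i\to U_i)\}$ trivializing $M$, verify that the generic fibre of $U_i^{V_i}$ is $V_i$ (the paper cites \cite{Stack} 035K for this), identify the localization $\widetilde{E}_{Y\to X}/(V_i\to U_i)\simeq \widetilde{E}_{V_i\to U_i}$, apply He's descent result there, and conclude by locality of the almost-isomorphism condition. The compatibility of $\rR\varepsilon_*$ with localization that you flag as the main obstacle is exactly what the paper absorbs into the phrase ``the assertion is local on $E_{Y\to X}$''.
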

\begin{proof}
	Let $\{(V_i\to U_i)\to (X_{\overline{\eta}}\to X)\}_{i\in I}$ be a covering such that $M|_{(V_i\to U_i)}$ is free of finite type. 
	By further localization, we may assume each object $(V\to U)\to (X_{\overline{\eta}}\to X)$ in this covering that $V, U$ are coherent. 
	By (\cite{Stack} 035K), the generic fiber of $U^{V}$ is isomorphic to $V$. 
	The cohomological descent (\cite{He21} corollaries 8.14, 8.18) holds for the restriction of $M$ to the localization $E_{Y\to X}/(V\to U)\simeq E_{V\to U}$. 
	As the assertion is local on $E_{Y\to X}$, the corollary follows. 
\end{proof}
\begin{secnumber}\label{sss:beta sigma}
	In the following, let $X$ be a flat, normal, separated $S$-scheme (resp. $\overline{S}$-scheme) of finite presentation such that $\underline{X}=X\times_S \overline{S}$ (resp. $X$) is normal and $Y=X_{\overline{\eta}}$. 
	We will use both of two settings in this article. 	
	Then, $\underline{X}$ (resp. $X$) is locally irreducible (cf. \cite{AGT} III.3.1). Indeed, as $\underline{X}$ (resp. $X$) is flat over $\overline{S}$, its generic points are the generic points of $X_{\overline{\eta}}$, which are finite (cf. \cite{AGT} III.3.2(ii)). 

	For simplicity, we denote the Faltings site $E_{X_{\overline{\eta}}\to X}$ (resp. the structure sheaf $\bB_{X_{\overline{\eta}}\to X}$) by $E_X$ or $E$ (resp. $\bB_{X}$ or $\bB$), if there is no confusion. 
	The Faltings topos $\widetilde{E}_{X}$ admits a closed sub-topos $\widetilde{E}_{X,s}$ (\cite{SGAIV} IV.9.3.5, \cite{AGT} III.9.3). 
	For each $n\ge 1$, the ring $\bB_{X,n}$ belongs to $\widetilde{E}_{X,s}$. 

The functor $\Fet_{X_{\overline{\eta}}}\to E$, defined by $V \mapsto (V\to X)$, is continuous and left exact and induces a morphism of topoi $\beta:\widetilde{E}_X\to X_{\overline{\eta},\fet}$. 
Moreover, it induces morphisms of ringed topoi (\cite{Xu17} 7.13)
\begin{equation} \label{eq:beta}
	\beta_{X,n}:(\widetilde{E}_{X,s},\bB_n)\to (X_{\overline{\eta},\fet},\oo_n),\quad \forall~ n\ge 1, \quad
	\breve{\beta}_X:(\widetilde{E}_X^{\Nc},\bbB_X)\to (X_{\overline{\eta},\fet}^{\Nc},\breve{\oo}), 
\end{equation}
	where $\oo_n$ is the constant sheaf of $X_{\overline{\eta},\fet}$ with value $\oo_n$, and $\breve{\oo}=(\oo_n)_{n \ge 1}$. 

	The functor $\Et_{X} \to E$, defined by $U\mapsto (U_{\overline{\eta}}\to U)$, is continuous and left exact and induces a morphism of topoi $\sigma:\widetilde{E}_X\to X_{\et}$
	Moreover, it induces morphisms of ringed topoi (\cite{AGT} III.9.9, III.9.10)
	\begin{equation} \label{eq:sigma}
	\sigma_{X,n}:(\widetilde{E}_{X,s},\bB_{X,n}) \to (X_{s,\et},\mathscr{O}_{\underline{X}_n}), \quad \forall~ n\ge 1, \quad
	\breve{\sigma}_X:(\widetilde{E}_{X,s}^{\Nc},\bbB_X) \to (X_{s,\et}^{\Nc},\mathscr{O}_{\breve{\underline{X}}}),
\end{equation}
where $\mathscr{O}_{\underline{X}_n}$ is the \'etale sheaf of $X_{s,\et}$ associated to the coherent sheaf $\mathscr{O}_{\underline{X}_n}$ of $X_{s,\zar}$ and $\mathscr{O}_{\breve{\underline{X}}}=(\mathscr{O}_{\underline{X}_n})_{n\ge 1}$. 


	When $X$ is an $S$-scheme, the canonical functor $E_X\to E_{\underline{X}}$, defined by $(V\to U)\mapsto (V\to \underline{U})$, induces a morphism of topoi
	\begin{equation} \label{eq:h morphism topoi}
	h:\widetilde{E}_{\underline{X}}\to \widetilde{E}_X.
\end{equation}
We have $h_{*}(\bB_{\underline{X}})=\bB_X$ and the above morphism is ringed by $\bB_{\underline{X}}$ and $\bB_{X}$. 
\end{secnumber}

\begin{lemma}
	\label{l:pullback compatible}
	\textnormal{(i)} 
	For any $\oo_n$-module $\mathbb{L}$ of $X_{\overline{\eta},\fet}$, the following canonical morphism is an isomorphism:
	\[
	\beta_{X,n}^*(\mathbb{L})\xrightarrow{\sim} h_*(\beta_{\underline{X},n}^*(\mathbb{L})).
	\]

	\textnormal{(ii)} 
	For any $\mathscr{O}_{\underline{X}_n}$-module $M$ of $X_{s,\et}$, the following canonical morphism is an isomorphism:
	\[
	\sigma_{X,n}^*(M)\xrightarrow{\sim} h_*(\sigma_{\underline{X},n}^*(M)). 
	\]	
\end{lemma}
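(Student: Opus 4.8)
The plan is to recognise the two canonical morphisms as base change morphisms for commutative squares whose ``horizontal'' legs coincide up to canonical identifications, and then to check they are isomorphisms componentwise by means of the description of sheaves on a Faltings site in Proposition \ref{p:sheaf E}.

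First I would record the elementary identifications underlying everything: for $U\in\Et_X$ one has $\underline{U}_{\overline{\eta}}\simeq U_{\overline{\eta}}$ and $\underline{U}_s\simeq U_s$, functorially in $U$ and compatibly with the structural maps to $X_{\overline{\eta}}=\underline{X}_{\overline{\eta}}$ and $X_s=\underline{X}_s$; the second holds because $k$ is algebraically closed, so the closed point of $\overline{S}$ has the same residue field as that of $S$. In particular the targets of $\beta_{\underline{X},n}$ and $\sigma_{\underline{X},n}$ are those of $\beta_{X,n}$ and $\sigma_{X,n}$, and the morphism $\underline{X}_{s,\et}\to X_{s,\et}$ induced by $\underline{X}\to X$ is the identity. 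Moreover the functor defining $\beta_{\underline{X}}$, namely $V\mapsto(V\to\underline{X})$ on $\Fet_{\underline{X}_{\overline{\eta}}}=\Fet_{X_{\overline{\eta}}}$, factors as $\Fet_{X_{\overline{\eta}}}\xrightarrow{V\mapsto(V\to X)}E_X\xrightarrow{u}E_{\underline{X}}$, where $u\colon(V\to U)\mapsto(V\to\underline{U})$ is the functor defining $h$; and similarly the functor defining $\sigma_{\underline{X}}$ factors through $u$ and $\Et_X\to\Et_{\underline{X}}$. Passing to pushforwards (compatibly with the ring structures) this gives $\beta_{\underline{X},n}=\beta_{X,n}\circ h$ and $\sigma_{\underline{X},n}=\sigma_{X,n}\circ h$, hence $\beta_{\underline{X},n}^*=h^*\beta_{X,n}^*$ and $\sigma_{\underline{X},n}^*=h^*\sigma_{X,n}^*$, so the morphisms of the lemma are precisely the units $\beta_{X,n}^*(\mathbb{L})\to h_*h^*\beta_{X,n}^*(\mathbb{L})$ and $\sigma_{X,n}^*(M)\to h_*h^*\sigma_{X,n}^*(M)$.

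Next I would compute both sides componentwise. Since $h$ is induced by $u$, one has $h_*G\simeq G\circ u$, so by Proposition \ref{p:sheaf E} the component of $h_*G$ at $U\in\Et_X$ is the component of $G$ at $\underline{U}\in\Et_{\underline{X}}$, viewed on $U_{\overline{\eta},\fet}=\underline{U}_{\overline{\eta},\fet}$ (resp. on $U_{s,\et}=\underline{U}_{s,\et}$). On the other hand, by the componentwise description of pullback along $\beta$ and $\sigma$ resulting from Proposition \ref{p:sheaf E}, the component at $U$ of $\beta_{X,n}^*(\mathbb{L})$ (resp. $\sigma_{X,n}^*(M)$) is obtained by pulling back $\mathbb{L}$ (resp. $M$) along the structural morphism $U_{\overline{\eta},\fet}\to X_{\overline{\eta},\fet}$ (resp. $U_{s,\et}\to X_{s,\et}$) and extending scalars to $\bB_{X,n}|_U$. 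Applying this over $\underline{X}$ at $\underline{U}$ and invoking the identifications of the first step, the structural morphism for $\underline{U}$ is identified with that for $U$; combined with $h_*\bB_{\underline{X},n}=\bB_{X,n}$, which yields $\bB_{\underline{X},n}|_{\underline{U}}=\bB_{X,n}|_U$, one gets that the component at $U$ of $h_*\beta_{\underline{X},n}^*(\mathbb{L})$ agrees with that of $\beta_{X,n}^*(\mathbb{L})$ and that the unit is the identity on it. The same computation with $(\sigma,M,U_s\to X_s)$ proves (ii); note that no reduction to locally free modules is needed, since taking a component is exact and monoidal and the ring twist is controlled by $h_*\bB_{\underline{X},n}=\bB_{X,n}$.

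The argument is essentially bookkeeping, and the one place to be careful is the translation through Proposition \ref{p:sheaf E}: one must check that the unit morphism, read off componentwise, really is the tautological identification — which reduces to the transition maps of the fibered sheaf $U\mapsto(\text{structural pullback of }\mathbb{L})$ being the canonical ones and to the functoriality in $U$ of $\underline{U}_{\overline{\eta}}\simeq U_{\overline{\eta}}$ — and that all of this is compatible with reduction modulo $p^n$, with the ring $\bB_{X,n}$ (via $h_*\bB_{\underline{X},n}=\bB_{X,n}$), and with the passage to the closed subtopos $\widetilde{E}_{X,s}$.
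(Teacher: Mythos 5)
Your reduction of both statements to the unit of adjunction $\mathscr{F}\to h_*h^*\mathscr{F}$, via the factorizations $\beta_{\underline{X},n}=\beta_{X,n}\circ h$ and $\sigma_{\underline{X},n}=\sigma_{X,n}\circ h$, is correct and is a clean way to organize the problem, and the identification of the $U$-component of $h_*G$ with the $\underline{U}$-component of $G$ is also fine. The gap is in the next step. The component at $U$ of the module pullback $\beta_{X,n}^*(\mathbb{L})$ is \emph{not} the naive expression $f_{\overline{\eta},\fet}^*(\mathbb{L})\otimes_{\oo_n}(\bB_{X,n})|_U$: that formula only describes the presheaf whose sheafification is $\beta_{X,n}^*(\mathbb{L})$, and sheafification on a Faltings site modifies the components, because the coverings of type (c) (\'etale coverings of $U$) impose an exactness condition across different components that the presheaf tensor product does not satisfy for a general $\oo_n$-module $\mathbb{L}$ (a projection-formula issue; note the lemma assumes no local freeness, and already for $\mathbb{L}=\oo_n$ the statement amounts to $h_*(\bB_{\underline{X}}/p^n)=\bB_X/p^n$, which is not formal). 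Relatedly, the parenthetical claim that ``taking a component is exact and monoidal'' is false: $F\mapsto F_U$ is an evaluation/pushforward-type functor, hence left exact, but it commutes with neither cokernels, nor tensor products, nor sheafification.

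This matters because the two sheafifications are taken with respect to genuinely different topologies: over $E_{\underline{X}}$ one sheafifies using \'etale coverings of $\underline{U}$, and these are strictly more general than base changes of \'etale coverings of $U$ (any \'etale covering defined over $U\times_S S'$ for a finite extension $S'/S$ occurs). Showing that these extra coverings do not change the $\underline{U}$-component --- equivalently, that $h_*$ carries the sheafification over $E_{\underline{X}}$ of the presheaf to the sheafification over $E_X$ of its restriction --- is exactly the non-tautological content of the lemma, and it is the point your ``bookkeeping'' elides. The paper handles it by quoting the explicit descriptions of (\cite{AGT} VI.8.9 and VI.10.9) (and VI.5.34(ii) for part (ii)), which identify \emph{both} sides with the sheaf on $E_X$ associated to the single presheaf $U\mapsto f_{\overline{\eta},\fet}^*(\mathbb{L})\otimes_{\oo_n}(\bB_{X,n})|_U$. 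To repair your argument you would need either to invoke those results, or to supply a limit/descent argument (e.g.\ using $\Et_{\underline{X}}=\varinjlim_{S'}\Et_{X_{S'}}$ and the coherence of the sites involved) showing that the two sheafifications have canonically isomorphic components.
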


\begin{proof}
	(i) By proposition \ref{p:sheaf E} and (\cite{AGT} VI.8.9 and VI.10.9), both two sheaves are isomorphic to the sheaf associated to the presheaf:
	\[
	\{U\mapsto f_{\overline{\eta},\fet}^*(\mathbb{L})\otimes_{\oo_n}  (\bB_{X,n})|_U,\quad f:U\to X\in \Et_{X}\},
	\]
	where $\oo_n$ is the constant sheaf of $U_{\overline{\eta},\fet}$.
	Then, the assertion follows. 

	Assertion (ii) can be proved in a similar way using (\cite{AGT} VI.5.34(ii)).  
\end{proof}

\begin{secnumber} \label{sss:Rbar}
	In the following, we assume $X$ is a \textit{semi-stable $S$-scheme}, equipped with the log structure $\mathscr{M}_{X}$ (\ref{sss:semi-stable}). 
	Then, $\underline{X}$ is normal. 
	Let $U=\Spec(R)\to X$ be an object of $\bQ$ \eqref{sss:catQ}, $\overline{y}\to U_{\overline{\eta}}$ a geometric generic point and $y$ the image of $\overline{y}$ in $U_{\eta}$, which is a generic point, such that $\overline{\mathcal{K}}:=\Gamma(\overline{y},\mathscr{O}_{\overline{y}})$ is an algebraic closure of the residue field $\mathcal{K}$ of $U_{\eta}$ at $y$. 
	We define $\mathcal{K}^{\ur}$ to be the union of all finite extensions of $\mathcal{L}$ of $\mathcal{K}$ contained in $\overline{\mathcal{K}}$ such that the integral closures of $R$ in $\mathcal{L}$ are \'etale over $R[\frac{1}{p}]$. 
	Let $\overline{R}$ be the integral closure of $R$ in $\mathcal{K}^{\ur}$ and $\widehat{\overline{R}}$ the $p$-adic completion of $\overline{R}$. 

	Since $U_{\overline{\eta}}$ is smooth over $\overline{\eta}$, generic points of $U_{\overline{\eta}}$ are classified by the set $\Con(U_{\overline{\eta}})$ of connected components of $U_{\overline{\eta}}$. 
	Let $U_{\overline{\eta}}^{\overline{y}}$ be the connected component of $U_{\overline{\eta}}$ containing $\overline{y}$. 
	The morphism $\overline{y}\to U_{\overline{\eta}}$ induces an extension $\overline{K}\to \overline{\mathcal{K}}$ above $K\to \mathcal{K}$.
	We denote by $G_{(U,\overline{y})}$ the Galois group $\Gal(\mathcal{K}^{\ur}/\mathcal{K})$, and by $\Delta_{(U,\overline{y})}$ the kernel of $G_{(U,\overline{y})}\to \Gal(\overline{K}/K)$, which is canonically isomorphic to $\pi_1(U_{\overline{\eta}}^{\overline{y}},\overline{y})$. 
	Let $\mathbf{B}_{\Delta_{(U,\overline{y})}}$ be the classifying topos of the profinite group $\Delta_{(U,\overline{y})}$. Under the equivalence: 
	$\nu_{\overline{y}}:(U_{\overline{\eta}}^{\overline{y}})_{\fet} \xrightarrow{\sim} \mathbf{B}_{\Delta_{(U,\overline{y})}}$ defined by the fiber functor $F\mapsto F_{\overline{y}} $, we have a canonical isomorphism (\cite{AGT} III.8.15) 
	\[
	\nu_{\overline{y}}(\bB_{U}|(U_{\overline{\eta}}^{\overline{y}})_{\fet})\xrightarrow{\sim} \overline{R}.
	\]
	Moreover, the action $\Delta_{(U,\overline{y})}$ on $\widehat{\overline{R}}$ is continuous with respect to the $p$-adic topology. 

	For each pair $(U,\overline{y})$ as above, we associate to adic $\bbB$-modules of finite type \eqref{sss:toposN} continuous $\widehat{\overline{R}}$-representations of $\Delta_{(U,\overline{y})}$ via $\nu_{\overline{y}}$.
	In the following, we collect some notions about representations:
\end{secnumber}

\begin{definition}[\cite{AGT} II.13.1, II.13.2] \label{d:rep small}
	Let $G$ be a topological group, $A$ an $\oo$-algebra that is complete and separated for the $p$-adic topology, equipped with a continuous $G$-action (via homomorphisms of $\oo$-algebras). 

	(i) 
	A continuous $A$-representation of $G$ is an $A$-module $M$ of finite type, equipped with the $p$-adic topology and a continuous $A$-semi-linear action of $G$. 

	(ii) Let $M$ be a continuous $A$-representation of $G$ and $\alpha$ a rational number $>0$. 
	
	We say $M$ is \textit{$\alpha$-small} if $M$ is a free $A$-module of finite rank having a basis over $A$ consisting of elements which are $G$-invariant modulo $p^\alpha$. 

	We say $M$ is \textit{small} if $M$ is $\alpha$-small for a rational number $\alpha>\frac{2}{p-1}$.

	(iii) A continuous $A[\frac{1}{p}]$-representation of $G$ is an $A[\frac{1}{p}]$-module $N$ of finite type, equipped with the $p$-adic topology and a continuous $A[\frac{1}{p}]$-semi-linear action of $G$. 

	(iv) We say a continuous $A[\frac{1}{p}]$-representation $N$ of $G$ is \textit{small}, if $N$ is moreover projective, and there exists a rational number $\alpha>\frac{2}{p-1}$, a sub-$A$-module of finite type $N^{\circ}$ which is $G$-invariant and generates $N$ over $A[\frac{1}{p}]$, and a finite number of generators of $N^{\circ}$ that are $G$-invariant modulo $p^{\alpha}$.  
\end{definition}

We denote by $\Rep(G,A)$ (resp. $\Rep(G,A[\frac{1}{p}])$) the category of continuous $A$-representations (resp. $A[\frac{1}{p}]$-representations) of $G$ and by $\Rep_{\sma}(G,A)$ (resp. $\Rep_{\sma}(G,A[\frac{1}{p}])$) the full subcategory of $\Rep(G,A)$  (resp. $\Rep(G,A[\frac{1}{p}])$) consisting of small representations.


\begin{secnumber} \label{sss:LS}
	We denote by $\LS(X_{\overline{\eta},\fet},\oo_n)$ (resp. $\LS(X_{\overline{\eta},\fet}^{\Nc},\breve{\oo})$) the category of locally free of finite type $\oo_n$-modules of $X_{\overline{\eta},\fet}$ (resp. adic $\breve{\oo}$-modules $M=(M_n)_{n\ge 1}$ of finite type such that each $M_n$ belongs to $\LS(X^{\mathbb{N}^{\circ}}_{\overline{\eta},\fet},\oo_n)$). 
	Suppose $X_{\overline{\eta}}$ is connected. 
	Let $\overline{x}$ be a geometric generic point of $X_{\overline{\eta}}$. The fiber functor $\nu_{\overline{x}}$ at $\overline{x}$ induces fully faithful functors: 
	\begin{equation}
	\LS(X_{\overline{\eta},\fet},\oo_n)\to \Rep(\pi_1(X_{\overline{\eta}},\overline{x}),\oo_n),\quad 
	\LS(X_{\overline{\eta},\fet}^{\Nc},\breve{\oo})\to \Rep(\pi_1(X_{\overline{\eta}},\overline{x}),\oo),  \label{eq:LS Repo}
\end{equation}
and an equivalence of categories (c.f. \cite{Xu17} 3.29):
\begin{equation}
	\LS(X_{\overline{\eta},\fet}^{\Nc},\breve{\oo})_{\BQ}\xrightarrow{\sim} \Rep(\pi_1(X_{\overline{\eta}},\overline{x}),\bC). \label{eq:LS RepC}
\end{equation}
The essential images of \eqref{eq:LS Repo} are representations whose underlying $\oo_n$-modules, $\oo$-modules are free. 


Let $\rho:X_{\overline{\eta},\et}\to X_{\overline{\eta},\fet}$ be the canonical morphism of topoi. 
The adjunction morphism $\id\to \rho_*\rho^*$ is an isomorphism. 
The inverse image of $\rho$ induces a fully faithfull functor:
\begin{equation} \label{eq:rho et fet}
\rho^*:\Mod(X_{\overline{\eta},\fet},\oo_n)\to \Mod(X_{\overline{\eta},\et},\oo_n), \quad \forall ~ n\ge 1.
\end{equation}
\end{secnumber}

\subsection{$P$-adic Simpson correspondence via Higgs crystals}

\begin{secnumber}
	We keep the assumption and notation of \ref{sss:Rbar}. 
	We denote by $\overline{R}^{\flat}$ the perfection $\varprojlim_{x\mapsto x^p} \overline{R}/p\overline{R}$ of $\overline{R}/p\overline{R}$ and by $\theta:W(\overline{R}^{\flat})\to \widehat{\overline{R}}$ the canonical homomorphism defined by Fontaine, which is surjective (\cite{AGT} II.9.5). 
	The element $\xi$ (\S \ref{sss:notations}) generates the ideal $\Ker(\theta)$. 
	
	For $N\ge 1$, we set $A_N(\overline{R})=W(\overline{R}^{\flat})/\xi^N W(\overline{R}^{\flat})$, which is also $p$-adically complete and separated. 
	There exists a canonical and continuous action of $G_{(U,\overline{y})}$ on $A_N(\overline{R})$ (\cite{AGT} IV.5.1.2). 
	
	We denote by $\overline{U}$ the $p$-adic formal scheme $\overline{U}=\Spf(\widehat{\overline{R}})$ and for $N\ge 1$, $D_N(\overline{U})$ the $p$-adic formal scheme $\Spf(A_{N}(\overline{R}))$. They are equipped with fine and saturated log structures induced by $\mathscr{M}_U$ and there is a canonical isomorphism $\overline{U}\xrightarrow{\sim} D_1(\overline{U})$ (cf. \cite{AGT} IV.5.1.4). 
	
	The inductive system $D(\overline{U})=(D_N(\overline{U}))_{N\ge 1}$ together with a canonical morphism $z_{\overline{U}}:\overline{U}\to \check{U}$ form an object of $(\check{X}/\Sigma_S)_{\HIG}^r$ for $r\in \mathbb{N}\cup \{\infty\}$ (cf. \cite{AGT} IV.5.2). 
	Then, $\Delta_{(U,\overline{y})}$ acts on $(D(\overline{U}),z_{\overline{U}})$.  
\end{secnumber}

\begin{secnumber}
	\label{sss:HCZpr}
	We denote by $\LPM(\widehat{\overline{R}})$ the category of finitely generated $\widehat{\overline{R}}$-modules $M$ such that $M[\frac{1}{p}]$ is projective over $\widehat{\overline{R}}[\frac{1}{p}]$, and by $\Rep^{\LPM}(\Delta_{(U,\overline{y})}, \widehat{\overline{R}})$ (resp. $\Rep^{\PM}(\Delta_{(U,\overline{y})},\widehat{\overline{R}}[\frac{1}{p}])$) the full subcategory of $\Rep(\Delta_{(U,\overline{y})}, \widehat{\overline{R}})$ (resp. $\Rep(\Delta_{(U,\overline{y})},\widehat{\overline{R}}[\frac{1}{p}])$) whose underlying module belongs to $\LPM(\widehat{\overline{R}})$ (resp. is finitely generated and projective over $\widehat{\overline{R}}[\frac{1}{p}]$). 

For an object $U$ of $\bQ$, a geometric generic point $\overline{y}$ of $U_{\overline{\eta}}$ as in \ref{sss:Rbar} and an object $\mathcal{M}$ of $\HC_{\Zp,\fin}(\check{X}/\Sigma_S)$ (resp. $\HC^r_{\Qp,\fin}(\check{X}/\Sigma_S)$), 
	we associate an object of $\LPM(\widehat{\overline{R}})$ (resp. a finite projective $\widehat{\overline{R}}[\frac{1}{p}]$-module):
	\[
	T_{(U,\overline{y})}(\mathcal{M})=\Gamma( (D(\overline{U}),z_{\overline{U}}), \mathcal{M}),\quad 
	\textnormal{(resp. } V_{(U,\overline{y})}^r(\mathcal{M})=\Gamma( (D(\overline{U}),z_{\overline{U}}), \mathcal{M}) ). 
	\]
	The action of $\Delta_{(U,\overline{y})}$ on $(D(\overline{U}),z_{\overline{U}})$ provides a semi-linear action of $\Delta_{(U,\overline{y})}$ on $T_{(U,\overline{y})}(\mathcal{M})$.
	Then, we obtain the following functors (\cite{AGT} IV.5.2):
\begin{eqnarray}
	T_{(U,\overline{y})}: \HC_{\Zp,\fin}(\check{X}/\Sigma_S) &\to& \Rep^{\LPM}(\Delta_{(U,\overline{y})},\widehat{\overline{R}}),\quad \mathcal{M}\mapsto \Gamma( (D(\overline{U}),z_{\overline{U}}), \mathcal{M}), \\
	V_{(U,\overline{y})}^r: \HC^r_{\Qp,\fin}(\check{X}/\Sigma_S) &\to& \Rep^{\PM}(\Delta_{(U,\overline{y})},\widehat{\overline{R}}[\frac{1}{p}]), \quad \mathcal{M}\mapsto \Gamma( (D(\overline{U}),z_{\overline{U}}), \mathcal{M}).
\end{eqnarray}

	Let $f:U'\to U$ be a morphism of $\bQ$ and $\overline{y}'$ a geometric generic point of $U_{\overline{\eta}}'$ above $\overline{y}$. It induces a homomorphism $\Delta_{(U',\overline{y}')}\to \Delta_{(U,\overline{y})}$ and a morphism $D(\overline{U}')\to D(\overline{U})$ of $(\check{X}/\Sigma_S)_{\HIG}^r$ compatible with $\Delta_{(U',\overline{y}')}$-actions. 
	Moreover, we deduce a $\widehat{\overline{R}}$-linear and $\Delta_{(U',\overline{y}')}$-equivariant morphism:
	\[
	T_{(U,\overline{y})}(\mathcal{M}) \to T_{(U',\overline{y}')}(\mathcal{M}) \quad \textnormal{(resp. }	V^r_{(U,\overline{y})}(\mathcal{M}) \to V^r_{(U',\overline{y}')}(\mathcal{M}) ). 
	\]

	Since $\mathcal{M}$ is a crystal, we deduce the following $\widehat{\overline{R}}'$-linear $\Delta_{(U',\overline{y}')}$-equivariant isomorphism:
	\begin{equation}
		T_{(U,\overline{y})}(\mathcal{M})\otimes_{\widehat{\overline{R}}}\widehat{\overline{R}}' \xrightarrow{\sim} T_{(U',\overline{y}')}(\mathcal{M}) \quad 
		\textnormal{(resp. }	
		V^r_{(U,\overline{y})}(\mathcal{M})\otimes_{\widehat{\overline{R}}}\widehat{\overline{R}}' \xrightarrow{\sim} V^r_{(U',\overline{y}')}(\mathcal{M}) ). 
		\label{eq:TVr base change}
	\end{equation}

	The above construction globalizes to a functor (\cite{AGT} IV.6.4.5): 
	\begin{equation} \label{eq:functor T}
		T: \HC_{\Zp,\fin}(\check{X}/\Sigma_S) \to \Mod(\bbB),\quad \mathcal{M}\mapsto (T_{(U,\overline{y})}(\mathcal{M})_n)_{n\ge 1, U\in \bQ, \overline{y}\in \Con(U_{\overline{\eta}})},
\end{equation}
where $\Mod(\bbB)$ denotes the category of $\bbB$-modules, geometric generic points $\overline{y}$ are taken over the set of connected components $\Con(U_{\overline{\eta}})$ of $U_{\overline{\eta}}$ and $(T_{(U,\overline{y})}(\mathcal{M})_n)_{\overline{y}\in \Con(U_{\overline{\eta}})}$ forms a $\bB_{X,n}|_U$-module of $U_{\overline{\eta},\fet}$. 
	The above functor is fully faithful up to isogeny (\cite{AGT} theorem IV.6.4.9). 

	For $r\in \mathbb{N}$, we denote by $\HC_{\Zp,\fin}^r(\check{X}/\Sigma_S)$ the category whose object is a triple consisting of an object $\mathcal{M}$ of $\HC_{\Qp,\fin}^r(\check{X}/\Sigma_S)$, an object $\mathcal{M}^{\circ}$ of $\HC_{\Zp,\fin}(\check{X}/\Sigma_S)$ and an isomorphism $\iota_{\mathcal{M}}:\mathcal{M}^{\circ}_{\BQ}\simeq \mathcal{M}|_{(\check{X}/\Sigma_S)_{\HIG}^{\infty}}$. A morphism is defined in a natural way (c.f. \cite{AGT} definition IV.3.5.4).

The functor $T$ induces a fully faithful functor: 
	\begin{equation} 
		V^r: \HC_{\Zp,\fin}^r(\check{X}/\Sigma_S)_{\BQ}
		\to \Mod(\bbB)_{\BQ},\quad 
		(\mathcal{M},\mathcal{M}^{\circ})\mapsto T(\mathcal{M}^{\circ})_{\BQ},
\end{equation}
whose restriction to $(U,\overline{y})$ is compatible with the functor $V^r_{(U,\overline{y})}$. 
\end{secnumber}

\begin{secnumber}
	Next, we discuss the pullback functoriality of functors $T,V^r$. We keep the assumption of \S~\ref{sss:pullback-HiggsC}. 
	Let $(\widetilde{E}^{\Nc}_{X'},\bbB_{X'})$ be the Faltings' ringed topos associated to the $S'$-scheme $X'$. 
	Then, the morphism $f:X'\to X_{S'}$ induces a morphism of ringed topoi:
	\[
		\Phi:(\widetilde{E}^{\Nc}_{X'},\bbB_{X'}) \to (\widetilde{E}^{\Nc}_X,\bbB_X).
	\]
	Let $T':\HC_{\Zp,\fin}(\check{X}'/\Sigma_{S'}) \to \Mod(\bbB_{X'})$ be the functor \eqref{eq:functor T} defined by $X'$. 
\end{secnumber}

\begin{prop}\label{p:pullbackfunctoriality}
	\textnormal{(i)} The following diagram is commutative up to canonical isomorphisms $\gamma_f$:
	\[
	\xymatrix{
	\HC_{\Zp,\fin}(\check{X}/\Sigma_S) \ar[r]^-{T} \ar[d]_{\check{f}_{\HIG}^*} & \Mod(\bbB_X) \ar[d]^{\Phi^*} \\
	\HC_{\Zp,\fin}(\check{X}'/\Sigma_{S'}) \ar[r]^-{T'} \ar@{=>}[ru]^{\gamma_f} & 
	\Mod(\bbB_{X'})
	}
	\]	
	
	\textnormal{(ii)}
	Let $K''$ be a finite extension of $K'$ in $\overline{K}$, $X''$ a semi-stable $S''=\Spec(\mathscr{O}_{K''})$-scheme, and $g:X''\to X'_{S''}$ an $S''$-morphism. 
	Then, we have an identity of natural transforms:
	\begin{equation} \label{eq:natural-gammaf}
	\gamma_{g\circ f}= \gamma_g\biggl( \check{f}_{\HIG}^*(-)\biggr) \circ \Psi^*\biggl( \gamma_f (-)\biggr),
\end{equation}
	where $\Psi:(\widetilde{E}^{\Nc}_{X''},\bbB_{X''})\to (\widetilde{E}^{\Nc}_{X'},\bbB_{X'})$ is the canonical morphism of Faltings' ringed topoi associated to $g$. 
\end{prop}

\begin{proof}
	(i) Let $\mathcal{M}$ be an object of $\HC_{\Zp,\fin}(\check{X}/\Sigma_S)$. 
	Let $U'$ be an object of $\bQ_{X'}$, equipped with an $S'$-morphism $f:U'\to U_{S'}=U\times_S S'$, and $\overline{y}'$ a geometric generic point of $U'_{\overline{\eta}}$ above $\overline{y}$. 
	With the notation of \ref{sss:HCZpr}, $D(\overline{U}')$ defines an object of $(\check{X}/\Sigma_S)_{\HIG}^{\infty}$ via functor \eqref{eq:functor-f-g-HIG}. 
	As in \ref{sss:HCZpr}, the morphism $f$ induces a homomorphism $\Delta_{(U',\overline{y}')}\to \Delta_{(U,\overline{y})}$ and a morphism $D(\overline{U}')\to D(\overline{U})$ of $(\check{X}/\Sigma_S)_{\HIG}^r$ compatible with $\Delta_{(U',\overline{y}')}$-actions. 
	Since $\mathcal{M}$ is a crystal, we deduce a $\widehat{\overline{R}}'$-linear $\Delta_{(U',\overline{y}')}$-equivariant isomorphism: 
	\[
		T_{(U,\overline{y})}(\mathcal{M})\otimes_{\widehat{\overline{R}}}\widehat{\overline{R}}' \xrightarrow{\sim} 
		T'_{(U',\overline{y}')}(\check{f}^*_{\HIG}(\mathcal{M})). 
	\]
	Then, we deduce a canonical isomorphism:
	\begin{equation} \label{eq:gammaf}
	\nu_{\overline{y}'}(\gamma_f(\mathcal{M})|_{U'^{\overline{y}'}_{\overline{\eta},\fet}}): 
	\nu_{\overline{y}'}(\Phi^*(T(\mathcal{M}))|_{U'^{\overline{y}'}_{\overline{\eta},\fet}}) \xrightarrow{\sim} 
	T'_{(U',\overline{y}')}(\check{f}^*_{\HIG}(\mathcal{M})).
	\end{equation}
	
	The above isomorphism is natural on $(U',\overline{y}')$ and $(U,\overline{y})$. 
	Then, we obtain a canonical isomorphism $\gamma_f(\mathcal{M}): \Phi^*(T(\mathcal{M}))\xrightarrow{\sim} T'(\check{f}_{\HIG}^*(\mathcal{M}))$, which is functorial. Then the assertion follows. 

	(ii) 
	Recall that the isomorphism $\gamma_f(\mathcal{M})$ is constructed by the transition isomorphism of the evaluation of Higgs crystals on certain objects of Higgs site \eqref{eq:gammaf}. 
	Then, the assertion follows from the cocycle condition of transition isomorphisms.
\end{proof}

\begin{secnumber} \label{sss:periodring}
	The functor $T_{(U,\overline{y})}$ (resp. $V^r_{(U,\overline{y})}$) can be reinterpreted as an admissible isomorphism for a ``period ring''. 
	Let $\mathcal{X}$ be a smooth Cartesian lifting of $\check{X}$ over $\Sigma_{S}$
	Following (\cite{AGT} IV.5.2), we denote by $\mathscr{D}^r_{\check{X},\mathcal{X}}(\overline{U})$ the object of $\mathscr{C}^r$ defined by the Higgs envelope (\S~\ref{sss:Hig envelope C})
	\[
		\mathscr{D}^r_{\check{X},\mathcal{X}}(\overline{U})=
		D^{r}_{\HIG}(\overline{U}\to \mathcal{X}\times_{\Sigma_S} D(\overline{U})),
	\]
	which is affine. We define an object $\mathscr{A}_{\check{X},\mathcal{X}}^r(\overline{R})=(\mathscr{A}_{\check{X},\mathcal{X},N}^r(\overline{R}))$ of $\mathscr{A}^r_{\bullet}$ (\S~\ref{sss:Hig envelope}) by 
	\[
	\mathscr{A}_{\check{X},\mathcal{X},N}^r(\overline{R})=\Gamma(\mathscr{D}^r_{\check{X},\mathcal{X},N}(\overline{U}),\mathscr{O}_{\mathscr{D}^r_{\check{X},\mathcal{X},N}(\overline{U})}).
	\]
	We ignore the notation $\check{X},\mathcal{X}$ from $\mathscr{A}_{\check{X},\mathcal{X},N}^r$ if there is no confusion and we focus on $\mathscr{A}^r_1$. 

	Let $\underline{U}^{\overline{y}}=\Spec(R_1)$ be the connected component (or equivalently, irreducible component) of $\underline{U}=U\times_S \overline{S}$ containing $\overline{y}$ and $\check{U}^{\overline{y}}=\Spf(\widehat{R_1})$ its $p$-adic completion. 
	The projections $\mathscr{D}^r(\overline{U})\to D(\overline{U})$ and $\mathscr{D}^r\to \mathcal{X}$ induce compatible homomorphisms $\widehat{\overline{R}}\to \mathscr{A}^r_1(\overline{R})$ and $\widehat{R_1}\to \mathscr{A}^r_1(\overline{R})$. 
	We refer to (\cite{AGT} IV.5.2.5) for a description of $\widehat{\overline{R}}$-algebra structure of $\mathscr{A}^r_1(\overline{R})$. 
	The ring $\mathscr{A}^r_1(\overline{R})$ is equipped with a continuous action of $\Delta_{(U,\overline{y})}$ with respect to the $p$-adic topology (\cite{AGT} IV.5.2.6) and a Higgs field 
	\[
	\theta_{\mathscr{A}}:\mathscr{A}_1^r(\overline{R})\to \xi^{-1}\mathscr{A}^r_1(\overline{R})\otimes_{R} \widetilde{\Omega}_{R/\mathscr{O}_K}^1. 
	\]

	Let $\mathcal{M}$ be a Higgs isocrystal of $\HC^r_{\Qp,\fin}(\check{X}/\Sigma_S)$ (resp. a Higgs crystal of $\HC_{\Zp,\fin}(\check{X}/\Sigma_S)$). 
	We set $M=\Gamma(\check{U}^{\overline{y}},\mathcal{M}_{\mathcal{X}})$ and $\theta:M\to \xi^{-1}M\otimes_{R}\widetilde{\Omega}^1_{R/\mathscr{O}_K}$ the associated Higgs field on $M$. 
	We set $V(\mathcal{M})$ to be $V^r_{(U,\overline{y})}(\mathcal{M})$ (resp. $T_{(U,\overline{y})}(\mathcal{M})$). 
	Then, there exists a canonical $\Delta_{(U,\overline{y})}$-equivariant $\mathscr{A}^r_1(\overline{R})$-linear isomorphism compatible with Higgs fields (\cite{AGT} IV.5.2.12):
	\begin{equation}
		\mathscr{A}^r_1(\overline{R})\otimes_{\widehat{\overline{R}}}V(\mathcal{M}) \xrightarrow{\sim}
		\mathscr{A}^r_1(\overline{R})\otimes_{\widehat{R_1}} M, 
		\label{eq:admissible iso}
	\end{equation}
	where the Higgs field on $V(\mathcal{M})$ is trivial and $\Delta_{(U,\overline{y})}$-action on $M$ is also trivial. 

	Moreover, Tsuji compared the algebra $\mathscr{A}^r_1(\overline{R})$ with the Higgs-Tate algebra introduced by Abbes--Gros (\cite{AGT} II.10). 
	More precisely, let $\cC_{\mathcal{X}_2,(U,\overline{y})}$ be the Higgs-Tate $\widehat{\overline{R}}$-algebra defined by $\mathcal{X}_2$ over $\Sigma_{2,S}$ associated to $(U,\overline{y})$ (\cite{AGT} II.10.5) and let $\hcC_{\mathcal{X}_2,(U,\overline{y})}$ be its $p$-adic completion. 
	There exists a canonical $\Delta_{(U,\overline{y})}$-equivariant isomorphism compatible with Higgs fields (\cite{AGT} (IV.5.4.3)):
	\begin{equation} \label{eq:iso A C}
		\hcC_{\mathcal{X}_2,(U,\overline{y})}\xrightarrow{\sim} \mathscr{A}^{\infty}_{\check{X},\mathcal{X},1}(\overline{R}). 
\end{equation}

For $r\ge 1$, there exists an injection of $\widehat{\overline{R}}$-homomorphism from $\mathscr{A}_1^r(\overline{R})$ to $\hcC_{\mathcal{X}_2,(U,\overline{y})}^{(1/r)}$ the $p$-adic completion of the Higgs-Tate algebra of  thickness $1/r$ (\cite{AGT} II.12.1) whose cokernel  is annihilated by $p$ (\cite{AGT} (IV.5.4.4)). 
\end{secnumber}

\begin{secnumber} \label{sss:associated}
	The Higgs-Tate algebra admits a globaliztion as a $\bbB_X$-algebra $\bcC_{\mathcal{X}_2}^{(s)}$ of $\widetilde{E}_{X}^{\Nc}$ for $s\in \BQ_{>0}$ (\cite{AGT} III.10.31). 
	For a pair of an adic $\bbB_{X,\BQ}$-module of finite type $\mathscr{N}$ and a Higgs bundle $(M,\theta)$ of $\HB_{\Qp}(\check{X}/\Sigma_{1,S})$ \eqref{d:Conv Int}, Abbes and Gros introduced the notiona of \textit{$\mathscr{N}$ and $(M,\theta)$ are associated} by an admissible condition defined by $\bcC_{\mathcal{X}_2}^{(s)}$ for some $s\in \BQ_{>0}$ (\cite{AGT} III.12.11). 
	An adic $\bbB_{X,\mathbb{Q}}$-module of finite type satisfying such an admissible condition is called \textit{Dolbeault}. 
	This admissible condition establishes an equivalence between small Higgs bundles and Dolbeault $\bbB_{X,\BQ}$-modules: 
	\begin{equation} \label{eq:adm-padicSimpson}
		\bfT_{\mathcal{X}_2} : \HB_{\Qp,\sma}(\check{X}/\Sigma_{1,S})\simeq \Mod^{\Dolb}(\bbB_{X,\mathbb{Q}}): \bfH_{\mathcal{X}_2}.
	\end{equation}
	
	Their approach to the $p$-adic Simpson correspondence is compatible Tsuji's approach via $\iota_{\mathcal{X}}$ (proposition \ref{p:HS HM}(ii)). 
	More precisely, in view of \S~\ref{sss:periodring} and proposition \ref{p:pullbackfunctoriality}, we have: 
\end{secnumber}

\begin{prop} \label{p:compatibility AGT}
	\textnormal{(i)} Given an object $(\mathcal{M},\mathcal{M}^{\circ})$ of $\HC^{r}_{\Zp,\fin}(\check{X}/\Sigma_S)$, there exists a functorial isomorphism:
	\[
		T(\mathcal{M}^{\circ})_{\BQ}(=V^r(\mathcal{M},\mathcal{M}^{\circ})) \simeq 
		\bfT_{\mathcal{X}_2}(\iota_{\mathcal{X}}^{-1}(\mathcal{M})).
	\]

\textnormal{(ii)} The equivalence $\bfT_{\mathcal{X}_2}$ (resp. $\bfH_{\mathcal{X}_2}$) satisfies the inverse image functoriality with respect to the twisted inverse image functor for small Higgs bundles \eqref{eq:twisted pullback conv} and inverse image functor of Faltings topoi as in proposition \ref{p:pullbackfunctoriality}. 
\end{prop}

\subsection{A local version of the $p$-adic Simpson correspondence} \label{ss:local Simpson}
In this subsection, we assume that $\id_X$ belongs $\bQ$ and that $X_{\overline{\eta}}$ is connected. We fix a geometric generic point $\overline{y}$ of $X_{\overline{\eta}}$ as in \ref{sss:Rbar}.
	We review a local description of the $p$-adic Simpson correspondence on $X$ following (\cite{Fal05}, \cite{AGT} II.13, II.14). 

\begin{secnumber} \label{sss:equivalences small}
	Note that $X\times_S \overline{S}$ is affine, and is denoted by $\Spec(R_1)$ (\S~\ref{sss:periodring}). 
	We denote by $\HB_{\Zp,\sma}^{\free}(\widehat{R_1})$ the category of pairs $(M,\theta)$ consisting of a free $\widehat{R_1}$-module $M$ of finite rank together with a small Higgs field $\theta:M\to \xi^{-1}M\otimes_{R}\widetilde{\Omega}_{R/\mathscr{O}_K}^1$ \eqref{d:small HB}. 
	We note $\Delta_{(X,\overline{y})}$ simply by $\Delta$ and refer to (\cite{AGT} II.6.10) for the definition of $\Delta_{\infty}$. 
	There exist equivalences of categories (\cite{Fal05} lemma 1, theorem 3; \cite{AGT} II.13.11, II.14.4):
	\begin{equation} \label{eq:local exp}
	\HB_{\Zp,\sma}^{\free}(\widehat{R_1}) \xrightarrow{\sim}
	\Rep_{\sma}(\Delta_{\infty},\widehat{R_1})\xrightarrow{\sim} 
	\Rep_{\sma}(\Delta,\widehat{\overline{R}}),
	\end{equation}
	where the first equivalence preserves the underlying $\widehat{R_1}$-modules and the second one is defined by $M\mapsto M\otimes_{\widehat{R_1}}\widehat{\overline{R}}$. 
	A Higgs bundle $(M,\theta)$ of $\HB_{\Zp,\sma}^{\free}(\widehat{R_1})$ and the corresponding small $\widehat{\overline{R}}$-representation of $\Delta$ are associated in the sense of \ref{sss:associated} (\cite{AGT} II.13.16). 


	If $\HB_{\Zp,\sma}^{\free}(\check{X}/\Sigma_{1,S})$ denotes the full subcategory of $\HB_{\Zp,\conv}(\check{X}/\Sigma_{1,S})$ \eqref{d:small HB} consisting of objects whose underlying $\mathscr{O}_{\check{X}}$-module is free and $\theta$ is small, 
	then there exists a canonical functor $\HB_{\Zp,\sma}^{\free}(\check{X}/\Sigma_{1,S})\to \HB_{\Zp,\sma}^{\free}(\widehat{R_1})$ defined by restriction to $\check{U}^{\overline{y}}$. 
	We denote the composition of above functors by 
	\begin{equation} \label{eq:functor V}
	\mathbf{V}:\HB_{\Zp,\sma}^{\free}(\check{X}/\Sigma_{1,S}) \to \Rep_{\sma}(\Delta,\widehat{\overline{R}}).
	\end{equation}
\end{secnumber}

\begin{prop} \label{p:local p-adic Simpson}
	Let $\mathcal{X}$ be a smooth Cartesian lifting of $\check{X}$ over $\Sigma_S$.
	Then, the following diagram is commutative up to isomorphisms  
	\[
	\xymatrix{
	\HB_{\Zp,\sma}^{\free}(\check{X}/\Sigma_{1,S}) \ar[rd]_{\iota_{\mathcal{X}}} \ar[rr]^-{\mathbf{V}} & & 
	\Rep^{\LPM}(\Delta,\widehat{\overline{R}})\\
	&\HC_{\Zp,\fin}(\check{X}/\Sigma_S) \ar[ru]_{T_{(X,\overline{y})}} & 
	}
	\]
	where $\iota_{\mathcal{X}}$ is defined in proposition \ref{p:HS HM}(ii). 
\end{prop}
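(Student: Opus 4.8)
The plan is to compare the two functors through the notion of \emph{associated} modules recalled in \S\ref{sss:associated}--\S\ref{sss:periodring}, using the admissible isomorphism \eqref{eq:admissible iso} as the common bridge. Fix $(M,\theta)$ in $\HB_{\Zp,\sma}^{\free}(\check{X}/\Sigma_{1,S})$ and set $\mathcal{M}=\iota_{\mathcal{X}}(M,\theta)\in\HC_{\Zp,\fin}(\check{X}/\Sigma_S)$; since $\id_X\in\bQ$ we may take $U=X$ and $\overline{y}$ as fixed, so that $\check{X}^{\overline{y}}=\Spf(\widehat{R_1})$ is the component of $\check{X}$ through $\overline{y}$ and the restriction of $\mathcal{M}_{\mathcal{X}}$ to $\widehat{R_1}$ is exactly the module $M$ entering the definition of $\mathbf{V}$ in \eqref{eq:functor V}. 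By construction of $T$ and the crystal property, $T_{(X,\overline{y})}(\mathcal{M})=\Gamma((D(\overline{U}),z_{\overline{U}}),\mathcal{M})$ as an $\widehat{\overline{R}}$-module with its semi-linear $\Delta$-action. Applying \eqref{eq:admissible iso} with $r=\infty$ and then the comparison \eqref{eq:iso A C}, I would record the $\Delta$-equivariant, $\hcC_{\mathcal{X}_2,(X,\overline{y})}$-linear, Higgs-compatible isomorphism
\[
	\hcC_{\mathcal{X}_2,(X,\overline{y})}\otimes_{\widehat{\overline{R}}}T_{(X,\overline{y})}(\mathcal{M})\xrightarrow{\sim}\hcC_{\mathcal{X}_2,(X,\overline{y})}\otimes_{\widehat{R_1}}M,
\]
which says precisely that $T_{(X,\overline{y})}(\mathcal{M})$ and the Higgs bundle $(M,\theta)$ over $\widehat{R_1}$ are associated in the sense of \S\ref{sss:associated}; this is the local avatar of proposition \ref{p:compatibility AGT}.

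On the other side, $\mathbf{V}(M,\theta)$ is by definition the image of $(M,\theta)$ under the equivalences \eqref{eq:local exp}. By (\cite{AGT} II.13.16) the small $\widehat{\overline{R}}$-representation $\mathbf{V}(M,\theta)$ of $\Delta$ and the Higgs bundle $(M,\theta)$ are likewise associated, i.e.\ admit a $\Delta$-equivariant, Higgs-compatible isomorphism over $\hcC_{\mathcal{X}_2,(X,\overline{y})}$ of the same shape as above. Thus both $T_{(X,\overline{y})}(\mathcal{M})$ and $\mathbf{V}(M,\theta)$ are small $\widehat{\overline{R}}$-representations of $\Delta$ associated to the single Higgs bundle $(M,\theta)$.

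To conclude I would use that, restricted to small Higgs bundles and small representations, the association relation underlies an equivalence of categories (the small $p$-adic Simpson correspondence, \cite{AGT} II.13), so the small representation associated to a fixed small Higgs bundle is unique up to a unique isomorphism; concretely, $\mathbf{V}(M,\theta)$ and $T_{(X,\overline{y})}(\mathcal{M})$ are both recovered as the $\widehat{\overline{R}}$-module of horizontal sections of $\hcC_{\mathcal{X}_2,(X,\overline{y})}\otimes_{\widehat{R_1}}M$ for the total Higgs field, with its residual $\Delta$-action, so the two displayed isomorphisms identify them canonically. Naturality of this identification in $(M,\theta)$ follows from the functoriality of \eqref{eq:admissible iso}, of the local equivalences \eqref{eq:local exp}, and of the horizontal-sections construction; this yields the asserted commutativity of the triangle up to isomorphism.

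The main obstacle I anticipate is bookkeeping rather than conceptual: one must carefully match Tsuji's period ring $\mathscr{A}^{\infty}_{\check{X},\mathcal{X},1}(\overline{R})$ with the $p$-adically completed Higgs--Tate algebra $\hcC_{\mathcal{X}_2,(X,\overline{y})}$ of Abbes--Gros, compatibly with the two $\Delta$-actions and the two Higgs fields --- this is \eqref{eq:iso A C} together with (\cite{AGT} IV.5.4) --- and then check that under this identification the admissible isomorphism \eqref{eq:admissible iso} is the one entering the definition of "associated'' in \S\ref{sss:associated}, so that proposition \ref{p:compatibility AGT} really does apply to $U=X$. A secondary, routine point is to confirm that the globally defined equivalence $\iota_{\mathcal{X}}$ of proposition \ref{p:HS HM}(ii), restricted to $\check{X}^{\overline{y}}$, agrees with the passage $(M,\theta)\mapsto(M|_{\widehat{R_1}},\theta)$ used in \eqref{eq:functor V}, which holds by the very construction of the Higgs stratification on $\mathcal{M}$ from $\theta$.
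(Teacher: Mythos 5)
Your proposal is correct and follows essentially the same route as the paper's proof: both sides are identified as being associated to $(M,\theta)$ over the Higgs--Tate algebra (via \eqref{eq:admissible iso} compared through \eqref{eq:iso A C} on one side, and \cite{AGT}~II.13.16 on the other), and the identification is then extracted by taking Higgs-field invariants using $(\hcC_{\mathcal{X}_2,(X,\overline{y})})^{\theta=0}=\widehat{\overline{R}}$, exactly as in the paper.
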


\begin{proof}
Let $(M,\theta)$ be an object of $\HB_{\Zp,\sma}^{\free}(\check{X}/\Sigma_{1,S})$ and $\mathcal{M}$ the associated Higgs crystal on $(\check{X}/\Sigma_S)^{\infty}_{\HIG}$. 
	We denote abusively by $(M,\theta)$ the restriction of $(M,\theta)$ to $\HB_{\Zp,\sma}^{\free}(\widehat{R_1})$. 
By proposition \ref{p:compatibility AGT} and (\cite{AGT} II.13.16), we have the following $\Delta$-equivariant $\hcC_{\mathcal{X}_2,(X,\overline{y})}$-linear isomorphism
	\begin{eqnarray*}
		\hcC_{\mathcal{X}_2,(X,\overline{y})}\otimes_{\widehat{\overline{R}}} \mathbf{V}(M,\theta)\xrightarrow{\sim} \hcC_{\mathcal{X}_2,(X,\overline{y})}\otimes_{\widehat{R_1}} (M,\theta),
	\end{eqnarray*}
	compatible with Higgs fields. 
	We compare it with the isomorphism \eqref{eq:admissible iso} via \eqref{eq:iso A C}. 
	Then, the assertion follows from taking Higgs field invariants and the fact that $(\hcC_{\mathcal{X}_2,(X,\overline{y})})^{\theta=0}=\widehat{\overline{R}}$ (\cite{AGT} IV.5.2.10). 
\end{proof}

\section{From $\bC$-representations of the geometric fundamental group to Higgs bundles} \label{s:C-rep to HB}

In this section, we assume moreover that $k$ is an algebraic closure of $\mathbb{F}_p$. 
Let $X$ be a semi-stable $S$-scheme and geometrically connected generic fiber $X_{\overline{\eta}}$. 
Let $\check{X}$ be the $p$-adic completion of $X_\oo=X\otimes_{\mathscr{O}_K}\oo$, equipped with the log structure induced by $\mathscr{M}_X$ on $X$, and $\mathcal{X}$ a smooth Cartesian lifting of $\check{X}$ over $\Sigma_S$. 
Let $\overline{x}$ be a geometric generic point of $X_{\overline{\eta}}$ and $\pi_1(X_{\overline{\eta}},\overline{x})$ the \'etale fundamental group.  

	The $p$-adic logarithmic homomorphism $\log:1+\mm\to \bC$, $x\mapsto \sum_{n= 1}^{\infty} (-1)^{n+1} \frac{(x-1)^n}{n}$ admits a section $\exp: y\mapsto \sum_{n=0}^{\infty} \frac{y^n}{n!}$ on the open ball of radius $p^{-\frac{1}{p-1}}$ of $\bC$. 
	We fix a section $\Exp:\bC\to 1+\mm$ of $\log$, extending $\exp$ \eqref{eq:Exp}. 

	In this section, we globalise the construction in \S \ref{ss:local Simpson} and then apply the descent for the $p$-adic Simpson correspondence over curves to construct a functor (depending on $\mathcal{X}$ and $\Exp$):
\[
\mathbb{H}_{\mathcal{X},\Exp}: 	\Rep(\pi_{1}(X_{\overline{\eta}},\overline{x}),\bC)\to \HB(X_{\bC}).
\]

We will provide a description of the essential image of $\mathbb{H}_{\mathcal{X},\Exp}$ in \S~\ref{s:parallel transport}. 

\subsection{Higgs bundles associated to small $\oo$-representations}
\label{ss:small o-rep to HB}

\begin{prop}\label{sss:small o-rep to HB}
	\textnormal{(i)} There exists a functor defined by  $\mathcal{X}$ (\ref{d:Conv Int}, \ref{d:rep small})
	\[
	\bH_{\mathcal{X}}: \Rep_{\sma}(\pi_1(X_{\overline{\eta}},\overline{x}),\oo) \to \HB_{\Zp,\conv}(\check{X}/\Sigma_{1,S}). 
	\]
	
	\textnormal{(ii)}
	If an $\oo$-representation $V$ of $\pi_1(X_{\overline{\eta}},\overline{x})$ is $\alpha$-small for some $\alpha>\frac{2}{p-1}$ \eqref{d:rep small} and $\beta=\alpha-\frac{1}{p-1}$, 
	then $\bH_{\mathcal{X}}(V)$ is $\beta$-small and is isomorphic to a trivial Higgs bundle modulo $p^{\alpha-\frac{2}{p-1}}$.	
\end{prop}

\begin{proof} 
	(i) Let $V$ be a $\alpha$-small $\oo$-representation of $\pi_1(X_{\overline{\eta}},\overline{x})$ for $\alpha>\frac{2}{p-1}$. 
	We take $s\in \mathbb{Q}_{>0}$ such that $\beta>s+\frac{1}{p-1}$. 
	Let $U$ be an object of $\bQ$ and $\overline{y}$ a geometric generic point of $U_{\overline{\eta}}$ as in \ref{sss:Rbar}. 

	By applying equivalences \eqref{eq:local exp} to the $\alpha$-small representation $V\otimes_{\oo}\widehat{\overline{R}}$ of $\Delta_{(U,\overline{y})}$, there exists a Higgs bundle $M_{(U,\overline{y})}=(V\otimes_{\oo}\widehat{R_1},\theta_{(U,\overline{y})})$ of $\HB^{\free}_{\Zp,\sma}(\widehat{R_1})$ and a canonical $\Delta_{(U,\overline{y})}$-equivariant $\hcC_{\mathcal{X}_2,(U,\overline{y})}^{(s)}$-linear isomorphism compatible with Higgs fields (\cite{AGT} II.13.16):
	\begin{equation} \label{eq:admissiblity local}
		M_{(U,\overline{y})}\otimes_{\widehat{R_1}}\hcC_{\mathcal{X}_2,(U,\overline{y})}^{(s)}
		\xrightarrow{\sim} V\otimes_{\oo} \hcC_{\mathcal{X}_2,(U,\overline{y})}^{(s)},
\end{equation}
where $M_{(U,\overline{y})}$ is equipped with the trivial $\Delta_{(U,\overline{y})}$-action, $V$ is equipped with the trivial Higgs field, and $\hcC_{\mathcal{X}_2,(U,\overline{y})}^{(s)}$ is equipped with the canonical action of $\Delta$ and the Higgs field $p^s d_{\hcC^{(s)}}$ (\cite{AGT} II.12.1). 
Moreover, $\theta_{(U,\overline{y})}$ is $\beta$-small and the above isomorphism is congruent to the identity map modulo $p^{\beta-\frac{1}{p-1}}$ in view of the proof of (\cite{AGT} II.13.15)\footnote{In \textit{loc.cit}, the isomorphism \eqref{eq:admissiblity local} is represented by a matrix $E$ (\cite{AGT} II.13.15.4), and the claim that $E$ is congruent to $\id$ modulo $p^{\beta-r}$ should read ``modulo $p^{\beta-\frac{1}{p-1}}$''.}. 

	If $g:U'\to U$ is a morphism of $\bQ$ and $\overline{y}'$ a geometric generic point of $U'_{\overline{\eta}}$ above $\overline{y}$, then $g$ induces a $\Delta_{(U',\overline{y}')}$-equivariant homomorphism $\hcC_{\mathcal{X}_2,(U,\overline{y})}^{(s)}\to \hcC_{\mathcal{X}_2,(U',\overline{y}')}^{(s)}$ via $\Delta_{(U',\overline{y}')}\to \Delta_{(U,\overline{y})}$. 
	We deduce the following $\Delta_{(U',\overline{y}')}$-equivariant isomorphism from \eqref{eq:admissiblity local} compatible with Higgs fields: 
	\[
	M_{(U,\overline{y})}\otimes_{\widehat{R_1}}\hcC_{\mathcal{X}_2,(U',\overline{y}')}^{(s)}
	\xrightarrow{\sim} 
	M_{(U',\overline{y}')}\otimes_{\widehat{R_1'}}\hcC_{\mathcal{X}_2,(U',\overline{y}')}^{(s)}.
	\]
	By taking $\Delta_{(U',\overline{y}')}$-invariant and (\cite{AGT} II.11.8), we obtain an isomorphism of Higgs bundles (\S~\ref{sss:periodring})
	\begin{equation} \label{eq:transition}
		c_g:(M_{(U,\overline{y})},\theta_{(U,\overline{y})})\otimes_{\widehat{R_1}}\widehat{R_1'}\xrightarrow{\sim} (M_{(U',\overline{y}')},\theta_{(U',\overline{y}')}).
	\end{equation}
	The transition isomorphisms $\{c_g\}$ satisfy a cocycle condition. 
	Then, the data $(M_{(U,\overline{y})},\theta_{(U,\overline{y})};c_g)$ descent to a locally free $\mathscr{O}_{\check{X}}$-module $M$ of rank $r$ and a $\beta$-small Higgs field $\theta:M\to \xi^{-1}M\otimes_{\mathscr{O}_{\check{X}}}\Omega_{\check{X}}$. 
	The construction of $V\mapsto (M,\theta)$ is clearly functorial. 

	(ii) The $\beta$-smallness is prove in (i). It remains to show the assertion for underlying vector bundle. 
	The isomorphism \eqref{eq:admissiblity local} is congruent to the identity modulo $p^{\alpha-\frac{2}{p-1}}$ and so is its $\Delta$-invariants. 
	Then, we deduce that the transition isomorphism \eqref{eq:transition} is the identity modulo $p^{\alpha-\frac{2}{p-1}}$. 
	This finishes the proof. 	
\end{proof}

In the following, we assume that $X$ is a semi-stable $S$-curve whose generic fiber $X_{\eta}$ has \textit{genus} $g\ge 2$.

\begin{coro} \label{p:small o-rep to HB}
	\textnormal{(i)} 
	The functor $\bH_{\mathcal{X}}$ factors through the full subcategory $\HB^{\DW}_{\Zp,\sma}(\check{X}/\Sigma_{1,S})$  \eqref{sss:HB DW}.

	\textnormal{(ii)} 
	The composition (\ref{p:HS HM}(ii), \eqref{eq:functor T})
	\[
	T\circ \iota_{\mathcal{X}} \circ \bH_{\mathcal{X}}:\Rep_{\sma}(\pi_1(X_{\overline{\eta}},\overline{x}),\oo) 
	\to \HB_{\Zp,\conv}(\check{X}/\Sigma_{1,S})
	\xrightarrow{\sim} \HC_{\Zp,\fin}(\check{X}/\Sigma_{S}) 
	\to \Mod(\bbB_X)
	\]
	is canonically isomorphic to the functor $V\mapsto \breve{\beta}_X^*(V)$ \eqref{eq:beta}, \eqref{eq:LS Repo}.
\end{coro}

\begin{proof}
	Assertion (i) follows from proposition \ref{sss:small o-rep to HB}(ii). 

	(ii) 
	It suffices to show that for each pair $(U,\overline{y})$, the composition $T_{(U,\overline{y})}\circ \iota_{\mathcal{X}}\circ \bH_{\mathcal{X}}$ is isomorphic to the functor $V\mapsto V\otimes_{\oo}\widehat{\overline{R}}$. 
	We may assume that $U=X$ belongs to $\bQ$. 
	In this case, the composition $\mathbf{V} \circ \bH_{\mathcal{X}}$ \eqref{eq:functor V} is isomorphic to the functor $V\mapsto V\otimes_{\oo}\widehat{\overline{R}}$. 
	Then, the assertion follows from proposition \ref{p:local p-adic Simpson}. 
\end{proof}

\begin{secnumber} \label{sss:HBDW}
	We denote by $\HB^{\DW}_{\Qp,\sma}(\check{X}/\Sigma_{1,S})$ the essential image of $\HB^{\DW}_{\Zp,\sma}(\check{X}/\Sigma_{1,S})$ in $\HB_{\Qp}(\check{X}/\Sigma_{1,S})$. 
By passing to categories up to isogeny, the functor $\bH_{\mathcal{X}}$ extends to the following functor, still denoted by $\bH_{\mathcal{X}}$ 
	\begin{equation}
		\bH_{\mathcal{X}}:\Rep_{\sma}(\pi_1(X_{\overline{\eta}},\overline{x}),\bC)\to \HB^{\DW}_{\Qp,\sma}(\check{X}/\Sigma_{1,S}).
		\label{eq:HX rat}
	\end{equation}
	We also consider $\HB^{\DW}_{\Qp,\sma}(\check{X}/\Sigma_{1,S})$ as a full subcategory of $\HB(X_{\bC})$ via equivalences \eqref{eq:equivHB}. 
\end{secnumber}

\begin{prop} \label{p:admissibility HX}
	\textnormal{(i)} 
	The functor $\breve{\beta}^*_{X,\BQ}$ \eqref{eq:beta} sends small $\bC$-representations to Dolbeault modules \eqref{sss:associated}. 

	\textnormal{(ii)} 
	The functor $\bH_{\mathcal{X}}$ \eqref{eq:HX rat} is canonically isomorphic to the composition of functors: 
	\begin{eqnarray*}
		&\Rep_{\sma}(\pi_1(X_{\overline{\eta}},\overline{x}),\bC) \xrightarrow{\breve{\beta}^*_{X,\BQ}} \Mod^{\Dolb}(\bbB_{X,\BQ}) \xrightarrow{\bfH_{\mathcal{X}_2}}
		\HB_{\Qp,\sma}(\check{X}/\Sigma_{1,S}). & 
	\end{eqnarray*}
\end{prop}

\begin{proof}
	Let $V$ be a small $\bC$-representation of $\pi_1(X_{\overline{\eta}},\overline{x})$ and $V^{\circ}$ a small $\oo$-representaiton lattice of $V$. 
	By corollary \ref{p:small o-rep to HB}(ii), there exists an integer $r\ge 1$ and an object $(\mathcal{M},\mathcal{M}^{\circ})$ of $\HC_{\Zp,\fin}^r(\check{X}/\Sigma_S)$ \eqref{sss:HCZpr} such that $\mathcal{M}^{\circ}\simeq \iota_{\mathcal{X}}(\bH_{\mathcal{X}}(V^{\circ}))$. 
	Then, the proposition follows from the equivalence \eqref{eq:adm-padicSimpson} and proposition \ref{p:compatibility AGT}.
\end{proof}

\begin{coro} \label{l:twisted inverse image functoriality} 
	Let $K'$ be a finite extension of $K$, $Y$ a semi-stable $S'=\Spec(\mathscr{O}_{K'})$-curve, $f:Y\to X_{S'}$ a generic $\eta'$-cover, $\overline{y}$ a geometric generic point of $Y_{\overline{\eta}}$ above $\overline{x}$ and $\mathcal{Y}$ a smooth Cartesian lifting of $\check{Y}$ over $\Sigma_{S'}$. 

	\textnormal{(i)} The twisted inverse image functor $\check{f}_{\mathcal{Y},\mathcal{X}}^*$ \eqref{eq:twisted pullback} sends $\HB^{\DW}_{\Zp,\sma}(\check{X}/\Sigma_{1,S})$ (resp. $\HB^{\DW}_{\Qp,\sma}(\check{X}/\Sigma_{1,S})$) to 
	$\HB^{\DW}_{\Zp,\sma}(\check{Y}/\Sigma_{1,S'})$ (resp. $\HB^{\DW}_{\Qp,\sma}(\check{Y}/\Sigma_{1,S'})$). 

	\textnormal{(ii)} 
	Via $\pi_1(Y_{\overline{\eta}},\overline{y})\to \pi_1(X_{\overline{\eta}},\overline{x})$, the following diagram is commutative up to canonical isomorphisms $\gamma_f$:
	\[
	\xymatrix{
	\Rep_{\sma}(\pi_1(X_{\overline{\eta}},\overline{x}),\bC)\ar[r]^-{\bH_{\mathcal{X}}} \ar[d]  
	&\HB^{\DW}_{\Qp,\sma}(\check{X}/\Sigma_{1,S}) \ar[d]^{\check{f}_{\mathcal{Y},\mathcal{X}}^{*}} \\
	\Rep_{\sma}(\pi_1(Y_{\overline{\eta}},\overline{y}),\bC)\ar[r]^-{\bH_{\mathcal{Y}}} \ar@{=>}[ru]^{\gamma_f} &
	\HB^{\DW}_{\Qp,\sma}(\check{Y}/\Sigma_{1,S'})
	}
	\]
	Moreover, $\gamma_f$ satisfy a cocycle condition as in \eqref{eq:natural-gammaf}. 
\end{coro}

\begin{proof}
	(i) The assertion follows from theorem \ref{t:pullback Higgs trivial}. 

	(ii) The functor $\breve{\beta}^*_{X,\BQ}:\Rep_{\sma}(\pi_1(X_{\overline{\eta}},\overline{x}),\bC)\to \Mod(\bbB_{X,\mathbb{Q}})$ is compatible with the inverse image functoriality, defined by morphisms of topoi. 
	Then, the assertion follows from proposition \ref{p:admissibility HX} and the inverse image functoriality of $\bfH_{\mathcal{X}_2}$ (propositions \ref{p:pullbackfunctoriality}, \ref{p:compatibility AGT}). 
\end{proof}

\subsection{Construction of $\bH_{\mathcal{X},\Exp}$ via descent} \label{ss:HXExp}
In the following of this section, we assume moreover that $X$ is a \textit{stable} $S$-curve. 
	To extend the construction of $\bH_{\mathcal{X}}$ to all $\bC$-representations, we need the twisted inverse image functor for Higgs bundles, that we summarize in the following. 

\begin{secnumber} \label{sss:twisted pullback review}
	Let $\pi:Y_{\overline{\eta}}\to X_{\overline{\eta}}$ be a finite morphism of smooth proper $\overline{\eta}$-curves. 
	There exists a finite extension $K'$ of $K$ such that $Y_{\overline{\eta}}$ admits a stable $S'$-model and that $\pi$ extends to a (unique) $S'$-morphism $f:Y\to X_{S'}$. 
	We fix a smooth Cartesian lifting $\mathcal{Y}$ of $\check{Y}$ over $\Sigma_{1,S'}$. 

	In appendix, we construct
	the twisted inverse image functor \eqref{eq:twisted fcirc} after Faltings:
	\[
	f_{\mathcal{Y},\mathcal{X},\Exp}^{\circ}:\HB(X_{\bC}) \to \HB(Y_{\bC}),\quad (M,\theta)\mapsto f_{\bC}^*(M,\theta)\otimes_{\mathscr{A}_c} \mathcal{L}_{f,b}^{\Exp},	
	\]
	where $b$ is the Hitchin image of $(M,\theta)$, $\mathscr{A}_c$ is the spectral algebra over $\mathscr{O}_{Y_{\bC}}$ defined by $c=f_{\bC}^*(b)$ and $\mathcal{L}_{f,b}^{\Exp}$ is an invertible $\mathscr{A}_c$-module defined by the obstruction of lifting $\check{f}:\check{Y}\to \check{X}$ to $\Sigma_{2,S}$ and $\Exp$ (\S~\ref{sss:LfbExp}). 

	While restricting to small Higgs bundles, $f_{\mathcal{Y},\mathcal{X},\Exp}^{\circ}$ is canonically isomorphic to the functor $\check{f}_{\mathcal{Y},\mathcal{X}}^*$ \eqref{eq:twisted pullback}, and is independent of the choice of $\Exp$ (see propositions \ref{p:fcircf*}, \ref{p:compatibility twisted}). 
	In particular, it is compatible with the inverse image functoriality of the $p$-adic Simpson correspondence (proposition \ref{p:pullbackfunctoriality}). 
\end{secnumber}

\begin{secnumber} \label{sss:construction HXExp}
	In the following, we construct a functor 
	\begin{equation} \label{eq:HXExp}
	\mathbb{H}_{\mathcal{X},\Exp}: 	\Rep(\pi_{1}(X_{\overline{\eta}},\overline{x}),\bC)\to \HB(X_{\bC}). 	
	\end{equation}
	For each object $V$ of $\Rep(\pi_1(X_{\overline{\eta}},\overline{x}),\bC)$, there exists a Galois \'etale cover $\pi:Y_{\overline{\eta}}\to X_{\overline{\eta}}$ with Galois group $G$ and a geometric generic point $\overline{y}$ above $\overline{x}$ such that the restriction of $V$ to $\pi_1(Y_{\overline{\eta}},\overline{y})$ is small. 
	Let $K'$ be a finite extension of $K$ and $f:Y\to X_{S'}$ the extension of $\pi$ to the stable $S'=\Spec(\mathscr{O}_{K'})$-models as in \ref{sss:twisted pullback review}. 
	We take a smooth Cartesian lifting $\mathcal{Y}$ of $\check{Y}$ over $\Sigma_{S'}$ as above. 


	Since $V$ is a small representation of $\pi_1(Y_{\overline{\eta}},\overline{y})$, we obtain a small Higgs bundle $\bH_{\mathcal{Y}}(V)$ over $Y_{\bC}$ \eqref{eq:HX rat}. 
	The automorphism group $\Aut(Y_{\overline{\eta}}/X_{\overline{\eta}})$, denoted by $G$, extends to an action on $Y$ above $X_{S'}$ (\cite{LL99} proposition 4.6). 
	By corollary \ref{l:twisted inverse image functoriality}, the $G$-action on $V|_{\pi_1(Y_{\overline{\eta}},\overline{y})}$ induces a $G$-action on $\bH_{\mathcal{Y}}(V)$:  
	\begin{equation} \label{eq:descent twisted}
	\{\varphi_g:g^{\circ}_{\mathcal{Y},\mathcal{Y},\Exp}(\bH_{\mathcal{Y}}(V))\xrightarrow{\sim} \bH_{\mathcal{Y}}(V)~|~g\in G\}, ~\textnormal{such that}\quad \varphi_g\circ g^{\circ}(\varphi_{g'})=\varphi_{g'g}.
\end{equation}
Let $c=(c_i)\in \oplus_{i=1}^r \Gamma(Y_{\bC},\xi^{-i}\Omega_{Y_{\bC}}^{\otimes i})$ be the Hitchin image of $\bH_{\mathcal{Y}}(V)$. 
We have $g^*(c)=c$ for all $g\in G$. 
	By descent, we obtain a point $b=(b_i)_{i=1}^r$ of the Hitchin base $\oplus_{i=1}^r \Gamma(X_{\bC},\xi^{-i}\Omega_{X_{\bC}}^{\otimes i})$ of $X_{\bC}$ such that $f_{\bC}^*(b)=c$. 
	Since $f\circ g=f$, we have a canonical isomorphism \eqref{eq:tensorLf} 
	\[
	\mathcal{L}_{f,b}^{\Exp}= \mathcal{L}_{f\circ g,b}^{\Exp}\simeq 
	\mathcal{L}_{g,c}^{\Exp}\otimes g_{\bC}^*(\mathcal{L}_{f,b}^{\Exp}).
	\]
	Then, the data \eqref{eq:descent twisted} defines a usual descent data on the Higgs bundle $\bH_{\mathcal{Y}}(V)\otimes_{\mathscr{A}_c} (\mathcal{L}_{f,b}^{\Exp})^{-1}$:
	\[
	\{\phi_g:g^{*}_{\bC}\bigl(\bH_{\mathcal{Y}}(V)\otimes_{\mathscr{A}_c} (\mathcal{L}_{f,b}^{\Exp})^{-1}\bigr)
	\xrightarrow{\sim} \bH_{\mathcal{Y}}(V)\otimes_{\mathscr{A}_c} (\mathcal{L}_{f,b}^{\Exp})^{-1}
	~|~g\in G\}, ~\textnormal{such that}\quad \phi_g\circ g^{*}_{\bC}(\phi_{g'})=\phi_{g'g}.
	\]
	By \'etale descent, we obtain a Higgs bundle $\bH_{\mathcal{X},\Exp}(V)$ on $X_{\bC}$ together with a canonical isomorphism 
	\[f^{\circ}_{\mathcal{Y},\mathcal{X},\Exp}(\bH_{\mathcal{X},\Exp}(V))\simeq \bH_{\mathcal{Y}}(V),\]
	which gives rise to the descent data $\{\varphi_g\}_{g\in G}$. 
\end{secnumber}

\begin{prop}
	\textnormal{(i)} The construction $V\mapsto \bH_{\mathcal{X},\Exp}(V)$ is independent of the choice of the $\pi$, $K'$ and $\mathcal{Y}$ up to canonical isomorphisms. 
	
	\textnormal{(ii)} The functor $\bH_{\mathcal{X},\Exp}$ is well-defined and is exact. 

\end{prop}

\begin{proof}
	(i) The independence of $\mathcal{Y}$ follows from proposition \ref{p:twisted pullback}(iv). 
	Let $K''$ be a finite extension of $K$, $Z$ a stable $S''=\Spec(\mathscr{O}_{K''})$-curve, $g:Z\to X_{S''}$ a Galois $\eta''$-cover and $\overline{z}$ a geometric generic point of $Z_{\overline{\eta}}$ above $\overline{x}$ such that $V$ is small as a $\bC$-representation of $\pi_1(Z_{\overline{\eta}},\overline{z})$. 
	To prove the assertion, we may assume that $g$ dominates $f$. 
	Then, the independence of $\pi$, $K'$ follows from corollary \ref{l:twisted inverse image functoriality}.
	
	(ii) Let $u:V\to V'$ be a morphism of $\Rep(\pi_1(X_{\overline{\eta}},\overline{x}),\bC)$. 
	By (i), we may choose a Galois $\eta'$-cover $f:Y\to X_{S'}$ as in \S \ref{sss:construction HXExp} such that $V$ and $V'$ are both small as $\bC$-representation of $\pi_1(Y_{\overline{\eta}},\overline{y})$. 
	Then, the functoriality of $\bH_{\mathcal{X},\Exp}$ follows from that of $\bH_{\mathcal{Y}}$ and of $f^{\circ}_{\mathcal{Y},\mathcal{X},\Exp}$. 

	The exactness follows from the fact that functors $\bH_{\mathcal{X}}$ and $f^{\circ}_{\mathcal{Y},\mathcal{X},\Exp}$ are exact (proposition \ref{p:twisted pullback}).  	
\end{proof}

To describe the essential image of $\bH_{\mathcal{X},\Exp}$, we introduce the following category: 

\begin{definition} \label{d:HBpDW}
	We denote by $\HB^{\pDW}_{\mathcal{X},\Exp}(X_{\bC})$ the full subcategory of $\HB(X_{\bC})$ consisting of Higgs bundles $(M,\theta)$ such that there exists a \textit{finite morphism} $\pi:Y_{\overline{\eta}}\to X_{\overline{\eta}}$ of smooth proper $\overline{\eta}$-curves, a finite extension $K'$ of $K$ such that $Y_{\overline{\eta}}$ admits a stable $S'=\Spec(\mathscr{O}_{K'})$-model $Y$, $\pi$ extends to the $S'$-morphism $f:Y\to X_{S'}$ of stable $S'$-models and that 
	for every smooth Cartesian lifting $\mathcal{Y}$ of $\check{Y}$ over $\Sigma_{S'}$, $f_{\mathcal{Y},\mathcal{X},\Exp}^{\circ}(M,\theta)$ belongs to the essential image of $\HB^{\DW}_{\Qp,\sma}(\check{Y}/\Sigma_{1,S'})$ \eqref{sss:HBDW} in $\HB(Y_{\bC})$. 
\end{definition}


\begin{rem}
	Compared to the construction of \ref{sss:construction HXExp}, we take $\pi$ to be a finite morphism in the above definition to allow more flexibility. 
	In corollary \ref{c:eta-cover-pDW}, we will show that any object of $\HB^{\pDW}_{\mathcal{X},\Exp}(X_{\bC})$ becomes a small Higgs bundle with strongly semi-stable reduction after (twisted) pullback along a finite \'etale morphism $\pi$. That is, we can replace ``finite morphism'' by ``finite \'etale morphism'' in the above definition. 

\end{rem}

\begin{coro}
	\label{c:C-rep to HBpDW}
	\textnormal{(i)} The functor $\mathbb{H}_{\mathcal{X},\Exp}$ factors through the full subcategory $\HB^{\pDW}_{\mathcal{X},\Exp}(X_{\bC})$:
\begin{equation}
	\mathbb{H}_{\mathcal{X},\Exp}: 	\Rep(\pi_{1}(X_{\overline{\eta}},\overline{x}),\bC)\to \HB^{\pDW}_{\mathcal{X},\Exp}(X_{\bC}).
	\label{eq:C-rep to HBpDW}
\end{equation}

	\textnormal{(ii)} 
	With the assumption of \ref{l:twisted inverse image functoriality}, the functor $f^{\circ}_{\mathcal{Y},\mathcal{X},\Exp}$ sends $\HB^{\pDW}_{\mathcal{X},\Exp}(X_{\bC})$ to $\HB^{\pDW}_{\mathcal{Y},\Exp}(Y_{\bC})$.

	\textnormal{(iii)} 	
	The following diagram is commutative up to isomorphisms $\gamma_f$:
	\[
	\xymatrixcolsep{5pc}\xymatrix{
	\Rep(\pi_1(X_{\overline{\eta}},\overline{x}),\bC)\ar[r]^-{\bH_{\mathcal{X},\Exp}} \ar[d]  
	&\HB^{\pDW}_{\mathcal{X},\Exp}(X_{\bC}) \ar[d]^{f_{\mathcal{Y},\mathcal{X},\Exp}^{\circ}} \\
	\Rep(\pi_1(Y_{\overline{\eta}},\overline{y}),\bC)\ar[r]^-{\bH_{\mathcal{Y},\Exp}} \ar@{=>}[ru]^{\gamma_f}&
	\HB^{\pDW}_{\mathcal{Y},\Exp}(Y_{\bC})
	}
	\]
	Moreover, $\gamma_f$ satisfy a cocycle condition as in \eqref{eq:natural-gammaf}. 
\end{coro}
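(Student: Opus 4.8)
The plan is to read both statements off the construction of $\bH_{\mathcal{X},\Exp}$ in \S\ref{sss:construction HXExp}, using only the independence and transitivity properties of the twisted inverse image functor recorded above and in the appendix.

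\textbf{Assertion (i).} The factorization is built into the construction. Given $V\in\Rep(\pi_1(X_{\overline\eta},\overline x),\bC)$, \S\ref{sss:construction HXExp} furnishes a finite extension $K'/K$, a projective $S'(=\Spec(\mathscr{O}_{K'}))$-curve $Y$ with semi-stable reduction, a Galois $\eta'$-cover $f\colon Y\to X_{S'}$ with $V$ small as a representation of $\pi_1(Y_{\overline\eta},\overline y)$, and a smooth Cartesian lifting $\mathcal{Y}$ of $\check Y$ over $\Sigma_{S'}$, such that $f^{\circ}_{\mathcal{Y},\mathcal{X},\Exp}(\bH_{\mathcal{X},\Exp}(V))\simeq\bH_{\mathcal{Y}}(V)$. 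Since $Y$, like $X$, has relative dimension one over its base, Proposition \ref{p:small o-rep to HB}(ii), extended to $\bC$-coefficients via \eqref{eq:HX rat}, shows $\bH_{\mathcal{Y}}(V)\in\HB^{\DW}_{\Qp,\sma}(\check Y/\Sigma_{1,S'})$. By Definition \ref{d:HBpDW} this is exactly the condition for $\bH_{\mathcal{X},\Exp}(V)$ to lie in $\HB^{\pDW}_{\mathcal{X},\Exp}(X_{\bC})$, which gives \eqref{eq:C-rep to HBpDW}.

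\textbf{Assertion (ii).} The strategy is to dominate $f$ by a Galois cover on which $V$ becomes small. Given the $\eta'$-cover $f\colon Y\to X_{S'}$ of Lemma \ref{l:twisted inverse image functoriality}, I would choose a finite extension $K''/K'$, a projective $S''(=\Spec(\mathscr{O}_{K''}))$-curve $Z$ with semi-stable reduction, and a factorization $h=f_{S''}\circ g$ with $g\colon Z\to Y_{S''}$ and $h\colon Z\to X_{S''}$ both Galois $\eta''$-covers and $V$ small on $\pi_1(Z_{\overline\eta})$; such $Z$ exists because the normal core in $\pi_1(X_{\overline\eta},\overline x)$ of a finite-index subgroup is again normal in $\pi_1(Y_{\overline\eta},\overline y)$. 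Fix a smooth Cartesian lifting $\mathcal{Z}$ over $\Sigma_{S''}$. By the independence statement of the proposition preceding Definition \ref{d:HBpDW}, $\bH_{\mathcal{X},\Exp}(V)$ may be computed through $h$ and $\bH_{\mathcal{Y},\Exp}(V|_Y)$ through $g$, so that
\[
h^{\circ}_{\mathcal{Z},\mathcal{X},\Exp}(\bH_{\mathcal{X},\Exp}(V))\simeq\bH_{\mathcal{Z}}(V|_Z)\simeq g^{\circ}_{\mathcal{Z},\mathcal{Y},\Exp}(\bH_{\mathcal{Y},\Exp}(V|_Y));
\]
moreover $\bH_{\mathcal{Y},\Exp}(V|_Y)=\bH_{\mathcal{Y}}(V|_Y)$ whenever $V|_Y$ is already small (use the trivial cover). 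Transitivity of twisted inverse images (Proposition \ref{p:twisted pullback}) gives $h^{\circ}_{\mathcal{Z},\mathcal{X},\Exp}\simeq g^{\circ}_{\mathcal{Z},\mathcal{Y},\Exp}\circ f^{\circ}_{\mathcal{Y},\mathcal{X},\Exp}$, so $g^{\circ}_{\mathcal{Z},\mathcal{Y},\Exp}$ sends both $f^{\circ}_{\mathcal{Y},\mathcal{X},\Exp}(\bH_{\mathcal{X},\Exp}(V))$ and $\bH_{\mathcal{Y},\Exp}(V|_Y)$ — both in $\HB^{\pDW}_{\mathcal{Y},\Exp}(Y_{\bC})$ by (i) and \cite{Xu17}, corollaire 5.8 — to $\bH_{\mathcal{Z}}(V|_Z)$. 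It then remains to descend this comparison along $g$: one checks that the isomorphisms above are compatible with the $\Aut(Z_{\overline\eta}/Y_{\overline\eta})$-equivariant descent data of \S\ref{sss:construction HXExp} (that on $\bH_{\mathcal{X},\Exp}(V)$ being restricted along $\Aut(Z_{\overline\eta}/Y_{\overline\eta})\subset\Aut(Z_{\overline\eta}/X_{\overline\eta})$), using the cocycle formula $\mathcal{L}^{\Exp}_{h,\theta}\simeq\mathcal{L}^{\Exp}_{g,f^*(\theta)}\otimes g^{*}(\mathcal{L}^{\Exp}_{f,\theta})$ and the cocycle compatibility of $f^{\circ}$ from the appendix. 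This produces the isomorphism $f^{\circ}_{\mathcal{Y},\mathcal{X},\Exp}(\bH_{\mathcal{X},\Exp}(V))\simeq\bH_{\mathcal{Y},\Exp}(V|_Y)$, natural in $V$, which is the asserted commutativity.

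Everything except the final descent step is routine unwinding of definitions and the established functoriality of $f^{\circ}$ and of $\bH_{\mathcal{X}}$ on small representations (Proposition \ref{p:pullbackfunctoriality}, Lemma \ref{l:twisted inverse image functoriality}). The point I expect to be the main obstacle is this last step: verifying that the comparison isomorphism survives descent along $g$, equivalently that $g^{\circ}_{\mathcal{Z},\mathcal{Y},\Exp}$ reflects isomorphisms between the potentially-Deninger--Werner Higgs bundles in play. This can alternatively be checked after a further cover on which the relevant Higgs bundles become small, where $T\circ\iota_{\mathcal{Y}}$ is available, faithful up to isogeny by \cite{AGT} IV.6.4.9, and compatible with $f^{\circ}$ by Proposition \ref{p:pullbackfunctoriality}.
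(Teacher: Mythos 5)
Your proposal is correct and follows essentially the route the paper intends: the paper gives no written proof beyond citing proposition \ref{p:small o-rep to HB}(ii), lemma \ref{l:twisted inverse image functoriality} and the construction of $\bH_{\mathcal{X},\Exp}$, and your argument is a faithful expansion of exactly those ingredients (independence of the choice of trivializing Galois cover, transitivity of $f^{\circ}$, and the cocycle formula for $\mathcal{L}^{\Exp}$). The descent step you flag as the crux does close as you suggest, since the two descent data along $g$ are both induced by the same $\Aut(Z_{\overline\eta}/Y_{\overline\eta})$-action on $V|_Z$, namely the restriction of the $\Aut(Z_{\overline\eta}/X_{\overline\eta})$-action.
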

\begin{proof}
	Assertion (i) follows from the construction of $\bH_{\mathcal{X},\Exp}$ and corollary \ref{p:small o-rep to HB}(i). 
	Assertion (ii) follows from proposition \ref{p:twisted pullback}, \cite{DW05} theorem 16, and \cite{Xu17} corollaire 5.8.  
	Assertion (iii) follows from corollary \ref{l:twisted inverse image functoriality}.
\end{proof}


\subsection{Some properties of $\HB^{\pDW}_{\mathcal{X},\Exp}(X_{\bC})$} \label{ss:pDW} 

\begin{prop}\label{p:HBpDW}
	\textnormal{(i)} Every Higgs bundle of $\HB^{\pDW}_{\mathcal{X},\Exp}(X_{\bC})$ is semi-stable of degree zero. 

	\textnormal{(ii)} The category $\HB^{\pDW}_{\mathcal{X},\Exp}(X_{\bC})$ is closed under extension. 
\end{prop}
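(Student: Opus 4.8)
The plan is to reduce both statements to the corresponding facts about Deninger--Werner vector bundles established in \cite{DW05}, using the twisted inverse image functor as the bridge. The key observation is that a twisted inverse image $f^{\circ}_{\mathcal{Y},\mathcal{X},\Exp}$ along an $\eta'$-cover $f:Y\to X_{S'}$ changes the underlying bundle only by tensoring with $\mathcal{L}_{f,\theta}^{\Exp}$, an invertible module over the spectral algebra whose construction comes from an exponential of a cohomology class; in particular (see \ref{sss:twisted line}) its effect on degree and on the Harder--Narasimhan filtration is controlled, because the degree of a Higgs bundle in $\HB^{\DW}_{\Qp,\sma}$ is zero and twisting by such an $\mathcal{L}$ preserves this. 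Moreover the morphism $f_{\bC}:Y_{\bC}\to X_{\bC}$ is finite, so $\deg$ and semistability behave predictably under $f_{\bC}^*$.

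\textbf{(i).} Let $(M,\theta)\in\HB^{\pDW}_{\mathcal{X},\Exp}(X_{\bC})$. By definition there is an $\eta'$-cover $f:Y\to X_{S'}$ and a smooth Cartesian lifting $\mathcal{Y}$ with $f^{\circ}_{\mathcal{Y},\mathcal{X},\Exp}(M,\theta)$ in the essential image of $\HB^{\DW}_{\Qp,\sma}(\check{Y}/\Sigma_{1,S'})$, hence of the form $(N,\psi)$ with $N\in\VB^{\DW}(Y_{\bC})$ (up to isogeny) and $\psi$ small. First I would show $\deg(M)=0$: since $f$ is finite of some degree $d$, $\deg f_{\bC}^*(M)=d\deg(M)$; and $f_{\bC}^*(M)$ differs from $N$ only by the twist $\mathcal{L}_{f,\theta}^{\Exp}$, whose underlying line bundle on each irreducible component has degree zero (the special fiber is trivial by \ref{sss:twisted line}, and the generic fiber is a rigid-analytic line bundle which is a limit of such, hence of degree $0$), so $\deg f_{\bC}^*(M)=\deg N=0$, giving $\deg(M)=0$. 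For semistability: if $(M',\theta')\subset(M,\theta)$ is a Higgs subbundle, its twisted inverse image sits inside $(N,\psi)$ as a Higgs subbundle, and by \cite{DW05} a Deninger--Werner bundle is semistable of degree zero (one uses that $N\in\VB^{\DW}$ implies $N$ is semistable of slope zero on $Y_{\bC}$, e.g.\ via theorem \ref{th:DW} and the fact that strongly semistable degree-zero bundles are semistable, a property insensitive to a small Higgs field since the Higgs-subbundle slope is bounded by the bundle slope); hence $\deg f_{\bC}^*(M')\le 0$, so $\deg(M')=\tfrac1d\deg f_{\bC}^*(M')\le 0$, and the Higgs field plays no role in this inequality beyond restricting the class of subobjects considered. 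Thus $(M,\theta)$ is semistable of degree zero.

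\textbf{(ii).} For closure under extension: given a short exact sequence $0\to(M_1,\theta_1)\to(M,\theta)\to(M_2,\theta_2)\to 0$ in $\HB(X_{\bC})$ with the outer terms in $\HB^{\pDW}_{\mathcal{X},\Exp}(X_{\bC})$, choose (enlarging $K'$ and refining $f$) a common Galois $\eta'$-cover $f:Y\to X_{S'}$ trivializing both, and apply the exact functor $f^{\circ}_{\mathcal{Y},\mathcal{X},\Exp}$ (exactness of the twisted inverse image is part of proposition \ref{p:twisted pullback}); one gets an extension of $N_2$ by $N_1$ with $N_i\in\VB^{\DW}(Y_{\bC})$ (up to isogeny) and small Higgs field, and $\VB^{\DW}$ is closed under extension by \cite{DW05}, while smallness of the Higgs field on the extension follows from smallness on sub and quotient together with the criterion in terms of the Hitchin base (proposition \ref{p:small spectral cover}): the $c_i(\theta)$ of the extension are products of the $c_j$ of the pieces, hence still divisible by the required power of $p$. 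So the extension lies in $\HB^{\DW}_{\Qp,\sma}(\check{Y}/\Sigma_{1,S'})$, i.e.\ $(M,\theta)\in\HB^{\pDW}_{\mathcal{X},\Exp}(X_{\bC})$. Closure under duals is similar: $f^{\circ}_{\mathcal{Y},\mathcal{X},\Exp}$ commutes with duals (proposition \ref{p:twisted pullback}), the dual of a Deninger--Werner bundle is again Deninger--Werner (as $\VB^{\DW}$ is stable under duality in \cite{DW05}, strong semistability and degree zero being preserved by $E\mapsto E^{\vee}$), and smallness of $\theta^{\vee}$ follows from $c_i(\theta^{\vee})=\pm c_i(\theta)/c_r(\theta)$-type relations, or more directly from the $\textnormal{(Conv)}_r$ condition being inherited by duals.

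\textbf{Main obstacle.} The delicate point is the degree computation for the twist $\mathcal{L}_{f,\theta}^{\Exp}$ and, relatedly, the fact that a Deninger--Werner bundle equipped with a small Higgs field is still semistable of degree zero \emph{as a Higgs bundle} — one must check that allowing Higgs subsheaves (rather than ordinary subbundles) does not destroy the slope inequality. This is exactly the content that Faltings' sketched argument addresses, and I expect it to rest on the observation that any $\theta$-invariant subsheaf is in particular an ordinary subsheaf, so the bound $\mu(M')\le\mu(M)=0$ from ordinary semistability of the underlying $\VB^{\DW}$ bundle already suffices; the only care needed is that smallness of the Higgs field is genuinely preserved under all the operations (pullback along $f$, passing to sub/quotient, dualizing), which is where proposition \ref{p:small spectral cover} and corollary \ref{c:divisible pn} do the real work.
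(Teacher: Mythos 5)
Your proposal follows essentially the same route as the paper: reduce everything to the Deninger--Werner results of \cite{DW05} via the twisted inverse image, using its exactness, compatibility with duals, and degree-preservation (proposition \ref{p:twisted pullback}(ii)--(iii)) to transport semistability, degree zero, extensions and duals back and forth along the cover. The only blemishes are minor: your degree argument for $\mathcal{L}_{f,\theta}^{\Exp}$ could simply cite proposition \ref{p:twisted pullback}(iii), and the relation $c_i(\theta^{\vee})=\pm c_i(\theta)/c_r(\theta)$ is not right (one has $c_i(\theta^{\vee})=(-1)^i c_i(\theta)$), but your fallback via the convergence condition is correct.
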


\begin{proof}
	(i) Since Deninger--Werner vector bundles are semi-stable of degree zero (\cite{DW05} theorem 13), we deduce that every object of $\HB^{\DW}_{\Qp,\sma}(\check{X}/\Sigma_{1,S})$ is a semi-stable Higgs bundle of degree zero over $X_{\bC}$. 
	Given a Higgs bundle $(M,\theta)$ of $\HB^{\pDW}_{\mathcal{X},\Exp}(X_{\bC})$, its twisted inverse image along a generic $\eta$-cover is semi-stable of degree zero. 
	By proposition \ref{p:twisted pullback}(iii), we conclude that $(M,\theta)$ is also semi-stable of degree zero.

	(ii) The same assertion holds for Deninger--Werner vector bundles (\cite{DW05} proposition 9, theorem 11).
	We deduce the corresponding result for $\HB^{\DW}_{\Qp,\sma}(\check{X}/\Sigma_{1,S})$. 
	Then, the assertion follows from the fact that $f^{\circ}$ is exact (propositions \ref{p:twisted pullback}(ii) and \ref{p:compatibility twisted}).
\end{proof}

\begin{prop} \label{p:pDW property}
	Every Higgs line bundle $(L,\theta)$ of degree zero over $X_{\bC}$ belongs to $\HB^{\pDW}_{\mathcal{X},\Exp}(X_{\bC})$. 
\end{prop}

\begin{proof}
	By propositions \ref{p:trivialize differentials} and \ref{p:twisted pullback}, we may assume $\theta$ is small and $L$ has degree zero after applying a twisted inverse image functor. 
	Recall that a line bundle of $\Pic^0_{X/S}(\oo)$ is Deninger--Werner over $\check{X}$ (\cite{DW05} claim of theorem 12). 
	Since the cokernel of the inclusion 
	\[
	\Pic_{X/S}^0(\oo) \hookrightarrow \Pic^0_{X_{\eta}/K}(\bC)
	\]
	is torsion (\cite{Col} theorem 4.1), there exists an integer $N\ge 1$ such that $L^{\otimes N}\in \Pic^0_{X/S}(\oo)$. 
By applying arguments of proposition \ref{p:trivialize differentials} and of (\cite{DW05} theorem 12), there exists a finite extension $K'$ of $K$, a stable $S'$(=$\Spec(\mathscr{O}_{K'})$)-curve $Y$ and an $\eta'$-cover $f:Y\to X_{S'}$ such that the usual inverse image $f^*_{\bC}(L)$ belongs to $\Pic_{Y/S'}^0(\oo)$. 
	We take a smooth Cartesian lifting $\mathcal{Y}$ of $\check{Y}$ over $\Sigma_{S'}$ and show that $f_{\mathcal{Y},\mathcal{X}}^{\circ}(L,\theta)\in \HB^{\DW}(\check{Y}/\Sigma_{1,S'})$. 

	By proposition \ref{p:trivial-special}, the line bundle $\mathcal{L}_{f,\theta}$ admits an integral model with trivial special fiber and belongs to the image of $\Pic_{Y/S'}^0(\oo)$.
	Then, so is the underlying bundle $f_{\bC}^{*}(L)\otimes_{\mathscr{O}_{Y_{\bC}}}\mathcal{L}_{f,\theta}$ of $f^{\circ}_{\mathcal{Y},\mathcal{X}}(L,\theta)$ (proposition \ref{sss:twisted line}). 
	The proposition follows. 
\end{proof}

%



\section{Parallel transport for Higgs bundles} \label{s:parallel transport}

In this section, $k$ denotes an algebraic closure of $\mathbb{F}_p$.
We will construct a quasi-inverse functor $\mathbb{V}_{\mathcal{X},\Exp}$ of $\mathbb{H}_{\mathcal{X},\Exp}$ \eqref{eq:C-rep to HBpDW} (see theorem \ref{t:HBpDW to C-rep}). 

\subsection{Parallel transport via Faltings topos}

In \cite{Xu17} \S~8, we construct the parallel transport functor for certain modules in Faltings topos inspired by Deninger--Werner's construction \cite{DW05}. 
In this subsection, we present another approach to this functor via h-descent for \'etale sheaves and we apply this functor to modules associated to Higgs bundles of $\HB^{\DW}_{\Zp,\sma}(\check{X}/\Sigma_{1,S})$ (\S~\ref{sss:HB DW}). 

	In this subsection, let $X$ be an $\overline{S}$-model of a smooth proper $\overline{K}$-variety (\S \ref{sss:models}), $(\widetilde{E}_X,\overline{\mathscr{B}}_X)$ the Faltings ringed topos associated to the pair $(X_{\overline{\eta}}\to X)$.
	If $Y$ is another $\overline{S}$-model and $\varphi:Y\to X$ an $\overline{S}$-morphism, we denote the functorial morphism of Faltings ringed topoi associated to $\varphi$ by:
\begin{equation} \label{eq:functorial Faltings topos}
\Phi:(\widetilde{E}_Y,\bB_Y)\to (\widetilde{E}_X,\bB_X).
\end{equation}

\begin{definition}[\cite{Xu17} d\'efinition 8.5] \label{d:pltf}
	(i) We say a $\bB_{X,n}$-module $M_n$ is \textit{potentially free of finite type} if it is of finite type and there exists a proper $\overline{S}$-morphism $\varphi:Y\to X$ such that $\varphi_{\overline{\eta}}$ is finite \'etale and that the inverse image $\Phi^{*}(M_n)$ is isomorphic to a free $\bB_{Y,n}$ of finite type. 
	
	(ii) We say a $\bbB_X$-module $M=(M_n)_{n\ge 1}$ is \textit{potentially free of finite type} if it is adic of finite type \eqref{sss:toposN} and for each $n\ge 1$, the $\bB_{X,n}$-module $M_n$ is potentially free of finite type. 
\end{definition}

	We denote by $\Mod^{\pltf}(\bB_{X,n})$ (resp. $\Mod^{\pltf}(\bbB_X)$) the full subcategory of $\Mod(\bB_{X,n})$ (resp. $\Mod(\bbB_X)$) consisting of potentially free of finite type objects, and by 
	$\Mod^{\pltf}_{\cohd}(\bB_{X,n})$ (resp. $\Mod^{\pltf}_{\cohd}(\bbB_X)$) the full subcategory of $\Mod^{\pltf}(\bB_{X,n})$ (resp. $\Mod^{\pltf}(\bbB_X)$ consisting of $\bB_{X,n}$-modules satisfying cohomological descent (\S~\ref{sss:coh descent}) (resp. $\bbB_X$-modules $M=(M_n)_{n\ge 1}$ such that each $M_n$ belongs to $\Mod^{\pltf}_{\cohd}(\bB_{X,n})$). 

	The inverse image of morphisms of ringed topoi $\beta_n:(\widetilde{E}_X,\bB_{X,n})\to (X_{\overline{\eta},\fet},\oo_n)$, $\breve{\beta}:(\widetilde{E}_X^{\Nc},\bbB_X)\to (X_{\overline{\eta},\fet}^{\Nc},\breve{\oo})$ induce functors (\ref{sss:beta sigma}, \ref{sss:LS}):
\begin{equation}
	\beta_n^*: \LS(X_{\overline{\eta},\fet},\oo_n) \to \Mod^{\pltf}_{\cohd}(\bB_{X,n}),\quad
	\breve{\beta}^*: \LS(X_{\overline{\eta},\fet}^{\Nc},\breve{\oo})\to \Mod^{\pltf}_{\cohd}(\bbB_X).
	\label{eq:pullback beta}
\end{equation}
	Indeed, if $\mathbb{L}$ is an object of $\LS(X_{\overline{\eta},\fet},\oo_n)$ and $Z$ is a finite \'etale cover of $X_{\overline{\eta}}$ trivializing $\mathbb{L}$, then the integral closure $X^Z\to X$ of $X$ in $Z$ is an $\overline{\eta}$-cover. 
	Then, the morphism of Faltings ringed topos, induced by $X^Z\to X$, trivializes $\beta_n^*(\mathbb{L})$ (c.f. \cite{Xu17} proposition 8.7 for more details).  
	Moreover, $\beta_n^*(\mathbb{L})$ satisfies the cohomological descent by corollary \ref{c:coh desent}.

\begin{secnumber} \label{construction Ln}
In the following, we construct in another direction, the parallel transport functor: 
\begin{equation} \label{eq:functor Ln}
	\mathbb{L}_n: \Mod^{\pltf}(\bB_{X,n}) \to 
	\Mod(X_{\overline{\eta},\fet},\oo_n).
\end{equation}

	Let $\mathscr{N}$ be an object of $\Mod^{\pltf}(\bB_{X,n})$ and $\varphi:Y\to X$ an $\overline{\eta}$-cover trivializing $\mathscr{N}$ as in \ref{d:pltf}. 
	We consider the hypercovering $\varepsilon_{\bullet}:Y_{\bullet}\to X$, where for $m\ge 0$, $Y_m\to X$ is defined by the $(m+1)$-th self product of $Y$ over $X$.
	Let $\varepsilon_{\bullet,\overline{\eta}}:(Y_{\bullet,\overline{\eta}})_{\et}\to X_{\overline{\eta},\et}$ be the augmentation of simplicial \'etale topoi. 
	In the following, we apply h-descent to $\varepsilon_{\bullet}$ to construct $\mathbb{L}_n(\mathscr{N})$. 

	For each integer $m\ge 1$, the morphism of Faltings topoi associated to $\varepsilon_m: Y_m \to X$ trivializes $\mathscr{N}$. 
	We have a canonical decomposition of the cohomology $\varepsilon_m^*(\mathscr{N})$ in terms of the connected components of $Y_{m,\overline{\eta}}$:
	\begin{equation}
		\Gamma(\widetilde{E}_{Y_m}, \varepsilon_m^*(\mathscr{N}))_{\sharp} \simeq \bigoplus_{Z\in \Con(Y_{m,\overline{\eta}})} \Gamma(\widetilde{E}_{Z\to Y_m}, \varepsilon_m^*(\mathscr{N}))_{\sharp},  
		\label{eq:decomposition coh}
	\end{equation}
	where the functor $\sharp:\Mod(\oo)\to \Mod(\oo)$ is defined by $M\mapsto \Hom_{\oo}(\mm,M)$ \eqref{eq:sharp}.
	
	By Faltings' comparison theorem (\cite{He-comparison} theorem 5.13), each component of \eqref{eq:decomposition coh} is isomorphic to $\Gamma(Z_{\et},\oo_n^{\oplus r})_{\sharp}\simeq (\oo_n^{\oplus r})_{\sharp}$, where $r$ is the rank of $\mathscr{N}$. 
	We denote abusively by $\Gamma(\widetilde{E}_{Y_m}, \varepsilon_m^*(\mathscr{N}))_{\sharp}$ the $\oo_n$-module of $(Y_{m,\overline{\eta}})_{\et}$, defined by the constant $\oo_n$-module $\Gamma(\widetilde{E}_{Z\to Y_m}, \varepsilon_m^*(\mathscr{N}))_{\sharp}$ in each connected component $Z$ of $Y_{m,\overline{\eta}}$. 
	
	For $m'> m$, every morphism $\phi:Y_{m'}\to Y_{m}$ in the hypercovering $Y_{\bullet}\to X$ induces a canonical morphism: 
	\begin{equation}
	\Phi^*: \Gamma(\widetilde{E}_{Y_m}, \varepsilon_m^*(\mathscr{N}))_{\sharp}
	\to \Gamma(\widetilde{E}_{Y_{m'}}, \varepsilon_{m'}^*(\mathscr{N}))_{\sharp}.
		\label{eq:map hypercover}
	\end{equation}
	If $Z$ is a connected component of $Y_{m',\overline{\eta}}$, then $\Phi^*$ induces an isomorphism 
	\[
	\Gamma(\widetilde{E}_{\phi(Z)\to Y_m}, \varepsilon_m^*(\mathscr{N}))_{\sharp} \xrightarrow{\sim} 
	\Gamma(\widetilde{E}_{Z\to Y_{m'}}, \varepsilon_{m'}^*(\mathscr{N}))_{\sharp}
	\]
	Therefore, $\Phi^*$ induces an isomorphism of $\oo_n$-modules on $(Y_{m',\overline{\eta}})_{\et}$:
	\begin{equation}
		\phi_{\overline{\eta}}^*(\Gamma(\widetilde{E}_{Y_m}, \varepsilon_m^*(\mathscr{N}))_{\sharp}) \xrightarrow{\sim} 
		\Gamma(\widetilde{E}_{Y_{m'}}, \varepsilon_{m'}^*(\mathscr{N}))_{\sharp}. 
		\label{eq:map connecting isomorphism}
	\end{equation}

	In summary, we obtain a descent datum of \'etale sheaves $\Gamma(\widetilde{E}_{Y_\bullet}, \varepsilon_\bullet^*(\mathscr{N}))_{\sharp}\in (Y_{\bullet,\overline{\eta}})_{\et}$ with respect to the hypercovering $Y_{\bullet,\overline{\eta}}\to X_{\overline{\eta}}$. 
	By h-descent (\cite{Stack} 0GF0), this descent datum gives rise to an \'etale $\oo_n$-module on $X_{\overline{\eta}}$, denoted by $\mathbb{L}_n(\mathscr{N})$. 
\end{secnumber}

\begin{prop}\label{p:Ln}
	\textnormal{(i)} The construction $\mathscr{N}\mapsto \mathbb{L}_n(\mathscr{N})$ defines a functor 
	$$\mathbb{L}_n:\Mod^{\pltf}(\bB_{X,n})\to \Mod(X_{\overline{\eta},\et},\oo_n)$$
	
	\textnormal{(ii)} The functor $\mathbb{L}_n$ factors through the full subcategory $\Mod(X_{\overline{\eta},\fet},\oo_n)$ \eqref{eq:rho et fet}. 
\end{prop}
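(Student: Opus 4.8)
The plan is to verify functoriality first and then the factorization through $\Mod(X_{\overline{\eta},\fet},\oo_n)$. For part (i), I would first check that the construction $\mathscr{N}\mapsto \mathbb{L}_n(\mathscr{N})$ is independent of the auxiliary choice of trivializing $\overline{\eta}$-cover $\varphi:Y\to X$. Given two trivializing covers $\varphi:Y\to X$ and $\varphi':Y'\to X$, I would pass to a common refinement $Y''\to X$ dominating both (which exists and is again an $\overline{\eta}$-cover, since $\overline{\eta}$-covers are stable under fiber product over $X$ by \S\ref{sss:models}), and compare the two descent data along the maps of hypercoverings $Y''_\bullet\to Y_\bullet$ and $Y''_\bullet\to Y'_\bullet$. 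The compatibility is exactly the content of the isomorphisms \eqref{eq:map connecting isomorphism}, which express that the \'etale sheaves $\Gamma(\widetilde{E}_{Y_m},\varepsilon_m^*(\mathscr{N}))_{\sharp}$ pull back correctly; the required cocycle identity follows from the functoriality of $(-)^*$ on Faltings topoi \eqref{eq:functorial Faltings topos} and of $(-)_{\sharp}$. Once independence is established, given a morphism $u:\mathscr{N}\to \mathscr{N}'$ in $\Mod^{\pltf}(\bB_{X,n})$ I would choose a single $\overline{\eta}$-cover $Y\to X$ trivializing both $\mathscr{N}$ and $\mathscr{N}'$ (take a common refinement of the two individual trivializing covers); then $u$ induces compatible morphisms $\Gamma(\widetilde{E}_{Y_m},\varepsilon_m^*(\mathscr{N}))_{\sharp}\to \Gamma(\widetilde{E}_{Y_m},\varepsilon_m^*(\mathscr{N}'))_{\sharp}$ respecting the descent data, hence by h-descent (\cite{Stack} 0GF0) a morphism $\mathbb{L}_n(\mathscr{N})\to \mathbb{L}_n(\mathscr{N}')$; preservation of identities and composition is then immediate from that of the underlying cohomology functors.

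For part (ii), I would show that $\mathbb{L}_n(\mathscr{N})$ lies in the essential image of $\rho^*:\Mod(X_{\overline{\eta},\fet},\oo_n)\to \Mod(X_{\overline{\eta},\et},\oo_n)$ \eqref{eq:rho et fet}. The key point is that, by construction, the restriction of $\mathbb{L}_n(\mathscr{N})$ to $Y_{\overline{\eta}}$ is (a direct sum indexed by connected components of) a \emph{constant} $\oo_n$-module, namely $\Gamma(\widetilde{E}_{Z\to Y},\varepsilon_1^*(\mathscr{N}))_{\sharp}\cong (\oo_n^{\oplus r})_{\sharp}$ on each component $Z$ of $Y_{\overline{\eta}}$; in particular it is a locally constant $\oo_n$-module that becomes constant after pullback along the finite \'etale cover $Y_{\overline{\eta}}\to X_{\overline{\eta}}$. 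I would recall that $\rho^*$ is fully faithful with $\rho_*\rho^*\simeq\id$ (\S\ref{sss:LS}), and that its essential image consists precisely of the \'etale $\oo_n$-modules on $X_{\overline{\eta}}$ that are locally constant for the finite \'etale topology; since $\mathbb{L}_n(\mathscr{N})$ is trivialized by the finite \'etale cover $Y_{\overline{\eta}}\to X_{\overline{\eta}}$, it satisfies this condition. Concretely, one applies $\rho_*$ to $\mathbb{L}_n(\mathscr{N})$, obtaining an object of $\Mod(X_{\overline{\eta},\fet},\oo_n)$, and checks using the descent datum that the adjunction morphism $\rho^*\rho_*\mathbb{L}_n(\mathscr{N})\to \mathbb{L}_n(\mathscr{N})$ is an isomorphism, which can be verified after restriction along $Y_{\overline{\eta}}\to X_{\overline{\eta}}$ where both sides are manifestly the same constant sheaf.

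The main obstacle I anticipate is the bookkeeping in part (i): making sure the identification of each summand of \eqref{eq:decomposition coh} with $(\oo_n^{\oplus r})_{\sharp}$ via Faltings' comparison theorem (\cite{He21} theorem 1.16) is done functorially and compatibly across the whole simplicial object $Y_\bullet$, so that the maps \eqref{eq:map hypercover} genuinely assemble into a descent datum (and not merely a datum satisfying the cocycle condition up to unspecified isomorphism). This requires knowing that Faltings' comparison isomorphism is natural in the $\overline{S}$-scheme and in the coefficient module, which is part of its formulation; granting that, the cocycle condition for $\Gamma(\widetilde{E}_{Y_\bullet},\varepsilon_\bullet^*(\mathscr{N}))_{\sharp}$ reduces to the cocycle condition for the simplicial structure of $Y_\bullet\to X$ itself, and h-descent finishes the argument. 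Everything else — exactness properties, compatibility with $\beta_n^*$ — is either routine or deferred to later statements.
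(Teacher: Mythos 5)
Your proposal is correct and follows essentially the same route as the paper: independence of the trivializing cover is checked by passing to a common refinement (the paper replaces $Z$ by $Z\times_X Y$ so it factors through $Y$ and compares the two descent data via the induced map of hypercoverings, which is equivalent to your two-pullback comparison), functoriality of morphisms is obtained by choosing a single cover trivializing both modules, and part (ii) is deduced from the fact that $\mathbb{L}_n(\mathscr{N})$ is trivialized by the finite \'etale cover $Y_{\overline{\eta}}\to X_{\overline{\eta}}$. The naturality concern you raise about Faltings' comparison isomorphism is harmless, since the descent datum is induced directly by functoriality of global sections along the simplicial structure, exactly as you conclude.
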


\begin{proof}
	(i) We first show that $\mathscr{N}\mapsto \mathbb{L}_n(\mathscr{N})$ is independent of the choice of the $\overline{\eta}$-cover $f:Y\to X$. Let $Z\to X$ be another $\overline{\eta}$-cover trivializing $\mathscr{N}$. 
	After replacing $Z$ by $Z\times_X Y$, we may assume that $Z\to X$ factors through $\pi:Z\to Y$. 
	Let $\varepsilon'_{\bullet}:Z_{\bullet}\to X$ be the hypercovering defined by self products of $Z$ over $X$. 

	The inverse image of the descent data $\Gamma(\widetilde{E}_{Y_\bullet}, \varepsilon_\bullet^*(\mathscr{N}))_{\sharp}$ defined by $Y_{\bullet}$ via $\pi_{\bullet}:Z_{\bullet}\to Y_{\bullet}$ is isomorphic to the descent data $\Gamma(\widetilde{E}_{Z_\bullet}, \varepsilon'^*_\bullet(\mathscr{N}))_{\sharp}$ defined by $\varepsilon'_{\bullet}$. 
	Then, the claim follows. 

	Given a morphism $u:\mathscr{N}\to \mathscr{N}'$ of $\Mod^{\pltf}(\bB_{X,n})$, 
	we may take an $\overline{\eta}$-cover $Y\to Z$ trivializing both $\mathscr{N}$ and $\mathscr{N}'$. 
	Then, the functoriality of $\mathbb{L}_n$ follows. 

	Assertion (ii) follows from the fact that $\mathbb{L}_n(\mathscr{N})$ is trivialized by an finite \'etale cover $Y_{\overline{\eta}}\to X_{\overline{\eta}}$.  
\end{proof}

	By a similar argument of (\cite{Xu17} proposition 8.16), we conclude the following proposition:	

\begin{prop}\label{p:Lbeta adjoint}
	Let $\mathbb{L}$ be an object of $\LS(X_{\overline{\eta},\fet},\oo_n)$. Then, there exists a canonical and functorial almost isomorphism of $\oo_n$-modules of $X_{\overline{\eta},\fet}$:
	\[
	\mathbb{L}_n(\beta_n^*(\mathbb{L})) \xrightarrow{\approx} \mathbb{L}.
	\]
\end{prop}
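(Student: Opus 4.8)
The statement to prove is that for $\mathbb{L}$ a locally free $\oo_n$-module of finite type on $X_{\overline{\eta},\fet}$, there is a canonical functorial almost isomorphism $\mathbb{L}_n(\beta_n^*(\mathbb{L})) \xrightarrow{\approx} \mathbb{L}$. The plan is to trace through the construction of $\mathbb{L}_n$ applied to the specific module $\beta_n^*(\mathbb{L})$ and identify the resulting descended sheaf with $\mathbb{L}$ itself, up to an almost isomorphism controlled by Faltings' comparison theorem.

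First I would choose a finite \'etale cover $Y_{\overline{\eta}}\to X_{\overline{\eta}}$ trivializing $\mathbb{L}$, so that $\beta_n^*(\mathbb{L})$ is trivialized (as a $\bB_{X,n}$-module) by the associated $\overline{\eta}$-cover $\varphi\colon Y = X^{Z}\to X$ (with $Z$ the normalization, using \cite{Stack} 035K as in the proof of corollary \ref{c:coh desent}). Then, following \S~\ref{construction Ln}, $\mathbb{L}_n(\beta_n^*(\mathbb{L}))$ is obtained by h-descent from the descent datum $(Y_{\bullet,\overline{\eta}})_{\et}\ni\Gamma(\widetilde{E}_{Y_\bullet},\varepsilon_\bullet^*\beta_n^*(\mathbb{L}))_{\sharp}$. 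The key computational input is that $\varepsilon_m\circ\beta_{n}$ agrees (up to the relevant comparison morphism) with $\beta_{Y_m,n}\circ(\varepsilon_{m,\overline{\eta}})_{\fet}$ — more precisely, by lemma \ref{l:pullback compatible} and the description of $\beta_n^*$ in its proof, $\varepsilon_m^*\beta_n^*(\mathbb{L})$ is the $\bB_{Y_m,n}$-module $\beta_{Y_m,n}^*(\varepsilon_{m,\overline{\eta}}^*\mathbb{L})$. Now apply Faltings' comparison theorem (\cite{He21} theorem 1.16) on each connected component $W$ of $Y_{m,\overline{\eta}}$: since $\mathbb{L}$ becomes constant (say $\simeq \oo_n^{\oplus r}$) over each such $W$, we get a canonical almost isomorphism $\Gamma(\widetilde{E}_{W\to Y_m},\varepsilon_m^*\beta_n^*\mathbb{L})_{\sharp}\xrightarrow{\approx}\Gamma(W_{\fet},\varepsilon_{m,\overline{\eta}}^*\mathbb{L})_{\sharp}\simeq (\oo_n^{\oplus r})_{\sharp}$; assembling over connected components gives a canonical almost isomorphism of \'etale $\oo_n$-modules $\Gamma(\widetilde{E}_{Y_m},\varepsilon_m^*\beta_n^*\mathbb{L})_{\sharp}\xrightarrow{\approx}(\varepsilon_{m,\overline{\eta}}^*\rho^*\mathbb{L})_{\sharp}$, compatibly with the transition maps \eqref{eq:map connecting isomorphism} in the cosimplicial structure (here one uses the naturality of Faltings' comparison in $Y_m$).

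Next I would observe that these almost isomorphisms are compatible with the descent data on both sides: the source descent datum descends to $\mathbb{L}_n(\beta_n^*\mathbb{L})$ by construction, while the target descent datum $(\varepsilon_{\bullet,\overline{\eta}}^*\rho^*\mathbb{L})_{\sharp}$ is the canonical descent datum for the pullback of $\rho^*\mathbb{L}$ along the h-cover $Y_{\bullet,\overline{\eta}}\to X_{\overline{\eta}}$, which descends to $(\rho^*\mathbb{L})_{\sharp}$ by fppf/h-descent for quasi-coherent \'etale sheaves (\cite{Stack} 0GF0 as in \S~\ref{construction Ln}). Since $(-)_{\sharp}$ sends almost isomorphisms to isomorphisms and commutes with the relevant operations, we obtain a canonical almost isomorphism $\mathbb{L}_n(\beta_n^*\mathbb{L})\xrightarrow{\approx}(\mathbb{L})_{\sharp}$, hence $\xrightarrow{\approx}\mathbb{L}$. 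Functoriality in $\mathbb{L}$ follows by choosing, for a morphism $\mathbb{L}\to\mathbb{L}'$, a common trivializing $\overline{\eta}$-cover and using functoriality of Faltings' comparison theorem and of h-descent, exactly as in the independence-of-cover argument in the proof of proposition \ref{p:Ln}(i).

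\textbf{The main obstacle.} The delicate point is the compatibility of the component-wise Faltings comparison isomorphisms with the cosimplicial transition maps \eqref{eq:map hypercover}–\eqref{eq:map connecting isomorphism}: one must check that the square relating $\Phi^*$ (for $\phi\colon Y_{m'}\to Y_m$) to the \'etale pullback $\phi_{\overline{\eta}}^*$ on the target side commutes, which amounts to the naturality of Faltings' comparison theorem with respect to pullback along $\overline{\eta}$-covers combined with the fact that each connected component of $Y_{m',\overline{\eta}}$ maps into a single connected component of $Y_{m,\overline{\eta}}$. This is precisely the kind of bookkeeping handled in \cite{Xu17} proposition 8.16, which is why the paper says the proposition follows "by a similar argument"; I would reduce to that reference after setting up the identification of $\varepsilon_m^*\beta_n^*\mathbb{L}$ with $\beta_{Y_m,n}^*$ of a constant sheaf on each component, so that the comparison-theorem input is literally the one used there.
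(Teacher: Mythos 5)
Your argument is correct and is essentially the expansion of what the paper compresses into the citation ``by a similar argument of \cite{Xu17} proposition 8.16'': you unwind the construction of $\mathbb{L}_n$ for $\mathscr{N}=\beta_n^*(\mathbb{L})$, identify $\varepsilon_m^*\beta_n^*(\mathbb{L})$ with $\beta_{Y_m,n}^*$ of a sheaf constant on each connected component, apply Faltings' comparison componentwise, and check compatibility with the cosimplicial transition maps before descending. You also correctly isolate the only delicate point (naturality of the comparison isomorphisms with respect to the maps \eqref{eq:map hypercover}--\eqref{eq:map connecting isomorphism}), which is exactly the bookkeeping the paper delegates to \cite{Xu17}.
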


\begin{prop} 
	The functors $\beta_n^*$ and $\mathbb{L}_n$ induce equivalences of categories quasi-inverse to each other up to almost isomorphisms  \eqref{sss:notations}:
	\begin{equation}
		(\beta_n^*)^\alpha: \alm\LS(X_{\overline{\eta},\fet},\oo_n) \leftrightarrows
		\alm\Mod^{\pltf}_{\cohd}(\bB_{X,n}): (\mathbb{L}_n)^{\alpha}.
		\label{eq:equiv beta L}
	\end{equation}
\end{prop}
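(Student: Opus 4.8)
The plan is to deduce the equivalence \eqref{eq:equiv beta L} formally from the three ingredients already assembled: the functoriality and trivialization statements for $\mathbb{L}_n$ (Proposition \ref{p:Ln}), the adjunction-type almost isomorphism $\mathbb{L}_n(\beta_n^*(\mathbb{L})) \xrightarrow{\approx} \mathbb{L}$ (Proposition \ref{p:Lbeta adjoint}), and the cohomological descent enjoyed by objects of $\Mod^{\pltf}_{\cohd}(\bB_{X,n})$ (Corollary \ref{c:coh desent}). First I would observe that both composites are defined on the indicated subcategories: $\beta_n^*$ lands in $\Mod^{\pltf}_{\cohd}(\bB_{X,n})$ by \eqref{eq:pullback beta}, and $\mathbb{L}_n$ restricted to that subcategory lands in $\Mod(X_{\overline{\eta},\fet},\oo_n)$, which by \eqref{eq:rho et fet} we regard inside $\LS$ after noting (using Proposition \ref{p:Ln}(ii) together with the fact that the descent datum is built from locally constant sheaves of finite rank $r$) that $\mathbb{L}_n(\mathscr{N})$ is in fact locally free of finite type. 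Then one checks that the two natural transformations — the counit $\mathbb{L}_n\circ\beta_n^* \Rightarrow \id$ of Proposition \ref{p:Lbeta adjoint} and a unit $\id \Rightarrow \beta_n^*\circ\mathbb{L}_n$ still to be produced — become isomorphisms in the almost categories.

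The counit side is essentially done: Proposition \ref{p:Lbeta adjoint} gives a functorial almost isomorphism $\mathbb{L}_n(\beta_n^*(\mathbb{L})) \xrightarrow{\approx} \mathbb{L}$, so $(\mathbb{L}_n)^\alpha \circ (\beta_n^*)^\alpha \simeq \id$. For the unit, given $\mathscr{N} \in \Mod^{\pltf}_{\cohd}(\bB_{X,n})$ trivialized by an $\overline{\eta}$-cover $\varphi\colon Y\to X$, I would construct a morphism $\mathscr{N} \to \beta_n^*(\mathbb{L}_n(\mathscr{N}))$ as follows. Pulling back along $\varphi$ both sides become free $\bB_{Y,n}$-modules (for the right-hand side, use that $\beta_n^*$ commutes with the functorial inverse image \eqref{eq:functorial Faltings topos} and that $\mathbb{L}_n(\mathscr{N})$ is trivialized by $Y_{\overline{\eta}}$, so its $\beta_n^*$ is free), and by construction of $\mathbb{L}_n$ via $\sharp$ of global sections on the hypercovering $Y_\bullet \to X$, one has a canonical identification of these two free modules compatible with the descent data — the descent datum on $\beta_n^*(\mathbb{L}_n(\mathscr{N}))$ is exactly \eqref{eq:map connecting isomorphism}, i.e. the one read off from $\mathscr{N}$. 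This yields, after applying $(-)_\sharp$, a morphism $\mathscr{N}_\sharp \to \beta_n^*(\mathbb{L}_n(\mathscr{N}))_\sharp$; since $(-)_\sharp$ computes the almost category, this is a morphism in $\alm\Mod(\bB_{X,n})$, and it is an isomorphism there because it is so after the faithfully-flat-type base change along $\varphi$ — here is where cohomological descent is used, to see that an almost morphism of $\bB_{X,n}$-modules satisfying cohomological descent that becomes an almost isomorphism after pulling back along the hypercovering $Y_\bullet\to X$ is already an almost isomorphism. Naturality in $\mathscr{N}$ follows from Proposition \ref{p:Ln}(i) by choosing a common trivializing cover for any given morphism.

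The main obstacle, I expect, is precisely the descent step in the previous paragraph: making rigorous that the unit morphism is an almost isomorphism. One must argue that $\mathscr{N}$ and $\beta_n^*(\mathbb{L}_n(\mathscr{N}))$ have the same image under $\rR\varepsilon_*\varepsilon^*$ for the proper hypercovering $Y_\bullet\to X$ — which holds because each enjoys cohomological descent (the first by hypothesis, the second because $\beta_n^*$ of a locally constant sheaf does, by Corollary \ref{c:coh desent}) and the two pulled-back objects $\varepsilon^*\mathscr{N}$ and $\varepsilon^*\beta_n^*(\mathbb{L}_n(\mathscr{N}))$ are identified compatibly with the descent data, by the very construction of $\mathbb{L}_n$. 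Once both sides are recovered from the same descent datum via $\rR\varepsilon_*\varepsilon^*$, the unit is an almost isomorphism on $\tau_{\le 0}$, which is all that is needed. This is the analogue, in the present h-descent formulation, of the argument of \cite{Xu17} Proposition 8.16, and I would model the write-up on that; the only genuinely new input is Corollary \ref{c:coh desent}, supplied by T. He's cohomological descent theorem.
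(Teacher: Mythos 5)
Your proposal is correct and follows essentially the same route as the paper: the counit side is exactly Proposition \ref{p:Lbeta adjoint}, and your "unit" step is precisely the content of Lemma \ref{p:compatibility parallel transport}, which the paper proves by the same mechanism you describe — identifying $\varepsilon^*\mathscr{N}$ and $\varepsilon^*(\beta_n^*(\mathbb{L}_n(\mathscr{N})))$ almost-isomorphically via Faltings' comparison theorem and then recovering both sides through $\rR\varepsilon_*$ using the cohomological descent hypothesis on $\mathscr{N}$ and Corollary \ref{c:coh desent} for $\beta_n^*(\mathbb{L}_n(\mathscr{N}))$. Your remark that one must check $\mathbb{L}_n(\mathscr{N})$ is (almost) locally free of finite type is a reasonable point of care that the paper leaves implicit in the construction of \S\ref{construction Ln}.
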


\begin{rem}
	In (\cite{MW}, Theorem 0.1(i)), Mann--Werner obtained a similar result between local systems with integral models and certain trivilizable modules over a proper adic space of finite type over $\bC$ via v-descent.
\end{rem}

The above proposition follows from proposition \ref{p:Lbeta adjoint} and the following lemma. 

\begin{lemma} \label{p:compatibility parallel transport}
	Let $\mathscr{N}$ be an object of $\Mod^{\pltf}_{\cohd}(\bB_{X,n})$. 
	Then, there exists a canonical isomorphism of $(\bB_{X,n})^{\alpha}$-modules:
	\begin{equation} \label{eq:adm pltf}
		(\beta_n^*(\mathbb{L}_n(\mathscr{N})))^{\alpha}\xrightarrow{\sim} \mathscr{N}^{\alpha}.
	\end{equation}
\end{lemma}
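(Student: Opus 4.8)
The plan is to build the almost isomorphism \eqref{eq:adm pltf} first over a trivializing cover and then to descend it by cohomological descent. Fix an $\overline{\eta}$-cover $\varphi:Y\to X$ trivializing $\mathscr{N}$ as in the construction \ref{construction Ln}, with associated hypercovering $\varepsilon_\bullet:Y_\bullet\to X$, the augmentations $\varepsilon_{m,\overline{\eta}}:(Y_{m,\overline{\eta}})_{\et}\to X_{\overline{\eta},\et}$, and the morphisms of Faltings ringed topoi $\Phi_m:(\widetilde{E}_{Y_m},\bB_{Y_m})\to (\widetilde{E}_X,\bB_X)$ \eqref{eq:functorial Faltings topos}. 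The first step is to record the base-change identity $\Phi_m^*\circ\beta_{X,n}^*\simeq \beta_{Y_m,n}^*\circ\varepsilon_{m,\overline{\eta}}^*$, which follows from the functoriality of the morphisms $\beta$ in \eqref{eq:beta} together with lemma \ref{l:pullback compatible}. Since $\mathbb{L}_n(\mathscr{N})$ is obtained by h-descent from the datum $\Gamma(\widetilde{E}_{Y_\bullet},\varepsilon_\bullet^*(\mathscr{N}))_\sharp$, its pullback $\varepsilon_{m,\overline{\eta}}^*(\mathbb{L}_n(\mathscr{N}))$ is by construction the locally constant $\oo_n$-module $\Gamma(\widetilde{E}_{Y_m},\varepsilon_m^*(\mathscr{N}))_\sharp$ on $Y_{m,\overline{\eta},\fet}$.

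Next I would produce the level-wise isomorphisms. Because $\varepsilon_m^*(\mathscr{N})$ is free of finite type over $\bB_{Y_m,n}$, Faltings' comparison theorem (\cite{He21} theorem 1.16), applied componentwise via lemma \ref{l:decomposition topoi}, shows that the counit $\beta_{Y_m,n}^*\beta_{Y_m,n,*}\to\id$ is an almost isomorphism on $\varepsilon_m^*(\mathscr{N})$, and identifies $\beta_{Y_m,n,*}(\varepsilon_m^*(\mathscr{N}))$ componentwise with the locally constant sheaf $\Gamma(\widetilde{E}_{Y_m},\varepsilon_m^*(\mathscr{N}))_\sharp=\varepsilon_{m,\overline{\eta}}^*(\mathbb{L}_n(\mathscr{N}))$. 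Combining this with the base-change identity of the first step yields, for every $m\ge 0$, a canonical almost isomorphism
\[
\Phi_m^*\bigl(\beta_{X,n}^*(\mathbb{L}_n(\mathscr{N}))\bigr)\xrightarrow{\approx}\Phi_m^*(\mathscr{N}),
\]
that is, the level-$m$ component of \eqref{eq:adm pltf} after pullback to $Y_m$.

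Then I would check that these level-wise almost isomorphisms are compatible with the cosimplicial structure of $\varepsilon_\bullet^*(\mathscr{N})$. The transition maps \eqref{eq:map connecting isomorphism} defining the descent datum of $\mathbb{L}_n(\mathscr{N})$ are, by their very construction, induced by the maps $\Phi^*$ attached to the face morphisms of $Y_\bullet$; hence they correspond exactly to the transition maps of $\varepsilon_\bullet^*(\mathscr{N})$ in the hypercovering. Consequently the family of level-$m$ isomorphisms assembles into an isomorphism of descent data over $Y_\bullet$. Finally, both $\mathscr{N}$ and $\beta_{X,n}^*(\mathbb{L}_n(\mathscr{N}))$ lie in $\Mod^{\pltf}_{\cohd}(\bB_{X,n})$ --- the former by hypothesis, the latter because $\mathbb{L}_n(\mathscr{N})\in\LS(X_{\overline{\eta},\fet},\oo_n)$ by proposition \ref{p:Ln}(ii) and $\beta_n^*$ carries this subcategory into $\Mod^{\pltf}_{\cohd}(\bB_{X,n})$ by \eqref{eq:pullback beta} (corollary \ref{c:coh desent}). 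Therefore an almost morphism between them is equivalent to an almost morphism of their pullbacks to $Y_\bullet$ compatible with descent, and the isomorphism of descent data descends to the desired isomorphism \eqref{eq:adm pltf} of $(\bB_{X,n})^\alpha$-modules on $X$.

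The step I expect to be the main obstacle is the compatibility in the third paragraph: verifying that the identifications furnished by Faltings' comparison theorem are canonical and functorial enough in $m$ to commute with the face maps of $Y_\bullet$, so that the level-wise almost isomorphisms genuinely form a morphism of descent data rather than merely existing levelwise. Once this naturality and the attendant bookkeeping with the functor $(-)_\sharp$ and with almost isomorphisms are in place, the final descent is formal from cohomological descent for the two modules.
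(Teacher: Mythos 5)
Your proposal is correct and follows essentially the same route as the paper: levelwise identification of $\varepsilon^*(\beta_n^*(\mathbb{L}_n(\mathscr{N})))$ with $\varepsilon^*(\mathscr{N})$ over the trivializing hypercovering via Faltings' comparison theorem, then cohomological descent for both modules (using that $\beta_n^*(\mathbb{L}_n(\mathscr{N}))$ lies in $\Mod^{\pltf}_{\cohd}(\bB_{X,n})$ by corollary \ref{c:coh desent}). The only difference is packaging: the compatibility check you single out as the main obstacle is sidestepped in the paper by routing both sides through the canonical zig-zag of simplicial sheaves $\varepsilon^*(\beta_n^*(\mathbb{L}_n(\mathscr{N})))\xleftarrow{\approx}\Gamma(\widetilde{E}_{Y_\bullet},\varepsilon^*(\mathscr{N}))\otimes_{\oo_n}\bB_{Y_\bullet,n}\xrightarrow{\approx}\varepsilon^*(\mathscr{N})$ and applying $\rR\varepsilon_*$, so that compatibility with the cosimplicial structure holds automatically by functoriality rather than by a descent-data argument.
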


\begin{proof}
	Set $\mathbb{L}=\mathbb{L}_n(\mathscr{N})$.
	By the construction in \S~\ref{construction Ln}, we have an isomorphism of $(\bB_{Y_{\bullet},n})$-modules: 
	\[
	\varepsilon^{*}( \mathbb{L}\otimes_{\oo_n}\bB_{X,n})\simeq 
	\Gamma(\widetilde{E}_{Y_{\bullet}}, \varepsilon^*(\mathscr{N}))_{\sharp}\otimes_{\oo_n} \bB_{Y_\bullet,n}. 
	\]

	Since $\varepsilon^*(\mathscr{N})$ is isomorphic to a free $(\bB_{Y_{\bullet},n})$-module of finite type, the following canonical morphisms are almost isomorphisms by Faltings' comparison theorem (\cite{He-comparison} theorem 5.13)
	$$
		\Gamma(\widetilde{E}_{Y_{\bullet}}, \varepsilon^*(\mathscr{N}))\otimes_{\oo_n} \bB_{Y_{\bullet},n}\to \Gamma(\widetilde{E}_{Y_\bullet}, \varepsilon^*(\mathscr{N}))_{\sharp}\otimes_{\oo_n} \bB_{Y_{\bullet},n}, \quad 
	\Gamma(\widetilde{E}_{Y_{\bullet}}, \varepsilon^*(\mathscr{N}))\otimes_{\oo_n} \bB_{Y_{\bullet},n} \to
	\varepsilon^*(\mathscr{N}).$$
	Then, these almost isomorphisms fit into the following diagram
	\begin{equation} \label{eq:0diagram descent adm}
		\xymatrix{
		\mathbb{L}\otimes_{\oo_n}\bB_{X,n} \ar[d]_{\approx} & & \mathscr{N} \ar[d]^{\approx} \\
		\rR \varepsilon_* (\varepsilon^{*}(\mathbb{L}\otimes_{\oo_n}\bB_{X,n})) &
		\rR \varepsilon_* (\Gamma(\widetilde{E}_{Y_{\bullet}}, \varepsilon^*(\mathscr{N}))\otimes_{\oo_n} \bB_{Y_{\bullet},n})  \ar[l]_-{\approx} \ar[r]^-{\approx} & 
		\rR \varepsilon_* (\varepsilon^{*}\mathscr{N})
		}
	\end{equation}
	Two vertical arrows are almost isomorphisms due to the cohomological descent and corollary \ref{c:coh desent}. 
	Then, we conclude the isomorphism of $(\bB_{X,n})^{\alpha}$-modules \eqref{eq:adm pltf}. 
\end{proof}
\begin{secnumber} \label{sss:LS Rep}
	For $m\ge n \ge 1$, the functors $\mathbb{L}_m$ and $\mathbb{L}_n$ are compatible up to almost isomorphisms via reduction (\cite{Xu17} lemma 8.13). 
	By considering the projective system $(\mathbb{L}_n)_{n\ge 1}$, we obtain following functor (\cite{Xu17} 8.14.1):
\begin{equation}
	\mathbb{L}: \Mod^{\pltf}(\bbB)\to \Mod(X_{\overline{\eta},\fet}^{\Nc},\breve{\oo}),\quad M=(M_n)_{\ge 1}\mapsto (\mm\otimes_{\oo} \mathbb{L}_n(M_n))_{\ge 1}.
\end{equation}
By passing to categories up to isogeny, the functor $\mathbb{L}_{\BQ}$ factors through the full subcategory $\LS(X_{\overline{\eta},\fet}^{\Nc},\breve{\oo})_{\BQ}$, which is equivalent to $\Rep(\pi_1(X_{\overline{\eta}},\overline{x}),\bC)$ \eqref{eq:LS RepC} (c.f. \cite{Xu17} 8.14): 
\[
\mathbb{L}_{\BQ}:\Mod^{\pltf}(\bbB)_{\BQ}\to \Rep(\pi_1(X_{\overline{\eta}},\overline{x}),\bC). 
\]
\end{secnumber}

\subsection{Parallel transport for Deninger--Werner Higgs bundles} \label{ss:VX-DWHB} 

In this subsection, $X$ denotes a \textit{semi-stable $S$-curve}. We fix a smooth Cartesian lifting $\mathcal{X}$ of $\check{X}$ over $\Sigma_S$ as in the beginning of \S \ref{s:C-rep to HB}. 

\begin{theorem} \label{t:thm comparison}
	Let $(M,\theta)$ be an object of $\HB_{\Zp,\sma}^{\DW}(\check{X}/\Sigma_{1,S})$, $\mathcal{M}$ the associated Higgs crystal and $\breve{h}:(\widetilde{E}_{\underline{X}}^{\Nc},\bbB_{\underline{X}})\to (\widetilde{E}_{X}^{\Nc},\bbB_{X})$ the canonical morphism of ringed topoi \eqref{eq:h morphism topoi}. 

	\textnormal{(i)}
	Then, $\breve{h}^*(T(\mathcal{M}))$ is potentially free of finite type. 
	In particular, this allows us to define the parallel transport functor $\mathbb{V}_{\mathcal{X}}$ as the composition \eqref{eq:functor T}:
	\begin{equation} \label{eq:bVX}
	\mathbb{V}_{\mathcal{X}}: \HB_{\Zp,\sma}^{\DW}(\check{X}/\Sigma_{1,S}) 
	\xrightarrow{\iota_{\mathcal{X}}} \HC_{\Zp,\fin}(\check{X}/\Sigma_S)
	\xrightarrow{\breve{h}^*\circ T} \Mod^{\pltf}(\bbB_{\underline{X}}) 
	\xrightarrow{\mathbb{L}} \Mod(X_{\overline{\eta},\fet}^{\mathbb{N}^{\circ}},\breve{\oo}).
\end{equation}

	\textnormal{(ii)} The $\bbB_{\underline{X}}$-module $\breve{h}^*(T(\mathcal{M}))$ satisfies the cohomological descent. 

	\textnormal{(iii)} There exists a functorial isomorphism of $(\bbB_X)^{\alpha}$-modules on $(M,\theta)$:
	\begin{equation} \label{eq:adm VX T}
		(\breve{\beta}_X^*(\mathbb{V}_{\mathcal{X}}(M,\theta)))^{\alpha}
		\simeq (T(\mathcal{M}))^{\alpha}.
\end{equation}
\end{theorem}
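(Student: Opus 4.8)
The plan is to establish the three assertions in order, with (i) reducing to the combination of the trivialization result (Theorem \ref{t:pullback Higgs trivial}) and the compatibility of $T$ with inverse images (Proposition \ref{p:pullbackfunctoriality}), and with (ii) and (iii) following by a descent argument modeled on Lemma \ref{p:compatibility parallel transport}. First I would treat (i): since $(M,\theta)$ lies in $\HB^{\DW}_{\Zp,\sma}(\check{X}/\Sigma_{1,S})$, Theorem \ref{t:pullback Higgs trivial} gives, for each $n\ge 1$, a finite extension $K'/K$, a semi-stable $S'$-curve $Y$ and an $\eta'$-cover $f\colon Y\to X_{S'}$ such that $\check f_{\HIG}^*(\mathcal{M})_n\simeq (\overline{\mathscr{O}}_{\check Y/\Sigma_{S'},n})^{\oplus r}$. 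Applying the functor $T$ and using Proposition \ref{p:pullbackfunctoriality} (compatibility of $T$ with the inverse image of Higgs crystals and of Faltings ringed topoi), together with the fact that $T$ sends the structure sheaf $\overline{\mathscr{O}}$ to $\bbB$, we get that the pullback of $T(\mathcal{M})_n$ to the Faltings topos of $\underline Y\to \underline X$ (via the morphism induced by $Y\times_{S'}\overline S\to \underline X$) is free of finite type over $\bB_{\underline Y,n}$. Since $\underline Y\to \underline X$ is an $\overline\eta$-cover of $\overline S$-models, this is exactly the trivialization required in Definition \ref{d:pltf}; so $\breve h^*(T(\mathcal{M}))_n$ is potentially free of finite type, and it is adic of finite type because $T(\mathcal{M})$ is (the crystal $\mathcal{M}$ being finite). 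This makes the composition \eqref{eq:bVX} well-defined by Proposition \ref{p:Ln} and \S\ref{sss:LS Rep}.

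For (ii), I would observe that cohomological descent is, by definition (\S\ref{sss:coh descent}), an almost-local statement on the Faltings topos, and that after pulling back along the $\overline\eta$-cover $\underline Y\to \underline X$ the module becomes free of finite type over $\bB_{\underline Y,n}$, hence satisfies cohomological descent by Corollary \ref{c:coh desent}. One then has to pass from the trivialized situation to $\underline X$ itself; here the key point is that $\underline X$ with $Y=X_{\overline\eta}$ satisfies the geometric hypotheses of \S\ref{sss:beta sigma}–\S\ref{sss:coh descent} (normality, locally irreducible, the relevant square Cartesian), so the argument of Corollary \ref{c:coh desent} applies verbatim to show that a $\bB_{\underline X,n}$-module which is free of finite type after an $\overline\eta$-cover satisfies cohomological descent. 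Therefore $\breve h^*(T(\mathcal{M}))_n\in \Mod^{\pltf}_{\cohd}(\bB_{\underline X,n})$ for each $n$, which is assertion (ii).

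For (iii), I would run the descent computation of Lemma \ref{p:compatibility parallel transport}. Write $\mathscr{N}=\breve h^*(T(\mathcal{M}))_n$ (for fixed $n$; the projective limit is taken at the end) and $\mathbb{L}=\mathbb{L}_n(\mathscr{N})$. Choosing the $\overline\eta$-cover $Y_\bullet\to \underline X$ as in \S\ref{construction Ln} and using Faltings' comparison theorem (\cite{He21} Theorem 1.16) one gets, exactly as in diagram \eqref{eq:0diagram descent adm}, a chain of almost isomorphisms $\mathbb{L}\otimes_{\oo_n}\bB_{\underline X,n}\xrightarrow{\approx}\rR\varepsilon_*(\varepsilon^*(\beta_{\underline X,n}^*(\mathbb{L})))\xleftarrow{\approx}\rR\varepsilon_*(\varepsilon^*(\mathscr{N})\text{ via comparison})\xrightarrow{\approx}\rR\varepsilon_*(\varepsilon^*\mathscr{N})$, where the two outer arrows use the cohomological descent established in (ii) for both $\beta_{\underline X,n}^*(\mathbb{L})$ (Corollary \ref{c:coh desent}) and $\mathscr{N}$ (part (ii)). This yields $(\beta_{\underline X,n}^*(\mathbb{L}_n(\mathscr{N})))^\alpha\simeq \mathscr{N}^\alpha$; then one transports this isomorphism along $\breve h$ using Lemma \ref{l:pullback compatible}(i) (which says $\beta_{X,n}^*$ commutes with $h_*$), and passes to the projective limit over $n$ and then to the associated adic module, obtaining $(\breve\beta_X^*(\mathbb{V}_{\mathcal{X}}(M,\theta)))^\alpha\simeq (T(\mathcal{M}))^\alpha$.

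The main obstacle I expect is the bookkeeping in step (iii) relating the module $\mathbb{L}_n(\mathscr{N})$ on $\underline X$ (produced by h-descent over the $\overline\eta$-cover $Y_\bullet$) to a module on $X_{\overline\eta,\fet}$ that is genuinely the value of the parallel transport functor $\mathbb{V}_{\mathcal{X}}$, and checking that $\breve\beta_X^*$ of it computes the right thing after applying $\breve h$; this requires carefully invoking Lemma \ref{l:pullback compatible} and the identifications $\widetilde E_{\underline X}\simeq \bigsqcup \widetilde E_{\underline X_i}$ of Corollary \ref{l:decomposition topoi}, and making sure the almost isomorphisms are compatible in the projective system. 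The geometric input (normality of $\underline X$, which holds because $X$ has semi-stable reduction, so $\underline X$ is normal as noted in \S\ref{sss:Rbar}) is needed to legitimately invoke Corollary \ref{c:coh desent} on $\underline X$ rather than only on the trivializing cover.
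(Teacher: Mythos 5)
Parts (i) and (iii) of your plan follow the paper's route, but part (ii) contains a genuine gap, and the missing ingredient also affects (iii). You claim that since $\breve h^*(T(\mathcal{M}))_n$ becomes free after pullback along the proper $\overline\eta$-cover $\underline Y\to\underline X$, ``the argument of Corollary \ref{c:coh desent} applies verbatim'' to give cohomological descent. It does not: Corollary \ref{c:coh desent} is proved by restricting to a covering $\{(V_i\to U_i)\}$ \emph{of the Faltings site}, where cohomological descent is a local statement, and then invoking He's theorem on each localization $E_{V_i\to U_i}$. A proper $\overline\eta$-cover $Y\to X$ is not \'etale over $X$, so $(Y_{\overline\eta}\to Y)$ is not an object of $E_{\underline X}$ and the morphism of topoi $\widetilde E_{\underline Y}\to\widetilde E_{\underline X}$ is not a localization; freeness of $\Phi^*(\mathscr N)$ on $\widetilde E_{\underline Y}$ therefore gives no local triviality of $\mathscr N$ on $E_{\underline X}$. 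Trying instead to check cohomological descent after the proper cover is essentially circular, since the almost-isomorphism $\mathscr N\to\rR\varepsilon_*\varepsilon^*\mathscr N$ for the hypercovering generated by $Y\to X$ is part of what one is trying to establish.

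What is actually needed --- and what the paper proves as Lemma \ref{l:locally free} --- is that $T(\mathcal{M})_n$ is \emph{locally free of finite type on the Faltings site $E_X$ itself}. This uses the smallness of the Higgs field rather than the Deninger--Werner property: over each small affine $U_i$ of $\bQ$ trivializing $M$, the local $p$-adic Simpson correspondence \eqref{eq:local exp} identifies $T(\mathcal{M})_{n}$ with a small $\overline R/p^n$-representation of $\Delta_{(U_i,\overline y)}$, which is trivialized by a \emph{finite \'etale} cover $V_i^{\overline y}\to U_{i,\overline\eta}^{\overline y}$; the pairs $(V_i^{\overline y}\to U_i)$ then form a genuine covering of $(X_{\overline\eta}\to X)$ in $E_X$, and Corollary \ref{c:coh desent} applies. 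Note that this same local freeness is also used in (iii) to get the adjunction isomorphism $T(\mathcal{M})\xrightarrow{\sim}\breve h_*\breve h^*(T(\mathcal{M}))$ before pushing the almost isomorphism over $\underline X$ down to $\widetilde E_X$ via Lemma \ref{l:pullback compatible}(i); your write-up of (iii) does not address this step. The rest of your argument --- the use of Theorem \ref{t:pullback Higgs trivial} and Proposition \ref{p:pullbackfunctoriality} for (i), and the descent diagram of Lemma \ref{p:compatibility parallel transport} for (iii) --- matches the paper.
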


\begin{proof}
	(i) Let $n$ be an integer $\ge 1$ and $\varphi:Y\to X_{S'}$ be an $\eta'$-cover of semi-stable curves trivializing $\mathcal{M}_n$ as in theorem \ref{t:pullback Higgs trivial}. 
	Since the functor $T$ is compatible with the inverse image functor of Higgs crystals and $\Phi^*$ \eqref{eq:functorial Faltings topos} (proposition \ref{p:pullbackfunctoriality}), then assertion (i) follows. 

	Assertion (ii) follows from corollary \ref{c:coh desent} and the following lemma. 
	
	\begin{lemma}
		The $\bB_{X,n}$-module $T(\mathcal{M})_n$ is locally free of finite type. 
		\label{l:locally free}
	\end{lemma}

	\begin{proof}
	By (\cite{AGT} II.5.17), there exists an \'etale covering $\{U_i\to X\}_{i\in I}$ of $\bQ$ such that $M|_{\check{U}_{i}}$ is a trivial vector bundle of rank $r$. 
	Let $\overline{y}$ be a geometric generic point of $U_{i,\overline{\eta}}$, $U_{i,\overline{\eta}}^{\overline{y}}$ the connected component of $U_{i,\overline{\eta}}$ containing $\overline{y}$ and we take again the notation of \S \ref{ss:local Simpson} for the pair $(U_i,\overline{y})$. 
	In view of equivalences \eqref{eq:local exp}, there exists a trivialization of the underlying $\overline{R}/p^n \overline{R}$-module of $T(\mathcal{M})_{n,U}|_{U_{i,\overline{\eta},\fet}^{\overline{y}}}$ (c.f. proposition \ref{p:sheaf E}): 
	\[
	\nu_{\overline{y}}\bigr(T(\mathcal{M})_{n,U}|_{U_{i,\overline{\eta},\fet}^{\overline{y}}}\bigl)\simeq (\overline{R}/p^n \overline{R})^{\oplus r}.
	\]

	Moreover, there exists a finite \'etale Galois cover $V_i^{\overline{y}}\to U_{i,\overline{\eta}}^{\overline{y}}$ such that the action of $\Delta_{(U,\overline{y})}=\pi_1(U_{i,\overline{\eta}}^{\overline{y}},\overline{y})$ on the above module factors through $\Gal(V_i^{\overline{y}}/U_{i,\overline{\eta}}^{\overline{y}})$. 
	In view of \eqref{eq:TVr base change} and (\cite{AGT} III.10.5), the above isomorphism induces a trivialization of $\bB_{V_i^{\overline{y}}\to U_i,n}$-modules of $\widetilde{E}_{V_i^{\overline{y}}\to U_i}$: 
	\[
	T(\mathcal{M})_n|_{(V_i^{\overline{y}}\to U_i)} \simeq 
	(\bB_{V_i^{\overline{y}}\to U_i,n})^{\oplus r}. 
	\]
	Since $\{(V_i^{\overline{y}}\to U_i)\to (X_{\overline{\eta}}\to X)\}_{i\in I, \overline{y}\in \Con(U_{i,\overline{\eta}})}$ forms a covering in Faltings site, the lemma follows.		
	\end{proof}

	(iii) By assertion (ii) and lemma \ref{p:compatibility parallel transport}, we conclude the following $(\bbB_{\underline{X}})^{\alpha}$-linear isomorphism:
	\begin{equation} \label{eq:VT adm Xbar}
		(\breve{\beta}_{\underline{X}}^*(\mathbb{V}_{\mathcal{X}}(M,\theta)))^{\alpha}
		\simeq (\breve{h}^*(T(\mathcal{M})))^{\alpha}.	
\end{equation}
On the one hand, the following canonical morphism is an isomorphism in view of lemma \ref{l:pullback compatible}(ii)
	\[
	\breve{\beta}_X^*(\mathbb{V}_{\mathcal{X}}(M,\theta))\xrightarrow{\sim}
	\breve{h}_*( \breve{\beta}_{\underline{X}}^*(\mathbb{V}_{\mathcal{X}}(M,\theta))).
	\]
	On the other hand, by lemma \ref{l:locally free}, the following canonical morphism is an isomorphism: 
	\[
	T(\mathcal{M})\xrightarrow{\sim} \breve{h}_*\breve{h}^*(T(\mathcal{M})).
	\]
	Then, the assertion follows from applying $\breve{h}_*$ to the isomorphism \eqref{eq:VT adm Xbar}. 
\end{proof}

By passing to categories up to isogeny and (\S~\ref{sss:LS Rep}), we deduce from $\bV_{\mathcal{X}}$ the following functor, that we abusively denote by $\bV_{\mathcal{X}}$: 
\begin{equation}
	\bV_{\mathcal{X}}:\HB^{\DW}_{\Qp,\sma}(\check{X}/\Sigma_{1,S}) \to \Rep(\pi_{1}(X_{\overline{\eta}},\overline{x}),\bC).
	\label{eq:VX HBDW}
\end{equation}

\begin{coro} \label{c:adm VX}
	The functor $\bfT_{\mathcal{X}_2}:\HB_{\Qp,\sma}^{\DW}(\check{X}/\Sigma_{1,S})\to 
	\Mod(\bbB_{X,\mathbb{Q}})$ \eqref{eq:adm-padicSimpson} is canonically isomorphic to the composition:
	\begin{eqnarray*}
		&\HB_{\Qp,\sma}^{\DW}(\check{X}/\Sigma_{1,S})\xrightarrow{\bV_{\mathcal{X}}} 
		\Rep(\pi_1(X_{\overline{\eta}},\overline{x}),\bC) \xrightarrow{\breve{\beta}_{X,\BQ}^*} 
		\Mod(\bbB_{X,\mathbb{Q}}). &
	\end{eqnarray*}
\end{coro}

\begin{proof}
	It follows from proposition \ref{p:compatibility AGT} and theorem \ref{t:thm comparison}(iii). 
\end{proof}

\begin{coro} \label{c:bV twisted pullback}
	Let $K'$ be a finite extension of $K$, $Y$ a semi-stable $S'=\Spec(\mathscr{O}_{K'})$-curve, $f:Y\to X_{S'}$ a generic $\eta'$-cover, $\overline{y}$ a geometric generic point of $Y_{\overline{\eta}}$ above $\overline{x}$ and $\mathcal{Y}$ a smooth Cartesian lifting of $\check{Y}$ over $\Sigma_{S'}$. 	
	Via  $\pi_1(Y_{\overline{\eta}},\overline{y})\to \pi_1(X_{\overline{\eta}},\overline{x})$, the following diagram is commutative up to canonical isomorphisms $\gamma_f$
	\[ 
	\xymatrix{
	\HB^{\DW}_{\Qp,\sma}(\check{X}/\Sigma_{1,S}) \ar[d]^{\check{f}_{\mathcal{Y},\mathcal{X}}^{*}} \ar[r]^-{\bV_{\mathcal{X}}} \ar[d] &
	\Rep(\pi_1(X_{\overline{\eta}},\overline{x}),\bC) \ar[d]  \\
	\HB^{\DW}_{\Qp,\sma}(\check{Y}/\Sigma_{1,S'}) \ar[r]^-{\bV_{\mathcal{Y}}}  \ar@{=>}[ru]^{\gamma_f} &
	\Rep(\pi_1(Y_{\overline{\eta}},\overline{y}),\bC)
	}
	\]
	Moreover, $\gamma_f$ satisfy a cocycle condition as in \eqref{eq:natural-gammaf}. 	
\end{coro}

\begin{proof}
	By  Faltings' comparison theorem (\cite{He-comparison} theorem 5.13), the following functor 
	\begin{equation} \label{eq:ReptoGRep}
	\breve{\beta}^*_{X,\BQ}:\Rep(\pi_1(X_{\overline{\eta}},\overline{x}),\bC) \simeq 
	\LS(X_{\overline{\eta},\fet}^{\Nc},\breve{\oo})_{\BQ} 
	\to \Mod(\bbB_X)_{\BQ} 
\end{equation}
is fully faithful (c.f. \cite{Xu17} proposition 8.8). 
Composing the above functor with the above diagram, the assertion follows from propositions \ref{p:pullbackfunctoriality}, \ref{p:compatibility AGT} and corollary \ref{c:adm VX}.
\end{proof}

\subsection{Construction of $\bV_{\mathcal{X},\Exp}$  via descent, d'après \cite{DWII}}
In the following of this section, 
$X$ denotes a \textit{stable $S$-curve} whose geometric generic fiber $X_{\overline{\eta}}$ has \textit{genus $\ge 2$}. 
We keep the notation and assumption as in the beginning of \S~\ref{s:C-rep to HB}. 
We extend the functor $\mathbb{V}_{\mathcal{X}}$ to $\HB^{\pDW}_{\mathcal{X},\Exp}(X_{\bC})$ (definition \ref{d:HBpDW}):

\begin{theorem}
	\label{t:HBpDW to C-rep}
	There exists a quasi-inverse functor $\mathbb{V}_{\mathcal{X},\Exp}$ to $\mathbb{H}_{\mathcal{X},\Exp}$ \eqref{eq:C-rep to HBpDW}:
	\begin{equation}
		\mathbb{V}_{\mathcal{X},\Exp}: \HB^{\pDW}_{\mathcal{X},\Exp}(X_{\bC}) 
		\to \Rep(\pi_{1}(X_{\overline{\eta}},\overline{x}),\bC).
		\label{eq:HBpDW to C-rep}
	\end{equation}
\end{theorem}



\begin{secnumber}
	We revise (\cite{DWII} \S~3) on the fundamental groupoids. 
	Let $Z$ be a variety over $\overline{\eta}$. We denote by $\Pi_1(Z)$ the following topological groupoid. 
	Its set of objects is $Z(\bC)$, and for $z,z'\in Z(\bC)$, $\Hom_{\Pi_1(Z)}(z,z')$ is the set of isomorphisms of \'etale fibre functors $F_z\to F_{z'}$, where $F_z=\Mor_Z(z,-)$ is the functor from the category $\Fet_Z$ \eqref{sss:toposN} to the category of finite sets. 
	The pro-finite set $\Hom_{\Pi_1(Z)}(z,z')$ is equipped with the pro-finite topology. 
	If $z\in Z(\bC)$ is a base point, then $\Hom_{\Pi_1(Z)}(z,z)$ is isomorphic to the \'etale fundamental group $\pi_1(Z,z)$. 
	A morphism $f:Z\to Z'$ over $\overline{\eta}$ induces a natural functor $f_*: \Pi_1(Z)\to \Pi_1(Z')$. 

	We denote by $\Rep(\Pi_1(Z),\Vect_{\bC})$ the category of continuous functors from $\Pi_1(Z)$ to the category $\Vect_{\bC}$ of finite dimensional $\bC$-vector spaces, equipped with the $p$-adic topology, whose morphisms are natural transforms. 
	There exists a natural functor:
	\begin{equation} \label{eq:RepPi1-LS}
		\LS(Z_{\fet}^{\Nc},\breve{\oo})_{\mathbb{Q}}\to \Rep(\Pi_1(Z),\Vect_{\bC}),\quad \mathbb{L}(=(\mathbb{L}_i)_{i\in \mathbb{N}}) \mapsto (\rho_{\mathbb{L}}: z\mapsto \mathbb{L}_z\otimes_{\oo}\bC ),
\end{equation}
where $\rho_{\mathbb{L}}$ sends any morphism $\gamma \in \Hom_{\Pi_1(Z)}(z,z')$ to the composition, defined for every $i\ge 1$, $Y_i\in \Fet_Z$ trivializing $\mathbb{L}_i$ and a system of compatible morphisms $\iota_{Y_i}\in F_z(Y_i)$ by
	\[
	\gamma \mapsto \gamma_{\mathbb{L}}: \mathbb{L}_{z} \xrightarrow{\varprojlim \iota_{Y_i}} \varprojlim \mathbb{L}_i(Y_i) \xrightarrow{\varprojlim (\gamma_{Y_i}(\iota_{Y_i}))^{-1}} \mathbb{L}_{z'}.
	\]
	The functor is an equivalence of categories. 
	Indeed, for any geometric point $z\in Z$, we have functors:
	\[
	\LS(Z_{\fet}^{\Nc},\breve{\oo})_{\mathbb{Q}} \to \Rep(\Pi_1(Z),\Vect_{\bC})\to
	\Rep_{\bC}(\pi_1(Z,z),\bC),
	\]
	where the second functor is fully faithful (\cite{DWI} Lemma 21) and the composition is the equivalence \eqref{eq:LS RepC}. 
	In particular, the above functors are equivalences of categories. 
\end{secnumber}

\begin{secnumber} \label{sss:descent-G}
	Let $\alpha:V\to U$ be a finite \'etale Galois covering of varieties over $\overline{\eta}$, with Galois group $G$. 
	Let $B:\Pi_1(V)\to \Vect_{\bC}$ be a continuous functor, equipped with a system of isomorphism $\varphi_{\sigma}:B\circ \sigma_{*}\xrightarrow{\sim} B$ for $\sigma\in G$ satisfying $\varphi_{e}=\id$ and $\varphi_{\sigma\tau}=\varphi_{\tau}\circ (\varphi_{\sigma} \tau_*)$. 
	By (\cite{DWII} construction 7), we can define a continuous functor $A:\Pi_1(U) \to \Vect_{\bC}$ as follows: for $x\in \Ob \Pi_1(U)=U(\bC)$, we set:
	\[
	A(x)=\{ (f_y)\in \prod_{y\in \alpha^{-1}(x)} B(y)|~ \varphi_{\sigma,y}(f_{\sigma y})=f_y,~ \textnormal{ for all $\sigma\in G$ and $y\in \alpha^{-1}(x)$}\}.
	\]
	Let $\gamma$ be an \'etale path in $U$ from $x_1$ to $x_2$, and $y_1\in V(\bC)$ a point above $x_1$. 
	Then, there is a unique path $\delta$ from $y_1$ to a point $y_2\in V(\bC)$ over $x_2$ such that $\alpha_*(\delta)=\gamma$. 
	The product of $B(\sigma_*(\delta)): B(\sigma y_1)\to B(\sigma y_2)$ over $\sigma\in G$ induces an isomorphism:
	\[
	A(\gamma): A(x_1) \biggl(\subset \prod_{\sigma\in G} B(\sigma y_1)\biggr) \to A(x_2) \biggl(\subset \prod_{\sigma\in G} B(\sigma y_2)\biggr).
	\]

	Moreover, we have a canonical isomorphism of functors $\Phi: A\circ \alpha_* \xrightarrow{\sim} B$, which gives rise to the above descent data $(\varphi_{\sigma})_{\sigma\in G}$. 
	The construction from $(B,\{\varphi_{\sigma}\}_{\sigma\in G})\mapsto A$ is clearly functorial. 
\end{secnumber}

\begin{secnumber} \label{sss:VX Exp}
	With the notation of \S~\ref{ss:VX-DWHB}, the functor $\bV_{\mathcal{X}}$ \eqref{eq:VX HBDW} factor through a functor:
	\begin{equation}
		\bV_{\mathcal{X}}: 
		\HB^{\DW}_{\Qp,\sma}(\check{X}/\Sigma_{1,S}) \to \Rep(\Pi_{1}(X_{\overline{\eta}}),\Vect_\bC).
	\label{eq:VX HBDW-Pi}
	\end{equation}
	By corollary \ref{c:bV twisted pullback}, we see that for every object $(M,\theta)$ of $\HB^{\DW}_{\Qp,\sma}(\check{X}/\Sigma_{1,S})$ and $x\in X_{\overline{\eta}}(\bC)$, we have a canonical isomorphism of $\bC$-vector spaces 
	\[
	\bV_{\mathcal{X}}(M,\theta)(x)\simeq M_x. 
	\]

	For each object $(M,\theta)$ of $\HB^{\pDW}_{\mathcal{X},\Exp}(X_{\bC})$, there exists a finite morphism $\pi:Y_{\overline{\eta}}\to X_{\overline{\eta}}$ of smooth proper $\overline{\eta}$-curves, a finite extension $K'$ of $K$ such that $Y_{\overline{\eta}}$ admits a stable $S'=\Spec(\mathscr{O}_{K'})$-model $Y$, $\pi$ extends to the $S'$-morphism $f:Y\to X_{S'}$ of stable $S'$-models and that $(N,\vartheta):=f_{\mathcal{Y},\mathcal{X},\Exp}^{\circ}(M,\theta)$ belongs to $\HB^{\DW}_{\Qp,\sma}(\check{Y}/\Sigma_{1,S'})$ for 
	a smooth Cartesian lift $\mathcal{Y}$ of $\check{Y}$ over $\Sigma_{S'}$.
	We may replace $\pi$ by its Galois closure and then assume that $f$ is a Galois generic $\eta'$-cover by the unique extension property of stable models \eqref{sss:models}.

	We set $G=\Gal(\overline{K}(Y_{\overline{\eta}})/\overline{K}(X_{\overline{\eta}}))$. 
	We have an action of $G$ on $(N,\vartheta)$ by twisted inverse image functoriality:
\begin{equation}
	\varphi_{\sigma}: \sigma^{\circ}_{\mathcal{Y},\mathcal{Y}}(N,\vartheta)\xrightarrow{\sim} (N,\vartheta), 	~\textnormal{such that}\quad \varphi_\tau\circ (\varphi_\sigma \tau^{\circ}_{\mathcal{Y},\mathcal{Y}})=\varphi_{\sigma \tau},\quad \forall~\sigma,\tau\in G. 
	\label{eq:descent-N}
\end{equation}
	Then, we obtain a continuous functor $\mathbb{V}_{\mathcal{Y}}(N,\vartheta):\Pi_1(Y_{\overline{\eta}})\to \Vect_{\bC}$, equipped with an action of $G$:
	\[
	\phi_\sigma: \mathbb{V}_{\mathcal{Y}}(N,\vartheta)\circ \sigma_*\xrightarrow{\sim} \mathbb{V}_{\mathcal{Y}}(N,\vartheta),
	~\textnormal{such that}\quad \phi_\tau\circ (\phi_\sigma \tau_*)=\phi_{\sigma \tau},\quad \forall~\sigma,\tau\in G. 
	\]

	Let $U$ be the open subset of $X_{\overline{\eta}}$ over which $\pi$ is unramified and $V=\pi^{-1}(U)$. 
	By \ref{sss:descent-G}, we deduce from $\mathbb{V}_{\mathcal{Y}}(N,\vartheta)|_{\Pi_1(V)}$ a continuous functor:
	\[
	\bV:\Pi_1(U)\to \Vect_{\bC},\quad x\mapsto M_x. 
	\]
\end{secnumber}


\begin{prop}
	The functor $\bV$ factors through $\Pi_1(U)\to \Pi_1(X_{\overline{\eta}})$ and induces a continuous functor 
	\[
	\bV_{\mathcal{X},\Exp}(M,\theta):\Pi_1(X)\to \Vect_{\bC}. 
	\]
\end{prop}
\begin{proof}
	We follow (\cite{DWII} theorem 9). 
	Let $S=X_{\overline{\eta}}-U$ be the ramification loci and $T=\pi^{-1}(S)$. 
	For every point $t\in T(\bC)$, we denote by $G_t$ the subgroup of $G$ consisting of elements fixing $t$. 
	Since $f^{\circ}(M,\theta)=(N,\vartheta)$, for $\sigma\in G_t$, the fiber $\varphi_{\sigma,t}:N_t(=(\sigma^{\circ}N)_t)\xrightarrow{\sim} N_t$
	of \eqref{eq:descent-N} at $t$ coincides with the identity map $\id_{N_t}$. 

	We fix a base point $x_0\in U(\bC)$ and a preimage $y_0\in V(\bC)$ of $x_0$. 
	Let $\gamma_0\in \pi_1(U,x_0)$ be an element sending to an element $\sigma\in G_t$, and let $\gamma$ be an \'etale path in $\Pi_1(V)$ with starting point $y_0$ and endpoint $\sigma(y_0)$, lifting $\gamma_0$. 
	Following the proof of (\cite{DWII} theorem 9), for any \'etale path $\delta$ in $Y$ from $y_0$ to $t$ and $j:V\to Y_{\overline{\eta}}$ the open immersion, we show
	\[
	\sigma_*(\delta) j_*(\gamma) \delta^{-1}=1\in \pi_1(Y,t).
	\]
	In particular, we have $\mathbb{V}(\sigma_*(\delta) j_*(\gamma) \delta^{-1})=\id_{N_t}$. 
	Then, we deduce that $\mathbb{V}(\gamma_0)=\id_{\mathbb{V}(x_0)}$ by (\cite{DWII} Lemma 8). 
	Hence, the representation $\mathbb{V}_{x_0}: \pi_1(U,x_0)\to \Aut(N_{x_0})$ factors through the quotient $\pi_1(X_{\overline{\eta}},x_0)$. 
\end{proof}

We also denote by $\mathbb{V}_{\mathcal{X},\Exp}(M,\theta)\in \Rep(\pi_1(X_{\overline{\eta}},\overline{x}),\bC)$ the $\bC$-representation associated to the functor in above proposition. 
Theorem \ref{t:HBpDW to C-rep} follows from the following proposition: 

\begin{prop} \label{p:prop VX}
\textnormal{(i)} The construction $(M,\theta)\mapsto \mathbb{V}_{\mathcal{X},\Exp}(M,\theta)$ 
is independent of the choice of $\pi$, $K'$ and $\mathcal{Y}$ up to canonical isomorphisms.  

\textnormal{(ii)} The functor $\mathbb{V}_{\mathcal{X},\Exp}: \HB_{\mathcal{X},\Exp}^{\pDW}(X_{\bC})\to \Rep(\pi_1(X_{\overline{\eta}},\overline{x}),\bC)$ is well-defined. 

\textnormal{(iii)} The functor $\mathbb{V}_{\mathcal{X},\Exp}$ is compatible with twisted inverse images of Higgs bundles and the restriction of representations as in corollary \ref{c:bV twisted pullback}.

\textnormal{(iv)} The functors $\mathbb{H}_{\mathcal{X},\Exp}$ and $\mathbb{V}_{\mathcal{X},\Exp}$ are quasi-inverse to each others. 
\end{prop}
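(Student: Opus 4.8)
The plan is to prove the four assertions of Proposition \ref{p:prop VX} in order, deducing Theorem \ref{t:HBpDW to C-rep} as a formal consequence. For \textnormal{(i)} and \textnormal{(ii)}, the strategy mirrors the corresponding proposition for $\bH_{\mathcal{X},\Exp}$: given two Galois $\eta'$-covers $f:Y\to X_{S'}$ and $g:Z\to X_{S''}$ trivializing $(M,\theta)$ in the sense of Definition \ref{d:HBpDW}, I would pass to a common dominating Galois $\eta'$-cover (which exists after a further finite extension, using \cite{Xu17} proposition 5.11(iii) and the semi-stable reduction theorem), and then invoke corollary \ref{c:bV twisted pullback} together with proposition \ref{p:twisted pullback}(iv) to identify the two resulting descent data. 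The point is that $\bV_{\mathcal{Y}}\circ f^{\circ}_{\mathcal{Y},\mathcal{X},\Exp}$ is canonically compatible with composition of twisted inverse images, so the descended representation does not depend on the chosen trivializing cover; functoriality in $(M,\theta)$ follows by choosing a single cover trivializing both source and target of a given morphism, exactly as in the proof of well-definedness of $\bH_{\mathcal{X},\Exp}$.

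For \textnormal{(iii)}, the compatibility with twisted inverse images, I would argue that if $f':Y'\to X_{S'}$ is an $\eta'$-cover and $(M,\theta)\in\HB^{\pDW}_{\mathcal{X},\Exp}(X_{\bC})$, one can choose a common Galois $\eta'$-cover $h:Z\to X$ that factors through both $Y'$ and the cover trivializing $(M,\theta)$; then both $\mathbb{V}_{\mathcal{X},\Exp}(M,\theta)$ restricted to $\pi_1(Y'_{\overline{\eta}},\overline{y}')$ and $\mathbb{V}_{\mathcal{Y}',\Exp}(f'^{\circ}_{\mathcal{Y}',\mathcal{X},\Exp}(M,\theta))$ are obtained by descending the same object $\bV_{\mathcal{Z}}\bigl((h)^{\circ}_{\mathcal{Z},\mathcal{X},\Exp}(M,\theta)\bigr)$ from $\pi_1(Z_{\overline{\eta}})$-representations, now along two compatible subgroups of $G=\Aut(Z_{\overline{\eta}}/X_{\overline{\eta}})$. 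Here corollary \ref{c:bV twisted pullback} provides the key diagram and the transitivity of twisted inverse images (proposition \ref{p:twisted pullback}) ensures the descent data match.

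For \textnormal{(iv)}, the heart of the matter, I would first treat the compatibility of $\bV_{\mathcal{X}}$ and $\bH_{\mathcal{X}}$ with the $p$-adic Simpson correspondence $\bfH_{\mathcal{X}}$ on small objects. By proposition \ref{p:admissibility HX} the Higgs bundle $\bH_{\mathcal{X}}(V)$ and $\breve{\beta}^*_{X,\BQ}(V)$ are associated; by corollary \ref{c:adm VX} the Higgs bundle $(M,\theta)$ and $\breve{\beta}^*_{X,\BQ}(\bV_{\mathcal{X}}(M,\theta))$ are associated; since the association relation determines each side uniquely from the other (via the admissibility with respect to $\{\bcC^{(s)}_{\mathcal{X}_2}\}$, cf. \S\ref{sss:associated}), one concludes $\bV_{\mathcal{X}}\circ\bH_{\mathcal{X}}\simeq\id$ and $\bH_{\mathcal{X}}\circ\bV_{\mathcal{X}}\simeq\id$ on the small/Deninger--Werner subcategories. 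Then for a general $(M,\theta)\in\HB^{\pDW}_{\mathcal{X},\Exp}(X_{\bC})$, choose a Galois $\eta'$-cover $f:Y\to X_{S'}$ as in \S\ref{sss:VX Exp}; by corollary \ref{c:C-rep to HBpDW}(ii) and part \textnormal{(iii)} just proved, both $f^{\circ}_{\mathcal{Y},\mathcal{X},\Exp}(\bH_{\mathcal{X},\Exp}(\bV_{\mathcal{X},\Exp}(M,\theta)))$ and $f^{\circ}_{\mathcal{Y},\mathcal{X},\Exp}(M,\theta)$ become canonically isomorphic in $\HB^{\DW}_{\Qp,\sma}(\check{Y}/\Sigma_{1,S'})$, compatibly with the $G$-descent data; since the twisted inverse image functor $f^{\circ}_{\mathcal{Y},\mathcal{X},\Exp}$ is faithful and reflects isomorphisms on the relevant categories (one can recover the original bundle by descent), this yields $\bH_{\mathcal{X},\Exp}\circ\bV_{\mathcal{X},\Exp}\simeq\id$, and symmetrically $\bV_{\mathcal{X},\Exp}\circ\bH_{\mathcal{X},\Exp}\simeq\id$ on $\Rep(\pi_1(X_{\overline{\eta}},\overline{x}),\bC)$.

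The main obstacle I anticipate is in \textnormal{(iv)}: verifying that the isomorphisms coming from ``association'' on the small level are genuinely natural and, crucially, compatible with the $G$-equivariant descent data, so that they descend along the Galois cover. This requires knowing that the equivalence induced by the association relation intertwines the $G$-actions on both sides — which in turn rests on the functoriality of the $p$-adic Simpson correspondence under automorphisms (proposition \ref{p:pullbackfunctoriality}) and on the compatibility of the cohomological descent statement in \cite{He21} with these actions. Assembling these compatibilities carefully, rather than any single hard estimate, is where the real work lies; the rest is a bookkeeping exercise in descent along Galois $\eta'$-covers.
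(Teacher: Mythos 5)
Your proposal is correct and follows essentially the same route as the paper: reduction by Galois descent along a trivializing $\eta'$-cover to the small/Deninger--Werner case, then identification of the two compositions with the identity via the association (admissibility) relation, \ref{p:admissibility HX}, \ref{c:adm VX} and the full faithfulness of $\breve{\beta}^*_{\BQ}$. The only minor divergence is in one direction of (iv): you invoke the uniqueness built into the association relation symmetrically for both composites, whereas the paper establishes $V\xrightarrow{\sim}\bV(\bH(V))$ more directly by comparing $\breve{\beta}^*(V)$ with $T(\iota_{\mathcal{X}}(\bH_{\mathcal{X}}(V)))$ (corollary \ref{p:compatibility Simpson} and theorem \ref{t:thm comparison}(iii)) before applying full faithfulness — a cosmetic difference, and your closing concern about $G$-equivariance of the association isomorphisms is exactly the functoriality the paper relies on.
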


\begin{proof}
	(i) Let $K''$ be a finite extension of $K$, $Z$ a stable $S''=\Spec(\mathscr{O}_{K''})$-curve, $g:Z\to X_{S''}$ a generic $\eta''$-cover and $\mathcal{Z}$ a smooth Cartesian lifting of $\check{Z}$ over $\Sigma_{S''}$ such that $f_{\mathcal{Z},\mathcal{X},\Exp}^{\circ}(M,\theta)$ belongs to $\HB^{\DW}_{\Qp,\sma}(\check{Z}/\Sigma_{1,S''})$. 
	To prove the assertion, we may assume that $g$ dominates $f$. 
	Then, the independence of $\mathcal{Y}$ and of $\pi,K'$ follows from corollary \ref{c:bV twisted pullback} and proposition \ref{p:twisted pullback}(iv). 
	
	(ii) Let $u:(M,\theta)\to (M',\theta')$ be a morphism of $\HB^{\pDW}_{\mathcal{X},\Exp}(X_{\bC})$. 
	By (i), we may choose a generic $\eta'$-cover $f:Y\to X_{S'}$ as in \S~\ref{sss:VX Exp} and a smooth Cartesian lift $\mathcal{Y}$ of $\check{Y}$ over $\Sigma_{S}$ such that both $f^{\circ}_{\mathcal{Y},\mathcal{X},\Exp}(M,\theta)$ and $f^{\circ}_{\mathcal{Y},\mathcal{X},\Exp}(M',\theta')$ belong to $\HB^{\DW}_{\Qp,\sma}(\check{X}/\Sigma_{1,S})$. 
	Then, the functoriality of $\bV_{\mathcal{X},\Exp}$ follows from that of $\bV_{\mathcal{Y}}$ and of descent for $\bC$-representations \eqref{sss:descent-G}. 

	(iii) The assertion follows from corollary \ref{c:bV twisted pullback} and the construction of $\bV_{\mathcal{X},\Exp}$. 

	(iv) We ignore $\mathcal{X},\Exp$ from the notations $\bV_{\mathcal{X},\Exp},\bH_{\mathcal{X},\Exp}$ for simplicity.
	We first show that for a small object $V$ of $\Rep(\pi_{1}(X_{\overline{\eta}},\overline{x}),\bC)$ (resp. $(M,\theta)$ of $\HB^{\DW}_{\Qp,\sma}(\check{X}/\Sigma_{1,S})$), there exist functorial isomorphisms 
	\begin{equation} \label{eq:bV bH inverse}
	V\xrightarrow{\sim} \bV(\bH(V)),\quad (M,\theta)\xrightarrow{\sim} \bH(\bV(M,\theta)).
	\end{equation}

	On the one hand, let $V$ be a small $\bC$-representation of $\pi_1(X_{\overline{\eta}},\overline{x})$.
	In view of corollary \ref{p:small o-rep to HB}(ii), proposition \ref{p:Lbeta adjoint} and theorem \ref{t:thm comparison}(i), we have a functorial isomorphism of $\bbB_{\underline{X},\BQ}$-module: 
	\[
	\breve{\beta}_{\underline{X},\BQ}^*(V)\xrightarrow{\sim} \breve{\beta}_{\underline{X},\BQ}^*(\bV_{\mathcal{X}}\bH_{\mathcal{X}}(V)). 
	\]
	In view of the full faithfulness of functor $\breve{\beta}_{\underline{X},\BQ}^*$ \eqref{eq:ReptoGRep}, the first isomorphism of \eqref{eq:bV bH inverse} follows. 

	On the other hand, let $(M,\theta)$ be a Higgs bundle of $\HB_{\Qp,\sma}^{\DW}(\check{X}/\Sigma_{1,S})$ such that $\bH(M,\theta)$ is $\alpha$-small for some $\alpha>\frac{2}{p-1}$. 
	By proposition \ref{p:admissibility HX} and corollary \ref{c:adm VX}, we obtain the following canonical isomorphisms:
	\[
	\bfT_{\mathcal{X}_2}(M,\theta) \simeq \breve{\beta}_{X,\BQ}^*(\bV(M,\theta)), \quad \bfT_{\mathcal{X}_2}(\bH(\bV(M,\theta))) \simeq \breve{\beta}_{X,\BQ}^*(\bV(M,\theta)).
	\]
	Then, we deduce a functorial isomorphism $(M,\theta)\xrightarrow{\sim}\bH(\bV(M,\theta))$ via the equivalence $\bfT_{\mathcal{X}_2}$ \eqref{eq:adm-padicSimpson}. 

	In general, let $V$ be  an object of $\Rep(\pi_1(X_{\overline{\eta}},\overline{x}),\bC)$. By (iii), corollary \ref{c:C-rep to HBpDW}(ii) and finite \'etale descent for $\bC$-representations, we deduce from small $\bC$-representations case an isomorphism $V\xrightarrow{\sim} \bV(\bH(V))$. 

	Let $(M,\theta)$ be an object of $\HB^{\pDW}_{\mathcal{X},\Exp}(X_{\bC})$. We take a Galois generic $\eta'$-cover $f$ as in \ref{sss:VX Exp}. 
	By applying \eqref{eq:bV bH inverse} for $\HB^{\DW}_{\mathbb{Q}_p,\sma}(\check{Y}/\Sigma_{1,S'})$ and the following lemma to $f_{\bC}$, we deduce a canonical isomorphism between $M$ and the underlying vector bundle of $\bH(\bV(M,\theta))$. 
	Since $f^{\circ}(M,\theta)$ comes from a Higgs bundle of $\HB(Y_{\bC},\xi^{-1}f_{\bC}^*(\Omega_{X_\bC}))$ (proposition \ref{p:twisted pullback}), we conclude an isomorphism $(M,\theta)\to \bH(\bV(M,\theta))$ by descent from the following lemma. 
\end{proof}

\begin{lemma}
	Let $g:C'\to C$ be a finite surjective map of smooth proper $\bC$-curves, $G$ a finite group and $\mu:G\times C'\to C'$ a $G$-action over $C$ such that over the unramified locus $U\subset C$ of $g$, $g$ is a $G$-torsor with the action $\mu$. 
	The functor $g^*$ induces a fully faithful functor from the category of vector bundles on $C$ to the category of descent data $(E,\{\varphi_{\sigma}\}_{\sigma\in G})$ consisting of a vector bundle $E$ over $C'$ and isomorphisms $\varphi_{\sigma}:\sigma^*(E)\xrightarrow{\sim} E$ such that $\varphi_{\sigma\tau}=\varphi_{\tau}\circ(\varphi_{\sigma}\tau^*)$ and $\varphi_{e}=\id_E$.
\end{lemma}
\begin{proof}
	We may assume $C=\Spec(A)$ and $C'=\Spec(A')$ are affine and set $B=\prod_{\sigma\in G} A'$. It suffices to show that for every projective $A$-module $M$ of finite rank, the following sequence is exact:
	\[
	0\to M \to M\otimes_A A' \xrightarrow{\iota_1-\iota_2} M\otimes_A B,
	\]
	where $\iota_1:A'\to B$ is the diagonal embedding and $\iota_2$ is induced by $\mu$. 

	It suffices to show the exactness after taking $-\otimes_{A}\widehat{A_x}$, where $\widehat{A_x}$ is the completed local ring of $A$ at a closed point $x\in C$. 
	When $x$ lies in the unramified locus of $g$, the exactness follows from fppf descent. 
	
	Suppose $g$ is ramified at $x$ and we set $I=g^{-1}(x)$. 
	Since $g$ is a $G$-torsor over $U$, the $G$-action on $I$ is transitive by continuity. 
	For $y\in I$, $G_y=\{\sigma\in G|\sigma(y)=y\}$ is a cyclic group of order $e>1$ (c.f. \cite{DWII} proof of theorem 9) and we have $(G/G_y)\cdot y=I$. 

	We replace $A$ (resp. $A'$, resp. $B$) by its tensor product $-\otimes_A \widehat{A}_x$. 
	Then the ring $A'$ decomposes as a product of completed local rings $\prod_{y\in I} A'_y$. 
	Each homomorphism $A\to A'_y$ is isomorphic to the homomorphism $A=\bC[\![t]\!]\to A'_y=\bC[\![s]\!]$, defined by $t\mapsto s^e$, and the action of $G_y$ is given by $\sigma(s)=\zeta_{\sigma} s$ for an $e$-th root of unity $\zeta_{\sigma}$. 
	We may assume $M=A$. 
	By a direct calculation, we have a short exact sequence:
	\[
	0\to A\to A'_y \xrightarrow{\iota_1-\iota_2} \prod_{\sigma\in G_y} A'_y. 
	\]
	
	Since the $G$-action on $I$ is transitive, we can identify the kernerl of
	\[
	\prod_{y\in I} A'_y \xrightarrow{\iota_1-\iota_2}\prod_{\sigma\in G} (\prod_{y\in I} A'_y)
	\]
	with the diagonal map $A\to \prod_{y\in I} A'_y$. Then, the lemma follows. 
\end{proof}

\begin{coro} \label{c:eta-cover-pDW}
	Let $(M,\theta)$ be an object of $\HB^{\pDW}_{\mathcal{X},\Exp}(X_{\bC})$. 
	There exists a finite extension $K'$ of $K$, a stable $S'=\Spec(\mathscr{O}_{K'})$-curve $Y$, an $\eta'$-cover $f:Y\to X_{S'}$ and a smooth Cartesian lift $\mathcal{Y}$ of $\check{Y}$ over $\Sigma_{S'}$ such that $f_{\mathcal{Y},\mathcal{X},\Exp}^{\circ}(M,\theta)$ belongs to $\HB^{\DW}_{\Qp,\sma}(\check{Y}/\Sigma_{1,S'})$. 
\end{coro}
\begin{proof}
	The assertion is true for $\bH_{\mathcal{X},\Exp}(V)$ for a $\bC$-representation $V$ of $\pi_1(X_{\overline{\eta}},\overline{x})$. 
	Then, the corollary follows from proposition \ref{p:prop VX}(iv).
\end{proof}

\subsection{Comparison with Deninger--Werner's theory}


\begin{theorem} \label{t:DW Higgs}
	\textnormal{(i)} 
	Let $M$ be a vector bundle over $X_{\bC}$. The following properties are equivalent:

	\begin{itemize}
		\item[(a)] The Higgs bundle $(M,0)$ belongs to $\HB^{\pDW}_{\mathcal{X},\Exp}(X_{\bC})$. 

		\item[(b)] The vector bundle $M$ belongs to $\VB^{\pDW}(X_{\bC})$ \eqref{d:DW}. 
	\end{itemize}

	\textnormal{(ii)}
	Via (i), Deninger--Werner's functor $\bV^{\DW}_{X_{\bC}}$ \eqref{eq:DW functor} is compatible with functor $\bV_{\mathcal{X},\Exp}$. 	
\end{theorem}

Moreover, we partially generalize the above result to the case of small Higgs fields. 

\begin{prop} \label{p:pDW-Higgs}
	Let $M$ be a vector bundle of $\VB^{\pDW}(X_{\bC})$ and $\theta$ a small Higgs field on $M$ \eqref{d:small HB}. 
	Then $(M,\theta)$ belongs to $\HB^{\pDW}_{\mathcal{X},\Exp}(X_{\bC})$. 
\end{prop}

We first show a preliminary version of theorem \ref{t:DW Higgs}. 

\begin{lemma} \label{l:DW Higgs}
	Let $Y$ be a semi-stable $\overline{S}$-curve, $\mathcal{Y}$ a smooth Cartesian lifting of $\check{Y}$ over $\Sigma_S$, $\mathcal{M}$ an object of $\VB^{\DW}(\check{Y})$ such that $\mathbb{V}^{\DW}_{Y}(\mathcal{M})$ \eqref{eq:DW functor} is small. 
	Then, we have a canonical isomorphism:
	\[
	\mathbb{V}_{\mathcal{Y}}(\mathcal{M}[\frac{1}{p}],0)\simeq \mathbb{V}^{\DW}_{Y}(\mathcal{M})[\frac{1}{p}].
	\]
\end{lemma}
\begin{proof}
	Let $M=\mathcal{M}[\frac{1}{p}]$ and $W=\bV_{\mathcal{Y}}(M,0)$. 
	By corollary \ref{c:adm VX}, we have a $\bbB_{Y,\BQ}$-linear isomorphism $\bfT_{\mathcal{Y}_2}(M,0)\simeq \breve{\beta}^*(W)$. 
	By (\cite{Xu17} proposition 11.7), we obtain a $\bbB_{Y,\BQ}$-linear isomorphism 
	\[\breve{\beta}^*(W) \simeq \breve{\sigma}^*(M),\] 
	(i.e. $M$ is Weil-Tate in the sense of \cite{Xu17} definition 10.3). 
	By (\cite{Xu17} corollaire 14.5, proposition 14.6), the functor $\bV^{\DW}_{Y_{\bC}}$ can be reconstructed from the above $\bbB_{\BQ}$-admissibility. 
	We obtain $W\simeq \bV^{\DW}_{Y_{\bC}}(M)$. 	
\end{proof}

\begin{secnumber} \textit{Proof of theorem \ref{t:DW Higgs}.}
	(i) We denote by $\HB^{\pDW,0}_{\mathcal{X},\Exp}(X_{\bC})$ the full subcategory of $\HB^{\pDW}_{\mathcal{X},\Exp}(X_{\bC})$ consisting of Higgs bundles with zero Higgs field. 
	When the twisted inverse image functor applies to Higgs bundles with zero Higgs field, it coincides with the usual inverse image functor (proposition \ref{p:twisted-pullback-0}). 
	Then, we deduce that there exists a canonical fully faithful functor from  $\HB^{\pDW,0}_{\mathcal{X},\Exp}(X_{\bC})$ to $\VB^{\pDW}(X_{\bC})$ by forgetting the zero Higgs field. 

	It remains to show that for any object $M$ of $\VB^{\pDW}(X_{\bC})$, $(M,0)$ belongs to $\HB^{\pDW,0}_{\mathcal{X},\Exp}(X_{\bC})$. 
	Let $V=\mathbb{V}^{\DW}_{X_{\bC}}(M)$. 
	There exists a finite cover $\pi: Y_{\overline{\eta}}\to X_{\overline{\eta}}$ of smooth proper $\overline{\eta}$-curves such that 

	\begin{itemize}
	\item  the restriction of $V$ to $\pi_1(Y_{\overline{\eta}},\overline{y})$ has a small $\oo$-lattice $V^{\circ}$; 
	
	\item $Y_{\overline{\eta}}$ admits a semi-stable $S'=\Spec(\mathscr{O}_{K'})$-model $Y'$ for a finite extension $K'$ of $K$ such that $\pi^*(M)$ extends to a Deninger--Werner vector bundle $\mathcal{N}$ over $\check{Y}'$. 
		\end{itemize}

	Let $Y$ be the stable $S'$-model of $Y_{\overline{\eta}}$. 
	After replacing $Y'$ by the regular desingularization of $Y'$, we may assume that $Y'$ dominates $Y$ by a morphism $g$. 
	We take smooth Cartesian liftings $\mathcal{Y}', \mathcal{Y}$ of $\check{Y}',\check{Y}$ over $\Sigma_{S'}$ respectively. 
	We obtain Higgs bundles $\mathbb{H}_{\mathcal{Y}'}(V^{\circ}), \mathbb{H}_{\mathcal{Y}}(V^{\circ})$ on $\check{Y}', \check{Y}$ satisfying: 
	\[
	g^{\circ}_{\mathcal{Y}',\mathcal{Y}}(\mathbb{H}_{\mathcal{Y}}(V^{\circ}))\simeq \mathbb{H}_{\mathcal{Y}'}(V^{\circ}).
	\]

	Applying theorem \ref{t:HBpDW to C-rep} and lemma \ref{l:DW Higgs} to $\mathcal{N}$ over $\check{Y}'$, we have a canonical isomorphism on $Y_{\bC}(\simeq Y'_{\bC})$:
	\[
	(\pi^*(M),0) \simeq \mathbb{H}_{\mathcal{Y}}(V).
	\]
	Then, we deduce that $\mathbb{H}_{\mathcal{Y}'}(V^{\circ})$ has zero Higgs field and so is $\mathbb{H}_{\mathcal{Y}}(V^{\circ})$. 
	In particular, $\pi^*(M)$ admits a Deninger--Werner model $\mathbb{H}_{\mathcal{Y}}(V^{\circ})$ over $\check{Y}$ and assertion (i) follows.

	(ii) By corollary \ref{c:eta-cover-pDW} and descent for $\bC$-representations along finite \'etale covers, we may reduce to case where $M$ admits a model in $\VB^{\DW}(\check{X})$. In this case, assertion (ii) follows from lemma \ref{l:DW Higgs}. \hfill\qed 
\end{secnumber}

\begin{secnumber}
	\textit{Proof of proposition \ref{p:pDW-Higgs}.}
	By proposition \ref{p:trivialize differentials}, there exists a finite extension $K'$ of $K$, a stable $S'$(=$\Spec(\mathscr{O}_{K'})$)-curve $Y$ and a generic $\eta'$-cover $f:Y\to X_{S'}$ such that $f_{\bC}^*(M)$ admits an integral model $\mathscr{M}$ on $\check{Y}$ with trivial special fiber and that $f_\bC^*(\theta)$ is $\alpha$-small on $\mathscr{M}$ for some $\alpha\in \mathbb{Q}_{>\frac{1}{p-1}}$ (definition \ref{d:small HB}). 
	We fix a smooth lifting $\mathcal{Y}$ of $\check{Y}$ over $\Sigma_{S'}$. 
	By proposition \ref{p:trivial-special}(ii), the line bundle $\mathcal{L}_{f,\theta}$ has the trivial special fiber on the relative spectral curve defined by $c=f_\bC^*(h(M,\theta)) \in \oplus_{i=1}^r p^{\alpha i}\Gamma(\check{Y},\xi^{-i}\check{f}^*(\Omega^{\otimes i}_{\check{X}}))$ (\ref{sss:twisted inverse image}). 
	We deduce that $f^{\circ}_{\mathcal{Y},\mathcal{X}}(M,\theta)$ has an integral model $(\mathscr{M}\otimes \mathcal{L}_{f,\theta},\vartheta)$ with trivial special fiber $\mathscr{M}_s$ and an $\alpha$-small Higgs field $\vartheta$. 
	Then $f^{\circ}_{\mathcal{Y},\mathcal{X}}(M,\theta)$ belongs to $\HB^{\DW}_{\mathbb{Q}_p,\sma}(\check{Y}/\Sigma_{1,S'})$ (\ref{sss:HBDW}) and the proposition follows. \hfill \qed 
\end{secnumber}

\begin{coro} \label{c:abelian}
	The categories $\HB^{\pDW}_{\mathcal{X},\Exp}(X_{\bC})$ and $\VB^{\pDW}(X_{\bC})$ are abelian. 
\end{coro}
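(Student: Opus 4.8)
The plan is to deduce the abelianity of $\HB^{\pDW}_{\mathcal{X},\Exp}(X_{\bC})$ (and of $\VB^{\pDW}(X_{\bC})$) from the equivalence of Theorem~\ref{t:HBpDW to C-rep} together with the fact that $\Rep(\pi_1(X_{\overline{\eta}},\overline{x}),\bC)$ is abelian. More precisely, since $\bH_{\mathcal{X},\Exp}$ and $\bV_{\mathcal{X},\Exp}$ are exact functors (Proposition in \S\ref{ss:HXExp}) that are mutually quasi-inverse, the category $\HB^{\pDW}_{\mathcal{X},\Exp}(X_{\bC})$ is equivalent to $\Rep(\pi_1(X_{\overline{\eta}},\overline{x}),\bC)$, which is abelian; an equivalence of categories transports the abelian structure, so $\HB^{\pDW}_{\mathcal{X},\Exp}(X_{\bC})$ is abelian.

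First I would recall that $\Rep(\pi_1(X_{\overline{\eta}},\overline{x}),\bC)$ is abelian: it is the category of finite-dimensional continuous $\bC$-linear representations of a profinite group, and kernels and cokernels are computed on underlying $\bC$-vector spaces (equivalently, one may invoke \eqref{eq:LS RepC} identifying it with $\LS(X_{\overline{\eta},\fet}^{\Nc},\breve{\oo})_{\BQ}$). Next, Theorem~\ref{t:HBpDW to C-rep}(ii) gives functorial isomorphisms exhibiting $\bH_{\mathcal{X},\Exp}$ and $\bV_{\mathcal{X},\Exp}$ as quasi-inverse equivalences between $\HB^{\pDW}_{\mathcal{X},\Exp}(X_{\bC})$ and $\Rep(\pi_1(X_{\overline{\eta}},\overline{x}),\bC)$. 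An equivalence of categories creates and reflects finite limits and colimits, hence in particular kernels, cokernels, biproducts, and the condition that every monomorphism be a kernel and every epimorphism a cokernel; therefore $\HB^{\pDW}_{\mathcal{X},\Exp}(X_{\bC})$ satisfies all the axioms of an abelian category. For $\VB^{\pDW}(X_{\bC})$, Remark~(i) after Proposition~\ref{p:comp coh} together with Proposition~\ref{p:DW Higgs} identifies it with the full subcategory of $\HB^{\pDW}_{\mathcal{X},\Exp}(X_{\bC})$ consisting of Higgs bundles with zero Higgs field; and this subcategory corresponds under the equivalence to the essential image of $\VB^{\pDW}(X_{\bC})$, i.e.\ to Weil--Tate representations, which by Proposition~\ref{p:DW Higgs} is closed under the relevant operations. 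Alternatively, one observes directly that the full subcategory of $\HB^{\pDW}_{\mathcal{X},\Exp}(X_{\bC})$ of objects $(M,0)$ is closed under kernels, cokernels and extensions inside the abelian category $\HB^{\pDW}_{\mathcal{X},\Exp}(X_{\bC})$ (a sub-quotient of a Higgs bundle with zero Higgs field again has zero Higgs field, and likewise for extensions by Proposition~\ref{p:HBpDW}(ii) combined with Proposition~\ref{p:DW Higgs}), hence is itself abelian.

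The only point requiring a little care is that being abelian is not quite a property stable under passing to an arbitrary full subcategory, so for $\VB^{\pDW}(X_{\bC})$ one genuinely needs the closure statements (under kernels, cokernels, finite direct sums) rather than just full faithfulness; these follow from Proposition~\ref{p:HBpDW}(ii), Proposition~\ref{p:DW Higgs}, and the corresponding results for Deninger--Werner vector bundles (\cite{DW05} proposition~9, theorem~11), which are already invoked in the proof of Proposition~\ref{p:HBpDW}. I expect this to be the main (mild) obstacle; the transport of the abelian structure across the equivalence for $\HB^{\pDW}_{\mathcal{X},\Exp}(X_{\bC})$ itself is formal. I would write the proof as follows.

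\begin{proof}
	By \eqref{eq:LS RepC}, the category $\Rep(\pi_1(X_{\overline{\eta}},\overline{x}),\bC)$ is equivalent to $\LS(X_{\overline{\eta},\fet}^{\Nc},\breve{\oo})_{\BQ}$; in particular it is an abelian category, since kernels and cokernels of morphisms of finite-dimensional continuous $\bC$-representations are computed on underlying $\bC$-vector spaces. By theorem~\ref{t:HBpDW to C-rep}, the exact functors $\bH_{\mathcal{X},\Exp}$ and $\bV_{\mathcal{X},\Exp}$ are mutually quasi-inverse equivalences between $\HB^{\pDW}_{\mathcal{X},\Exp}(X_{\bC})$ and $\Rep(\pi_1(X_{\overline{\eta}},\overline{x}),\bC)$. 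An equivalence of categories preserves and reflects all finite limits and colimits, as well as the axioms characterizing abelian categories; hence $\HB^{\pDW}_{\mathcal{X},\Exp}(X_{\bC})$ is abelian.

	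For $\VB^{\pDW}(X_{\bC})$, recall from proposition~\ref{p:DW Higgs} that sending $M$ to $(M,0)$ identifies $\VB^{\pDW}(X_{\bC})$ with the full subcategory of $\HB^{\pDW}_{\mathcal{X},\Exp}(X_{\bC})$ consisting of Higgs bundles with vanishing Higgs field. This subcategory is closed under kernels, cokernels and finite direct sums computed in $\HB^{\pDW}_{\mathcal{X},\Exp}(X_{\bC})$: a subobject or quotient of a Higgs bundle with zero Higgs field again carries the zero Higgs field, and it belongs to $\VB^{\pDW}(X_{\bC})$ by proposition~\ref{p:HBpDW}(ii) together with proposition~\ref{p:DW Higgs} and the corresponding stability properties for Deninger--Werner vector bundles (\cite{DW05} proposition~9, theorem~11). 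Therefore $\VB^{\pDW}(X_{\bC})$, as an additive full subcategory of the abelian category $\HB^{\pDW}_{\mathcal{X},\Exp}(X_{\bC})$ closed under kernels and cokernels, is itself abelian.
\end{proof}
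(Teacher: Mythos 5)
Your proof is correct and follows essentially the same route as the paper: transport the abelian structure of $\Rep(\pi_1(X_{\overline{\eta}},\overline{x}),\bC)$ through the exact equivalence $\bH_{\mathcal{X},\Exp}$, then identify $\VB^{\pDW}(X_{\bC})$ via proposition \ref{p:DW Higgs} with the full subcategory of Higgs bundles with vanishing Higgs field. The only difference is that you spell out the closure of that subcategory under kernels and cokernels (a point the paper leaves implicit in its citation of proposition \ref{p:DW Higgs}), which is a worthwhile clarification but not a change of method.
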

\begin{proof}
	Since $\Rep(\pi_1(X_{\overline{\eta}},\overline{x}),\bC)$ is abelian and $\bH_{\mathcal{X},\Exp}$ is an exact equivalence, the assertion for $\HB^{\pDW}_{\mathcal{X},\Exp}(X_{\bC})$ follows. 
	The assertion for $\VB^{\pDW}(X_{\bC})$ follows from that of $\HB^{\pDW}_{\mathcal{X},\Exp}(X_{\bC})$ and theorem \ref{t:DW Higgs}. 
\end{proof}

\begin{secnumber}
	\textit{Proof of proposition \ref{p:comp coh}}. 
	The equivalence $\bH_{\mathcal{X},\Exp}$ induces an isomorphism of extension classes:
	\[
	\Ext^1_{\Rep(\pi_1(X_{\overline{\eta}},\overline{x}),\bC)}(V,\bC) \xrightarrow{\sim} 
	\Ext^1_{\HB(X_{\bC})}( (M,\theta), (\mathscr{O}_{X_{\bC}},0)).
	\]

	The left hand side is isomorphic to $\rH^1_{\et}(X_{\overline{\eta}},V)$. And the right hand side is calculated by the first Higgs cohomology. Then, we deduce an isomorphism \eqref{eq:compare coh}:
	\[
	\rH^1_{\et}(X_{\overline{\eta}},V)\simeq \mathbb{H}^1(X_{\bC},M\xrightarrow{\theta} M\otimes_{\mathscr{O}_{X_{\bC}}}\xi^{-1}\Omega_{X_{\bC}}^1). \hfill\qed		
	\]
\end{secnumber}
\begin{secnumber}
	When the Higgs bundle $(M,0)$ has the trivial Higgs field, the right hand side of above isomorphism degenerates and gives a decomposition
	\[
	\rH^1_{\et}(X_{\overline{\eta}},V)\simeq \rH^1(X_{\bC},M)\oplus \rH^0(X_{\bC},M\otimes_{\mathscr{O}_{X_{\bC}}}\xi^{-1}\Omega_{X_{\bC}}^1). 
	\]
	The first component $\rH^1(X_{\bC},M)$ represents the extension classes of vector bundles. 
	The above isomorphism generalizes the Hodge--Tate decomposition. 
\end{secnumber}

\appendix

\section{Twisted inverse image for Higgs bundles over curves, d'après Faltings} \label{s:app}
\begin{center}Tongmu He, Daxin Xu
\end{center}


	In this appendix, we define a twisted inverse image functor for Higgs bundles along a finite morphism between smooth proper $p$-adic curves. 
	We present its construction for curves over $\bC$, which is different to the original definition of Faltings \cite{Fal05}, defined for curves over a discrete valuation ring. 

	Let $f:C'\to C$ be a finite morphism between smooth proper $\bC$-curves of \textit{genus $\ge 2$}. 
	Let $\mathfrak{C}',\mathfrak{C}$ be two flat liftings of $C',C$ over $\rB_{\dR,2}^+$ respectively. 
	The $p$-adic logarithmic map $\log:1+\mm\to \bC$, defined by $x\mapsto \sum_{n= 1}^{\infty} (-1)^{n+1} \frac{(x-1)^n}{n}$, gives rise to a short exact sequence:
	\begin{equation} \label{eq:log esq}
	0\to \mu_{p^{\infty}} \to 1+\mm \xrightarrow{\log} \bC \to 0. 
\end{equation}
	For any $\varepsilon\in \BQ$, we denote by $B_{\varepsilon}^{\circ}$ (resp. $B_{\varepsilon}$) the open (resp. closed) ball $\{x\in \bC~|~ |x| < \varepsilon\}$ (resp. $|x|\le \varepsilon$) of radius $\varepsilon$. 
	We set $r_p=p^{-\frac{1}{p-1}}$. 
	The exponential map $\exp: B_{r_p}^{\circ} \to 1+\mm$, defined by $x\mapsto \sum_{n\ge 0} \frac{x^n}{n!}$, is a section of $\log$ on $B_{r_p}^{\circ}$. 
	Since $(1+\mm,\times)$ is divisible and is therefore an injective abelian group, we can choose a homomorphism, called \textit{exponential map} 
	\begin{equation} \label{eq:Exp}
	\Exp:\bC\to 1+\mm,
\end{equation}
	satisfying:
	$\log \circ \Exp=\id$ and $\Exp|_{B_{r_p}^{\circ}}=\exp$. 
	In particular, $\Exp$ is continuous for the $p$-adic topologies. 

	With the above data, we will define the twisted inverse image functor between Higgs bundles (\S~\ref{d:small HB}):
	\begin{equation}
		f^{\circ}_{\CC',\CC,\Exp}:\HB(C)\to \HB(C'). 
		\label{eq:twisted dR}
	\end{equation}

\subsection{Splitting of the $p$-adic logarithm for commutative smooth algebraic groups}
	We review the $p$-adic logarithm of rigid groups, following Fargues \cite{Far}. 
	A more general approach via $v$-sheaves for the $p$-adic logarithm is developed by Heuer \cite{Heu22b} \S 2.3. 
	We define a functorial splitting of the $p$-adic logarithm, which is a key ingredient in the construction of $f^{\circ}_{\CC',\CC,\Exp}$. 
\begin{secnumber}
	Let $H$ be a (classical) commutative rigid group over $\bC$,
	$\Lie H$ its Lie algebra. 
	If $V$ is a finite dimensional $\bC$-vector space, we denote by $\Ga^{\an}\otimes_{\bC} V$ the associated rigid group, which represents the functor $X\mapsto (\Gamma(X,\mathscr{O}_X)\otimes V,+)$. 
	After choosing a basis of $V$, it is isomorphic to a sum of copies of $\Ga^{\an}$. 
	For a rigid space $X$ over $\bC$, we denote by $|X|$ the set of classical points $X(\bC)$. 
	
	By \cite{Far} th\'eor\`eme 1.2, there exists an open rigid subgroup $U_H$ of $H$ such that the associated Berkovich space satisfies
	\[
	|U_H|_{\Ber}=\{g\in |H|_{\Ber} ~| \lim_{n\to \infty} p^n g=0\}. 
	\]
	There exists a unique \'etale morphism of rigid groups 
	\[
	\log_H:U_H\to \Ga^{\an}\otimes \Lie H,
	\]
	inducing the identity map on Lie algebras and an isomorphism on a neighborhood of the identity element (cf. \cite{Far} proposition 9). 
	Let $H[p^{\infty}]$ be the $p$-divisible subgroup defined by $p$-power torsions of $H$. 
	We obtain a short exact sequence of rigid analytic groups (see the following proposition and \cite{Far} th\'eor\`eme 1.2):
	\begin{equation} \label{eq:logG esq}
		0\to H[p^{\infty}]\to U_H \xrightarrow{\log_H} \Ga^{\an} \otimes \Lie H. 
	\end{equation}
	When $H=\Gm^{\an}$, $|U_H|=1+\mm$ and the underlying points of the above sequence recovers \eqref{eq:log esq}. 
	We refer to (\cite{Heu22b} \S 2) for a generalization of above constructions in the context of $v$-sheaves. 

	A commutative rigid group $H$ over $\bC$ is called \textit{$p$-divisible}, if $[p]:H\to H$ is finite, faithfully flat and satisfies $\forall g\in |H|_{\Ber}$, $\lim_{n\to \infty}p^ng=0$ for the topology of the Berkovich space (c.f. \cite{Far} d\'efinition 2). 
\end{secnumber}

\begin{prop} \label{p:ExpG}
	Let $G$ be a commutative algebraic group over $\bC$, $G^{\an}$ the associated rigid group. 

	\textnormal{(i)} 
	The rigid group $U_{G^{\an}}$ is $p$-divisible. 
	We simply denote $U_{G^{\an}}$ (resp. $\log_{G^{\an}}$) by $U_G$ (resp. $\log_G$). 

	\textnormal{(ii)} 
	The morphisms $\log_G$ and $\Exp$ \eqref{eq:Exp} induce a continuous homomorphism of topological groups:
	\begin{equation} \label{eq:ExpG}
		\Exp_G:\Lie G \to |U_G|,
	\end{equation}
	which is a section of $|\log_G|$ \eqref{eq:logG esq} on underlying points. 
	Moreover, $\Exp_G$ is functorial in $G$. 

\end{prop}

\begin{proof}
	(i) By the definition of $U_G$, it suffices to prove that $[p]$ is finite and faithfully flat. 
	By (\cite{SGA3I} VII.8.4), $[p]:G \to G$ is \'etale. 
By Chevalley's structure theorem (\cite{SGA3I} VI.12.4), $G$ is an extension of an abelian variety $A$ by a commutative linear algebraic group $D$. 
	On $A$ and $D$, $[p]$ is surjective and $\Ker [p]$ is finite. 
	Hence, the same holds for $G$ and $[p]:G\to G$ is finite and faithfully flat. Then, the same properties hold for $G^{\an}$ and $U_G$. 

	(ii) Let $\Lambda=T_p G$ be the Tate module of $G$. 
	By \cite{Far} th\'eor\`em 3.3, there exists a unique $\bC$-linear morphism
	\[
	f: \Lie G\to \Lambda\otimes_{\Zp}\bC(-1)
	\]
	and a commutative diagram with exact rows (\cite{Far} proposition 16) 
	\[
	\xymatrixcolsep{3pc}\xymatrix{
	0 \ar[r] & G[p^{\infty}] \ar[r] \ar[d]^{\wr} & U_G \ar[r]^{\log_G} \ar[d] & \Ga^{\an} \otimes \Lie G \ar[r] \ar[d]_{f} & 0\\
	0 \ar[r] & \Lambda(-1)\otimes \mu_{p^{\infty}} \ar[r] & \Lambda(-1)\otimes U_{\Gm} \ar[r]^-{\id\otimes \log_{\Gm}} & 
	\Ga^{\an} \otimes (\Lambda\otimes_{\Zp}\bC(-1)) \ar[r] & 0
	}
	\]
	such that the top extension is isomorphic to the pullback of the lower one by $f$. 
	If we consider the underlying points of the above sequence, we obtain
	\[
	\xymatrixcolsep{3pc}\xymatrix{
	0 \ar[r] & |G[p^{\infty}]| \ar[r] \ar[d]^{\wr} & |U_G| \ar@{}[rd]|{\Box} \ar[r]^{|\log_G|} \ar[d] & \Lie G \ar[r] \ar[d]_{f} & 0\\
	0 \ar[r] & \Lambda(-1)\otimes_{\Zp} \mu_{p^{\infty}} \ar[r] & \Lambda(-1)\otimes_{\Zp} (1+\mm) 
	\ar[r]^-{\id\otimes \log} & \Lambda\otimes_{\Zp}\bC(-1) \ar[r] \ar@/^1pc/[l]^{\id\otimes \Exp} & 0
	}
	\]
	where the right square is Cartesian. 
	Then, the section $\Exp: \bC \to 1+\mm$ of $\log$ induces an exponential map $\Exp_G: \Lie G\to |U_G|$.
	The continuity of $\Exp_G$ follows from that of $\Exp$ and of $f$. 
	In view of the construction, the map $\Exp_G$ is functorial on $G$.
\end{proof}
\begin{rem}
	The above result is generalized for $p$-divisible rigid groups in (\cite{HWZ} theorem 6.12). 
\end{rem}

\begin{secnumber} \label{sss:smooth model}
	Let $G$ be a commutative algebraic group over $\bC$. 
	Suppose that there exists a smooth group scheme $\mathcal{G}$ over $\oo$ with generic fiber $G$.
	Let $\widehat{\mathcal{G}}$ be the $p$-adic completion of $\mathcal{G}$, $\widehat{\mathcal{G}}^{\rig}$ the associated rigid group and $\Sp:\widehat{\mathcal{G}}^{\rig}\to \mathcal{G}_s$ the specialization morphism. 
	Then, we have (\cite{Far} Exemple 5.(3)):
	\[
	U_G=\bigcup_{n\ge 1} \Sp^{-1}(\mathcal{G}_s[p^n]). 
	\]

	The Lie algebra $\Lie \mathcal{G}$ of $\mathcal{G}$ is an $\oo$-lattice of $\Lie G$. We abusively denote by $\Lie \mathcal{G}$ the associated affine scheme over $\oo$. 
	Let $\alpha$ be a positive rational number. 
	Let $\mathscr{I}$ be the ideal sheaf of $\Lie \mathcal{G}$ defined by the unit section over $\oo$ and $\Lie \mathcal{G}^{\alpha}=\Spec_{\mathscr{O}_{\Lie \mathcal{G}}}(\mathscr{O}_{\Lie \mathcal{G}}[\frac{\mathscr{I}}{p^{\alpha}}])$ the dilatation of $\Lie \mathcal{G}$ of $(\mathscr{I},p^{\alpha})$ with respect to $p^{\alpha}$. 
	The rigid generic fiber $\widehat{\Lie \mathcal{G}^{\alpha}}^{\rig}$ of the $p$-adic completion of $\Lie \mathcal{G}^{\alpha}$ is an open subspace of $\Lie G \otimes \Ga^{\an}$.

	In this case, the $p$-adic logarithm $\log_G$ is compatible by the logarithm of the formal group defined by the completion of $\mathcal{G}$ along the unit section (\cite{Tate} \S 2.4, \cite{Far} exemple 5 (4)). 
	When $\alpha >\frac{1}{p-1}$, $\log_G$ induces an isomorphism (\cite{Tate} \S 2.4):
	\[
	\log_G: \log_G^{-1}(\widehat{\Lie \mathcal{G}^{\alpha}}^{\rig}) \xrightarrow{\sim} \widehat{\Lie \mathcal{G}^{\alpha}}^{\rig}. 
	\]
	We denote the composition of the inverse of this map and the natural inclusion to $U_G$ by 
	\begin{equation} \label{eq:expG}
		\exp_{\mathcal{G}}: 
		\widehat{\Lie \mathcal{G}^{\alpha}}^{\rig} 
		\to U_G.
	\end{equation}
	The differential map of $\exp_{\mathcal{G}}$ induces the identity map on $\Lie G$. 
\end{secnumber}
\subsection{Construction of $f^{\circ}_{\CC',\CC,\Exp}$}
	We keep the notation in the beginning of appendix \ref{s:app}. 

\begin{secnumber} \label{sss:spectral}
	We briefly review the notion of \textit{spectral cover} over a curve following \cite{BNR}. 
	Let $L$ be a line bundle over $C$.
	We denote by $\HB(C,L)$ the category of pairs $(M,\theta)$ consisting of a vector bundle $M$ and an $\mathscr{O}_C$-linear map $\theta:M\to M\otimes L$. 
	Let $(M,\theta)$ be such a pair of rank $r$. 
	For $1 \le i\le r$, the map $\theta$ induces a canonical morphism $\wedge^i M\to  \wedge^i M \otimes L^{\otimes i}$ and we denote by $c_i(\theta)$ (or simply $c_i$, if there is no confusion) the image of $(-1)^i \id_{\wedge^i M}\in \End(\wedge^i M)$ in $\Gamma(C,L^{\otimes i})$ via the composition:
	\[
	\FEnd(\wedge^i M)=\wedge^i M \otimes (\wedge^i M)^{\vee}\xrightarrow{\wedge^i \theta \otimes \id} \wedge^i M \otimes (\wedge^i M)^{\vee} \otimes L^{\otimes i} \xrightarrow{\ev \otimes \id} L^{\otimes i}.
	\]
	Then, $\theta$ satisfies the \textit{characteristic polynomial}:
	\[
	\theta^r +c_1 \theta^{r-1} +\cdots + c_r=0 \quad \in \Hom(M,M\otimes L^{\otimes r}).
	\]

	The above construction underlies to the Hitchin map $h$ sending each object $(M,\theta)$ of $\HB(C,L)$ of rank $r$, to a point $(c_i(\theta))_{i=1}^d$ of the Hitchin base $B:=(\oplus_{i=1}^r \Gamma(C,L^{\otimes i}))\otimes \Ga$ \cite{Hit}. 
	The spectral cover, reviewed below, implies that the Hitchin map is surjective.
	For a closed point $c=(c_i)_{i=1}^r$ of the Hitchin base $B$, we denote by $\mathscr{I}_{c}$ the ideal sheaf of $\Sym_{\mathscr{O}_{C}}(L^{-1})$ locally generated by 
	\begin{equation} \label{eq:I theta}
	\{(a,c_1\cdot a,\cdots, c_r\cdot a)| a\in L^{-\otimes r}\} \subset L^{-\otimes r}\oplus \cdots \oplus L^{-1}\oplus \mathscr{O}_{C},
\end{equation}
	and we set
	$$\mathscr{A}_{c}=\Sym_{\mathscr{O}_{C}}(L^{-1}) /\mathscr{I}_{c}.
	$$

	The finite flat morphism $\pi_{c}:C_{c}:=\Spec_{\mathscr{O}_C}(\mathscr{A}_{c})\to C$ (resp. the algebra $\mathscr{A}_{c}$) is called \textit{spectral cover} (resp. \textit{spectral algebra}) of $c$. 
	Since $\pi_c$ is finite flat, $\pi_{c*},\pi_c^{-1}$ induce an equivalence between invertible $\mathscr{A}_c$-modules over $C$ and invertible $\pi_c^{-1}(\mathscr{A}_c)(=\mathscr{O}_{C_c})$-modules over $C_c$. In particular, we have
\begin{equation} \label{eq:isomH1}
	\rH^1(C,\mathscr{A}_c^{\times})\simeq \rH^1(C_c,\mathscr{O}_{C_c}^{\times}). 
\end{equation}

	An invertible $\mathscr{A}_c$-module defines an object $(M,\theta)$ of $\HB(C,L)$ such that $h(M,\theta)=c$ (\cite{BNR} proposition 3.6). 
	On the other hand, for an object $(M,\theta)$ of $\HB(C,L)$ with Hitchin image $c$, $M$ is equipped with an $\mathscr{A}_c$-module structure induced by $\theta$. 
	The tensor product over $\mathscr{A}_c$ with an invertible $\mathscr{A}_{c}$-module defines a natural action on Higgs bundles with Hitchin image $c$. 
	For such a Higgs bundle $(M,\theta)$, we also denote the associated ideal sheaf and the spectral algebra by $\mathscr{I}_{\theta}$ and $\mathscr{A}_{\theta}$ respectively. 

	Let $\Omega_C$ (resp. $T_C$) be the sheaf of differentials of $C$ over $\bC$ (resp. tangent sheaf of $C$ over $\bC$). 
	We mainly work with $L=\xi^{-1}\Omega_C$, where an object $(M,\theta)$ of $\HB(C,L)$ is the same as a Higgs bundle considered in \S~\ref{ss:Higgscrystals}. 
\end{secnumber}

\begin{lemma} \label{sss:rigPic}
	Let $r\in \mathbb{N}$, $b=(b_i)_{i=1}^r\in \oplus_{i=1}^r \Gamma(C,\xi^{-i}\Omega_C^{\otimes i})$ a point of the Hitchin base, $C'_{f,b}\to C'$ the spectral cover of $c=f^*(b)\in \oplus_{i=1}^r \Gamma(C',\xi^{-i}f^*(\Omega_C)^{\otimes i})$ and $\mathscr{A}_c$ the spectral algebra of $c$ over $C'$. 

	\textnormal{(i)} 
	There exists a divisor $i:D(=\sqcup\Spec(\bC))\to C'$ such that $f^*(b_r)$ vanishes on the image $D$. 

	\textnormal{(ii)} 
	The divisor $D$ of $C'$ can be uniquely lifted to a divisor $i:D\to C'_{f,b}$ such that, if we set $I=\Ker(\mathscr{A}_c\to i_*(\mathscr{O}_D))$,  the canonical morphism $\xi f^*(T_C)\to \mathscr{A}_c$ factors through a morphism of vector bundles over $C'$
	\[\xi f^*(T_C)\to I. \]
\end{lemma}

\begin{proof}
	(i) Since $f^*(\Omega_C)^{\otimes r}$ has a positive degree (as $g(C)\ge 2$), the divisor of zeros of $f^*(b_r)$ is non-empty.

	(ii) We may assume that $D=\Spec(\bC)$ is a closed point. Then, $D\times_{C'}C'_{f,b} \simeq \Spec(\bC[X]/P_{D}(X))$, where $P_{D}(X)$ is the restriction of $X^r+f^*(b_1)X^{r-1}+\cdots+f^*(b_r)$ to $D$ and $X$ is a local basis of $\xi f^*(T_\bC)$ around $D$. 
	By (i), there is a unique surjection $\bC[X]/P_{D}(X) \to \bC$ sending $X$ to $0$. 
	The assertion follows. 
\end{proof}

\begin{secnumber} \label{sss:LfbExp}
	We fix a divisor $i:D\to C'$ as in the above lemma and take its unique lift $i:D\to C'_{f,b}$. 
	Since $g(C)\ge 2$ and $C$ is connected, we have $\Gamma(C',\xi f^*(T_C))=0$ and $\Gamma(C'_{f,b},\mathscr{O}_{C'_{f,b}})\simeq \bC$. 
	By (\cite{Ray70} corollaire 2.2.2), the divisor $i$ defines a rigidification of the Picard functor $\Pic_{C'_{f,b}/\bC}$ (\cite{Ray70} \S~2.1). 
	We consider the rigidified Picard functor $\mathscr{P}$, defined for any $\bC$-scheme $T$ by
	\begin{equation} \label{eq:rigPic}
	\mathscr{P}(T)
	=\{(\mathscr{L},\alpha)|~ \mathscr{L} \textnormal{ line bundle over $C'_{f,b,T}$ },~ \alpha: i_{T}^*\mathscr{L}\xrightarrow{\sim} i_{T}^*\mathscr{O}_{C'_{f,b,T}}\},
\end{equation}
where $i_T:D_T\to C'_{f,b,T}$ denotes the base change of $i$ by $T$. 

	Then, $\mathscr{P}$ is represented by a smooth algebraic group over $\bC$ (\cite{Ray70} proposition 2.4.3). 
	Note that $i^*(\mathscr{A}_c)\simeq \mathscr{O}_D$ and $I=\Ker(\mathscr{A}_c\to i_{*}i^*(\mathscr{A}_c))$. 
	We have an exponential map \eqref{eq:ExpG}:
	\begin{equation} \label{eq:rigPicExp}
	\Exp_{\mathscr{P}}: \Lie \mathscr{P}(\simeq\rH^1(C',I))\to \mathscr{P}(\bC). 
\end{equation}

	Flat liftings of $f$ to a $\rB_{\dR,2}^+$-morphism from $\CC'$ to $\CC$ define a $\xi f^*(T_C)$-torsor $\mathcal{L}_f$ over $C'$. 
	This torsor is unique up to a unique isomorphism as $\Gamma(C',\xi f^*(T_{\bC}))=0$. 	
	By pushout along $\xi f^*(T_C)\to I$ (lemma \ref{sss:rigPic}) and taking $\Exp_{\mathscr{P}}$, we obtain an object 
	\begin{equation} \label{eq:LfbExp}
		(\mathcal{L}_{f,b}^{\Exp},\alpha) \in \mathscr{P}(\bC).
	\end{equation}
	We abusively view $\mathcal{L}_{f,b}^{\Exp}$ as an invertible $\mathscr{A}_c$-module via direct image of $\pi:C'_{f,b}\to C'$. 
	If we set $J=\Ker(\mathscr{A}_c^{\times}\to i_*i^*(\mathscr{A}_c))$, we deduce from \eqref{eq:isomH1} that $\mathscr{P}(\bC)\simeq \rH^1(C',J)$. 
\end{secnumber}

\begin{prop} \label{p:LExp}
	\textnormal{(i)} The line bundle $\mathcal{L}_{f,b}^{\Exp}$ on $C'_{f,b}$ is independent of the choice of the rigidification $(i,D)$ in lemma \ref{sss:rigPic} up to a unique isomorphism. 

	\textnormal{(ii)} Let $g:C''\to C'$ be a finite morphism, and $\CC''$ a flat lifting of $C''$ to $\rB_{\dR,2}^+$. 
	Then, we have a canonical isomorphism of line bundles on $C''_{f\circ g,b}$:
	\begin{equation} \label{eq:tensorLf}
		\varepsilon_{f,g}:\mathcal{L}^{\Exp}_{f\circ g, b} \simeq \pi^*(\mathcal{L}^{\Exp}_{g,f^*(b)}) \otimes (g\times \id_{C'_{f,b}})^*(\mathcal{L}^{\Exp}_{f,b}),
	\end{equation}
	where $f^*(b)\in \Gamma(C',\xi^{-i}\Omega_{C'}^{\otimes i})$ is the pullback differential forms, $\pi:C''_{f\circ g, b}\to C''_{g,f^*(b)}$ is the canonical morphism induced by $T_{C'}\to f^*(T_C)$ and $g\times \id_{C'_{f,b}}: C''_{f\circ g,b}\simeq C''\times_{C'} C'_{f,b}\to C'_{f,b}$ is the canonical morphism. 
\end{prop}

\begin{proof}
(i) Let $(i',D')$ be another rigidification satisfying conditions of \ref{sss:rigPic}, $\mathscr{P}'$ the associated rigidified Picard functor, $I'=\Ker(\mathscr{A}_c\to i'^*i'_{*}(\mathscr{A}_c))$, $J'=\Ker(\mathscr{A}_c^{\times}\to i'^*i'_{*}(\mathscr{A}_c^{\times}))$ and $(\mathcal{L}'^{\Exp}_{f,b}, \alpha')$ the associated rigidified line bundle. 
	We may assume that $(i,D)$ is a sub-divisor of $(i',D')$. 
	Then, the canonical morphism $I'\to I$, $J'\to J$ induce compatible surjections $\mathscr{P}'\to \mathscr{P}$ and $\Lie \mathscr{P}'\to \Lie \mathscr{P}$. 
	The canonical morphisms $\xi f^*(T_C)\to I$, $\xi f^*(T_C)\to I'$ are compatible with $I'\to I$. 
	By the functoriality of $\Exp_{\mathscr{P}}$ (theorem \ref{p:ExpG}) and $\Gamma(C',J)=0$, we obtain an identity in $\mathscr{P}(\bC)$: 
	\[
	(\mathcal{L}'^{\Exp}_{f,b}, \alpha'|_{D})=(\mathcal{L}^{\Exp}_{f,b},\alpha).
	\]
	By forgetting the rigidifications, the assertion follows.

	(ii)
	We lift $i$ to a rigidification $D\to C''_{f\circ g,b}$ of $C''_{f\circ g,b}$ as in \ref{sss:rigPic}. 
	Let $\mathcal{L}_{f\circ g}$ and $\mathcal{L}_g$ be the torsor of liftings of $f\circ g$ and of $g$ to $\rB_{\dR,2}^+$ respectively. 
	We have a canonical isomorphism of $\xi g^*f^*(T_C)$-torsors over $C''$: 
	\[
	\mathcal{L}_{f\circ g}\simeq \iota(\mathcal{L}_{g})\wedge g^*(\mathcal{L}_f),
	\]
	where $\iota$ denotes the pushout along $\xi g^*(T_{C'})\to \xi g^*f^*(T_C)$, and $\wedge$ denotes the contracted product of $\xi g^*f^*(T_C)$-torsors (\cite{Bre90}, \S~2.3). 
	By taking pushout of above isomorphism and exponential, we obtain an identity of line bundles equipped with rigidifications on $C''_{f\circ g,b}$ as in \eqref{eq:tensorLf}. 
	Then, we obtain the canonical isomorphism \eqref{eq:tensorLf} by forgetting rigidifications. 
\end{proof}

Let $(M,\theta)$ be a Higgs bundle of rank $r$ over $C$. We also denote by $C'_{f,\theta}$ (resp. $\mathcal{L}^{\Exp}_{f,\theta}$) the spectral curve $C'_{f,b}$ (resp. line bundle $\mathcal{L}_{f,b}^{\Exp}$) for $b=h(M,\theta)$ \eqref{sss:spectral}. 
We set $c=f^*(b)\in \oplus_{i=1}^r \Gamma(C',\xi^{-i}f^*(\Omega_C)^{\otimes i})$. 

\begin{prop} \label{p:twisted pullback}
	\textnormal{(i)} The association $(M,\theta) \mapsto f^*(M,\theta)\otimes_{\mathscr{A}_c}\mathcal{L}_{f,\theta}^{\Exp}$ defines a functor
	\begin{eqnarray} \label{eq:twisted inverse image}
		f^{\circ}_{\CC',\CC,\Exp}: \HB(C) &\to& \HB(C',\xi^{-1}f^*(\Omega_C)), 
	\end{eqnarray}
	also denoted by $f^{\circ}$ if there is no confusion. 
	We define the \textit{twisted inverse image functor} \eqref{eq:twisted dR} by the composition of the above functor with $\HB(C',\xi^{-1}f^*(\Omega_C)) \to \HB(C')$ induced by $f^*(\Omega_C)\to \Omega_{C'}$. 

	\textnormal{(ii)} The functor $f^{\circ}$ (resp. functor \eqref{eq:twisted dR}) is exact. 

	\textnormal{(iii)} The underlying vector bundles of $f^*(M,\theta)$ and $f^{\circ}(M,\theta)$ have the same degree. 

	\textnormal{(iv)} 
	Under the assumption of \ref{p:LExp}(ii), there exists a canonical isomorphism of functors \eqref{eq:twisted dR}:
	\[
	(f\circ g)^{\circ}_{\CC'',\CC,\Exp}\simeq g^{\circ}_{\CC'',\CC',\Exp}\circ f^{\circ}_{\CC',\CC,\Exp}.
	\]
\end{prop}

\begin{proof}(i-ii)
	Since $\mathcal{L}_{f,\theta}^{\Exp}$ is an invertible $\mathscr{A}_{c}$-module, 
	$f^{\circ}(M,\theta)=f^*(M,\theta)\otimes_{\mathscr{A}_c}\mathcal{L}_{f,\theta}^{\Exp}$ is a Higgs bundle over $C'$. 
	Let $0\to (M'',\theta'')\to (M,\theta)\to (M',\theta')\to 0$ be a short exact sequence of Higgs bundles over $C$. 
	Since $\theta$ and $\theta'$ are compatible, the action of the spectral algebra $\mathscr{A}_{\theta}$ on $M$ induces an action on $M'$, which is compatible with that of $\Sym_{\mathscr{O}_{C}}(\xi T_{C})$ via $\theta'$. 
	Then, we obtain a surjective homomorphism: 
	$\mathscr{A}_{\theta}\twoheadrightarrow \mathscr{A}_{\theta'}$.  
	
	On the other hand, for the dual Higgs bundle $(M^{\vee},\theta^{\vee})$, we have $c_i(\theta)=c_i(\theta^{\vee})$ and $\mathscr{A}_{\theta}=\mathscr{A}_{\theta^{\vee}}$ \eqref{sss:spectral}. 
	By considering the dual of $(M'',\theta'')\to (M,\theta)$, we obtain a surjective homomorphism $\mathscr{A}_{\theta}\twoheadrightarrow \mathscr{A}_{\theta''}$. 
	The action of $\mathscr{A}_{\theta}$ on $M$ preserves $M''$ and the induced action on $M''$ factors through $\mathscr{A}_{\theta''}$. 
	
	Let $0\to (N'',\vartheta'')\to (N,\vartheta)\to (N',\vartheta')\to 0$ be the inverse image of the above sequence to $C'$ via $f^*$. 
	In view of the construction, we have canonical isomorphisms of $\mathscr{A}_{\vartheta'}$-modules (resp. $\mathscr{A}_{\vartheta''}$-modules) on $C'$:
	\begin{equation} \label{eq:quotientLExp}
	\mathscr{A}_{\vartheta'} \otimes_{\mathscr{A}_{\vartheta}} \mathcal{L}_{f,\theta}^{\Exp}
	\simeq \mathcal{L}_{f,\theta'}^{\Exp}, \quad 
	\mathscr{A}_{\vartheta''} \otimes_{\mathscr{A}_{\vartheta}} \mathcal{L}_{f,\theta}^{\Exp}\simeq \mathcal{L}_{f,\theta''}^{\Exp}.
	\end{equation}
	By applying $-\otimes_{\mathscr{A}_{\vartheta}}\mathcal{L}_{f,\theta}^{\Exp}$, we deduce the following short exact sequence of Higgs bundles: 
	\[
	0\to f^{\circ}(M'',\theta'')
	\to f^{\circ}(M,\theta) 
	\to f^{\circ}(M',\theta')\to 0.
	\]
	
	In this way, we obtain the construction of $f^{\circ}(u)$ for a morphism $u$ of $\HB(C)$, whose cokernel is still a Higgs bundle. 
	We also show that $f^{\circ}$ is exact, i.e., assertion (ii). 

	If $u:(M_1,\theta_1)\to (M_2,\theta_2)$ is a monomorphism of Higgs bundles whose quotient is a torsion $\mathscr{O}_{C}$-modules. 
	Since $u$ is an isomorphism on an open dense subset of $C$, we deduce that $h(M_1,\theta_1)=h(M_2,\theta_2):=b$. 
	Since $f$ is flat, this allows us to define a monomorphism $f^{\circ}(u):f^{\circ}(M_1,\theta_1)\to f^{\circ}(M_2,\theta_2)$ by tensor with $\mathcal{L}_{f,b}^{\Exp}$.

	Since $\dim C=1$, a coherent $\mathscr{O}_{C}$-module can be decomposed as a direct sum of a vector bundle over $C$ and a torsion $\mathscr{O}_{C}$-module.
	From previous constructions, we establish the functoriality of $f^{\circ}$.

	(iii)
	Let $\FHB(C',\xi^{-1}f^*(\Omega_C))$ be the algebraic stack of objects of $\HB(C',\xi^{-1}f^*(\Omega_C))$ of rank $r$ (\cite{Laumon} \S~1), $\FPic(\mathscr{A}_c)$ the stack of line bundles on $C'_{f,b}$. 
	Since the connected components of the moduli stack of vector bundles are classified by their degrees, objects in the same connected component of $\FHB(C',\xi^{-1}f^*(\Omega_C))$ have the same degrees. 
	The tensor product on the spectral curve defines a natural action of $\FPic(\mathscr{A}_c)$ on the fiber $\FHB(C',\xi^{-1}f^*(\Omega_C))_c= \FHB(C',\xi^{-1}f^*(\Omega_C)) \times_{h,B} c$ of the Hitchin map $h$ over $c$ (\cite{Ngo} \S 4). 

	By construction, there exits an integer $n\ge 1$ such that $(\mathcal{L}_{f,\theta}^{\Exp})^{\otimes p^n}$ lies in the neutral component of $\mathscr{P}$. 
	By (\cite{BLR} \S 9 corollary 14), $\mathcal{L}_{f,\theta}^{\Exp}$ lies in the neutral component of the relative Picard scheme $\Pic_{C'_{f,b}/\bC}$ and hence in that of $\FPic(\mathscr{A}_c)$. 
	Then, $f^*(M,\theta)$ and $f^{\circ}(M,\theta)$ lie in the same connected component of $\FHB(C',\xi^{-1}f^*(\Omega_C))$. 
	Hence their underlying vector bundles have the same degree.

	(iv) Proposition \ref{p:twisted pullback}(iv) follows from proposition \ref{p:LExp}(ii).
\end{proof}

\begin{rem}
	For a torsion coherent $\mathscr{O}_{C}$-module $M$ together with a Higgs field $\theta$, we can canonically define the twisted inverse image $f^{\circ}(M,\theta)$ by its usual inverse image $f^*(M,\theta)$. 
	Together with the above construction, we can slightly extend $f^{\circ}$ to the category of coherent $\mathscr{O}_{C}$-modules with a Higgs field.  
\end{rem}

\begin{prop}
	\label{sss:twisted line}
	The functor $f^{\circ}$ sends a Higgs line bundle $(L,\theta)$ to $(f^*(L)\otimes_{\mathscr{O}_{C'}}\mathcal{L}_{f,\theta}^{\Exp},f^*(\theta))$. 
\end{prop}
\begin{proof}
	Note that $\theta$ is a section of $\Gamma(C,\xi^{-1}\Omega_C)$. 
	Then, $\mathscr{A}_{f^*(\theta)}$ is isomorphic to $\mathscr{O}_{C'}$ and an invertible $\mathscr{A}_{f^*(\theta)}$-module is equivalent to a line bundle over $C'$ together with the Higgs field $f^*(\theta)\in \Gamma(C',\xi^{-1}f^*(\Omega_C))$. 
	The tensor product of invertible $\mathscr{A}_{f^*(\theta)}$-modules is the same as the tensor product of underlying line bundles together with the Higgs field $f^*(\theta)$. 
	Then, the proposition follows. 
\end{proof}

\subsection{Twisted inverse image for small Higgs bundles via exponential} \label{sss:twisted small}
	In the following, we discuss the twisted inverse image functor for small Higgs bundles. 
	Let $X$ be a semi-stable $S$-curve \eqref{sss:models} such that $X_{\bC}$ has genus $\ge 2$ and $\check{X}$ the $p$-adic completion of $X_\oo=X\otimes_{\mathscr{O}_K}\oo$, equipped with the fine log structure induced by $\mathscr{M}_X$. 
	We simply write $\Omega_{\check{X}/\Sigma_{1,S}}^1$ as $\Omega_{\check{X}}$ and we denote its dual by $T_{\check{X}}$. 

	Here is a criterion for the smallness of a Higgs bundle in terms of its image in the Hitchin base. 

\begin{prop} \label{p:small spectral cover}
	A Higgs bundle $(M,\theta)$ of rank $r$ over $X_{\bC}$ is small \eqref{d:small HB} if and only if there exists a rational number $\alpha>\frac{1}{p-1}$ such that $c_i(\theta)\in p^{\alpha i}\Gamma(\check{X},\xi^{-i}\Omega^{\otimes i}_{\check{X}}) \subset \Gamma(X_{\bC},\xi^{-i}\Omega^{\otimes i}_{X_{\bC}})$ for every $1\le i\le r$. 
\end{prop}

\begin{proof}
	Assume $(M,\theta)$ is small. 
	There exists a coherent $\mathscr{O}_{\check{X}}$-module $\mathcal{M}$ with generic fiber $M$ and a rational number $\alpha>\frac{1}{p-1}$ such that $\theta(\mathcal{M})\subset p^\alpha \mathcal{M}\otimes_{\mathscr{O}_{\check{X}}} \xi^{-1} \Omega_{\check{X}}$. 
	For each $i\ge 1$, $\wedge^i \theta$ sends $\wedge^i \mathcal{M}$ to $p^{\alpha i} \wedge^i\mathcal{M}\otimes_{\mathscr{O}_{\check{X}}} \xi^{-i} \Omega_{\check{X}}^{\otimes i}$. 
	Then, we deduce $c_i(\theta)\in p^{\alpha i}\Gamma(\check{X},\xi^{-i}\Omega^{\otimes i}_{\check{X}})$.

	On the other hand, assume $c_i(\theta)\in p^{\alpha i}\Gamma(\check{X},\xi^{-i}\Omega^{\otimes i}_{\check{X}})$ for every $1\le i\le r$. 
	We take local sections $m$ of $M$, $d\log(x)$ of $\Omega^1_{\check{X}}$ and its dual $\partial$ of $T_{\check{X}}$. And we write 
	\[
	\theta(m)=\theta_{\partial}(m)\otimes \xi^{-1}d\log(x).
	\]
	Locally, let $\mathcal{N}$ be the $\mathscr{O}_{\check{X}}$-submodule of $M$ generated by $\{m,\theta_{\partial}(m),\cdots, (\theta_{\partial})^{r-1}(m)\}$. 
	In view of the characteristic polynomial of $\theta$ and the assumption, for $n \ge r$, the section $(\theta_{\partial})^n(m)$ belongs $p^{\alpha(n-r)}\mathcal{N}$. 
	We take a positive integer $s$ such that $\alpha>\frac{1}{s}+\frac{1}{p-1}$.  
	Then, we deduce that $p^{-s^{-1}n}(\theta_{\partial})^n(m)/n!$ tends to zero, when $n$ tends to infinity, i.e. $(M,\theta)$ satisfies $\textnormal{(Conv)}_s$ \eqref{HS HB}. 
	By (\cite{AGT} IV.3.6.6), the Higgs field $\theta$ is small.
\end{proof}

\begin{secnumber} \label{sss:twisted inverse image}



	Let $K'$ be a finite extension of $K$, $Y$ a semi-stable $S'(=\Spec(\mathscr{O}_{K'}))$-curve, $f:Y\to X_{S'}$ a generic $\eta'$-cover \eqref{sss:models} and $\check{f}:\check{Y}\to \check{X}$ the associated morphism of $p$-adic fine log formal $\oo$-schemes. 
	Let $\mathcal{Y}$ be a smooth lifting of $\check{Y}$ over $\Sigma_{S'}$. 
	Let $\alpha\in \BQ_{>\frac{1}{p-1}}$ be a rational number, $b=(b_i)_{i=1}^r\in \oplus_{i=1}^r p^{\alpha i}\Gamma(\check{X},\xi^{-i}\Omega^{\otimes i}_{\check{X}})$ and $c=\check{f}^*(b)\in \oplus_{i=1}^r p^{\alpha i}\Gamma(\check{Y},\xi^{-i}\check{f}^*(\Omega^{\otimes i}_{\check{X}}))$. 

	We denote by $\mathscr{J}_c$ the ideal sheaf of $\Sym_{\mathscr{O}_{\check{Y}}}^\bullet(p^{-\alpha}\xi \check{f}^*T_{\check{X}})$ defined by a similar formula of \eqref{eq:I theta} with coefficients $c$. 
	We denote by $\mathcal{A}_c$ the finite, locally free 
	$\mathscr{O}_{\check{Y}}$-algebra 
	\[\mathcal{A}_c=
	\Sym_{\mathscr{O}_{\check{Y}}}^\bullet(p^{-\alpha}\xi \check{f}^*T_{\check{X}})/\mathscr{J}_c.\] 
	
	Flat liftings of $\check{f}$ to a $\Sigma_{2,S}$-morphism $\mathcal{Y}\to \mathcal{X}$ defines a $\xi \check{f}^*(T_{\check{X}})$-torsor $\mathcal{L}_{f}$ on $\check{Y}$. 
	We set $\check{Y}_{f,b}=\Spf_{\mathscr{O}_{\check{Y}}}(\mathcal{A}_c)$ and consider the following composition:
	\begin{eqnarray}
		\label{eq:exp small}		
		\rH^1(\check{Y}, \xi \check{f}^*(T_{\check{X}})) \to 
		\rH^1(\check{Y}, p^\alpha \mathcal{A}_c) 
		&\xrightarrow{\exp}& \rH^1(\check{Y},(\mathcal{A}_c)^{\times} )(\simeq \rH^1(\check{Y}_{f,b}, \mathscr{O}^{\times}_{\check{Y}_{f,b}})), \\
		\varphi=(\varphi_{ij}) &\mapsto& \exp(\varphi)=(\exp(\varphi_{ij})) \nonumber
	\end{eqnarray}
	where the second map is well-defined due to $\alpha>\frac{1}{p-1}$, and the isomorphism can be verified in a similar way as in \eqref{eq:isomH1}. 
	By applying the above composition to a Čech cocycle of $[\mathcal{L}_f]\in \rH^1(\check{Y}, \xi \check{f}^*(T_{\check{X}}))$, we obtain an invertible $\mathcal{A}_c$-module $\mathcal{L}_{f,b}$. 
\end{secnumber}

\begin{prop} \label{p:trivial-special}
	\textnormal{(i)} The line bundle $\mathcal{L}_{f,b}$ is independent of the choice of the Čech cocycle of $[\mathcal{L}_f]\in \rH^1(\check{Y}, \xi \check{f}^*(T_{\check{X}}))$ up to a unique isomorphism.

	\textnormal{(ii)} The special fiber of the line bundle $\mathcal{L}_{f,b}$ over $\check{Y}_{f,b}$ is trivial. 	
\end{prop}

\begin{proof}
	(i) 
	Since $X_{\bC}$ has genus $\ge 2$, $\Gamma(\check{Y},\xi \check{f}^*(T_{\check{X}}))$ is a $p$-torsion-free submodule of $\Gamma(Y_{\bC},f_{\bC}^*(T_{X_{\bC}}))$ and therefore vanishes. 
	Hence, two Čech cocycles of $\mathcal{L}_f$ is different by a unique Čech 1-coboundary $(g_i)$. 
	Then, $\exp(g_i)$ defines a unique isomorphism between $\mathcal{L}_{f,b}$ defined by two Čech cocycles. 

	(ii) The line bundle $\mathcal{L}_{f,b}$ can be described as $\exp(\varphi_{ij})$ in \eqref{eq:exp small}. 
	Since $p^{\alpha}$ divides each $\varphi_{ij}$ and $\alpha>\frac{1}{p-1}$, the convergent series $\exp(\varphi_{ij})\equiv \id$ modulo the maximal ideal $\mm$ of $\oo$. 
	Then, the assertion follows. 
\end{proof}

\begin{prop}
	Let $(M,\theta)$ be an object of $\HB_{\mathbb{Q}_p,\sma}(\check{X}/\Sigma_{1,S})$ (definition \ref{d:Conv Int}) with Hitchin image $b\in \oplus_{i=1}^r p^{\alpha i}\Gamma(\check{X},\xi^{-i}\Omega^{\otimes i}_{\check{X}})$. 
	Via $\check{f}^*(\Omega_{\check{X}})\to \Omega_{\check{Y}}$, the correspondence $(M,\theta)\mapsto \check{f}^*(M,\theta)\otimes_{\mathcal{A}_c}\mathcal{L}_{f,b}$ defines the \textit{twisted inverse image functor} for small Higgs bundles:
	\begin{equation} \label{eq:twisted-exp}
	f^{\circ}_{\mathcal{Y},\mathcal{X}}: \HB_{\mathbb{Q}_p,\sma}(\check{X}/\Sigma_{1,S}) \to \HB_{\mathbb{Q}_p,\sma}(\check{Y}/\Sigma_{1,S'}).
\end{equation}
\end{prop}
\begin{proof}
	By proposition \ref{p:small spectral cover}, $f^{\circ}_{\mathcal{Y},\mathcal{X}}(M,\theta)$ is still small. 

		As in \eqref{eq:quotientLExp}, a short exact sequence of small Higgs bundles $0\to (M'',\theta'')\to (M,\theta)\to (M',\theta')\to 0$ induces canonical isomorphisms of $\mathcal{A}_{c'}$-modules, $\mathcal{A}_{c''}$-modules respectively:
	\begin{equation} \label{eq:quotientLf}
	\mathcal{L}_{f,b}\otimes_{\mathcal{A}_c}\mathcal{A}_{c'}\simeq \mathcal{L}_{f,b'}, 
	\quad \mathcal{L}_{f,b}\otimes_{\mathcal{A}_c}\mathcal{A}_{c''}\simeq \mathcal{L}_{f,b''}.
	\end{equation}
	Then, we verify the functoriality of $f_{\mathcal{Y},\mathcal{X}}^{\circ}$ as in proposition \ref{p:twisted pullback}. 
\end{proof}


\begin{prop} \label{p:fcircf*}
	\textnormal{(i)} By restricting $\check{f}_{\mathcal{Y},\mathcal{X}}^*$ \eqref{eq:twisted pullback} to small Higgs bundles, there exists a canonical isomorphism of functors $\Psi_f: f^\circ_{\mathcal{Y},\mathcal{X}}\xrightarrow{\sim} \check{f}_{\mathcal{Y},\mathcal{X}}^*$. 
	
	\textnormal{(ii)} Moreover, $\Psi_f$ satisfies a cocycle condition as in \eqref{eq:natural-gammaf}. 
\end{prop}

\begin{proof}
	We first make an explicit description of $\mathcal{L}_{f,b}$. 
	Let $\{U_i\}_{i\in I}$ be an affine open coverings of $\check{Y}$ and $\mathcal{U}_i\to \mathcal{Y}$ the associated open formal subschemes. 
	We take a family of local liftings $\widetilde{f}_i:\mathcal{U}_i\to \mathcal{X}$ of $\check{f}$. 
	The difference between $\widetilde{f}_i$ and $\widetilde{f}_j$ (modulo $\xi^2$) on the intersection $\mathcal{U}_{ij}=\mathcal{U}_i\cap \mathcal{U}_j$ defines a section 
	$$\varphi_{ij}\in \Hom(\check{f}^*(\Omega_{\check{X}})|_{U_{ij}},\xi \mathscr{O}_{\mathcal{U}_{ij,2}}).$$
	The cocyle $\varphi=(\varphi_{ij})$ defines the class $[\mathcal{L}_b]\in \rH^1(\check{Y},\xi\check{f}^*(T_{\check{X}}))$. 
	The invertible $\mathcal{A}_c$-module $\mathcal{L}_{f,b}$ is equivalent to $\{\mathcal{A}_c|_{U_i}\}_{i\in I}$ equipped with the gluing data $\{\exp(\varphi_{ij})\}_{i,j\in I}$. 

	Let $(M,\theta)$ be an object of $\HB_{\bQ_{p},\sma}(\check{X}/\Sigma_{1,S})$ with Hitchin image $b$. 	
	Then, the $\mathcal{A}_c$-module $f^{\circ}_{\mathcal{Y},\mathcal{X}}(M,\theta)$ is equivalent to $\{\check{f}^*(M)|_{U_i}\}_{i\in I}$ equipped with the gluing data $\{\Phi_{ij}=\id\otimes\exp(\varphi_{ij})\}_{i,j\in I}$.

	We may assume that each $\mathcal{U}_i$ is affine and there exists $x=(x_{N})\in \varprojlim_{N}\Gamma(\mathcal{U}_{ij,N},\mathscr{M}_{\mathcal{U}_{ij,N}})$ such that $d\log(x_{N})$ is a basis of $\Omega_{\mathcal{U}_{ij,N}/\Sigma_{N,S}}^1$ for $N\ge 1$ as in \ref{r:formula twisted pullback}. 
	Then, we have 
	\[
	\varphi_{ij}(d\log(x_{1}))=
	\xi (\widetilde{f}_i^*(x_{2})\widetilde{f}_j^*(x_{2})^{-1}-1). 
	\]
	As an isomorphism of $\mathscr{O}_{U_{ij}}$-modules, the transition isomorphism $\Phi_{ij}:\check{f}^*(M)|_{U_{ij}}\xrightarrow{\sim} \check{f}^*(M)|_{U_{ij}}$ is given by:
	\begin{equation} \label{eq:transition twisted}
	\check{f}^*(m) \mapsto  
	\sum_{n\ge 0} \check{f}^*(\theta_{\partial^n}(m)) \otimes 
	\frac{1}{n!} \biggl(\frac{\widetilde{f}'^*(x_{2})\widetilde{f}^*(x_{2})^{-1}-1}{\xi}\biggr)^n,
\end{equation}
which coincides with the transition isomorphism of $\check{f}_{\mathcal{Y},\mathcal{X},(\tilde{f}_i)_{i\in I}}^*(M,\theta)$ \eqref{eq:transition twisted pullback} and we obtain 
$$ \Psi_{(\tilde{f}_i)_{i\in I}}: f^\circ_{\mathcal{Y},\mathcal{X}}(M,\theta)\xrightarrow{\sim} \check{f}_{\mathcal{Y},\mathcal{X},(\tilde{f}_i)_{i\in I}}^*(M,\theta).$$ 
	By \eqref{eq:quotientLf}, the transition isomorphisms $\Phi_{ij}$ are functorial. 
	Hence $\Psi_{(\tilde{f}_i)_{i\in I}}$ defines an isomorphism of functors $f^\circ_{\mathcal{Y},\mathcal{X}}(M,\theta) \Rightarrow \check{f}_{\mathcal{Y},\mathcal{X},(\tilde{f}_i)_{i\in I}}^*$. 
	If $\{\mathcal{V}_j\}_{j\in J}$ is a refinement of $\{\mathcal{U}_i\}_{i\in I}$ and $(\widetilde{g}_j:\mathcal{V}_j\to \mathcal{X})_{j\in J}$ is a collection of lifting of $\check{f}$ as in proposition \ref{p:twisted-pullback}(iii), then we have $\Psi_{(\tilde{g}_j)_{j\in J}}=\Phi_{(\tilde{f}_i)_{i\in I}, (\tilde{g}_{j})_{j\in J}} \circ \Psi_{(\tilde{f}_i)_{i\in I}}$ (see \textit{loc.cit}). 
	Then, we define $\Psi_f$ by $\Psi_{(\tilde{f}_i)_{i\in I}}$, which is independent of the choice of $(\widetilde{f}_i)_{i\in I}$ up to a unique isomorphism.  

	Assertion (ii) follows from the cocycle condition of the Higgs stratification. 
\end{proof}

\subsection{Comparison of twisted inverse image functors}
\label{sss:compare-twisted-pullback}

\begin{secnumber} \label{sss:compare liftings}
	We set $\mathcal{S}^{\circ}=\Spa(\bC,\oo)$ and $\mathcal{S}=\Spa(\rB_{\dR,2}^+,\Ainft)$, which are strongly Noetherian (lemma \ref{l:BdR2}). Then, there exists a canonical morphism of locally ringed spaces induced by the identity map of $\rB_{\dR,2}^+$:
	\[\varphi:\mathcal{S}\to \Spec(\rB_{\dR,2}^+).\]

	Following \cite{Zav} definition 6.1, a \textit{relative analytification} of a locally finite type $\rB_{\dR,2}^+$-scheme $X$ is an adic $\mathcal{S}$-space $X^{\an/\mathcal{S}}\to \mathcal{S}$ with a morphism of locally ringed $\phi_X:X^{\an/\mathcal{S}}\to X$ above $\varphi$ such that, for every adic $\mathcal{S}$-space $U$, $\phi_X$ induces a bijection:
	\[
	\Map_{\Adic/\mathcal{S}}(U,X^{\an/\mathcal{S}})\simeq \Map_{\LRS/\Spec(\rB_{\dR,2}^+)}(U,X).
	\]
	Here $\Adic/\mathcal{S}$ denotes the category of adic spaces over $\mathcal{S}$ and $\LRS/\Spec(\rB_{\dR,2}^+)$ denotes the category of locally ringed spaces over $\Spec(\rB_{\dR,2}^+)$. 

	Such a relative analytification exists (\cite{Hub94} proposition 3.8) and is unique. 
	When $X$ is a locally of finite type $\bC$-scheme, the relative analytification is compatible with the anlytification functor to rigid spaces over $\bC$ in the classical sense.

	Let $C$ be a smooth proper curve over $\bC$ and $C^{\an}$ the associated rigid curve over $\mathcal{S}^{\circ}$. 
	Then, the relative analytification over $\rB_{\dR,2}^+$ induces a natural map:
	\[
	\{\textnormal{flat liftings of $C$ over $\rB_{\dR,2}^+$}\} 
	\to 
	\{\textnormal{flat liftings of $C^{\an}$ over $\mathcal{S}$}\}.
	\]
	Note that both side is a torsor under the cohomology group $\rH^1(C,\xi T_C)$( $\simeq \rH^1(C^{\an}, \xi T_{C^{\an}})$). 
	Then, the above map is a bijection. 
	In particular, any flat lifting of $C^{\an}$ over $\mathcal{S}$ comes from an algebraic lifting of $C$. 
\end{secnumber}

\begin{lemma}
	The ring $\rB_{\dR,2}^+$ (equipped with the $p$-adic topology) is strongly Noetherian (\cite{Hub94} \S~2). 
	\label{l:BdR2}
\end{lemma}
\begin{proof}
	We prove the lemma using the fact that $\bC$ is strongly Noetherian. 
	Let $n\ge 1$ be an integer and $I$ an ideal of $\rB_{\dR,2}^+\langle T_1,\cdots,T_n\rangle$. 
	The image $\overline{I}$ of the composition $I\to \rB_{\dR,2}^+\langle T_1,\cdots,T_n\rangle \to \bC\langle T_1,\cdots,T_n\rangle$ (as $\rB_{\dR,2}^+\langle T_1,\cdots,T_n\rangle$-modules) is an ideal of $\bC\langle T_1,\cdots,T_n\rangle$ and is therefore finitely generated over $\bC\langle T_1,\cdots,T_n\rangle$. 
	The kernel $\Ker(I\to \overline{I})$ is a submodule of $\xi \rB_{\dR,2}^+\langle T_1,\cdots,T_n\rangle$ and is finitely generated over $\rB_{\dR,2}^+\langle T_1,\cdots,T_n\rangle$. 
	Then, $I$ is generated by liftings of generators of $\overline{I}$ and $\Ker(I\to \overline{I})$. 
	Hence the lemma follows. 
\end{proof}

\begin{secnumber}
We keep the notation and assumption of \S~\ref{sss:twisted inverse image}. 
The adic spaces $\mathfrak{X}=\mathcal{X}_2^{\rig}$, $\mathfrak{Y}=\mathcal{Y}_2^{\rig}$ over $\Spa(\rB_{\dR,2}^+,\Ainft)$ associated to formal schemes $\mathcal{X}_2,\mathcal{Y}_2$ are flat liftings of $\check{X}^{\rig},\check{Y}^{\rig}$ respectively. 
By \S~\ref{sss:compare liftings}, $\check{X}^{\rig},\check{Y}^{\rig}$ and their liftings are algebrizable. 
Then, we have a twisted inverse image functor \eqref{eq:twisted inverse image}: 
\begin{equation}\label{eq:twisted fcirc}
	f^{\circ}_{\mathfrak{Y},\mathfrak{X},\Exp}:\HB(X_{\bC}) \to \HB(Y_{\bC}),\quad (M,\theta)\mapsto f_{\bC}^*(M,\theta)\otimes_{\mathscr{A}_c}\mathcal{L}_{f,b}^{\Exp}.
\end{equation}
\end{secnumber}
	
\begin{prop} \label{p:compatibility twisted}
	Assume moreover that $X$ is a stable $S$-curve. 

	\textnormal{(i)} Via the equivalence \eqref{eq:equivHB}, there exists a canonical isomorphism $\Psi_f$ between the restriction of functor $f^{\circ}_{\mathfrak{Y},\mathfrak{X},\Exp}$ on the category of small Higgs bundles and the functor $f^{\circ}_{\mathcal{Y},\mathcal{X}}$ \eqref{eq:twisted-exp}.

\textnormal{(ii)} For morphisms $f,g$ between stable curves, $\Psi_f$ satisfies a cocycle condition as in \eqref{eq:natural-gammaf}. 
\end{prop}

\begin{secnumber} \label{sss:relPic}
	We need to investigate the integral structure of the rigidified Picard functor $\mathscr{P}$ \eqref{eq:rigPic}. 
	The $\mathscr{O}_{\check{Y}}$-algebra $\mathcal{A}_c$ comes from the $p$-adic completion of a finite locally free $\mathscr{O}_{Y_{\oo}}$-algebra, denoted by $A_c$. 
	We set $Y_{f,b}=\Spec_{\mathscr{O}_{Y_{\oo}}}(A_c)$ (whose $p$-adic formal completion is $\check{Y}_{f,b}$) and take a divisor $i:D=\sqcup \Spec(\oo) \to Y_{f,b}$ such that its generic fiber $i_{\bC}$ satisfies conditions in \S \ref{sss:rigPic}. 
\end{secnumber}

\begin{prop} \label{p:coh-flat}
	\textnormal{(i)} The $\mathscr{O}_{Y_{\oo}}$-module $A_c$ is cohomologically flat (of dimension $0$) over $\oo$.

	\textnormal{(ii)} 
	The divisor $(i,D)$ is a rigidification of the relative Picard functor $\Pic_{A_c/\oo}$ (\cite{Ray70} \S2.1). 
\end{prop}
\begin{proof}
	(i) Recall that $A_c\simeq \oplus_{i=0}^{r-1} p^{-i\alpha}\xi^i f_{\oo}^*(\widetilde{T}_{X_{\oo}}^{\otimes i})$ as $\mathscr{O}_{Y_{\oo}}$-modules, where $\widetilde{T}_{X_{\oo}}$ is the dual of the dualizing sheaf $\omega_{X/S}\otimes_{\mathscr{O}_K}\oo$. 
	Since $Y$ is semi-stable over $S'$, $\mathscr{O}_{Y_{\oo}}$ is cohomologically flat along $\pi:Y_{\oo}\to \Spec(\oo)$. 
	
	Set $L=p^{-\alpha}\xi f_{\oo}^*(\widetilde{T}_{X_{\oo}})$. We will show that $\pi_{*}(L^{\otimes i})=0$ for $i\ge 1$ after any base change $S\to \Spec(\oo)$. 
	By (\cite{Mum} II.5 corollary 2), it suffices to show that for any point $s\to \Spec(\oo)$, $\Gamma(Y_{s},L^{\otimes i}_s)=0$. And we may reduce to the case $s$ is the closed point or the generic point of $\Spec(\oo)$. 
	In either case, the vanishing of $\Gamma(Y_{s},L^{\otimes i}_s)$ follows from the fact that $\omega_{X_s/s}$ is an ample line bundle on $X_s$ (\cite{Liu} corollary 10.3.13). 

	Assertion (ii) follows from the vanishing results in (i) and  (\cite{Ray70} corollaire 2.2.2). 
\end{proof}

\begin{secnumber}	
	The rigidified Picard functor $\mathcal{P}$ associated with $Y_{f,b}$ and $D$, is defined, for any $\oo$-scheme $T$, by
	\begin{equation}
		\mathcal{P}(T)
		=\{(\mathscr{L},\alpha)~|~ \mathscr{L} ~~ \textnormal{line bundle over $Y_{f,b}$},~ \alpha: i_{T}^*\mathscr{L}\xrightarrow{\sim} i_{T}^*\mathscr{O}_{Y_{f,b,T}}\}.
		\label{eq:def PicBD}
	\end{equation}
	The functor $\mathcal{P}$ is represented by a smooth algebraic space locally of finite presentation over $\oo$ (\cite{Ray70} th\'eor\`eme 2.3.1 et corollaire 2.3.2). 
	The generic fiber $\mathcal{P}_{\bC}$ of $\mathcal{P}$ is denoted by $\mathscr{P}$ in \eqref{eq:rigPic}. 

	Let $T$ be an affine $\oo$-scheme. We set $A_{c,T}=A_c\otimes_{\oo}\mathscr{O}_T$, $I_T=\Ker(A_{c,T}\to i_{T,*}i_{T}^*(A_{c,T}))$, $J_T=\Ker(A_{c,T}^{\times}\to i_{T,*}i_{T}^*(A_{c,T}^{\times}))$. 
	By considering the long exact sequences and (\cite{Ray70} corollaire 2.2.2), we deduce $\rH^0(Y_T,I_T)=0$, $\rH^0(Y_T,J_T)=0$, and short exact sequences:
	\begin{eqnarray}
		&0\to \rH^0(D_T,\mathscr{O}_{D_T})/\rH^0(Y_T,A_{c,T})\to \rH^1(Y_T,I_T) \to \rH^1(Y_T,A_{c,T})\to 0, & \label{eq:esq Lie algebras} \\
		&0\to \rH^0(D_T,\mathscr{O}_{D_T}^{\times})/\rH^0(Y_T,A_{c,T}^{\times})\to \rH^1(Y_T,J_T) \to \rH^1(Y_T,A_{c,T}^{\times})\to \rH^1(D_T,\mathscr{O}_{D_T}^{\times}). & \label{eq:esq groups}
\end{eqnarray}

By (\cite{BLR} 8.4 theorem 1) and (\cite{Ray70} proposition 2.4.1), the Lie algebra of $\mathcal{P}$ is isomorphic to the functor:
	\[
	T\mapsto \rH^0(Y_T,I_I)(\simeq \rH^0(T,\rR^1 \pi_{T,*}(I_T))). 
	\]
	Moreover, by proposition \ref{p:coh-flat} and (\cite{Mum} II.5 corollaries 1, 2), we deduce that $\Lie \mathcal{P}\simeq \rR^1 \pi_* A_c$ is represented by a free $\oo$-module of finite rank. 

	By \eqref{eq:esq groups} and (\cite{Ray70}  \S~1.2, proposition 2.4.1), the functor $\Pic_{A_c/\oo}$ (resp. $\mathcal{P}$) is isomorphic to the sheaf associated to the presheaf $T\mapsto \rH^1(Y_T,A_{c,T}^{\times})$ (resp. $\rH^1(Y_T,J_T)$) for the big Zariski topology. In particular, we have a natural map:
	\[
	\rH^1(Y_T,J_T)\to \mathcal{P}(T),
	\]
	which is an isomorphism when $T=\Spec(\oo_n)$ or $\Spec(\bC)$ in viewed of (\cite{Ray70} proposition 2.1.2 c). 
\end{secnumber}

\begin{lemma} \label{l:cohJ}
	\textnormal{(i)} 
	We have $\rH^1(Y_T,A_{c,T}^{\times})\simeq \rH^1(Y_{f,b,T},\mathscr{O}^{\times}_{Y_{f,b,T}})$. 

	\textnormal{(ii)} 
	Let $\widehat{I}=\varprojlim_n I_{\oo_n}$, $\widehat{J}=\varprojlim_n J_{\oo_n}$ be their $p$-adic completion on $\check{Y}$. 
	We have canonical isomorphisms:
	\[
	\rH^1(Y_{\oo},J_{\oo})\xrightarrow{\sim} \rH^1(\check{Y},\widehat{J})\xrightarrow{\sim} \varprojlim_n \rH^1(Y_{\oo_n}, J_{\oo_n}).
	\]
\end{lemma}

\begin{proof}
	(i) Since $Y_{f,b}\to Y$ is finite flat, the assertion follows from the same proof of \eqref{eq:isomH1}. 

	(ii) We first prove the assertion replacing $J$ by $A_c^{\times}$. By (\cite{Ab10} 2.8.9, 2.13.8), we have natural isomorphisms
		$\rH^1(Y_{f,b},\mathscr{O}^{\times}_{Y_{f,b}})\xrightarrow{\sim} \rH^1(\check{Y}_{f,b},\mathscr{O}_{\check{Y}_{f,b}}^{\times}) \xrightarrow{\sim} \varprojlim_n \rH^1(Y_{f,b,\oo_n}, \mathscr{O}_{Y_{f,b,\oo_n}}^{\times})$.
	By (i), we obtain natural isomorphisms
	\begin{equation} \label{eq:coh-Ac}
	\rH^1(Y_{\oo},A_{c}^{\times})\xrightarrow{\sim} \rH^1(\check{Y},\mathcal{A}_c^{\times}) \xrightarrow{\sim} \varprojlim_n \rH^1(Y_{\oo_n}, A_{c,\oo_n}^{\times}). 
\end{equation}

The transition map $J_{\oo_{n+1}}\to J_{\oo_n}$ is surjective and hence $\rR^1\varprojlim_n J_{\oo_n}=0$. 
	We have a short exact sequence:
	\[
	0\to \widehat{J}\to \mathcal{A}_c^{\times} \to \varprojlim_n i_{n*}(\mathscr{O}_{D_{\oo_n}}^{\times})\to 0.
	\]
	Note that $\rH^1(D_{T},\mathscr{O}_{D_T}^{\times})=0$ for $T=\Spec(\oo)$ and $\Spec(\oo_n)$ and $\rH^1(\widehat{D},\mathscr{O}^{\times}_{\widehat{D}})=0$. 
	By comparing the long exact sequence associated to the above sequence and 
	\eqref{eq:esq groups}, we deduce isomorphisms in (ii) from \eqref{eq:coh-Ac}. 	
\end{proof}

\begin{secnumber}
	For $\alpha>\frac{1}{p-1}$ and $n\ge 1$, we have the exponential map:
	\[
	\exp_{\mathcal{P},n}: \rH^1(Y_{\oo_n}, p^{\alpha}I_{\oo_n})\to \rH^1(Y_{\oo_n},J_{\oo_n}), \quad 
		\varphi=(\varphi_{i,j})\mapsto \exp(\varphi)=(\exp (\varphi_{i,j})),
	\]
	where $p^{\alpha}I_{\oo_n}=\IM(p^{\alpha}:I_{\oo_n}\to I_{\oo_n})$ and the above exponential is a finite sum.  

	We have $\rH^1(\check{Y},p^\alpha \widehat{I})\simeq \varprojlim_n \rH^1(Y_{\oo_n}, p^{\alpha}I_{\oo_n})$ (\cite{Ab10} 2.12.3).	
	By lemma \ref{l:cohJ}, we obtain the exponential map 
	\begin{equation}
		\exp_{\mathcal{P}}: \rH^1(\check{Y},p^\alpha \widehat{I}) \to \rH^1(\check{Y},\widehat{J}),\quad 
		\varphi=(\varphi_{i,j})\mapsto \exp(\varphi)=(\exp (\varphi_{i,j})).
		\label{eq:exp B}
	\end{equation}

	As in lemma \ref{sss:rigPic}, the natural map $\xi \check{f}^*(T_{\check{X}})\to p^{\alpha}\mathcal{A}_c$ factors through $p^{\alpha}\widehat{I}\subset p^{\alpha}\mathcal{A}_c$.  
	We apply the composition $\rH^{1}(\check{Y},\xi \check{f}^*(T_{\check{X}}))\to \rH^1(\check{Y},p^\alpha \widehat{I})$ with $\exp_{\mathcal{P}}$ to a Čech cocycle of $\mathcal{L}_f$ \eqref{sss:twisted inverse image} and  obtain an object 
	\[(L_{f,b},\alpha)\in \varprojlim_n \mathcal{P}(\oo_n),\] 
	that is a line bundle $L_{f,b}$ on $\check{Y}_{f,b}$ with a compatible system of rigidifications $\alpha=(\alpha_n)_{n\ge 1}$. 
\end{secnumber}

\begin{prop} \label{p:integral-Lfb}
	\textnormal{(i)} The line bundle $L_{f,b}$ is independent of the choice of $(i,D)$ and of Čech cocycle of $\mathcal{L}_f$ up to a unique isomorphism. 

	\textnormal{(ii)} There exists a unique isomorphism between line bundles $L_{f,b}$ and $\mathcal{L}_{f,b}$ \eqref{p:trivial-special} on $\check{Y}_{f,b}$. 
\end{prop}
\begin{proof}
	Assertion (i) can be verified in the same way as in propositions \ref{p:LExp} and \ref{p:trivial-special}. 

	Assertion (ii) follows from the fact that $\exp_{\mathcal{P}}$ is compatible with \eqref{eq:exp small} via forgetting rigidification. 
\end{proof}
\begin{prop} \label{p:compatible Exp}
	The exponential map $\exp_{\mathcal{P}}$ is compatible with the Exponential map of $\mathcal{P}_{\bC}$ \eqref{eq:ExpG} 
\begin{equation} \label{eq:Exp B}
	\Exp_{\mathcal{P}_{\bC}}: \Lie \mathcal{P}_{\bC} (\bC)\simeq \rH^1(Y_{\bC}, I_{\bC}) \to |U_{\mathcal{P}_{\bC}}| \bigl(\subset \mathcal{P}_{\bC}(\bC)\simeq \rH^1(Y_\bC,J_{\bC}) \bigr).
\end{equation} 
\end{prop}
\begin{proof} 
	Let $\mathcal{E}$ be the scheme-theoretic closure of the unit section of $\mathcal{P}_{\bC}$ in $\mathcal{P}$ (\cite{Ray70} \S 3.2 c)).
	Then, $\mathcal{E}$ is a sub-algebraic space of groups of $\mathcal{P}$. 
	Moreover, the quotient $\mathcal{G}=\mathcal{P}/\mathcal{E}$ is represented by a smooth group scheme over $\oo$ (\cite{Ray70} proposition 3.3.5). 
	By proposition \ref{p:coh-flat} and (\cite{Ray70} proposition 5.2), we have $\mathcal{E}^{0}\simeq 0$ and hence $\Lie \mathcal{E}=0$. 
	We deduce an isomorphism $\Lie \mathcal{P}\xrightarrow{\sim} \Lie \mathcal{G}$, which is isomorphic to an affine space over $\oo$. 

	Let $n$ be an integer $\ge 1$ and $A$ an $\oo_n$-algebra. Then, $\Lie \mathcal{G}(A)$ is an $\oo$-module. The image of $\Lie \mathcal{G}^{\alpha}(A)\to \Lie \mathcal{G}(A)$ (\S \ref{sss:smooth model}) is contained in $p^{\alpha} \Lie \mathcal{G}(A)$ (the image of multiplication by $p^\alpha$). 
	This allows us to define a homomorphism by the exponential 
	\begin{eqnarray} \label{eq:exp modpn}
		\exp^{\alpha}: \Lie \mathcal{G}^{\alpha}(A) \to p^\alpha \Lie \mathcal{G}(A) \simeq  p^\alpha \rH^1(Y_A, I_A) &\to&  \rH^1(Y_A,J_A) \to \mathcal{P}(A), \\
		\varphi=(\varphi_{i,j}) &\mapsto& \exp(\varphi)=(\exp (\varphi_{i,j})). \nonumber
\end{eqnarray}
	Note that the above exponential is a finite sum due to $\alpha>\frac{1}{p-1}$ and is therefore well-defined. 
	
	Together with the quotient $\mathcal{P}\to \mathcal{G}$, the above morphism induces a morphism of group $\oo_n$-schemes:
	\[
	\exp^{\alpha}_n: (\Lie \mathcal{G}^{\alpha})_n \to \mathcal{G}_n. 
	\]
	The above morphisms induce a morphism of formal group schemes over $\oo$ and of rigid groups over $\bC$:
	\begin{equation}\label{eq:exp H1 rig}
		\widehat{\exp^{\alpha}}^{\rig}: \widehat{\Lie \mathcal{G}^{\alpha}}^{\rig} \to \widehat{\mathcal{G}}^{\rig}. 
	\end{equation}
	In view of the construction \eqref{eq:exp modpn}, 
	its differential is the identity $\id_{\Lie \mathcal{G}_{\bC}}$, and 
	the underlying classical points of the above morphism is compatible with \eqref{eq:exp B}. 

	By the functoriality of $\log$, we deduce that $\widehat{\exp^{\alpha}}^{\rig}$ is a section of $\log_{\widehat{\mathcal{G}}^{\rig}}$ on $\widehat{\Lie \mathcal{G}^{\alpha}}^{\rig}$. 
	By the uniquenss of the $p$-adic logarithm (\cite{Far} Théorème 1.2), we deduce that $\widehat{\exp^{\alpha}}^{\rig}$ is compatible with $\exp_{\mathcal{G}}$ \eqref{eq:expG} and $\Exp_{\mathcal{P}_{\bC}}$ \eqref{eq:Exp B} as well.
	Then, the proposition follows. 
\end{proof}

\begin{secnumber} \textit{Proof of proposition} \ref{p:compatibility twisted}. 
	By proposition \ref{p:compatible Exp}, we have the following commutative diagrams:
	\[
	\xymatrix{
	\rH^1(\check{Y},\xi \check{f}^*(T_{\check{X}})) \ar[r] \ar[d] 
	& \rH^1(\check{Y}, p^\alpha \widehat{I}) \ar[r]^-{\exp_{\mathcal{P}}} \ar[d] 
	& \rH^1(\check{Y}, \widehat{J})(\simeq \rH^1(Y_{\oo},J_{\oo})) \ar[d] \\
	\rH^1(Y_{\bC},\xi f^*(T_{X_\bC})) \ar@{^{(}->}[r] 
	& \rH^1(Y_{\bC},I_{\bC}) \ar[r]^-{\Exp_{\mathcal{P}_{\bC}}} 
	& \rH^1(Y_{\bC},J_{\bC}).
	}
	\]
	By lemma \ref{l:cohJ}, $(L_{f,b},\alpha)\in \varprojlim \mathcal{P}(\oo_n)$ comes from the $p$-adic completion of an object of $\mathcal{P}(\oo)$, that we abusively denote by $(L_{f,b},\alpha)$. 
	The $\xi\check{f}^*(T_{\check{X}})$-torsor $\mathcal{L}_f$ (\S~\ref{sss:twisted inverse image}) is sent to the $\xi f_{\bC}^*(T_{X_{\bC}})$-torsor $\mathcal{L}_{f_{\bC}}$ (\S~\ref{sss:rigPic}) via the left vertical arrow. 
	Hence there exists a unique isomorphism between the line bundle $L_{f,b}[\frac{1}{p}]$ and $\mathcal{L}_{f,b}^{\Exp}$ on $Y_{f,b,\bC}$ compatible with rigidifications via the right vertical arrow. 
	Then, the proposition follows from proposition \ref{p:integral-Lfb}. \hfill\qed
\end{secnumber}

\begin{secnumber}
	In the end, we consider the case $b=0$. 
	Since the point $0$ is small (proposition \ref{p:small spectral cover}), we have a canonical isomorphism $\mathcal{L}_{f,0}\simeq \mathcal{L}_{f,0}^{\Exp}$ (proposition \ref{p:integral-Lfb}). 
	In this case, we have a canonical $\mathscr{O}_{\check{Y}}$-homomorphism $\iota:\mathcal{A}_0\twoheadrightarrow \mathscr{O}_{\check{Y}}$ with a nilpotent kernel. 	
\end{secnumber}

\begin{prop} \label{p:twisted-pullback-0}
	\textnormal{(i)} There exists a canonical isomorphism $\mathcal{L}_{f,0}\otimes_{\mathcal{A}_0,\iota}\mathscr{O}_{\check{Y}}\simeq \mathscr{O}_{\check{Y}}$. 

	\textnormal{(ii)} The restriction of $f_{\mathfrak{Y},\mathfrak{X},\Exp}^{\circ}$ to vector bundles (viewed as Higgs bundles with zero Higgs field) is canonically isomorphic to the usual inverse image functor $f_{\bC}^*$ for vector bundles. 
\end{prop}

\begin{proof}
	(i) Since $\Ker(\iota)$ is nilpotent, we have $\iota(\exp(\varphi_{ij}))=1$ for a Čech cocycle $\exp(\varphi_{ij})$ of $\mathcal{L}_{f,0}$. 
	Then, the assertion follows. 

	(ii) Given a vector bundle $M$ on $X_{\bC}$, the action of $\mathcal{A}_0$ on $(f_{\bC}^*(M),0)$ factors through $\iota$. 
	Then, the assertion follows from (i). 
\end{proof}

\end{document}